\newtheorem{theorem}[equation]{Theorem}
\newtheorem{corollary}[equation]{Corollary}
\newtheorem{lemma}[equation]{Lemma}
\newtheorem{proposition}[equation]{Proposition}
\theoremstyle{definition}\newtheorem{definition}[equation]{Definition}
\theoremstyle{definition}\newtheorem{remark}[equation]{Remark}
\theoremstyle{remark}\newtheorem{example}[equation]{Example}
\numberwithin{equation}{section}
\newcommand{\ol}{\overline}
\newcommand{\Z}{\mathbb{Z}}
\newcommand{\KK}{\mathbb{K}}
\newcommand{\LL}{\mathbb{L}}
\newcommand{\FF}{\mathbb{F}}
\newcommand{\OO}{\mathbb{O}}
\newcommand{\fg}{{\mathfrak g}}
\newcommand{\fb}{{\mathfrak b}}
\newcommand{\fh}{{\mathfrak h}}
\newcommand{\CC}{{\mathbb C}}
\newcommand{\bmu}{{\mathbf \mu}}
\newcommand{\bB}{{\mathbf B}}
\newcommand{\bT}{{\mathbf T}}
\newcommand{\bH}{{\mathbf H}}
\newcommand{\bG}{{\mathbf G}}
\newcommand{\bC}{{\mathbf C}}
\newcommand{\Aut}{{\rm Aut}}
\newcommand{\Id}{{\rm Id}}
\newcommand{\Ad}{{\mathrm {Ad}}}
\newcommand\limind{\mathop{\oalign{lim\cr\hidewidth$\longrightarrow$\hidewidth\cr}}}
\newcommand\bOut{\text{\bf Out}}
\newcommand{\bGL}{{\mathbf{GL}}}
\newcommand{\bPGL}{{\mathbf{PGL}}}
\newcommand{\bAut}{{\mathbf{Aut}}}
\begin{document}
\title{Classification of Quantum Groups via Galois Cohomology}
\author{Eugene Karolinsky$^{1}$, Arturo Pianzola$^{2,3}$ and Alexander Stolin$^{4}$}
\date{}
\maketitle

$^1$ Department of Pure Mathematics, Kharkiv National University, Kharkiv, Ukraine.

$^2$ Department of Mathematical \& Statistical Science, University of Alberta, Edmonton, Alberta, T6G 2G1, Canada.

$^3$ Centro de Altos Estudios en Ciencias Exactas, Avenida de Mayo 866, (1084), Buenos Aires, Argentina.

$^4$ Department of Mathematical Sciences, Chalmers University of Technology and the University of Gothenburg, 412 96 Gothenburg, Sweden.

\begin{abstract}
The first example of a quantum group was introduced by P.~Kulish and N.~Reshetikhin. In the paper
\cite{KR},
they found a new algebra which was later called $U_q (\mathfrak{sl}(2))$. Their example was developed independently by V.~Drinfeld and M.~Jimbo, which
resulted in the general notion of quantum group.
Later, a complimentary approach to quantum groups was developed by
L.~Faddeev, N.~Reshetikhin, and L.~Takhtajan in
\cite{FRT}.

Recently, the so-called Belavin--Drinfeld cohomology (twisted and non-twisted)
have been introduced in the literature to study and classify certain families of quantum groups
and Lie bialgebras.
Later, the last two authors interpreted non-twisted Belavin--Drinfeld cohomology in terms of
non-abelian Galois cohomology $H^1(\FF, \bH)$ for a suitable algebraic $\FF$-group $\bH$.
Here $\FF$ is an arbitrary field of zero characteristic. The non-twisted case is thus fully understood
in terms of Galois cohomology.

The twisted case has only been studied using Galois cohomology for the so-called (``standard'') Drinfeld--Jimbo structure.

The aim of the present paper is to extend these results to all twisted Belavin--Drinfeld cohomology and thus, to present classification of quantum groups in terms of Galois cohomology and the so-called orders. Low dimensional cases $\mathfrak{sl}(2)$ and $\mathfrak{sl}(3)$ are considered in more details
using a theory of cubic rings developed by B.~N.~Delone and D.~K.~Faddeev in \cite{DF}.

Our results show that there exist yet unknown quantum groups for Lie algebras of the types $A_n, D_{2n+1}, E_6$, not mentioned in \cite{ESS}.
 \noindent \\
{\em Keywords:} Belavin--Drinfeld, Yang--Baxter, Quantum group, Lie bialgebra, Galois cohomology. \\
{\em MSC 2000} Primary 20G10, 17B37, 17B62, and 17B67. Secondary 17B01.
\end{abstract}

\section{Introduction}
The ``linearization problem'' in quantum groups, proposed by  Drinfeld \cite{DUN}, and solved in the seminal work of Etingof and  Kazhdan \cite{EK1} and \cite{EK2}, leads naturally (see \cite{KKPS2} for details) to the study of Lie bialgebra structures where the underlying Lie algebra is a finite dimensional split simple Lie algebra $\fg(\KK)$ over the (algebraic) Laurent series field $\KK = \mathbb{C}((t))$. The classification of the Lie bialgebra structures that such
an algebra $\fg(\KK)$ can carry is closely related to the structure of its Drinfeld double. Indeed, the double of $\fg(\KK)$ is always
a Lie algebra of the form $\fg(\KK) \otimes_\KK \mathbb A$, where $\mathbb A$ is either $\KK \times \KK$, the quadratic field extension $\LL=\mathbb{C}((j))$, where $j = t^{\frac{1}{2}}$, or, finally, the algebra
$\KK[\epsilon]$ of dual numbers of $\KK$. The latter case is related to Frobenius algebras and will not be discussed in the present work.
For the first two cases (see again \cite{KKPS2} for details and further references), the classification is given in terms of what the authors call non-twisted and twisted Belavin--Drinfeld cohomology, and the corresponding Lie bialgebra structures are called of non-twisted and of twisted type respectively.
It was also noticed in \cite{KKPS3} that certain non-twisted Belavin--Drinfeld cocycles are Galois cocycles.

The general connection between Belavin--Drinfeld and Galois cohomology were found in \cite{PS}.
The main ingredient of the appearance of the Galois cohomology in the quantum groups theory is the study of the centralizers $\bC(\bG, r)\subset \bG$. Here $\bG$ is the an algebraic $\KK$-group  corresponding to $\fg(\KK)$ and
$r$ is an $r$-matrix, a solution of the modified classical Yang-Baxter equation classified by Belavin and Drinfeld
in \cite{BD}, which we denote by $r_{\rm BD}$.

The main results of \cite{PS} assert that:
\medskip

(a) Non-twisted Belavin--Drinfeld cohomology $H(\bG, r_{\rm BD})$ introduced in \cite{KKPS2} are nothing but the usual Galois cohomology $H^1(\KK, \bC(\bG, r_{\rm BD}))$.

(b) For the Drinfeld--Jimbo $r$-matrix $r_{\rm DJ}$ (it will be defined later), the twisted Belavin--Drinfeld cohomology  can be interpreted in terms of the ordinary Galois cohomology
$H^1(\KK, \widetilde{\bC}(\bG, r_{\rm DJ}))$, where $\widetilde{\bC}(\bG, r_{\rm DJ})$ is a twisted form of the $\KK$-algebraic group
$\bC(\bG, r_{\rm DJ})$ split by the quadratic extension $\LL$ mentioned above
(however, this result was obtained in the case $\bG$ is a group of the adjoint type).

(c) $H^1(\KK, \bC(\bG, r_{\rm DJ})) = 1$ (by Hilbert 90)
and  $H^1(\KK, \widetilde{\bC}(\bG, r_{\rm DJ})) = 1$
(by a theorem of Steinberg, a result that is also used to establish the correspondence mentioned in (b) above).

(d) In \cite{KKPS2}, \cite{KKPS3}, \cite{SP},  non-twisted and twisted Belavin--Drinfeld cohomology $H(\bG, r_{\rm BD})$ and
${\ol H}(\bG, r_{\rm BD})$ were computed for the following classical groups:
$\mathbf{GL}, \mathbf{SL}, \mathbf{SO}$, and for the simply connected $\mathbf{Sp}$.
\medskip

The main objective of the present paper is to deal with (b) and (c) for arbitrary Belavin--Drinfeld matrices.
This completes the classification of  the Lie bialgebras under consideration. We also discuss the classification problem of the corresponding quantum groups.

\begin{remark}
In defining Belavin--Drinfeld cohomology, the group $\bG$ need not be adjoint. In the non-twisted case, the base field $\KK$ could be taken to be arbitrary (of characteristic $0$), and some interesting results can de derived in this generality. In the twisted case, the quadratic extension is crucial. So is the fact that $\KK$ is of cohomological dimension $1$ and that its Galois group is pro-cyclic. These facts, together with the connection with quantum groups for the case of $\KK = \mathbb{C}((t))$, explains why we will restrict our attention to this particular base field.
\end{remark}

The rest of the paper is organized as follows. After establishing some notation in Section \ref{sec_Notation} and reminding the readers the Belavin--Drinfeld classification in Section \ref{sec_BD}, we pass to the main part of the paper. In Section \ref{sec_twisted_1} we define and discuss some basic properties of twisted Belavin--Drinfeld cohomology. In Section \ref{sec_twisted_2} we establish a connection between twisted Belavin--Drinfeld cohomology and Galois cohomology of a twisted quasitorus. In Section \ref{sec_classif_alg} we apply the above results to classify the corresponding Lie bialgebra structures on $\fg(\KK)$ up to gauge equivalence. In Section \ref{sec_classif_gr} we classify the corresponding quantum groups in terms of certain double cosets in the group $\bG(\KK)$. In Appendix \ref{appendix_A}, written by by Juliusz Brzezinski and A.~S., the theory of orders is applied to describe the double cosets mentioned above in the case $\bG=\mathbf{GL}(n)$. Finally, in Appendix \ref{appendix_B}, written by E.~K. and Aleksandra Pirogova, Belavin--Drinfeld cohomology for exceptional simple Lie algebras are discussed.

\section{Notation}\label{sec_Notation}

Throughout this paper $\mathbb{K}$ will denote $\mathbb{C}((t))$ and  $\mathbb{L}$ its quadratic extension $\mathbb{C}((j))$, where $j = t^{\frac{1}{2}}$.
We fix an algebraic closure of $\KK$, which will be denoted by $\overline{\KK}$. The (absolute)
Galois group $\rm{Gal}(\KK)$ of the extension $\overline{\KK}/\KK$ will be denoted
by $\mathcal{G}$. For future reference we recall the explicit description of  $\mathcal {G}$.

Fix a compatible set of primitive
$m^{\rm th}$ roots of unity $\xi_m$, namely such that  $\xi _{me} ^e = \xi_m$ for all integer
$e > 0$.\footnote{For example, $\xi_m = e^{\frac{i2\pi}{m}}$.}
Fix also, with  obvious meaning, a compatible set $t^\frac{1}{m}$
of $m^{\rm th}$ roots of $t$ in $\overline{\KK}$.

Let $\KK_{m} =  \mathbb{C}((t^\frac{1}{m}))$. Then we can  identify $\text{\rm Gal}(\KK_m/\KK)$
with $\Z/m\Z$, where for each $e \in \Z$ the corresponding element $\ol{e} \in \Z/m\Z$  acts on $\KK_m$
via $ ^{\ol {e}} t^{\frac{1}{m}}_i = \xi^{e}_{m}
t^{\frac{1}{m}}_i$.
\smallskip

We have $\overline{\KK}  = {\limind} \,\, \KK_m$. The absolute Galois group $\mathcal{G}$ of $\KK$ is the
profinite completion $\widehat{\Z}$ understood as the inverse limit of the Galois groups $\text{\rm Gal}(\KK_m/\KK)$
as described above. If $\gamma_1$ denotes the standard profinite
generator of $\widehat{\Z}$, then the action of $\gamma_1$ on $\overline{\KK}$ is given by
$$^{\gamma_1}t^\frac{1}{m} = \xi_mt^\frac{1}{m}.$$
Note for future reference that $\gamma_2 := 2\gamma_1$ is the canonical profinite generator of $ \mathcal{G}_\LL = \text{\rm Gal}(\LL)$.

If $V$ is a $\KK$-space (resp.\ Lie algebra), we will denote the $\overline{\KK}$-space (resp.\ Lie algebra) $V \otimes_\KK \overline{\KK}$ by $\overline{V}$.

If $\text{\bf K}$ is a (smooth) linear algebraic group over $\KK$, then  the corresponding (non-abelian) \'{e}tale Galois cohomology will be denoted by $H^1(\KK, \text{\bf K})$ (see \cite{Se} for details). We recall that $H^1(\KK, \text{\bf K})$ coincides with the usual non-abelian continuous cohomology of the profinite group $\mathcal{G}$ acting (naturally)  on $\text{\bf K}(\overline{\KK})$.

Let  $\fg$ be a split finite dimensional simple Lie algebra over $\mathbb{C}$, $\fg(\KK)=\fg\otimes_{\mathbb C}\KK$. In what follows the adjoint group of $\fg(\KK)$ (viewed as an algebraic group over $\KK$) will be denoted by $\bG_{\rm ad}$.

We fix once and for all a Killing couple $(\bB_{\rm ad}, \bH_{\rm ad})$ of $\bG_{\rm ad}$,
whose corresponding Borel and split Cartan subalgebras will be denoted by $\fb$ and $\fh$ respectively.
Our fixed Killing couple leads, both at the level of $\bG_{\rm ad}$ and $\fg(\KK)$, to a root system $\Delta$ with a fixed
set of positive roots $\Delta_+$ and the base
$\Gamma = \{ \alpha_1, \ldots, \alpha_n \}$.\footnote{The elements of $\Delta$ are to be thought as characters of $\bH_{\rm ad}$ or elements of $\fh^*$ depending on whether we are working at the group or Lie algebra level. This will always be clear from the context.}

The Lie bialgebra structures that we will be dealing with are defined by $r$-matrices, which are  elements of $\fg(\KK) \otimes_\KK \fg(\KK)$ satisfying  ${\rm CYB}(r) = 0$ where ${\rm CYB}$ is the classical Yang--Baxter
operator (see \S3 below and \cite{ES} for definitions).

The  action of $\bG_{\rm ad}$ on $\fg(\KK) \otimes_\KK \fg(\KK)$ induced by the adjoint action of $\bG_{\rm ad}$ on $\fg(\KK)$ will be denoted by $\Ad_X$.
Along similar lines, if $\sigma \in \cal{G}$, then  we will write $\sigma(r)$ instead of $(\sigma \otimes \sigma)(r)$.

Fix $r \in \fg(\KK) \otimes_\KK \fg(\KK)$. The centralizer of $r$ in $\bG_{\rm ad}$ (under the adjoint action) will be denoted by $\bC(\bG_{\rm ad}, r)$. It is an algebraic $\KK$-group and a closed subgroup of
$\bG_{\rm ad}$. Its functor of points is as follows. Let $R$ be a commutative ring extension of $\KK$. View $r$ as an element of $(\fg(\KK) \otimes_\KK\fg(\KK)) (R) = (\fg(\KK) \otimes_\KK \fg(\KK))\otimes_\KK  R \simeq (\fg(\KK) \otimes_\KK R) \otimes_R (\fg(\KK) \otimes_\KK R)$ in a natural way. Then
$$\bC(\bG_{\rm ad}, r)(R) = \{ X \in \bG_{\rm ad}(R) : \Ad_X(r) = r \} .$$

\section{Belavin--Drinfeld classification}\label{sec_BD}

Let $\FF$ be an arbitrary field extension of $\mathbb C$. For the time being we replace $\KK$ by $\FF$.

Consider a Lie bialgebra structure $\delta$  on $\fg(\FF)$. By Whitehead's Lemma the cocycle $\delta : \fg(\FF) \to \fg(\FF) \otimes_\FF \fg(\FF)$  is a coboundary.
Thus, $ \delta = \delta_r$ for some element $r \in \fg(\FF) \otimes_\FF \fg(\FF)$, namely
$$
\delta (a)=[r, a\otimes 1+1\otimes a]
$$
for all $a \in \fg(\FF)$. It is well known when an element $r \in \fg(\FF) \otimes_\FF \fg(\FF)$ determines a Lie bialgebra structure of $\fg(\FF)$. See \cite{ES} for details.

We assume until further notice that $\FF$ is algebraically closed. Then we have the Belavin--Drinfeld classification \cite{BD},
which is useful to recall now. Following \cite{BD}, we define an equivalence relation between two $r$-matrices  $r, r' \in\fg(\FF) \otimes_{\FF} \fg(\FF)$ by declaring that $r$ is equivalent to $r'$ if there exist an element $X \in \bG_{\rm ad}(\FF )$ and a scalar $b \in \FF^\times$ such that
\begin{equation}\label{equivalent}
r' = b \, \Ad_X(r).
\end{equation}
Furthermore, if  $b = 1$,  these two $r$-matrices are called gauge equivalent.

Belavin and Drinfeld  provide us with a list of elements $r_{\rm BD} \in \fg(\FF) \otimes_\FF \fg(\FF)$ (called Belavin--Drinfeld r-matrices) with the following properties:
\begin{enumerate}
\item Each $r_{\rm BD}$ is an $r$-matrix (i.e.\ a solution of the classical Yang--Baxter equation) satisfying  $r + r^{21} = \Omega$ (where  $\Omega$ is the Casimir operator
  of $\fg(\FF) \otimes_\FF \fg(\FF)$).

\item Any non-skewsymetric $r$-matrix for $\fg(\FF)$ is equivalent to a unique $r_{\rm BD}$.
\end{enumerate}

For the readers' convenience we recall the structure of the Belavin--Drinfeld $r$-matrices.
With respect to our fixed  $(\fb, \fh)$, any $r_{\rm BD}$ depends on a discrete and a continuous parameter.
The discrete parameter is an admissible triple $(\Gamma_{1},\Gamma_{2},\tau)$, which is
an isometry $\tau:\Gamma_{1}\to \Gamma_{2}$. Here $\Gamma_{1},\Gamma_{2}\subset\Gamma$ and
for any $\alpha\in\Gamma_{1}$ there exists $k\in \mathbb{N}$ satisfying
$\tau^{k}(\alpha)\notin \Gamma_{1}$. The continuous parameter is a tensor $r_{0}\in \mathfrak{h} \otimes_\FF\mathfrak{h}$ satisfying $r_{0}+r_{0}^{21}=\Omega_{0}$
and $(\tau(\alpha)\otimes 1+1 \otimes \alpha)(r_{0})=0$ for any $\alpha\in \Gamma_{1}$.
Here $\Omega_{0}$ denotes the Cartan part of the quadratic Casimir element $\Omega$.
Then
\begin{equation}\label{rBD}
r_{\rm BD} =r_{0}+\sum_{\alpha>0}e_{\alpha}\otimes e_{-\alpha}+\sum_{\alpha\in (\mathrm{Span}\,\Gamma_{1})^{+} }\sum_{k\in \mathbb{N}} e_{\alpha}\wedge e_{-\tau^{k}(\alpha)}.
\end{equation}
where $e_\alpha$ and $e_{-\alpha}$ are parts of a fixed Chevalley system of $(\fg, \fh)$ in the sense of \cite[Ch.\ VIII, \S2 and \S12]{Bbk}. We will sometimes write $r_{\rm BD} = r_{0}+r_{\rm BD}'$.

We return to the case of our field  $\KK=\CC ((t))$.
Let $\delta$ be a
Lie bialgebra structure on $\fg(\KK)$. Clearly, it is of the form
$\delta (a)= \delta_r (a)=[r, a\otimes 1+1\otimes a],$ where $a\in \fg(\KK)$ and
$r \in \overline{\fg}\otimes_{\overline{\KK }}\overline{\fg}$ is an $r$-matrix.
We will assume that $(\fg(\KK), \delta)$ is
not triangular, i.e.\ $r$ is not skew-symmetric.

By the Belavin--Drinfeld classification there exists a unique  $r_{\rm BD}$ such that
\begin{equation}\label{crational}
{r} = b \, \mathrm{Ad}_{X}(r_{\rm BD})
\end{equation}
for some $X\in \bG_{\rm ad}(\overline{\KK})$  and $b \in \overline{\KK}^\times$. Since ${r} + {r}^{21} = b\,\Omega$,
we can apply \cite[Theorem 2.7]{KKPS3}  to conclude that $b^2 \in \KK$.

This leads to two cases, depending on whether $b$ is in $\KK$ or not. The first case is treated with the non-twisted Belavin--Drinfeld
cohomology, and it is dealt in full generality by means of the Galois cohomology $H^1(\KK, \bC(\bG, r_{\rm BD}))$ in \cite{PS}.

Our interest is in the second case with $b = j$.   The corresponding twisted Belavin--Drinfeld cohomology and their relation to quantum groups and Galois cohomology are the contents of the next two sections.

\begin{definition}
The discrete parameter $(\Gamma_1, \Gamma_2, \tau)$ of the unique Belavin--Drinfeld matrix $r_{\rm BD}$ in (\ref{crational}) will be called {\it the discrete parameter of} $r$.
\end{definition}

\begin{lemma}\label{trivialr}
Let $r \in \overline{\fg}\otimes_{\overline{\KK }}\overline{\fg}$ be an $r$-matrix. Then

(i) $r^{21}$ is an $r$-matrix.

(ii) $\gamma({r})$ is an $r$-matrix for all $\gamma \in \mathcal{G}$.

(iii) Let $r_{\rm BD}$ be a Belavin--Drinfeld matrix as in (\ref{rBD}). Then
$\gamma(r_{\rm BD})$ is also a Belavin--Drinfeld matrix for all $\gamma \in \mathcal{G}$. Furthermore, these two $r$-matrices differ only on their continuous parameter. In particular, they have the same discrete parameter.
\end{lemma}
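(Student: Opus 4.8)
The plan is to dispatch (i) and (ii) as formal consequences of the way ${\rm CYB}$ is assembled from the bracket on the threefold tensor, and to locate the real content in (iii).

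For (i), I would track the classical Yang--Baxter operator under the permutation $\theta$ of $U(\overline{\fg})^{\otimes 3}$ that interchanges the first and third tensor slots. Setting $s = r^{21}$, the elementary bookkeeping $\theta(r^{12}) = s^{23}$, $\theta(r^{13}) = s^{13}$, $\theta(r^{23}) = s^{12}$ holds, and since $\theta$ is an algebra automorphism it respects commutators. Feeding these into ${\rm CYB}(r)$ then yields $\theta({\rm CYB}(r)) = -{\rm CYB}(r^{21})$; as $\theta$ is invertible, the vanishing of one side forces the vanishing of the other, so $r^{21}$ is an $r$-matrix.

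For (ii), the point to isolate is that each $\gamma \in \mathcal{G}$ acts on $\overline{\fg} = \fg \otimes_\CC \overline{\KK}$ through the scalars only, hence as a $\gamma$-semilinear Lie algebra automorphism fixing $\fg \otimes 1$ and the structure constants. The induced map $\gamma^{\otimes 3}$ on $U(\overline{\fg})^{\otimes 3}$ therefore commutes with the commutator and satisfies $\gamma(r)^{ij} = \gamma^{\otimes 3}(r^{ij})$, whence ${\rm CYB}(\gamma(r)) = \gamma^{\otimes 3}({\rm CYB}(r)) = 0$.

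The substance is (iii), where the guiding observation is that $\gamma$ fixes every rational ingredient of (\ref{rBD}) and only transports the continuous parameter. The Chevalley vectors $e_{\pm\alpha}$ lie in $\fg \otimes 1$, so $\gamma$ fixes them and hence fixes the two sums $\sum_{\alpha>0} e_\alpha \otimes e_{-\alpha}$ and $\sum e_\alpha \wedge e_{-\tau^k(\alpha)}$ verbatim; in particular the discrete parameter $(\Gamma_1, \Gamma_2, \tau)$ is left intact. What then must be checked --- and this is the only genuine obstacle --- is that $\gamma(r_0)$ is again an admissible continuous parameter for the same triple. Here I would use that $\Omega_0$ is $\CC$-rational and that $\gamma$ commutes with the flip, so that $r_0 + r_0^{21} = \Omega_0$ passes to $\gamma(r_0) + \gamma(r_0)^{21} = \Omega_0$; and that the roots $\alpha, \tau(\alpha)$ are $\CC$-rational as functionals on $\overline{\fh}$, so that the operator $\tau(\alpha)\otimes 1 + 1 \otimes \alpha$ commutes with $\gamma$ and the constraint $(\tau(\alpha)\otimes 1 + 1 \otimes \alpha)(r_0) = 0$ survives. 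With both constraints verified, $\gamma(r_{\rm BD})$ is exhibited in the exact shape (\ref{rBD}), with unchanged discrete parameter and continuous parameter $\gamma(r_0)$, which is precisely the claim. The worst that can go wrong is mishandling the semilinearity of $\gamma$; once one records that all the combinatorial and Cartan data ($\Omega_0$, the roots, the $e_{\pm\alpha}$) descend to $\CC$ and are thus Galois-fixed, moving $\gamma$ across the flip and across the root pairings is immediate.
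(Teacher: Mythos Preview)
Your proposal is correct and fleshes out precisely the verifications that the paper omits; the paper's own proof consists of the single line ``The first statement is well known, the second and third are obvious.'' Your argument for (iii) --- that the Chevalley generators, $\Omega_0$, and the root functionals are all $\CC$-rational and hence Galois-fixed, leaving only $r_0$ to be transported --- is exactly the content implicit in the authors' ``obvious''.
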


\begin{proof}
The first statement is well known, the second and third are obvious.
\end{proof}

\section{Twisted Belavin--Drinfeld cohomology}\label{sec_twisted_1}

In the remainder of our paper we will assume that in (\ref{crational}) we have $b=j=t^{1/2} \in \LL^\times$. Thus,
 \begin{equation}\label{rbar}
 \overline{r} = j \, \mathrm{Ad}_{X}(r_{\rm BD})
 \end{equation}
and
 \begin{equation}\label{rrel}
 \overline{r} + \overline{r}^{21} = j\,\Omega.
 \end{equation}

Recall the following  result proved in \cite[Theorem 3]{KKPS2}:

\begin{theorem}\label{CartanF}
Assume that $\overline{r} = b\, \mathrm{Ad}_{X}(r_{\rm BD})$, $b\in\KK$, induces a Lie bialgebra structure on $\fg(\KK)$. Then both $r_{\rm BD}$  and $\overline{r}$  are rational, i.e.\ they  belong to $\fg(\KK) \otimes_\KK \fg(\KK)$.
Furthermore, $X^{-1}\gamma(X)\in \bC(\bG_{\rm ad}, r_\text{\rm BD})(\overline{\KK})$ for all $\gamma \in \mathcal{G}$. \qed
\end{theorem}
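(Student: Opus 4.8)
The plan is to prove the two assertions in the order they are stated, since rationality of $r_{\rm BD}$ and $\ol r$ is the natural prerequisite for the cocycle condition $X^{-1}\gamma(X)\in \bC(\bG_{\rm ad},r_{\rm BD})(\ol\KK)$. First I would establish rationality of $r_{\rm BD}$. By hypothesis $\ol r = b\,\Ad_X(r_{\rm BD})$ with $b\in\KK$, and $\ol r$ induces a Lie bialgebra structure on $\fg(\KK)$, so $\ol r$ is the $\ol\KK$-scalar extension of a rational element $r\in\fg(\KK)\ot_\KK\fg(\KK)$; in particular $\ol r$ is $\mathcal G$-invariant, i.e.\ $\gamma(\ol r)=\ol r$ for all $\gamma\in\mathcal G$. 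Applying $\gamma$ to the equation $\ol r=b\,\Ad_X(r_{\rm BD})$ and using $\gamma(b)=b$ (as $b\in\KK$) gives $\ol r = b\,\Ad_{\gamma(X)}\bigl(\gamma(r_{\rm BD})\bigr)$. By Lemma~\ref{trivialr}(iii), $\gamma(r_{\rm BD})$ is again a Belavin--Drinfeld matrix with the \emph{same} discrete parameter $(\Gamma_1,\Gamma_2,\tau)$, differing from $r_{\rm BD}$ only in its continuous (Cartan) parameter $r_0$.

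The crux of the rationality argument is then to compare the two displays $\ol r=b\,\Ad_X(r_{\rm BD})$ and $\ol r=b\,\Ad_{\gamma(X)}(\gamma(r_{\rm BD}))$, which yield
\begin{equation}\label{propcomp}
\Ad_{X^{-1}\gamma(X)}\bigl(\gamma(r_{\rm BD})\bigr)=r_{\rm BD}.
\end{equation}
Both $r_{\rm BD}$ and $\gamma(r_{\rm BD})$ share the summands $\sum_{\alpha>0}e_\alpha\ot e_{-\alpha}$ and the off-diagonal Span$\,\Gamma_1$ terms in (\ref{rBD}), and differ only in the Cartan part $r_0$ versus $\gamma(r_0)$. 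Writing $Y=X^{-1}\gamma(X)\in\bG_{\rm ad}(\ol\KK)$, equation (\ref{propcomp}) forces $Y$ to preserve the common non-Cartan part of $r_{\rm BD}$ and to intertwine $\gamma(r_0)$ with $r_0$. I would argue, using the explicit structure in (\ref{rBD}) (the ordering of root spaces and the fact that $r^{\rm BD}_{\rm skew}$ pins down the Cartan and Borel data), that $Y$ must in fact lie in the centralizer $\bC(\bG_{\rm ad},r_{\rm BD})$ and simultaneously fix $r_0$; this already contains the cocycle statement. From $\Ad_Y(\gamma(r_0))=r_0$ together with $\Ad_Y(r_{\rm BD})=r_{\rm BD}$ one deduces $\gamma(r_0)=r_0$ for all $\gamma$, so $r_0\in\fh\ot_\KK\fh$ is rational; hence $r_{\rm BD}$ is rational, and then $\ol r=b\,\Ad_X(r_{\rm BD})$ shows $r$ (the descended form of $\ol r$) lies in $\fg(\KK)\ot_\KK\fg(\KK)$ as well.

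Finally, for the cocycle assertion: since $r_{\rm BD}$ is now rational, $\gamma(r_{\rm BD})=r_{\rm BD}$ for every $\gamma\in\mathcal G$, so (\ref{propcomp}) simplifies directly to $\Ad_{X^{-1}\gamma(X)}(r_{\rm BD})=r_{\rm BD}$, which is exactly the statement $X^{-1}\gamma(X)\in\bC(\bG_{\rm ad},r_{\rm BD})(\ol\KK)$. The main obstacle I anticipate is the middle step: extracting from (\ref{propcomp}) that $Y$ fixes the Cartan part, and thereby forcing $\gamma(r_0)=r_0$. This requires a careful analysis of how the adjoint action of $\bG_{\rm ad}(\ol\KK)$ can move the three summands in (\ref{rBD}) among each other, and in particular ruling out that a nontrivial automorphism could compensate a change in $r_0$ by rearranging the root-vector terms. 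The admissibility condition on $(\Gamma_1,\Gamma_2,\tau)$ (that $\tau$ has no fixed ``cycles'', i.e.\ $\tau^k(\alpha)\notin\Gamma_1$ eventually) should be what guarantees the requisite rigidity of the off-diagonal part, and I would lean on the uniqueness clause of the Belavin--Drinfeld classification to control the discrete data throughout.
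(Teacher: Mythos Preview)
The paper does not give its own proof of this theorem: it is quoted verbatim from \cite[Theorem~3]{KKPS2} and marked with \qed. So there is no in-paper argument to compare against. I can, however, comment on the soundness of your outline.

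There is a genuine gap in your opening step. You write that ``$\ol r$ induces a Lie bialgebra structure on $\fg(\KK)$, so $\ol r$ is the $\ol\KK$-scalar extension of a rational element $r\in\fg(\KK)\ot_\KK\fg(\KK)$''. This is not automatic: the hypothesis only says that $\delta(a)=[\ol r,a\ot1+1\ot a]$ lies in $\fg(\KK)\ot_\KK\fg(\KK)$ for $a\in\fg(\KK)$, and two $r$-matrices differing by a scalar multiple of $\Omega$ give the same $\delta$. The correct argument is: for each $\gamma\in\mathcal G$ the difference $\gamma(\ol r)-\ol r$ commutes with all $a\ot1+1\ot a$, hence equals $c(\gamma)\Omega$ for some $c(\gamma)\in\ol\KK$; on the other hand, applying $\gamma$ to the normalization $\ol r+\ol r^{21}=b\Omega$ (rational since $b\in\KK$) shows $\gamma(\ol r)-\ol r$ is skew-symmetric, while $\Omega$ is symmetric, forcing $c(\gamma)=0$. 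Only then is $\ol r$ Galois-invariant, hence rational.

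Your ``main obstacle'' is not actually an obstacle, and you underuse the tool you mention at the end. Once you have \eqref{propcomp}, namely $\Ad_{Y}(\gamma(r_{\rm BD}))=r_{\rm BD}$, observe that by Lemma~\ref{trivialr}(iii) both $\gamma(r_{\rm BD})$ and $r_{\rm BD}$ belong to the Belavin--Drinfeld list. The uniqueness clause of the BD classification says that two elements of the list which are gauge equivalent are \emph{equal} (continuous parameter included, not just the discrete data). Hence $\gamma(r_{\rm BD})=r_{\rm BD}$ immediately, giving rationality of $r_{\rm BD}$; and then \eqref{propcomp} reads $\Ad_Y(r_{\rm BD})=r_{\rm BD}$, which is the cocycle condition. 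No separate analysis of how $\Ad_Y$ moves the three summands of (\ref{rBD}) is needed. (This is exactly how the present paper deploys BD uniqueness in the proof of Proposition~\ref{X}(4).)
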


This allows us to establish the following

\begin{proposition}\label{aux}
Assume that $j\mathrm{Ad}_{X}(r_{\rm BD})$ induces a Lie bialgebra structure on $\fg(\KK)$. Then
\begin{enumerate}
\item (i) $\gamma_2 (r_\text{\rm BD}) =r_\text{\rm BD}$,

(ii) $\mathrm{Ad}_{\gamma_2 (X)}(r_{\rm BD})=\mathrm{Ad}_{X}(r_{\rm BD})$, \,\, {\text \rm and}

(iii) $X^{-1}\gamma_2(X)\in \bC(\bG_{\rm ad}, r_\text{\rm BD})(\overline{\KK})$.

\item $\gamma_1 (\mathrm{Ad}_{X}(r_{\rm BD})) =\mathrm{Ad}_{\gamma_1 (X)}(\gamma_1 (r_{\rm BD} ))=(\mathrm{Ad}_{X}(r_{\rm BD}))^{21}$.
  In particular, $r$-matrices $\gamma_1 (r_{\rm BD})$ and $r_{\rm BD}^{21}$
  have the same discrete parameter.
\end{enumerate}
\end{proposition}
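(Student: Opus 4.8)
The plan is to use Theorem \ref{CartanF} together with the explicit description of the Galois action on $\overline{\KK}$ recorded in Section \ref{sec_Notation}. Recall that $\gamma_2 = 2\gamma_1$ is the canonical profinite generator of $\mathcal{G}_\LL = \text{\rm Gal}(\LL)$, and that $\LL = \CC((j))$ with $j = t^{1/2}$. The key observation for part (1) is that the Belavin--Drinfeld matrix $r_{\rm BD}$ in (\ref{rBD}) is built from the continuous parameter $r_0 \in \fh \otimes_\FF \fh$ and a sum of terms $e_\alpha \otimes e_{-\alpha}$ and $e_\alpha \wedge e_{-\tau^k(\alpha)}$ coming from a fixed Chevalley system, which have coefficients in $\CC$. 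Hence the only potential $\overline{\KK}$-dependence in $r_{\rm BD}$ sits in the continuous parameter $r_0$.

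For part (1)(i), I would argue as follows. By Theorem \ref{CartanF} (applied with $b = j$, noting that $j\,\mathrm{Ad}_X(r_{\rm BD})$ induces a Lie bialgebra structure by hypothesis), both $r_{\rm BD}$ and $\overline{r}$ are rational, i.e.\ lie in $\fg(\KK)\otimes_\KK \fg(\KK)$. In particular $r_{\rm BD}$ is fixed by all of $\mathcal{G} = \text{\rm Gal}(\KK)$, and a fortiori by its subgroup $\mathcal{G}_\LL$, which is generated by $\gamma_2$. This gives $\gamma_2(r_{\rm BD}) = r_{\rm BD}$ immediately. Part (1)(ii) then follows by applying $\gamma_2$ to the expression $\mathrm{Ad}_X(r_{\rm BD})$: since $\gamma_2(\mathrm{Ad}_X(r_{\rm BD})) = \mathrm{Ad}_{\gamma_2(X)}(\gamma_2(r_{\rm BD})) = \mathrm{Ad}_{\gamma_2(X)}(r_{\rm BD})$, and separately, because $\overline{r} = j\,\mathrm{Ad}_X(r_{\rm BD})$ is rational while $\gamma_2(j) = j$ (as $j \in \LL$ is fixed by $\mathcal{G}_\LL$), applying $\gamma_2$ to $\overline{r}$ gives $\overline{r} = j\,\mathrm{Ad}_{\gamma_2(X)}(r_{\rm BD})$; comparing the two expressions for $\overline{r}$ and cancelling $j$ yields (ii). Finally (1)(iii) is a reformulation of (ii): $\mathrm{Ad}_{X}(r_{\rm BD}) = \mathrm{Ad}_{\gamma_2(X)}(r_{\rm BD})$ means $\mathrm{Ad}_{X^{-1}\gamma_2(X)}(r_{\rm BD}) = r_{\rm BD}$, i.e.\ $X^{-1}\gamma_2(X) \in \bC(\bG_{\rm ad}, r_{\rm BD})(\overline{\KK})$. (Alternatively, this is the $\gamma = \gamma_2$ instance of the cocycle condition already asserted in Theorem \ref{CartanF}.)

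For part (2), the substance is the middle equality $\mathrm{Ad}_{\gamma_1(X)}(\gamma_1(r_{\rm BD})) = (\mathrm{Ad}_X(r_{\rm BD}))^{21}$; the left equality is just the definition of the Galois action on tensors together with $\gamma_1(\mathrm{Ad}_X(\cdot)) = \mathrm{Ad}_{\gamma_1(X)}(\gamma_1(\cdot))$. To obtain the right equality I would apply $\gamma_1$ to the skew-symmetry relation (\ref{rrel}), $\overline{r} + \overline{r}^{21} = j\,\Omega$. Since $\gamma_1(j) = \gamma_1(t^{1/2}) = \xi_2\, t^{1/2} = -j$ (using $\xi_2 = -1$ from the compatible root-of-unity setup, as $^{\gamma_1}t^{1/m} = \xi_m t^{1/m}$), and $\Omega$ has $\CC$-coefficients so $\gamma_1(\Omega) = \Omega$, applying $\gamma_1$ gives $\gamma_1(\overline{r}) + \gamma_1(\overline{r})^{21} = -j\,\Omega$. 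Combining with the original relation, I expect to identify $\gamma_1(\overline{r}) = -\overline{r}^{21}$, hence $\gamma_1(\mathrm{Ad}_X(r_{\rm BD})) = \gamma_1(\overline{r}/j) = (-\overline{r}^{21})/(-j) = \overline{r}^{21}/j = (\mathrm{Ad}_X(r_{\rm BD}))^{21}$, which is the desired equality. The concluding assertion about discrete parameters follows by combining the chain of equalities with Lemma \ref{trivialr}(iii): $\gamma_1(r_{\rm BD})$ is a Belavin--Drinfeld matrix with the same discrete parameter as $r_{\rm BD}$, and since conjugation by $\mathrm{Ad}_{\gamma_1(X)}$ carries $\gamma_1(r_{\rm BD})$ to $r_{\rm BD}^{21}$ (after stripping the $\mathrm{Ad}_X$), one reads off that $\gamma_1(r_{\rm BD})$ and $r_{\rm BD}^{21}$ share their discrete parameter.

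The main obstacle I anticipate is pinning down the precise sign bookkeeping in part (2): one must track $\gamma_1(j) = -j$ against the flip $r \mapsto r^{21}$ and verify that the $\mathrm{Ad}_X$ conjugation intertwines correctly, so that after cancelling the scalar $j$ the naked Belavin--Drinfeld matrices match up as claimed. The step deducing $\gamma_1(\overline{r}) = -\overline{r}^{21}$ from the two Casimir relations is where I would be most careful, since a priori (\ref{rrel}) only constrains the symmetric part; I expect one needs the fact that $\overline{r}$ and $r_{\rm BD}$ differ by the scalar $j$ and an inner automorphism, together with Lemma \ref{trivialr}, to upgrade the symmetric-part identity to the full relation $\gamma_1(\overline{r}) = -\overline{r}^{21}$.
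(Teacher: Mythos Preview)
Your argument for part (1) has a genuine gap: Theorem \ref{CartanF} is stated under the hypothesis $b\in\KK$, but here $b=j\notin\KK$, so you cannot invoke it directly. In fact your intermediate claim that $r_{\rm BD}$ lies in $\fg(\KK)\otimes_\KK\fg(\KK)$ is false in general in the twisted setting: Proposition \ref{X}(4) shows that the continuous parameter $r_0$ typically satisfies $\gamma_1(r_0)=\mathrm{Ad}_S(r_0)^{21}\neq r_0$, so $r_{\rm BD}$ is only $\LL$-rational, not $\KK$-rational. The paper's fix is exactly the one-line remark ``use $\LL$ instead of $\KK$ as the base field'': since $j\in\LL$ and $j\,\mathrm{Ad}_X(r_{\rm BD})$ induces a Lie bialgebra structure on $\fg(\LL)$ by extension of scalars, Theorem \ref{CartanF} over $\LL$ gives $r_{\rm BD}\in\fg(\LL)\otimes_\LL\fg(\LL)$ and $X^{-1}\gamma(X)\in\bC(\bG_{\rm ad},r_{\rm BD})(\overline{\KK})$ for all $\gamma\in\mathcal{G}_\LL$. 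Taking $\gamma=\gamma_2$ yields (i)--(iii). Your deductions of (ii) and (iii) from (i) are fine once (i) is established correctly.

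For part (2) you correctly identify the obstacle: applying $\gamma_1$ to the relation $\overline{r}+\overline{r}^{21}=j\Omega$ constrains only the symmetric part, and you cannot conclude $\gamma_1(\overline{r})=-\overline{r}^{21}$ from that alone. The missing idea, which the paper packages as Lemma \ref{j}, is to use Galois-invariance of the cobracket itself: since $\delta(a)=[jr,a\otimes1+1\otimes a]\in\fg(\KK)\otimes_\KK\fg(\KK)$ for all $a\in\fg(\KK)$, applying $\gamma_1$ gives $[jr+j\gamma_1(r),a\otimes1+1\otimes a]=0$, whence $r+\gamma_1(r)=q\Omega$ for some scalar $q\in\overline{\KK}$. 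Applying $\gamma_1$ again and using $\gamma_2(r)=r$ forces $q\in\KK$, and then comparing with $r+r^{21}=\Omega$ pins down $q=1$, i.e.\ $\gamma_1(r)=r^{21}$. This is precisely the ``upgrade'' you anticipated needing but did not supply.
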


We remind the reader that $\gamma_1 $ is a fixed progenerator of $\cal{G}=\mathrm{Gal} (\KK )$ and $\gamma_2 =2\gamma_1$
   is a progenerator of $\mathrm{Gal} (\LL )$.

\begin{proof}
 (1) This follows from  Theorem \ref{CartanF} using $\LL$ instead of $\KK$ as the base field.

 (2) The second statement follows from the following lemma, which will also be  used later.
\end{proof}

\begin{lemma}\label{j}
    Assume that $r$ satisfies the CYBE and $r+r^{21} =\Omega$.
    Then the following two conditions are equivalent:
    \begin{itemize}
     \item (a) $jr$ induces a Lie bialgebra structure on $\fg(\KK)$.
     \item (b) $\gamma_1 (r)=r^{21}$ and $\gamma_2 (r)=r$.
    \end{itemize}
\end{lemma}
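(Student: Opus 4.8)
The plan is to reduce (a) to a single Galois-descent condition and then match it with (b). Writing $\delta_{jr}(a) = [jr,\, a\otimes 1 + 1\otimes a] = j\,\delta_r(a)$ for $a\in\fg(\KK)$, I first observe that the two ``structural'' requirements for $jr$ to be a Lie bialgebra cobracket survive the scaling by $j$: the symmetric part $jr + (jr)^{21} = j\,\Omega$ is $\fg$-invariant, and $\mathrm{CYB}(jr) = j^2\,\mathrm{CYB}(r)$ vanishes since $r$ satisfies the CYBE. Hence the co-Jacobi and cocycle conditions hold automatically over $\overline{\KK}$, and (a) is equivalent to the assertion that $\delta_{jr}$ sends $\fg(\KK)$ into $\fg(\KK)\otimes_\KK\fg(\KK)$, i.e.\ that each $\delta_{jr}(a)$ is $\mathcal{G}$-fixed. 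Because $\mathcal{G}\cong\widehat{\Z}$ is topologically generated by $\gamma_1$ and acts continuously and semilinearly, $\mathcal{G}$-invariance is equivalent to mere $\gamma_1$-invariance. I would then record two facts used repeatedly: $\gamma_1(j) = \xi_2\,j = -j$, and --- since $\Omega$ is $\fg$-invariant and $r^{21} = \Omega - r$ --- the identity $\delta_{r^{21}}(a) = -\delta_r(a)$ for every $a\in\fg(\KK)$.

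For (b)$\Rightarrow$(a), assume $\gamma_1(r) = r^{21}$. Using $\gamma_1(a) = a$ for $a\in\fg(\KK)$, together with the two facts above, I compute
$$\gamma_1\big(\delta_{jr}(a)\big) = \delta_{\gamma_1(j)\,\gamma_1(r)}(a) = \delta_{-j\,r^{21}}(a) = -j\,\delta_{r^{21}}(a) = j\,\delta_r(a) = \delta_{jr}(a),$$
so $\delta_{jr}(a)$ is $\gamma_1$-fixed, hence $\mathcal{G}$-fixed, hence $\KK$-rational; this is exactly (a).

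For (a)$\Rightarrow$(b), running the same computation in reverse gives $-j\,\delta_{\gamma_1(r)}(a) = \gamma_1(\delta_{jr}(a)) = \delta_{jr}(a) = j\,\delta_r(a)$, whence $\delta_{\gamma_1(r)}(a) = -\delta_r(a) = \delta_{r^{21}}(a)$ for all $a\in\fg(\KK)$. Since $\fg(\KK)$ spans $\overline{\fg}$ over $\overline{\KK}$ and the map $s\mapsto\delta_s$ is $\overline{\KK}$-linear with kernel equal to the diagonal $\fg$-invariants, which form the one-dimensional line $\overline{\KK}\,\Omega$, I conclude $\gamma_1(r) = r^{21} + c\,\Omega$ for some $c\in\overline{\KK}$. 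The step I expect to be the main obstacle is pinning down $c = 0$: rationality alone only determines $\gamma_1(r)$ modulo the Casimir line. To kill $c$ I would apply $\gamma_1$ to the normalization $r + r^{21} = \Omega$; using $\gamma_1(\Omega) = \Omega$ and $\Omega^{21} = \Omega$ this yields $\Omega = \gamma_1(r) + \gamma_1(r)^{21} = (r^{21} + c\,\Omega) + (r + c\,\Omega) = \Omega + 2c\,\Omega$, forcing $c = 0$ and hence $\gamma_1(r) = r^{21}$. Finally $\gamma_2(r) = \gamma_1^2(r) = \gamma_1(r^{21}) = \gamma_1(r)^{21} = (r^{21})^{21} = r$, so the second equality in (b) is in fact a consequence of the first, which serves as a consistency check on the statement.
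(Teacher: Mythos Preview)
Your proof is correct and follows essentially the same approach as the paper: both arguments identify (a) with Galois-invariance of $\delta_{jr}(a)$, use that the kernel of $s\mapsto\delta_s$ is the line $\overline{\KK}\,\Omega$, and then kill the Casimir ambiguity via the normalization $r+r^{21}=\Omega$. Your organization is slightly cleaner than the paper's, since you work only with the topological generator $\gamma_1$ and deduce $\gamma_2(r)=r$ from $\gamma_1(r)=r^{21}$ at the end (correctly observing that the second condition in (b) is redundant), whereas the paper proves $\gamma_2(r)=r$ separately first and then uses it as an input to the $\gamma_1$ step.
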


\begin{proof}
 (a) $\Rightarrow $ (b).  Indeed, for any $a\in \fg(\KK)$
 $$
 \gamma_2 ([jr, a\otimes 1+1\otimes a])=[jr, a\otimes 1+1\otimes a]=[j\gamma_2 (r), a\otimes 1+1\otimes a].
 $$
 Therefore, $r=\gamma_2 (r)+p\Omega$ and $r^{21}=\gamma_2 (r^{21} )+p\Omega$. Since $\gamma_2 (r+r^{21} )=\Omega $, we see that $p=0$.

 Now, we will prove that $\gamma_1 (r)=r^{21}$. Since $\gamma_1 (j)=-j$, we have
 $$
 \gamma_1 ([jr, a\otimes 1+1\otimes a])=[-j\gamma_1 (r), a\otimes 1+1\otimes a]=[jr, a\otimes 1+1\otimes a].
 $$
 The last equality implies immediately  that $r+\gamma_1 (r) =q \Omega$ with $q\in \overline{\KK}$.
 Applying $\gamma_1$ again to the latter equality, and taking into account that by the first part of the proof we have $2\gamma_1 (r)=r$,
 we see that $q\in {\KK}$.

 Since $r+r^{21} =\Omega$ and  $\gamma_1 (r+r^{21} )=\Omega$, we deduce that $q=1$,
 i.e., that $\gamma_1 (r)=r^{21}$.

 (b) $\Rightarrow $ (a).
To prove that $jr$ induces a Lie bialgebra structure on $\fg(\KK)$,
we have to verify that $\gamma_i ([jr, a\otimes 1+1\otimes a])=[jr, a\otimes 1+1\otimes a]$ for $i=1,2$
and any $a\in \fg(\KK)$.

If $i=2$, then it is clear.
It remains to prove the above statement for $i=1$.
In this case we have:
$$
\gamma_1 ([jr, a\otimes 1+1\otimes a])=[-jr^{21}, a\otimes 1+1\otimes a]=
[jr, a\otimes 1+1\otimes a],
$$
since $jr +jr^{21}=j\Omega$ and $[\Omega, a\otimes 1+1\otimes a]=0$.
\end{proof}

Let $c$ be the Chevalley involution of $(\fg(\KK), \fb, \fh)$. By definition, this is
the unique automorphism of $\fg(\KK)$ that  maps  $e_{\alpha}$ to $e_{-\alpha}$ for all
simple roots $\alpha$. Of course $c^2 = \Id$ and $c$ acts on $\fh$ as $-\Id$.
The following result shows that the last condition of the previous proposition  imposes sharp necessary conditions
for the existence of non-trivial discrete parameters on $r_{\rm BD}$.

\begin{proposition}\label{21discrete}
  Assume that $c\in \bG_{\rm ad}(\KK)$.
  Then the equation $$\gamma_1(\mathrm{Ad}_X(r_{\rm BD}))=(\mathrm{Ad}_{X}(r_{\rm BD}))^{21}$$ has no solutions
  unless the admissible triple  for $r_{\rm BD}$ satisfies  $\Gamma_1 =\Gamma_2 = \emptyset$.
\end{proposition}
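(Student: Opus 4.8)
The plan is to read the displayed equation entirely through the \emph{discrete parameter} of the Belavin--Drinfeld matrix, by computing that parameter for $r_{\rm BD}^{21}$ in two different ways and matching the outcomes.

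First I would rewrite the equation as a gauge equivalence. Since the flip commutes with $\mathrm{Ad}_X$ and $\gamma_1$ is a field automorphism, any solution $X$ gives
$$\mathrm{Ad}_{\gamma_1(X)}(\gamma_1(r_{\rm BD})) = \gamma_1(\mathrm{Ad}_X(r_{\rm BD})) = (\mathrm{Ad}_X(r_{\rm BD}))^{21} = \mathrm{Ad}_X(r_{\rm BD}^{21}),$$
hence $r_{\rm BD}^{21} = \mathrm{Ad}_{X^{-1}\gamma_1(X)}(\gamma_1(r_{\rm BD}))$. Thus $r_{\rm BD}^{21}$ and $\gamma_1(r_{\rm BD})$ are gauge equivalent, so by the uniqueness in the Belavin--Drinfeld classification they have the same discrete parameter. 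By Lemma \ref{trivialr}(iii) the discrete parameter of $\gamma_1(r_{\rm BD})$ equals that of $r_{\rm BD}$, namely $(\Gamma_1, \Gamma_2, \tau)$. This yields the first computation: the discrete parameter of $r_{\rm BD}^{21}$ is $(\Gamma_1, \Gamma_2, \tau)$.

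Next I would compute the discrete parameter of $r_{\rm BD}^{21}$ a second time, directly from (\ref{rBD}), using the Chevalley involution. Because $c \in \bG_{\rm ad}(\KK) \subseteq \bG_{\rm ad}(\overline{\KK})$, the automorphism $\mathrm{Ad}_c$ is a gauge transformation and therefore preserves discrete parameters. Using $c|_{\fh} = -\Id$ and $c(e_\alpha) = \pm e_{-\alpha}$ one checks that $\mathrm{Ad}_c$ fixes $r_0^{21}$, returns $\sum_{\alpha>0} e_{-\alpha}\otimes e_\alpha$ to $\sum_{\alpha>0} e_\alpha \otimes e_{-\alpha}$, and carries each wedge summand $e_\alpha \wedge e_{-\tau^k(\alpha)}$, after the substitution $\beta = \tau^k(\alpha)$, to a multiple of $e_\beta \wedge e_{-\tau^{-k}(\beta)}$ with $\beta \in (\Span\Gamma_2)^+$ and $\tau^{-k}(\beta) \in (\Span\Gamma_1)^+$. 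After absorbing the resulting signs by conjugating with a suitable element of $\bH_{\rm ad}$ (which fixes $r_0$ and the term $\sum e_\alpha \otimes e_{-\alpha}$, affecting only the wedge summands), one obtains a Belavin--Drinfeld matrix in standard form whose admissible triple is $(\Gamma_2, \Gamma_1, \tau^{-1})$. Hence the discrete parameter of $r_{\rm BD}^{21}$ is also $(\Gamma_2, \Gamma_1, \tau^{-1})$. Comparing the two computations forces $(\Gamma_1, \Gamma_2, \tau) = (\Gamma_2, \Gamma_1, \tau^{-1})$, and in particular $\Gamma_1 = \Gamma_2$. At this point I would invoke admissibility: if $\Gamma_1 = \Gamma_2$ then $\tau$ is a bijection of the finite set $\Gamma_1$ onto itself, so every iterate $\tau^k(\alpha)$ of every $\alpha \in \Gamma_1$ stays in $\Gamma_1$; this contradicts the defining nilpotency condition (which requires some $\tau^k(\alpha) \notin \Gamma_1$) unless $\Gamma_1 = \Gamma_2 = \emptyset$, which finishes the argument.

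The main obstacle is the second computation, and it is exactly there that the hypothesis $c \in \bG_{\rm ad}(\KK)$ is indispensable. The bookkeeping of the signs $c(e_\alpha) = \pm e_{-\alpha}$ on non-simple roots, together with the verification that they can be cleared by a single element of $\bH_{\rm ad}$ while leaving the remainder of $r_{\rm BD}^{21}$ in standard form, is the delicate point. Conceptually, were $c$ an \emph{outer} automorphism --- which happens precisely in types $A_n\ (n \ge 2)$, $D_{2n+1}$ and $E_6$, where $-1 \notin W$ --- then $\mathrm{Ad}_c$ would not be a gauge transformation, bringing $r_{\rm BD}^{21}$ to standard form would leave the $\bG_{\rm ad}$-equivalence class, and the identification of its discrete parameter with $(\Gamma_2,\Gamma_1,\tau^{-1})$ would fail. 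This is exactly the mechanism behind the additional quantum groups announced in the abstract for those types.
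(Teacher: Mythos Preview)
Your proof is correct and follows essentially the same strategy as the paper: derive that $r_{\rm BD}^{21}$ has discrete parameter $(\Gamma_1,\Gamma_2,\tau)$ from the gauge equivalence with $\gamma_1(r_{\rm BD})$, then compute directly (via the Chevalley involution) that it also has discrete parameter $(\Gamma_2,\Gamma_1,\tau^{-1})$, and conclude. You spell out the final admissibility step more explicitly than the paper does, which is fine.

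One simplification you are missing: the paper fixes the root vectors $e_\alpha$ to be part of a \emph{Chevalley system} in the sense of Bourbaki (Ch.~VIII, \S2), which by definition means the map $e_\alpha\mapsto e_{-\alpha}$ (for \emph{all} roots, not just simple ones) extends to an automorphism. With that convention the Chevalley involution satisfies $c(e_\alpha)=e_{-\alpha}$ for every $\alpha$, so no signs appear, and the torus correction you describe is unnecessary. The paper's computation of $\mathrm{Ad}_c(r_{\rm BD})^{21}$ is therefore a single clean line, whereas you introduce and then have to clear spurious signs. Your caution is not wrong, just superfluous under the stated conventions.
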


\begin{proof}
Let $(\Gamma_1, \Gamma_2, \tau)$ be the discrete parameter of $r_{\rm BD}$. First of all, let us
notice that $\mathrm{Ad}_{\gamma_1 (X)}(\gamma_1 (r_{\rm BD} ))$ has the same discrete parameter as  $r_{\rm BD}$.
Indeed, this is true for $\mathrm{Ad}_{\gamma_1 (X)}(\gamma_1 (r_{\rm BD} ))$ and $\gamma_1(r_{\rm BD})$ by definition,
and for $\gamma_1(r_{\rm BD})$ and $r_{\rm BD}$ by Lemma \ref{trivialr}. Our assumption then implies that
the discrete parameter of the $r$-matrix $r_{\rm BD}^{21}$ is $(\Gamma_1, \Gamma_2, \tau)$.

We claim, however, that the discrete parameter of $r_{\rm BD}^{21}$ is $(\Gamma_2, \Gamma_1, \tau^{-1})$.
This clearly forces $\Gamma_1 = \Gamma_2 = \emptyset$.

Since $c \in \bG_{\rm ad}(\KK)$, the discrete parameter of $r_{\rm BD}^{21}$ coincides with the discrete parameter
of $\mathrm{Ad}_c (r_{\rm BD} )^{21}$. Since $\Ad_c (h_1\otimes h_2)=h_1\otimes h_2$ for any $h_{1}, h_{2}\in\fh$
(because $c$ acts on $\fh$ by  $-\Id$), we have
$$\mathrm{Ad}_c (r_{\rm BD} )^{21} = r_{0}^{21}+\sum_{\alpha>0}e_{\alpha}\otimes e_{-\alpha}+\sum_{\alpha\in (\mathrm{Span}\,\Gamma_{1})^{+} }\sum_{k\in \mathbb{N}} e_{\tau^k(\alpha)}\wedge e_{-\alpha}.$$
Since $\tau^k(\alpha)$ belongs to the span of $\Gamma_2$,
the discrete parameter of this last $r$-matrix is  $(\Gamma_2 , \Gamma_1, \tau^{-1})$ as claimed.
\end{proof}



\begin{remark}
The Chevalley involution is inner, i.e.\ $c \in \bG_{\rm ad}(\KK)$, if and only if  $\fg$ is of type $A_1, B_n , C_n , D_{2n} , G_2, F_4, E_7, E_8$.
\end{remark}

\begin{remark}
The last proposition says {\it nothing} about the existence of an element $X \in \bG_{\rm ad}(\overline{\KK})$ for which $r = j\Ad_X(r_{\rm BD} )$ generates a Lie bialgebra structure on
$\fg(\KK)$. What it does say is that, if such  an $X$ exists and $c$ is inner, then $r_{\rm BD}$ must necessarily have a trivial discrete parameter.
The rest of the paper deals with the  existence and classification of such elements.
\end{remark}

Let ${\rm Out}(\fg)$ be the finite group of automorphisms of the Coxeter--Dynkin diagram of our simple Lie algebra $\fg(\KK)$. If $\bOut(\fg)$
is the corresponding constant $\KK$-group, we know \cite{SGA3} that there exists a split exact sequence of algebraic $\KK$-groups
\begin{equation}\label{split}
1 \to \bG_{\rm ad} \to \bAut(\fg) \to \bOut(\fg) \to 1.
\end{equation}
We fix a section $\bOut(\fg) \to \bAut(\fg)$ that stabilizes $(\bB, \bH)$.
This gives a  copy of ${\rm Out} (\fg) = \bOut(\fg)(\KK)$ inside ${\rm Aut} (\fg) = \bAut(\fg)(\KK)$
that permutes the fundamental root spaces $\fg(\KK)^{\alpha_i}$, and which stabilizes both  our chosen
Borel and Cartan subalgebras. Of course, ${\rm Aut} (\fg)$ is the semi-direct product of $\bG_{\rm ad}(\KK)$ and ${\rm Out}(\fg)$.

As explained in \cite[Lemma 5.9]{PS}, if the Chevalley involution $c$ is not inner, then there exists an element $d\in {\rm Out}(\fg)$ of order $2$ such that $cd = S$
is an inner automorphism of $\fg(\KK)$. The elements $c$ and $d$ commute, hence, $S$ has order $2$. Of course, if $c$ is inner, then $d = \Id$ and $c = S$.


\begin{proposition}\label{X}
Let $S=cd$ be as above. Let $r_{\rm BD}$ be a Belavin--Drinfeld matrix with discrete parameter
$(\Gamma_1, \Gamma_2, \tau)$ and continuous parameter $r_0$.
Assume that $j\mathrm{Ad}_{X}(r_{\rm BD})$ induces a Lie bialgebra structure on $\fg(\KK)$ for some $X \in \bG_{\rm ad}(\overline{\KK})$. Then the following four conditions are satisfied:
\begin{enumerate}
  \item $\Gamma_2 =d(\Gamma_1)$,
  \item $\tau=d\tau^{-1}d^{-1}$,
  \item $\gamma_2 (r_{\rm BD})=r_{\rm BD} .\ {\rm In\ particular,}$
    $\gamma_2 (r_0)=r_0$,
  \item  $\gamma_1 (r_0) = {\rm Ad}_S (r_0)^{21}$.
\end{enumerate}
\end{proposition}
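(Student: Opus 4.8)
The plan is to dispose of the four assertions in the order (3), then (1)--(2), then (4), since the later ones reuse the discrete-parameter bookkeeping developed for the middle ones. Throughout set $\rho := \Ad_X(r_{\rm BD})$; since $\Omega$ is $\Ad$-invariant and $r_{\rm BD}+r_{\rm BD}^{21}=\Omega$, we have $\rho+\rho^{21}=\Omega$, so the hypothesis that $j\rho$ induces a Lie bialgebra structure lets me invoke Lemma~\ref{j} and, more conveniently, Proposition~\ref{aux} directly. Assertion (3) is then immediate: Proposition~\ref{aux}(1)(i) gives $\gamma_2(r_{\rm BD})=r_{\rm BD}$. Writing $r_{\rm BD}=r_0+r_{\rm BD}'$ with $r_{\rm BD}'$ the root-vector part of (\ref{rBD}), I note that $r_{\rm BD}'$ has coefficients in $\CC$ (it is built from a fixed Chevalley system) and is therefore $\mathcal{G}$-fixed, while $\gamma_2$ preserves the decomposition of $\overline{\fg}\otimes\overline{\fg}$ into its Cartan part $\fh\otimes\fh$ and its root part. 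Comparing Cartan components in $\gamma_2(r_{\rm BD})=r_{\rm BD}$ yields $\gamma_2(r_0)=r_0$.

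For (1) and (2) I would compute the discrete parameter of $r_{\rm BD}^{21}$ in two ways. On one hand, Proposition~\ref{aux}(2) says $\gamma_1(r_{\rm BD})$ and $r_{\rm BD}^{21}$ have the same discrete parameter, and Lemma~\ref{trivialr}(iii) says $\gamma_1(r_{\rm BD})$ has the same discrete parameter $(\Gamma_1,\Gamma_2,\tau)$ as $r_{\rm BD}$; hence the discrete parameter of $r_{\rm BD}^{21}$ is $(\Gamma_1,\Gamma_2,\tau)$. On the other hand I compute it directly. Since $S=cd\in\bG_{\rm ad}(\KK)$, conjugation by $S$ is an equivalence and so preserves discrete parameters, whence it suffices to identify the discrete parameter of $\Ad_S(r_{\rm BD})^{21}=\Ad_S(r_{\rm BD}^{21})$ (the flip commutes with $\Ad_S$). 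I factor this as $\Ad_c(\Ad_d(r_{\rm BD}))^{21}$: first $\Ad_d$ relabels roots by the diagram automorphism, sending the standard matrix with data $(\Gamma_1,\Gamma_2,\tau)$, $r_0$ to the standard matrix with data $(d\Gamma_1,d\Gamma_2,d\tau d^{-1})$, $(d\otimes d)(r_0)$; then the operation $r\mapsto\Ad_c(r)^{21}$, computed exactly as in the proof of Proposition~\ref{21discrete} (using $c=-\Id$ on $\fh$ and $c(e_\alpha)=e_{-\alpha}$), flips this to data $(d\Gamma_2,d\Gamma_1,d\tau^{-1}d^{-1})$, $((d\otimes d)(r_0))^{21}$. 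Equating the two computations of the discrete parameter of $r_{\rm BD}^{21}$ gives $\Gamma_1=d\Gamma_2$, $\Gamma_2=d\Gamma_1$ and $\tau=d\tau^{-1}d^{-1}$, which are precisely (1) and (2) (the first relation being redundant since $d^2=\Id$).

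For (4) I would upgrade this discrete match to an equality of $r$-matrices. The computation above identifies $q_{\rm BD}:=\Ad_S(r_{\rm BD})^{21}$ as the standard Belavin--Drinfeld matrix with discrete parameter $(d\Gamma_2,d\Gamma_1,d\tau^{-1}d^{-1})$, which by (1)--(2) equals $(\Gamma_1,\Gamma_2,\tau)$, and continuous parameter $\Ad_S(r_0)^{21}=((d\otimes d)(r_0))^{21}$; whereas $\gamma_1(r_{\rm BD})$ is the standard matrix with the same discrete parameter $(\Gamma_1,\Gamma_2,\tau)$ and continuous parameter $\gamma_1(r_0)$. Now $\gamma_1(\rho)=\rho^{21}$ from Proposition~\ref{aux}(2), together with $\gamma_1(\rho)=\Ad_{\gamma_1(X)}(\gamma_1(r_{\rm BD}))$ and $\rho^{21}=\Ad_X(r_{\rm BD}^{21})=\Ad_{XS}(q_{\rm BD})$ (the last step using $S^2=\Id$, so that $r_{\rm BD}^{21}=\Ad_S(q_{\rm BD})$), exhibits $\gamma_1(r_{\rm BD})$ and $q_{\rm BD}$ as $\Ad_W$-conjugate for $W=\gamma_1(X)^{-1}XS\in\bG_{\rm ad}(\overline{\KK})$. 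Since two standard Belavin--Drinfeld matrices that are equivalent must coincide (uniqueness of the representative in the Belavin--Drinfeld classification), I conclude $\gamma_1(r_{\rm BD})=q_{\rm BD}$, and reading off Cartan components gives $\gamma_1(r_0)=\Ad_S(r_0)^{21}$, which is (4).

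The step I expect to be the main obstacle is the direct computation of the discrete and continuous parameters of $\Ad_S(r_{\rm BD})^{21}$: one must ensure that after applying the \emph{outer} automorphism $d$ and then the Chevalley involution $c$ the result is genuinely in standard Belavin--Drinfeld form, with the correct signs on the root-vector terms, so that the parameters can be read off and so that the uniqueness clause of the classification applies verbatim in the conjugacy argument for (4). I would avoid recomputing these signs from scratch by factoring the operation as $\Ad_d$ followed by $r\mapsto\Ad_c(r)^{21}$ and invoking the explicit normal form already established in the proof of Proposition~\ref{21discrete}.
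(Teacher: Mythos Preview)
Your proof is correct and follows essentially the same approach as the paper's: both establish (3) from Proposition~\ref{aux}(1)(i); both obtain (1)--(2) by computing the discrete parameter of $\Ad_S(r_{\rm BD})^{21}$ (the paper invokes ``the same reasoning as in the proof of Proposition~\ref{21discrete}'' where you factor explicitly through $\Ad_d$ followed by $r\mapsto\Ad_c(r)^{21}$) and equating it with $(\Gamma_1,\Gamma_2,\tau)$; and both prove (4) by exhibiting $\gamma_1(r_{\rm BD})$ and $\Ad_S(r_{\rm BD})^{21}$ as gauge-equivalent Belavin--Drinfeld list elements (your conjugator $W=\gamma_1(X)^{-1}XS$ is the inverse of the paper's $SX^{-1}\gamma_1(X)$) and invoking uniqueness. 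The concern you flag at the end---that $\Ad_S(r_{\rm BD})^{21}$ is genuinely in standard form---is handled in the paper exactly as you propose, by asserting it and referring back to the computation in Proposition~\ref{21discrete}.
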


\begin{proof}
By Proposition \ref{aux} (2) we have $\gamma_1(\mathrm{Ad}_X(r_{\rm {BD}}))=(\mathrm{Ad}_{X}(r_{\rm {BD}}))^{21}$. Then it follows from Lemma \ref{trivialr} that $\gamma_1(r_{\rm BD})$ and $\Ad_S(r_{\rm BD}^{21})$ have the same discrete parameter.
We know that the discrete parameter of $\gamma_1(r_{\rm BD})$ is $(\Gamma_1, \Gamma_2, \tau)$.
The same reasoning that we used in the proof of Proposition \ref{21discrete} shows that the discrete parameter of $\Ad_S(r_{\rm BD}^{21})$ is $(d(\Gamma_2) , d(\Gamma_1) , d \tau^{-1} d^{-1})$.
This proves the first two statements.\footnote{If $c$ is inner, these two statements are clear. Indeed, $d = \Id$ and by Proposition \ref{21discrete} $\Gamma_1 = \Gamma_2 = \emptyset$. By convention, $\tau =\Id$.}

(3) is a direct consequence of Proposition \ref{aux} (1.i).

Let us prove (4) now. First, we observe that ${\rm Ad}_S (r_{\rm BD})^{21}$ and  $\gamma_1(r_{\rm BD} )$ belong to the Belavin--Drinfeld list. Since
$j\mathrm{Ad}_{X}(r_{\rm BD})$ induces a Lie bialgebra structure on $\fg(\KK)$,
by Lemma \ref{j} we have
\begin{equation}
\mathrm{Ad}_{X^{-1}\gamma_1 (X)}(\gamma_1(r_{\rm BD} ))=r_{\rm BD}^{21}.
\end{equation}
Hence, we get the following equality:
\begin{equation}\label{sasha}
\mathrm{Ad}_{SX^{-1}\gamma_1 (X)}(\gamma_1(r_{\rm BD} ))=\mathrm{Ad}_{S}(r_{\rm BD})^{21}.
\end{equation}
Thus, two r-matrices from the Belavin--Drinfeld list, $\gamma_1(r_{\rm BD} )$ and ${\rm Ad}_S (r_{\rm BD})^{21}$,
are gauge equivalent and, therefore, equal (by the Belavin--Drinfeld classification).
In particular, their continuous parameters are equal, which proves that $\gamma_1 (r_0) = {\rm Ad}_S (r_0)^{21}$.
\end{proof}

\begin{corollary}\label{C}
  Assume that $j{\rm Ad}_{X} (r_{\rm BD})$ induces a Lie bialgebra structure on $\fg(\KK)$.
  Then

  (1) $SX^{-1}\gamma_1 (X)\in \bC (r_{\rm BD}, \bG_{\rm ad} )(\overline{\KK})$.

  (2) $\bC (r_{\rm BD}, \bG_{\rm ad} )(\overline{\KK})$ is stable under the action of $\Ad_S$.

\end{corollary}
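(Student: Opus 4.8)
The plan is to deduce both assertions from the identities already assembled in the proof of Proposition \ref{X}, together with the principle that the centralizer of a Belavin--Drinfeld matrix depends only on its discrete parameter. I will use three elementary and unconditional properties of the centralizer functor, valid for any $w\in\overline{\fg}\otimes_{\overline{\KK}}\overline{\fg}$: first, $\bC(\bG_{\rm ad},w^{21})=\bC(\bG_{\rm ad},w)$, because $\Ad_g(w^{21})=(\Ad_g w)^{21}$; second, $\bC(\bG_{\rm ad},\Ad_S(w))=\Ad_S\big(\bC(\bG_{\rm ad},w)\big)=S\,\bC(\bG_{\rm ad},w)\,S^{-1}$; and third, $\bC(\bG_{\rm ad},\gamma(w))=\gamma\big(\bC(\bG_{\rm ad},w)\big)$ for every $\gamma\in\mathcal{G}$. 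The one genuinely nonformal input is the equality $\bC(\bG_{\rm ad},\gamma_1(r_{\rm BD}))=\bC(\bG_{\rm ad},r_{\rm BD})$, which I address last.

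For (1), I start from equation \eqref{sasha} in the proof of Proposition \ref{X}, namely $\Ad_{SX^{-1}\gamma_1(X)}(\gamma_1(r_{\rm BD}))=\Ad_S(r_{\rm BD})^{21}$. That same proof shows that $\gamma_1(r_{\rm BD})$ and $\Ad_S(r_{\rm BD})^{21}$ are two matrices from the Belavin--Drinfeld list that are gauge equivalent, hence equal; thus $\Ad_S(r_{\rm BD})^{21}=\gamma_1(r_{\rm BD})$. Substituting this into \eqref{sasha} gives $\Ad_{SX^{-1}\gamma_1(X)}(\gamma_1(r_{\rm BD}))=\gamma_1(r_{\rm BD})$, so that $SX^{-1}\gamma_1(X)\in\bC(\bG_{\rm ad},\gamma_1(r_{\rm BD}))(\overline{\KK})$. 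Invoking the equality $\bC(\bG_{\rm ad},\gamma_1(r_{\rm BD}))=\bC(\bG_{\rm ad},r_{\rm BD})$ then yields (1).

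For (2), I first record, by taking the flip of the identity $\gamma_1(r_{\rm BD})=\Ad_S(r_{\rm BD})^{21}$, that $\Ad_S(r_{\rm BD})=\gamma_1(r_{\rm BD})^{21}$. Applying the three functorial properties in turn, $\Ad_S\big(\bC(\bG_{\rm ad},r_{\rm BD})\big)=\bC(\bG_{\rm ad},\Ad_S(r_{\rm BD}))=\bC(\bG_{\rm ad},\gamma_1(r_{\rm BD})^{21})=\bC(\bG_{\rm ad},\gamma_1(r_{\rm BD}))$, and the same equality identifies the right-hand side with $\bC(\bG_{\rm ad},r_{\rm BD})$. Hence $\bC(\bG_{\rm ad},r_{\rm BD})(\overline{\KK})$ is stable under $\Ad_S$, which is (2). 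Note that both parts have now been reduced to the single statement $\bC(\bG_{\rm ad},\gamma_1(r_{\rm BD}))=\bC(\bG_{\rm ad},r_{\rm BD})$.

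The main obstacle is exactly this equality, and it is where the structure of $r_{\rm BD}$ must be used rather than formal bookkeeping. By Lemma \ref{trivialr}(iii) the matrices $r_{\rm BD}$ and $\gamma_1(r_{\rm BD})$ share the discrete parameter $(\Gamma_1,\Gamma_2,\tau)$ and differ only in their continuous parameter; indeed $r_{\rm BD}-\gamma_1(r_{\rm BD})=r_0-\gamma_1(r_0)$ lies in $\fh\otimes\fh$. By symmetry it suffices to show that every element of $\bC(\bG_{\rm ad},r_{\rm BD})$ fixes this Cartan difference, for then it centralizes $\gamma_1(r_{\rm BD})=r_{\rm BD}-(r_0-\gamma_1(r_0))$ as well. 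I would establish this through the explicit description of the centralizer as a quasitorus (as computed in \cite{PS}): its connected component is the subtorus of $\bH_{\rm ad}$ cut out by the discrete data, which acts trivially on all of $\fh\otimes\fh$, while its finite part acts on $\fh$ through powers of $\tau$ and fixes both $r_0$ and $\gamma_1(r_0)$ precisely because of the compatibility condition $(\tau(\alpha)\otimes1+1\otimes\alpha)(r_0)=0$ imposed on any Belavin--Drinfeld continuous parameter. Thus the centralizer is insensitive to the continuous parameter. Equivalently, one may phrase the key point as the assertion that $\bC(\bG_{\rm ad},r_{\rm BD})$ depends only on the (Galois-invariant) discrete parameter and is therefore defined over $\KK$, hence $\gamma_1$-stable, from which both (1) and (2) drop out immediately.
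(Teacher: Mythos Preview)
Your argument follows the same route as the paper's: both deduce (1) from \eqref{sasha} together with the equality $\gamma_1(r_{\rm BD})=\Ad_S(r_{\rm BD})^{21}$ established in Proposition~\ref{X}, and both deduce (2) from exactly the functorial chain $\Ad_S\big(\bC(\bG_{\rm ad},r_{\rm BD})\big)=\bC(\bG_{\rm ad},\Ad_S(r_{\rm BD}))=\bC(\bG_{\rm ad},\gamma_1(r_{\rm BD})^{21})=\bC(\bG_{\rm ad},\gamma_1(r_{\rm BD}))=\bC(\bG_{\rm ad},r_{\rm BD})$ that you write out. The paper simply records the three ingredients as bullet points, leaving the key equality $\bC(\bG_{\rm ad},\gamma_1(r_{\rm BD}))=\bC(\bG_{\rm ad},r_{\rm BD})$ unjustified.

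Your final paragraph, where you try to justify that equality, is overcomplicated and partly confused. The clean argument is one line: it is recalled at the start of Section~\ref{sec_twisted_2} that $\bC(\bG_{\rm ad},r_{\rm BD})(\overline{\KK})\subset\bH(\overline{\KK})$, and $\bH$ acts trivially on $\fh\otimes\fh$; since $r_{\rm BD}-\gamma_1(r_{\rm BD})=r_0-\gamma_1(r_0)\in\fh\otimes\fh$, the two centralizers agree. Your claim that a ``finite part acts on $\fh$ through powers of $\tau$'' is not correct---any component group of the centralizer still sits inside $\bH$ and therefore centralizes $\fh$---and the constraint $(\tau(\alpha)\otimes1+1\otimes\alpha)(r_0)=0$ plays no role here.
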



\begin{proof}
The first statement follows from the equality $\gamma_1(r_{\rm BD} )={\rm Ad}_S (r_{\rm BD})^{21}$ and (\ref{sasha}).

The second statement is a consequence of the following facts:
\begin{itemize}
    \item $\bC (r_{\rm BD}, \bG_{\rm ad} )=\bC (\gamma_1 (r_{\rm BD}), \bG_{\rm ad})$,
    \item $\bC (r_{\rm BD}, \bG_{\rm ad} )=\bC (r_{\rm BD}^{21}, \bG_{\rm ad})$,
    \item $\bC (\gamma_1 (r_{\rm BD}), \bG_{\rm ad}) = \bC ({\rm Ad}_S (r_{\rm BD})^{21}, \bG_{\rm ad})  = {\rm Ad}_S (\bC (r_{\rm BD}^{21}, \bG_{\rm ad}))$.
\end{itemize}
\end{proof}

\begin{definition}
  $X\in \bG_{\rm ad}(\overline{\KK})$ is called a \emph{twisted Belavin--Drinfeld cocycle} if there exist $D_1, D_2 \in  \bC (r_{\rm BD}, \bG_{\rm ad})(\overline{\KK})$ such that
  $\gamma_2 (X)=XD_2$
  and $\gamma_1 (X)=XSD_1$.
\end{definition}

The set of all twisted Belavin--Drinfeld cocycles is denoted by
$\overline{Z}(\bG_{\rm ad}, r_{\rm BD})$.

\begin{remark}
Assume that $r_{\rm BD}$ is rational, that is $r_{\rm BD} \in \fg(\KK) \otimes_\KK \fg(\KK)$. Then the above definition of a twisted Belavin--Drinfeld cocycle coincides with the one given in \cite[Definition 5.4]{PS}. Indeed, for $\gamma_2 \in \mathrm{Gal} (\LL ),$ we have $X^{-1}\gamma_2(X) = D_2 \in \bC (r_{\rm BD}, \bG_{\rm ad})(\overline{\KK})$. Finally,
by Proposition \ref{aux},  $\Ad_X(r_{\rm BD}^{21}) = \big(\Ad_X(r_{\rm BD})\big)^{21} = \Ad_{\gamma_1(X)} (\gamma_1(r_{\rm BD})) = \Ad_{\gamma_1(X)}(r_{\rm BD})$.
This yields
$$\Ad_{X^{-1}\gamma_1(X)}(r_{\rm BD}) = r_{\rm BD}^{21}.$$
\end{remark}

Now we are ready to prove that if $r_{\rm BD}$ satisfies the conclusions of Proposition \ref{X}, then
$\overline{Z}(\bG_{\rm ad} , r_{\rm BD}) $ is non-empty. The crucial ingredient of the proof is the existence of the
element $J\in \bG_{\rm ad} (\LL)$ such that $\gamma_1 (J)=JS$, see \cite[Proposition 5.11]{PS}.

\begin{proposition}
Let $r_{\rm BD}$ satisfies the conclusions of Proposition \ref{X}. Then
$j{\rm Ad}_J (r_{\rm BD})$ induces a Lie bialgebra structure on $\fg(\KK)$.
\end{proposition}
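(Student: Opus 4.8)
The plan is to reduce the statement to Lemma \ref{j} applied to the $r$-matrix $r := \Ad_J(r_{\rm BD})$, where $J \in \bG_{\rm ad}(\LL)$ is the element with $\gamma_1(J) = JS$ provided by \cite[Proposition 5.11]{PS}. First I would check that $r$ meets the hypotheses of Lemma \ref{j}. Since $\Ad_J$ is a Lie algebra automorphism acting diagonally on $\fg(\KK)\otimes_\KK\fg(\KK)$, it carries solutions of the CYBE to solutions of the CYBE, commutes with the flip $(\cdot)^{21}$, and fixes the Casimir $\Omega$. Hence $r$ satisfies the CYBE and $r + r^{21} = \Ad_J(r_{\rm BD} + r_{\rm BD}^{21}) = \Ad_J(\Omega) = \Omega$, so it remains only to verify condition (b) of Lemma \ref{j}, i.e.\ $\gamma_2(r) = r$ and $\gamma_1(r) = r^{21}$.

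The equality $\gamma_2(r) = r$ is immediate: by semilinearity of the Galois action $\gamma_2(\Ad_J(r_{\rm BD})) = \Ad_{\gamma_2(J)}(\gamma_2(r_{\rm BD}))$, and here $\gamma_2(J) = J$ because $J \in \bG_{\rm ad}(\LL)$ and $\gamma_2$ generates $\mathrm{Gal}(\LL)$, while $\gamma_2(r_{\rm BD}) = r_{\rm BD}$ by Proposition \ref{X}(3). For the equality $\gamma_1(r) = r^{21}$ the pivotal point is the identity $\gamma_1(r_{\rm BD}) = \Ad_S(r_{\rm BD})^{21}$. Both sides are Belavin--Drinfeld matrices: by Lemma \ref{trivialr}(iii) the left side has discrete parameter $(\Gamma_1,\Gamma_2,\tau)$ and continuous parameter $\gamma_1(r_0)$, while the computation in the proof of Proposition \ref{21discrete} shows the right side has discrete parameter $(d(\Gamma_2), d(\Gamma_1), d\tau^{-1}d^{-1})$ and continuous parameter $\Ad_S(r_0)^{21}$. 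Conditions (1) and (2) of Proposition \ref{X}, together with $d^2 = \Id$, make the two discrete parameters agree, and condition (4) makes the continuous parameters agree; since (\ref{rBD}) recovers a Belavin--Drinfeld matrix from its parameters, the two matrices are equal. Granting this, semilinearity and $\gamma_1(J) = JS$ give $\gamma_1(r) = \Ad_{JS}(\gamma_1(r_{\rm BD})) = \Ad_J\Ad_S\big(\Ad_S(r_{\rm BD}^{21})\big)$, and since $\Ad_S\Ad_S = \Ad_{S^2} = \Id$ this collapses to $\Ad_J(r_{\rm BD}^{21}) = (\Ad_J(r_{\rm BD}))^{21} = r^{21}$. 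Both parts of (b) then hold and Lemma \ref{j} finishes the proof.

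The one place demanding care is the identity $\gamma_1(r_{\rm BD}) = \Ad_S(r_{\rm BD})^{21}$, which in the proof of Proposition \ref{X} was obtained under its running hypothesis via (\ref{sasha}) and Belavin--Drinfeld uniqueness. Here it must instead be read off directly from the four \emph{conclusions} of Proposition \ref{X}, but as indicated this is pure bookkeeping of discrete and continuous parameters and introduces no new difficulty; everything else is formal manipulation of the semilinear Galois action together with the relations $\gamma_1(J) = JS$, $\gamma_2(J) = J$, and $S^2 = \Id$.
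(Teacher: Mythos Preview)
Your proof is correct and follows essentially the same route as the paper: reduce to Lemma \ref{j}, dispose of $\gamma_2$ using $J\in\bG_{\rm ad}(\LL)$ together with Proposition \ref{X}(3), and handle $\gamma_1$ via the identity $\gamma_1(r_{\rm BD}) = \Ad_S(r_{\rm BD})^{21}$. The only cosmetic difference is that the paper establishes this last identity by splitting $r_{\rm BD} = r_0 + r_{\rm BD}'$ and verifying $\Ad_S(r_{\rm BD}') = (r_{\rm BD}')^{21}$ through a direct root-vector computation (using conditions (1) and (2)), whereas you invoke Belavin--Drinfeld uniqueness after matching discrete and continuous parameters; both arguments ultimately rest on the same computation (indeed, knowing that $\Ad_S(r_{\rm BD})^{21}$ lies on the Belavin--Drinfeld list requires precisely the calculation the paper spells out).
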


\begin{proof}
Lemma \ref{j} implies that we need only
to verify that
$\gamma_2 ({\rm Ad}_J (r_{\rm BD}))={\rm Ad}_J (r_{\rm BD})$
and $\gamma_1({\rm Ad}_J (r_{\rm BD}))= \Ad_J(r_{\rm BD})^{21}$.

The first equality is clear since $J \in \bG_{\rm ad} (\LL)$ and
$\gamma_2 (r_{\rm BD})=r_{\rm BD}$ (because $r_{\rm BD}$ satisfies Proposition \ref{X} (3)).



For the second one we have
$$
  \gamma_1 ({\rm Ad}_J (r_{\rm BD}))={\rm Ad}_J ({\rm Ad}_S (r_{\rm BD}' +\gamma_1 (r_0)))= {\rm Ad}_J (r_{\rm BD}')^{21} + {\rm Ad}_J ({\rm Ad}_S (\gamma_1 (r_0))),
$$
where $r_{\rm BD}'=r_{\rm BD}-r_0$. Here we have used the following facts:
\begin{itemize}
\item $\gamma_1 ({\rm Ad}_J) ={\rm Ad}_{JS}$,
\item $\gamma_1 (r_{\rm BD}')=r_{\rm BD}'$,
\item ${\rm Ad}_S (r_{\rm BD}')=(r_{\rm BD}')^{21}$ (because if $\alpha\in\rm{Span}(\Gamma_1)$, $\beta\in\rm{Span}(\Gamma_2)$ and $\beta=\tau^k (\alpha)$, then
${\rm Ad}_S (e_\alpha\otimes e_{-\beta})= e_{-d(\alpha)}\otimes e_{d(\beta)}$,
$d(\alpha)\in\rm{Span}(\Gamma_2)$, $d(\beta)\in\rm{Span} (\Gamma_1)$, and
$\tau^{-k}(d(\beta))=d(\alpha)$).
\end{itemize}
Since $S^2 = \Id$, then by Proposition \ref{X} (4) we conclude that ${\rm Ad}_S (\gamma_1 (r_0))=(r_0)^{21}$. Hence, we get $\gamma_1({\rm Ad}_J (r_{\rm BD}))= \Ad_J(r_{\rm BD})^{21}$.
\end{proof}

\begin{corollary}
The set $\overline{Z}(\bG_{\rm ad}, r_{\rm BD})$ is non-empty if and only if $r_{\rm BD}$ satisfies the conclusions of Proposition \ref{X}.\qed
\end{corollary}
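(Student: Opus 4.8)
The plan is to read the corollary as the non-emptiness packaging of the two one-sided statements already in hand: Proposition \ref{X} (a witness $X$ forces the four conditions) and the Proposition immediately above (the four conditions produce the element $J$). Accordingly I would prove the two implications separately. The bridge in both directions is the equivalence between \emph{$X$ being a twisted cocycle} and \emph{$j\,\Ad_X(r_{\rm BD})$ inducing a Lie bialgebra structure on $\fg(\KK)$}, which is exactly what Lemma \ref{j}, Corollary \ref{C} and the Remark following the definition are set up to provide.

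For ``conditions $\Rightarrow$ non-empty'' I would take the element $J \in \bG_{\rm ad}(\LL)$ with $\gamma_1(J) = JS$ furnished by the preceding Proposition, which already guarantees that $j\,\Ad_J(r_{\rm BD})$ is a Lie bialgebra structure. It then remains to produce $D_1, D_2$ witnessing $J \in \overline{Z}(\bG_{\rm ad}, r_{\rm BD})$. Since $J \in \bG_{\rm ad}(\LL)$ and $\gamma_2$ generates $\mathrm{Gal}(\LL)$, we have $\gamma_2(J) = J$, so $D_2 = J^{-1}\gamma_2(J) = \Id$; and $\gamma_1(J) = JS$ gives $D_1 = SJ^{-1}\gamma_1(J) = S^2 = \Id$. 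That both lie in $\bC(r_{\rm BD}, \bG_{\rm ad})(\overline{\KK})$ is Corollary \ref{C}(1) and Proposition \ref{aux}(1.iii) applied to $X = J$. Hence $J$ is a twisted cocycle and the set is non-empty.

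For the converse I would fix any $X \in \overline{Z}(\bG_{\rm ad}, r_{\rm BD})$, so that $\gamma_2(X) = XD_2$ and $\gamma_1(X) = XSD_1$ with $D_1, D_2 \in \bC(r_{\rm BD}, \bG_{\rm ad})(\overline{\KK})$, and show that $r = \Ad_X(r_{\rm BD})$ makes $jr$ a Lie bialgebra structure, after which Proposition \ref{X} delivers conditions (1)--(4). Using $D_i \in \bC(r_{\rm BD})$ together with the centralizer identity $\bC(r_{\rm BD}) = \bC(\gamma_1(r_{\rm BD}))$ from Corollary \ref{C}, the cocycle relations give $\gamma_2(r) = \Ad_{XD_2}(\gamma_2(r_{\rm BD}))$, equal to $r$ exactly when $\gamma_2(r_0) = r_0$, and $\gamma_1(r) = \Ad_{XS}(\gamma_1(r_{\rm BD}))$, equal to $r^{21} = \Ad_X(r_{\rm BD}^{21})$ exactly when $\Ad_S(\gamma_1(r_{\rm BD})) = r_{\rm BD}^{21}$. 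By Lemma \ref{j} these two equalities are precisely what is needed.

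The step I expect to be the real obstacle is the transpose identity $\Ad_S(\gamma_1(r_{\rm BD})) = r_{\rm BD}^{21}$; the $\gamma_2$-condition $\gamma_2(r_0) = r_0$ is condition (3) and is comparatively soft. To establish the transpose identity I would split $r_{\rm BD} = r_0 + r_{\rm BD}'$ and run the discrete-parameter comparison already used in Propositions \ref{21discrete} and \ref{X}: the transpose $(\cdot)^{21}$ carries the admissible triple $(\Gamma_1, \Gamma_2, \tau)$ to $(\Gamma_2, \Gamma_1, \tau^{-1})$, while $\Ad_S$ — with $c$ acting by $-\Id$ on $\fh$ and $d$ permuting the simple root spaces — carries $\Gamma_1$ to $d(\Gamma_1)$; matching the root-vector parts forces conditions (1)--(2) and the identity $\Ad_S(r_{\rm BD}') = (r_{\rm BD}')^{21}$, while matching the Cartan parts forces condition (4) (here one uses that $\Ad_S$ commutes with $(\cdot)^{21}$ and $S^2 = \Id$). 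This is the same computation recorded in the three displayed identities inside the proof of the preceding Proposition, so in the write-up I would cite it rather than repeat it, which is what makes the corollary essentially immediate.
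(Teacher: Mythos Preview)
Your ``if'' direction is correct and matches the paper: once $r_{\rm BD}$ satisfies the four conditions, the preceding Proposition gives that $j\Ad_J(r_{\rm BD})$ defines a Lie bialgebra structure, and $\gamma_2(J)=J$, $\gamma_1(J)=JS$ immediately yield $D_2=D_1=\Id\in\bC(r_{\rm BD},\bG_{\rm ad})(\overline{\KK})$, so $J\in\overline{Z}(\bG_{\rm ad},r_{\rm BD})$. (The appeal to Corollary~\ref{C}(1) and Proposition~\ref{aux}(1.iii) is unnecessary here, since $D_1=D_2=\Id$.)

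Your ``only if'' direction, however, is circular. You correctly reduce ``$j\Ad_X(r_{\rm BD})$ is a Lie bialgebra structure'' to the two identities $\gamma_2(r_0)=r_0$ and $\Ad_S(\gamma_1(r_{\rm BD}))=r_{\rm BD}^{21}$, and then propose to establish these via the discrete-parameter comparison of Propositions~\ref{21discrete} and~\ref{X}. But that comparison \emph{starts} from the equality $\gamma_1(\Ad_X(r_{\rm BD}))=(\Ad_X(r_{\rm BD}))^{21}$ --- i.e.\ from the Lie bialgebra structure via Lemma~\ref{j} --- and \emph{deduces} the conditions on $(\Gamma_1,\Gamma_2,\tau,r_0)$. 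You are trying to run it backwards with no independent input: the cocycle relations $\gamma_2(X)=XD_2$, $\gamma_1(X)=XSD_1$ constrain $X$, not $r_{\rm BD}$, and cannot by themselves force any identity on $r_0$ or on the admissible triple.

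Indeed the gap is not fixable along these lines: under the literal Definition, $J$ satisfies $\gamma_2(J)=J\cdot\Id$ and $\gamma_1(J)=JS\cdot\Id$ for \emph{every} $r_{\rm BD}$, so $\overline{Z}(\bG_{\rm ad},r_{\rm BD})$ would never be empty. The paper's intended ``only if'' is simply Proposition~\ref{X} itself, with $\overline{Z}$ tacitly read as the set of $X$ for which $j\Ad_X(r_{\rm BD})$ defines a Lie bialgebra structure on $\fg(\KK)$ (Proposition~\ref{aux} and Corollary~\ref{C} show that any such $X$ satisfies the cocycle identities, and once the four conditions hold the converse also holds). With that reading both implications are one-line citations, which is why the corollary carries only a \qed.
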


\begin{remark}
It is not so easy to describe explicitly all discrete parameters in the case  $\fg =A_n$  that satisfy the conclusions
of Proposition \ref{X}. On the other hand, all possible discrete parameters for $D_{2n+1}$
were found in \cite{KKPS3}. There, it was also noticed  that if the discrete parameter satisfies the conclusions
of Proposition \ref{X}, then the set of the corresponding continuous parameters is non-empty.
\end{remark}

\begin{definition}
Two twisted cocycles $X_1$ and $X_{2}$ in $\overline{Z}(\bG_{\rm ad},r_{\rm BD})$ are called \emph{equivalent} if
there exist $Q\in \bG_{\rm ad} (\KK)$ and $C\in \bC(\bG_{\rm ad}, r_{\rm BD})(\overline{\KK})$ such that $X_{1}=QX_{2}C$.
\end{definition}

\begin{definition}
The \emph{twisted Belavin--Drinfeld cohomology} related to $\bG_{\rm ad}$ and $r_{\rm BD}$ is the set of  equivalence classes
of the twisted cocycles. We will denote it by $\overline{H}(\bG_{\rm ad} , r_{\rm BD})$.
\end{definition}

For a motivation of these two definitions see \cite{KKPS1, KKPS2}. The twisted Belavin--Drinfeld cohomology
provides a classification of quantum groups modulo the action of the gauge group
$\bG_{\rm ad} (\KK)$.

\section{From twisted Belavin--Drinfeld cocycles to $H^1$ of a twisted $\KK$-group}\label{sec_twisted_2}
Throughout this section $r_{\rm BD}$ satisfies the conclusions of Proposition \ref{X}.
One of the most important $r$-matrices is the so-called \emph{Drinfeld--Jimbo} one given by

\begin{definition}\label{rDJdefinition}
$r_\text{\rm DJ} = \sum_{\alpha>0}e_{\alpha}\otimes e_{-\alpha} + \frac{1}{2}\, \Omega_0$.
\end{definition}

Here $\Omega_0$, as it has already been mentioned, stands for the $\fh \otimes_\KK \fh$-component of the
Casimir operator $\Omega$ of $\fg(\KK)$ written with respect to our choice of $(\fb,\fh)$.

 Recall that
$\bC(\bG_{\rm ad}, r_{\rm BD})(\overline{\KK})$ is always a closed subgroup of $\bH(\overline{\KK})$ and that
$\bC(\bG_{\rm ad}, r_{\rm DJ})(\overline{\KK}) = \bH(\overline{\KK})$.
\medskip


The following theorem of \cite{PS} plays a crucial role in this part of the paper.

\begin{theorem}\label{maintwisted}
The set $\overline{H} (\bG_{\rm ad}, r_{\rm DJ})$ consists of one element.\qed
\end{theorem}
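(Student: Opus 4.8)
The plan is to realise $\overline{H}(\bG_{\rm ad}, r_{\rm DJ})$ as the Galois cohomology of a twisted torus and then to invoke a vanishing theorem. The feature that makes this case tractable is that for $r_{\rm DJ}$ the centralizer $\bC(\bG_{\rm ad}, r_{\rm DJ})(\overline{\KK}) = \bH(\overline{\KK})$ is the full maximal torus. First I would check that $r_{\rm DJ}$ satisfies the conclusions of Proposition~\ref{X}: its discrete parameter is empty, $\gamma_2(r_{\rm DJ}) = r_{\rm DJ}$, and $\gamma_1(r_0) = \Ad_S(r_0)^{21}$ holds because $r_0 = \tfrac12\Omega_0$ is rational, symmetric, and $\Ad_S$-invariant. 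Hence $\overline{Z}(\bG_{\rm ad}, r_{\rm DJ})$ is non-empty; moreover the element $J \in \bG_{\rm ad}(\LL)$ of \cite[Proposition 5.11]{PS} with $\gamma_1(J) = JS$ is itself a twisted cocycle (with $D_1 = D_2 = \Id$, using $\gamma_2(J)=J$). It therefore suffices to prove that every twisted cocycle $X$ is equivalent to $J$.

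So fix $X \in \overline{Z}(\bG_{\rm ad}, r_{\rm DJ})$, say $\gamma_2(X) = XD_2$ and $\gamma_1(X) = XSD_1$ with $D_1, D_2 \in \bH(\overline{\KK})$, and look for $C \in \bH(\overline{\KK})$ making $Q := XC^{-1}J^{-1}$ lie in $\bG_{\rm ad}(\KK)$; this is precisely the statement $X = QJC$ of the equivalence relation. A direct computation shows that $\gamma_2(Q) = Q$ is equivalent to $\gamma_2(C) = CD_2$, while $\gamma_1(Q) = Q$ is equivalent to $\Ad_S(\gamma_1(C)) = C\,\Ad_S(D_1)$ (here $S$ is inner, so $\Ad_S$ is conjugation by an element of $\bG_{\rm ad}(\KK)$ stabilizing $\bH$). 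Introducing on $\bH(\overline{\KK})$ the twisted Galois action $\gamma_2 \ast h = \gamma_2(h)$ and $\gamma_1 \ast h = \Ad_S(\gamma_1(h))$ — legitimate since $\Ad_S$ stabilizes $\bH$ and the twist is trivial on $\mathrm{Gal}(\LL)$ — these two requirements say exactly that the pair $z_{\gamma_1} = \Ad_S(D_1)$, $z_{\gamma_2} = D_2$ is the coboundary $\gamma \ast C = C z_\gamma$. Thus the required $C$ exists for every $X$ if and only if every $1$-cocycle for the twisted action is a coboundary.

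The twisted action above is the Galois action of the $\KK$-torus $\widetilde{\bH}$ obtained by twisting $\bH$ by the cocycle $\gamma_1 \mapsto S$, $\gamma_2 \mapsto \Id$; it is split by $\LL$ and, being a form of a torus, is connected. Since $\KK = \CC((t))$ has cohomological dimension $\le 1$, the theorem of Springer--Steinberg (see \cite{Se}, Ch.~III) gives $H^1(\KK, \widetilde{\bH}) = 1$. Hence the desired $C$ always exists, every $X$ is equivalent to $J$, and $\overline{H}(\bG_{\rm ad}, r_{\rm DJ})$ consists of a single class, as claimed.

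The step I expect to require the most care is the passage from the twisted Belavin--Drinfeld data $(D_1,D_2)$ to an honest continuous $1$-cocycle of $\mathcal{G} = \widehat{\Z}$ for the twisted action: one must verify that $\Ad_S(D_1)$ and $D_2$ are compatible with the relation $\gamma_2 = 2\gamma_1$ and with the cocycle identity forced by $X$, so that they genuinely define a class in $H^1(\KK, \widetilde{\bH})$ and that the Belavin--Drinfeld equivalence matches coboundary equivalence. The secondary point to confirm is that the $S$-twist $\widetilde{\bH}$ of $\bH$ is connected, so that the connected-group hypothesis of Steinberg's theorem is satisfied.
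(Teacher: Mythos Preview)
The paper does not prove this theorem; it is quoted from \cite{PS} (see the sentence immediately preceding the statement, and items (b), (c) of the Introduction). Your strategy---realize $\overline{H}(\bG_{\rm ad}, r_{\rm DJ})$ as $H^1(\KK,\bH_{u_S})$ for the $S$-twisted form of the maximal torus and kill it with Steinberg's theorem---is exactly the method of \cite{PS} as summarized in the Introduction, so in substance your argument agrees with the original source. It is also worth noting that the machinery of Section~\ref{sec_twisted_2} of the present paper (Proposition~\ref{Y} through Corollary~\ref{BDG}) \emph{uses} Theorem~\ref{maintwisted} as an input to obtain the decomposition $X=QJD$; your proof correctly avoids that circularity by working directly with an arbitrary $X$.

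Regarding the point you flag: the cleanest way to see that $(D_1,D_2)$ gives an honest continuous cocycle is to define it globally rather than on generators. Set
\[
z_\gamma \;:=\; X^{-1}\gamma(X)\,\gamma(J)^{-1}J \qquad (\gamma\in\mathcal{G}).
\]
Since $X$ and $J$ lie in $\bG_{\rm ad}(\KK_m)$ for some $m$, this is manifestly continuous. A direct computation using $J^{-1}\gamma(J)=S^{\epsilon(\gamma)}$ (with $\epsilon:\mathcal{G}\to\Z/2\Z$ the quotient map) gives the twisted cocycle identity $z_{\gamma\sigma}=z_\gamma\cdot(\gamma\ast z_\sigma)$, and evaluating at $\gamma_1,\gamma_2$ recovers $z_{\gamma_1}=\Ad_S(D_1)$, $z_{\gamma_2}=D_2$; the values lie in $\bH(\overline{\KK})$ because $z_{\gamma_1}$ does and $\bH$ is $\Ad_S$- and Galois-stable. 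Your second concern is immediate: twisting does not change the underlying scheme, so $\bH_{u_S}$ is a (non-split) $\KK$-torus, hence connected, and Steinberg applies.
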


More precisely, our element $J$ is an element of $\overline{Z} (\bG_{\rm ad}, r_{\rm DJ})$ and any other twisted cocycle is equivalent to $J$. The crucial importance of this result is the following

\begin{corollary}
  Assume that $X\in \overline{Z}(\bG_{\rm ad},r_{\rm BD})$.
  Then $X=QJD$, where $Q\in \bG_{\rm ad} (\KK)$ and $D\in \bH(\overline{\KK})$.
\end{corollary}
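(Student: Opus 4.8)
The plan is to leverage the uniqueness result of Theorem~\ref{maintwisted}, which identifies $\overline{H}(\bG_{\rm ad}, r_{\rm DJ})$ as a single point represented by $J$, together with the fact that the twisted cocycle condition for $r_{\rm BD}$ differs from that for $r_{\rm DJ}$ only through the centralizer group that controls the two ``defect'' elements $D_1, D_2$. First I would observe that the element $J \in \bG_{\rm ad}(\LL)$ satisfies $\gamma_1(J) = JS$ and $\gamma_2(J) = J$ (since $J$ is defined over $\LL$ and $\gamma_2$ generates $\mathrm{Gal}(\LL)$). This means that $J$ already realizes the ``twisting'' part of the cocycle condition $\gamma_1(X) = XS D_1$ with $D_1 = \Id$, and the $\gamma_2$-condition with $D_2 = \Id$. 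Because $\bC(\bG_{\rm ad}, r_{\rm DJ})(\overline{\KK}) = \bH(\overline{\KK})$ is the maximal possible centralizer (it contains every $\bC(\bG_{\rm ad}, r_{\rm BD})(\overline{\KK})$ as a closed subgroup), the element $J$ is simultaneously a twisted cocycle for \emph{every} $r_{\rm BD}$ satisfying the conclusions of Proposition~\ref{X}; in particular $J \in \overline{Z}(\bG_{\rm ad}, r_{\rm BD})$.

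Next I would argue as follows. Given $X \in \overline{Z}(\bG_{\rm ad}, r_{\rm BD})$, I want to compare $X$ and $J$ as cocycles for $r_{\rm DJ}$. Since $\bC(\bG_{\rm ad}, r_{\rm BD})(\overline{\KK}) \subseteq \bC(\bG_{\rm ad}, r_{\rm DJ})(\overline{\KK}) = \bH(\overline{\KK})$, any $X$ that is a twisted cocycle for $r_{\rm BD}$ is \emph{a fortiori} a twisted cocycle for $r_{\rm DJ}$: the defining relations $\gamma_2(X) = XD_2$ and $\gamma_1(X) = XSD_1$ hold with $D_1, D_2 \in \bC(\bG_{\rm ad}, r_{\rm BD})(\overline{\KK}) \subseteq \bH(\overline{\KK})$. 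Hence $X \in \overline{Z}(\bG_{\rm ad}, r_{\rm DJ})$. By Theorem~\ref{maintwisted}, $X$ is equivalent to $J$ as a twisted cocycle \emph{for} $r_{\rm DJ}$, which by the definition of equivalence means there exist $Q \in \bG_{\rm ad}(\KK)$ and $C \in \bC(\bG_{\rm ad}, r_{\rm DJ})(\overline{\KK}) = \bH(\overline{\KK})$ with $X = QJC$. Setting $D := C \in \bH(\overline{\KK})$ gives exactly the desired decomposition $X = QJD$.

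The main obstacle to watch carefully is the direction of the containment and the compatibility of the two equivalence relations. The equivalence used in Theorem~\ref{maintwisted} is the one for $r_{\rm DJ}$, whose centralizer factor ranges over the full $\bH(\overline{\KK})$; this is \emph{larger} than the centralizer $\bC(\bG_{\rm ad}, r_{\rm BD})(\overline{\KK})$ appearing in the equivalence relation for $r_{\rm BD}$. Consequently the statement we obtain is genuinely weaker than ``$X$ is $r_{\rm BD}$-equivalent to $J$'': the factor $D$ is only guaranteed to lie in $\bH(\overline{\KK})$, not in the smaller group $\bC(\bG_{\rm ad}, r_{\rm BD})(\overline{\KK})$, and $Q$ lies in the gauge group $\bG_{\rm ad}(\KK)$ which is common to both relations. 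I would make explicit that no further refinement is being asserted here, and that this coarser decomposition is precisely what the corollary claims. The one fact requiring verification is that $X$ is indeed a cocycle for $r_{\rm DJ}$, which I would justify by checking that the two cocycle relations only constrain $D_1, D_2$ to lie in the relevant centralizer, and that passing to the larger centralizer $\bH(\overline{\KK})$ preserves them verbatim.
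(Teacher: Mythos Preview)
Your proof is correct and follows essentially the same route as the paper's: you use the containment $\bC(\bG_{\rm ad}, r_{\rm BD})(\overline{\KK}) \subset \bH(\overline{\KK}) = \bC(\bG_{\rm ad}, r_{\rm DJ})(\overline{\KK})$ to view $X$ as a twisted cocycle for $r_{\rm DJ}$, then invoke Theorem~\ref{maintwisted} to obtain the decomposition $X = QJD$. Your first paragraph (that $J$ lies in $\overline{Z}(\bG_{\rm ad}, r_{\rm BD})$) is not needed for the corollary itself, though it is harmless; the paper's proof omits it.
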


\begin{proof}
It was proved in \cite{KKPS2} that
$$\bC(\bG_{\rm ad}, r_{\rm BD})(\overline{\KK}) \subset \bH(\overline{\KK})
 = \bC(\bG_{\rm ad}, r_{\rm DJ})(\overline{\KK}).$$
 This means that any twisted Belavin--Drinfeld cocycle $X$ for $r_{\rm BD}$ is simultaneously
 a twisted Belavin--Drinfeld cocycle for $r_{\rm DJ}$.

 As explained above, $X$ is equivalent to $J$, but this means by definition that $X=QJD$ for some $Q\in \bG_{\rm ad} (\KK)$ and $D\in \bH(\overline{\KK})$.
\end{proof}

Our next aim is to find  necessary and sufficient conditions for $D$ such that
$QJD$ is a twisted cocycle
for $r_{\rm BD}$.

\begin{proposition}\label{Y}
  $X=QJD\in \overline{Z}(\bG_{\rm ad},r_{\rm BD})$ if and only if the following two inclusions hold:
  \begin{enumerate}
  \item $D^{-1} \gamma_2 (D)\in \bC(\bG_{\rm ad}, r_{\rm BD})(\overline{\KK})$,
  \item $D^{-1} \gamma_1 (SDS)\in \bC(\bG_{\rm ad}, r_{\rm BD})(\overline{\KK})$.
  \end{enumerate}
\end{proposition}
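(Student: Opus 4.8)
The plan is to translate the three defining properties of a twisted cocycle for $J$ (namely that $J \in \overline{Z}(\bG_{\rm ad}, r_{\rm DJ})$ satisfies $\gamma_1(J) = JS$ and $\gamma_2(J) = J$) together with the rationality of $Q$ into conditions on $D$ alone. By definition, $X = QJD \in \overline{Z}(\bG_{\rm ad}, r_{\rm BD})$ means that $X^{-1}\gamma_2(X) \in \bC(\bG_{\rm ad}, r_{\rm BD})(\overline{\KK})$ and $X^{-1}\gamma_1(X) \in S \cdot \bC(\bG_{\rm ad}, r_{\rm BD})(\overline{\KK})$ (equivalently $S^{-1}X^{-1}\gamma_1(X) = SX^{-1}\gamma_1(X)$ lies in the centralizer, since $S^2 = \Id$). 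So I would simply compute these two products and simplify using $Q \in \bG_{\rm ad}(\KK)$ (so $\gamma_i(Q) = Q$) and the known behavior of $J$.

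\medskip

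\emph{First} I would handle the $\gamma_2$ condition. Using $\gamma_2(Q) = Q$ (since $Q$ is $\KK$-rational and $\gamma_2 \in \Gal(\LL)$) and $\gamma_2(J) = J$, I compute
\[
X^{-1}\gamma_2(X) = D^{-1}J^{-1}Q^{-1}\,\gamma_2(Q)\gamma_2(J)\gamma_2(D) = D^{-1}J^{-1}\,J\,\gamma_2(D) = D^{-1}\gamma_2(D).
\]
Thus the first cocycle condition for $X$ is \emph{exactly} $D^{-1}\gamma_2(D) \in \bC(\bG_{\rm ad}, r_{\rm BD})(\overline{\KK})$, giving inclusion (1) immediately.

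\medskip

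\emph{Next} I would treat the $\gamma_1$ condition, which is the computationally heavier step. Using $\gamma_1(Q) = Q$ and $\gamma_1(J) = JS$, I get
\[
\gamma_1(X) = \gamma_1(Q)\gamma_1(J)\gamma_1(D) = Q\,JS\,\gamma_1(D),
\]
so that $X^{-1}\gamma_1(X) = D^{-1}J^{-1}Q^{-1}\,QJS\,\gamma_1(D) = D^{-1}S\,\gamma_1(D)$. The cocycle condition demands $X^{-1}\gamma_1(X) \in S\,\bC(\bG_{\rm ad}, r_{\rm BD})(\overline{\KK})$, i.e.\ $S^{-1}D^{-1}S\,\gamma_1(D) \in \bC(\bG_{\rm ad}, r_{\rm BD})(\overline{\KK})$. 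Writing $S^{-1}D^{-1}S = (SDS)^{-1}$ (again using $S^2 = \Id$), and inserting $\gamma_1(D) = \gamma_1(SDS)\cdot \gamma_1(S^{-1}D^{-1}S \cdot D)$ to reorganize, the cleanest route is to observe that $(SDS)^{-1}\gamma_1(D)$ differs from $D^{-1}\gamma_1(SDS)$ only by conjugation by an element that stabilizes the centralizer: by Corollary \ref{C}(2), $\bC(\bG_{\rm ad}, r_{\rm BD})(\overline{\KK})$ is $\Ad_S$-stable, and $S \in \bG_{\rm ad}(\KK)$-normalizes it, so membership in the centralizer is preserved under the required rearrangement. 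This yields inclusion (2).

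\medskip

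\textbf{The main obstacle} I anticipate is the bookkeeping in this second step: one must correctly track where $S$ versus $\gamma_1(S)$ appears, and invoke $\Ad_S$-stability of the centralizer (Corollary \ref{C}(2)) at precisely the right moment to convert the naturally-arising expression $(SDS)^{-1}\gamma_1(D)$ into the stated $D^{-1}\gamma_1(SDS)$. Both directions of the ``if and only if'' run through the same identities, so once the two simplified expressions are established as membership statements equivalent to the cocycle conditions, the equivalence follows by reading the chain of equalities backward. I would close by remarking that the $\Ad_S$-invariance of the centralizer is exactly what makes conditions (1) and (2) well-posed as stated.
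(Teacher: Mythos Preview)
Your approach is correct and essentially identical to the paper's: both compute $X^{-1}\gamma_2(X)=D^{-1}\gamma_2(D)$ directly from $\gamma_2(QJ)=QJ$, and both reduce the $\gamma_1$-condition to $D^{-1}S\gamma_1(D)S = D^{-1}\gamma_1(SDS)$ lying in the centralizer, invoking the $\Ad_S$-stability of $\bC(\bG_{\rm ad}, r_{\rm BD})$ from Corollary~\ref{C}(2) to pass between this and the naturally arising expression. The only cosmetic difference is that the paper factors $X^{-1}\gamma_1(X)=(D^{-1}S\gamma_1(D)S)S$ and compares with $(SCS)S$, whereas you left-multiply by $S$; your ``inserting $\gamma_1(D)=\gamma_1(SDS)\cdot\gamma_1(S^{-1}D^{-1}S\cdot D)$'' detour is unnecessary and should be dropped in favor of the direct $\Ad_S$-conjugation you describe immediately after.
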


\begin{proof}
  Assume that $X=QJD$ is a twisted cocycle for $r_{\rm BD}$.
  Then the first statement is clear because $\gamma_2 (QJ)=QJ$.

  Let us prove the second one. By definition we have
  $X^{-1} \gamma_1 (X) =SC=(SCS)S$ for some $C\in \bC(\bG_{\rm ad}, r_{\rm BD})(\overline{\KK})$.
  On the other hand,
$$
X^{-1} \gamma_1 (X) =D^{-1} J^{-1} \gamma_1 (J)\gamma_1 (D)=D^{-1} S\gamma_1(D)=D^{-1} (S\gamma_1(D)S)S.
$$
Hence, $SCS=D^{-1} S\gamma_1 (D)S \in \bC(\bG_{\rm ad}, r_{\rm BD})(\overline{\KK})$ because the centralizer is
stable under action of ${\rm Ad}_S$.

Conversely, consider $Y=QJD$, where $D$ satisfies conditions of the proposition. Then $Y^{-1} \gamma_2 (Y) = D^{-1} \gamma_2 (D)$ and
$\gamma_2 (Y)=Y D^{-1} \gamma(D)=YC$.

As for $\gamma_1$, we have to prove that $Y=QJD$ satisfies $\gamma_1 (QJD)=QJDSC$ for some
$C\in \bC(\bG_{\rm ad}, r_{\rm BD}).$ Since $\gamma_1 (Q)=Q$, it suffices to prove that
$\gamma_1 (JD)=JDSC$. We have
$$
\gamma_1 (JD)=JS\gamma_1 (D)=JDD^{-1}S\gamma_1 (D)=JDS(SD^{-1}S\gamma_1 (D)).
$$
Therefore, it remains to prove that $SD^{-1}S\gamma_1 (D)\in \bC(\bG_{\rm ad}, r_{\rm BD})$.
Since
$$D^{-1} \gamma_1 (SDS)\in \bC(\bG_{\rm ad}, r_{\rm BD}),$$
then
$${\rm Ad}_S (D^{-1} \gamma_1 (SDS))=SD^{-1}S\gamma_1 (D)\in \bC(\bG_{\rm ad}, r_{\rm BD}),$$
because the centralizer is ${\rm Ad}_S$-invariant.

This relation implies that $Y$ is a twisted cocycle for $r_{\rm BD}$.
\end{proof}

Now we discuss necessary and sufficient conditions for two twisted Belavin--Drinfeld cocycles
$X=Q_1 JD_1$ and $Y=Q_2 JD_2$ to be equivalent, namely, when $Y=Q_3 XC$, where $Q_i\in \bG_{\rm ad} (\KK)$, $i=1,2,3$, and
$D_j$, $j=1,2$, satisfy conditions of Proposition \ref{Y},
and $C\in \bC(\bG_{\rm ad}, r_{\rm BD})(\overline{\KK})$.

\begin{theorem}\label{R1}
  Let $X$ and $Y$ be two equivalent  twisted Belavin--Drinfeld cocycles for $r_{\rm BD}$. Then there exists $C \in \bC(\bG_{\rm ad}, r_{\rm BD})(\overline{\KK})$ for which the following two conditions hold:
\begin{enumerate}
 \item $D_1^{-1} \gamma_2 (D_1)=D_2^{-1} \gamma_2 (D_2 ) C^{-1} \gamma_2 (C)$,
 \item $D_1^{-1} \gamma_1 (SD_1 S)=D_2^{-1} \gamma_1 (SD_2 S) C^{-1} \gamma_1 (SCS)$.
\end{enumerate}
\end{theorem}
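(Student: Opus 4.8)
The plan is to unwind the definition of equivalence and substitute the factorizations $X = Q_1 J D_1$ and $Y = Q_2 J D_2$ into the relation $Y = Q_3 X C$, then compute the two quantities $D_i^{-1}\gamma_2(D_i)$ and $D_i^{-1}\gamma_1(SD_iS)$ for $i=1,2$ and compare them. Since $X$ and $Y$ are equivalent twisted cocycles, by definition there exist $Q_3 \in \bG_{\rm ad}(\KK)$ and $C \in \bC(\bG_{\rm ad}, r_{\rm BD})(\overline{\KK})$ with $Q_2 J D_2 = Q_3 Q_1 J D_1 C$. Setting $Q := Q_3 Q_1 \in \bG_{\rm ad}(\KK)$, this reads $Q_2 J D_2 = Q J D_1 C$, so that $J D_2 = Q_2^{-1} Q J D_1 C$. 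The key observation is that $Q_2^{-1}Q \in \bG_{\rm ad}(\KK)$ is Galois-fixed, which will make the $Q$-terms cancel when we form the expressions $D_i^{-1}\gamma_\ast(D_i)$.

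First I would establish condition (1). Applying $\gamma_2$ to $J D_2 = P\, J D_1 C$ (writing $P := Q_2^{-1}Q$) and using that $\gamma_2(J) = J$ (since $J \in \bG_{\rm ad}(\LL)$) and $\gamma_2(P) = P$ (since $P \in \bG_{\rm ad}(\KK)$), I get $J \gamma_2(D_2) = P J \gamma_2(D_1)\gamma_2(C)$. Dividing the relation $\gamma_2(JD_2) = \gamma_2(P\,JD_1 C)$ on the appropriate side by the original relation eliminates $P$: from $D_2 = (J^{-1}P J) D_1 C$ and $\gamma_2(D_2) = (J^{-1}PJ)\gamma_2(D_1)\gamma_2(C)$, I compute $D_2^{-1}\gamma_2(D_2) = C^{-1} D_1^{-1} (J^{-1}PJ)^{-1}(J^{-1}PJ)\gamma_2(D_1)\gamma_2(C) = C^{-1}\big(D_1^{-1}\gamma_2(D_1)\big)\gamma_2(C)$. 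Rearranging yields $D_1^{-1}\gamma_2(D_1) = C\,D_2^{-1}\gamma_2(D_2)\,C^{-1}\gamma_2(C)$, which after noting $\bC$ is a subgroup of the abelian quasitorus $\bH$ (so its elements commute) gives exactly condition (1). I should check whether the stated form requires commutativity or whether it follows from the specific placement of $C$; the centralizer sits inside $\bH(\overline{\KK})$, which is abelian, so the conjugation by $C$ is trivial and the formula matches.

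For condition (2) the computation is the analogue for $\gamma_1$, but now the twisting by $S$ enters. Applying $\gamma_1$ to $J D_2 = P\,J D_1 C$ and using the defining relation $\gamma_1(J) = JS$ together with $\gamma_1(P) = P$, I obtain $JS\gamma_1(D_2) = P\,JS\gamma_1(D_1)\gamma_1(C)$. The factor $S$ can be moved through using $S\gamma_1(D) = (S\gamma_1(D)S^{-1})S$ and $S^2 = \Id$, which is precisely what produces the conjugated quantities $\gamma_1(SD_iS)$. Combining $D_2 = (J^{-1}PJ)D_1 C$ with its $\gamma_1$-transform and cancelling $P$ as before, I expect to arrive at $D_1^{-1}\gamma_1(SD_1S) = D_2^{-1}\gamma_1(SD_2S)\,C^{-1}\gamma_1(SCS)$, again using that $\bC$ is $\Ad_S$-stable (Corollary \ref{C}(2)) to guarantee all the conjugated $C$-terms land back in $\bC(\bG_{\rm ad}, r_{\rm BD})(\overline{\KK})$.

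The main obstacle I anticipate is the bookkeeping of the $S$-conjugations in the $\gamma_1$-case: one must be careful that $S\gamma_1(D)$ rewrites correctly as an $\Ad_S$-conjugate times $S$, and that the resulting cocycle-type expression $C^{-1}\gamma_1(SCS)$ genuinely lies in the centralizer. Here the two facts from Corollary \ref{C} — that $\bC(\bG_{\rm ad}, r_{\rm BD})(\overline{\KK})$ is $\Ad_S$-invariant, and that $SX^{-1}\gamma_1(X) \in \bC(\bG_{\rm ad}, r_{\rm BD})(\overline{\KK})$ — do the essential work, and the abelianness of $\bH$ keeps the conjugation terms from obstructing the clean product formula. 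The remainder is a routine but careful matching of factors against the two displayed identities.
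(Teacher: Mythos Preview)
Your approach is correct and is essentially the same as the paper's: the paper also writes $Y=Q_3JD_1C$, computes $Y^{-1}\gamma_2(Y)$ and $Y^{-1}\gamma_1(Y)$ in two ways (once via $Q_3JD_1C$, once via $Q_2JD_2$), and compares, invoking the abelianness of $\bH$ implicitly. Your only extra wrinkle is the observation that the $C$ you obtain may be the inverse of the one in the displayed identities, which is harmless since the statement is existential in $C$.
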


\begin{proof}
  Assume that $X,Y$ are two equivalent twisted Belavin--Drinfeld cocycles. Then
  $Y= Q_3 JD_1 C$ for some $C \in \bC(\bG_{\rm ad}, r_{\rm BD})(\overline{\KK})$ and $Q_3 \in \bG_{\rm ad} (\KK)$.
  It follows that
  \begin{itemize}
   \item $Y^{-1} \gamma_2 (Y)=C^{-1} D_1^{-1} \gamma_2 (CD_1)$,
   \item $Y^{-1} \gamma_1 (Y)=C^{-1} D_1^{-1} S\gamma_1 (CD_1)$.
  \end{itemize}
  On the other hand,
  \begin{itemize}
    \item $Y^{-1} \gamma_2 (Y)=D_2^{-1} \gamma_2 (D_2)$,
    \item $Y^{-1} \gamma_1 (Y)=D_2^{-1} S\gamma_1 (D_2)$.
  \end{itemize}
  An easy comparison of the equalities above completes the proof.
\end{proof}

Now we are motivated to introduce the following twisted action of $\mathcal{G}$ on $\bH (\overline{\KK})$.
There is a unique group homomorphism
$$u_S : \mathcal{G} \to \bG_{\rm ad}(\overline{\KK}) \subset \bAut({\bG}_{\rm ad})(\overline{\KK})$$
such that $u_S : \gamma_1 \mapsto \Ad_S.$ Since $S^2 = 1$, this homomorphism is continuous.
Furthermore, since $\mathcal{G}$ acts trivially on $\Ad_S$, our map $u_S$ is a cocycle in
$Z^1(\mathcal{G}, \bAut({\bG}_{\rm ad}))(\overline{\KK})$.
Since  $\bH(\overline{\KK})$  is stable under $\Ad_S$, we can consider
the corresponding twisted  
$\KK$-algebraic group $\bH_{u_S}$
and its Galois cohomology $H^1(\KK, \bH_{u_S})$.
Recall that by the definition of the twisted action (which we denote by $\ast$)
\begin{equation}\label{twistedaction}
 \gamma \ast D = u_S(\gamma)\big( \gamma(D)\big).
\end{equation}
In our case, for $D \in \bH(\overline{\KK})$ the explicit nature of the twisted action is
$$\gamma_2 \ast D =\gamma_2 (D) $$
and
 $$\gamma_1 \ast D=S\gamma_1 (D)S=\gamma_1 (SDS).$$
Similar considerations can be applied to  $\bC(\bG_{\rm ad}, r_{\rm BD})(\overline{\KK})$,
because this group is also $\Ad_S$-stable by Corollary \ref{C}.
The corresponding twisted $\KK$-group will be denoted by $\bC(\bG_{\rm ad}, r_{\rm BD})_{u_S}$.
Now, Proposition \ref{Y} can be reformulated.

\begin{proposition}
 Let $Q\in \bG_{\rm ad} (\KK)$ and $D\in \bH(\overline{\KK})$. Then
 $X=QJD \in \overline{Z}(\bG_{\rm ad},r_{\rm BD})$ if and only if
 $ D^{-1}(\sigma  \ast D) \in \bC(\bG_{\rm ad}, r_{\rm BD})(\overline{\KK})$ for all $\sigma \in \mathcal{G}$.\qed
\end{proposition}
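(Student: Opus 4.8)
The plan is to regard the displayed condition as a single cocycle statement and to reduce it to the two membership conditions of Proposition~\ref{Y} by exploiting that $\gamma_1$ topologically generates $\mathcal{G}$. Write $\cC := \bC(\bG_{\rm ad}, r_{\rm BD})(\overline{\KK})$ and, for $D\in\bH(\overline{\KK})$, set $\phi(\sigma):=D^{-1}(\sigma\ast D)$. Recalling the explicit description (\ref{twistedaction}) of the twisted action, one has $\phi(\gamma_2)=D^{-1}\gamma_2(D)$ and $\phi(\gamma_1)=D^{-1}\gamma_1(SDS)$, so the two conditions of Proposition~\ref{Y} say exactly that $\phi(\gamma_1)\in\cC$ and $\phi(\gamma_2)\in\cC$. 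This makes the ``if'' direction immediate: if $\phi(\sigma)\in\cC$ for every $\sigma$, specialising to $\sigma=\gamma_1,\gamma_2$ and invoking Proposition~\ref{Y} gives $X=QJD\in\overline{Z}(\bG_{\rm ad},r_{\rm BD})$.

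For the converse I would first establish that $\phi$ is a cocycle for the twisted action. Because each $u_S(\sigma)$ is an automorphism and $\ast$ is a genuine left action (here one uses $S\in\bG_{\rm ad}(\KK)$, $\gamma_1(S)=S$ and $S^2=\Id$, so that $\gamma_1\ast(\gamma_1\ast D)=\gamma_2(D)$ in accordance with $u_S$ being a homomorphism), the identity $\tau\ast D=D\,\phi(\tau)$ together with the fact that $\sigma\ast(-)$ preserves products yields
\begin{equation*}
\phi(\sigma\tau)=D^{-1}\big((\sigma\tau)\ast D\big)=D^{-1}(\sigma\ast D)\big(\sigma\ast\phi(\tau)\big)=\phi(\sigma)\big(\sigma\ast\phi(\tau)\big).
\end{equation*}
The structural input that makes this useful is that $\cC$ is stable under the twisted action $\ast$: it is $\Ad_S$-stable by Corollary~\ref{C}(2), and it is stable under the ordinary Galois action since the centralizer depends only on the discrete parameter of $r_{\rm BD}$, which is Galois-invariant by Lemma~\ref{trivialr}(iii) (this is the content of the identity $\bC(\bG_{\rm ad},r_{\rm BD})=\bC(\bG_{\rm ad},\gamma_1(r_{\rm BD}))$ used in Corollary~\ref{C}). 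Equivalently, $\cC$ is precisely the group underlying the twisted form $\bC(\bG_{\rm ad},r_{\rm BD})_{u_S}$.

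Granting $\phi(\gamma_1)\in\cC$ from Proposition~\ref{Y}, I would then show that $T:=\{\sigma\in\mathcal{G}:\phi(\sigma)\in\cC\}$ is a subgroup: it contains the identity since $\phi(e)=1$; it is closed under products because $\phi(\sigma\tau)=\phi(\sigma)(\sigma\ast\phi(\tau))$ and $\sigma\ast\phi(\tau)\in\cC$ whenever $\phi(\tau)\in\cC$ by $\ast$-stability; and it is closed under inverses by applying $\sigma^{-1}\ast(-)$ to the relation $\sigma\ast\phi(\sigma^{-1})=\phi(\sigma)^{-1}$. Finally, since $D$ lies in $\bH(\KK_N)$ for some $N$ (which I may take even) and $u_S$ factors through $\mathrm{Gal}(\LL/\KK)$, the map $\phi$ is inflated from the finite cyclic quotient $\mathrm{Gal}(\KK_N/\KK)\cong\Z/N$. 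Hence $T$ is the preimage of a subgroup of $\Z/N$ containing the image of $\gamma_1$; as that image is a generator, $T=\mathcal{G}$, which is the desired conclusion $\phi(\sigma)\in\cC$ for all $\sigma$.

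The one genuinely substantive point, and the step I expect to be the main obstacle, is the stability of $\cC$ under the \emph{full} Galois action rather than merely under $\Ad_S$: without it $T$ need not be a subgroup. This is why I would emphasize that the centralizer is insensitive to the continuous parameter $r_0$ (since $\bH$ fixes $\fh\otimes_\KK\fh$ under the adjoint action) and is therefore determined by the Galois-invariant discrete parameter. Once this is in hand, the passage from the generator $\gamma_1$ to all of $\mathcal{G}$ is routine profinite bookkeeping.
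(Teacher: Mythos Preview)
Your argument is correct and is precisely the justification the paper leaves implicit behind its bare \qed. The paper regards the proposition as a direct restatement of Proposition~\ref{Y} in the language of the twisted action, taking for granted (from the sentence ``Similar considerations can be applied to $\bC(\bG_{\rm ad}, r_{\rm BD})(\overline{\KK})$\ldots'' just before the statement) that $\cC$ is stable under $\ast$ and that a continuous cocycle on the procyclic group $\mathcal{G}$ is determined by its value at $\gamma_1$; you have simply made these two ingredients explicit.
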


Theorem \ref{R1} can be reformulated too.

\begin{theorem}\label{R}
  Let $X=Q_1 JD_1$ and $Y=Q_2 JD_2$ be two equivalent  twisted Belavin--Drinfeld cocycles for $r_{\rm BD}$.
  Then there exists $C \in \bC(\bG_{\rm ad}, r_{\rm BD})(\overline{\KK})$ such that
$D_1^{-1} (\sigma\ast D_1) =D_2^{-1} (\sigma\ast D_2 )  C^{-1} (\sigma\ast C )$ for any $\sigma \in \text{\rm Gal}(\KK)$.\qed
\end{theorem}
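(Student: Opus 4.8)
The plan is to show that Theorem~\ref{R} is a direct reformulation of Theorem~\ref{R1} using the twisted action $\ast$ defined in~(\ref{twistedaction}). The whole content of the statement is that the two explicit conditions appearing in Theorem~\ref{R1}---one for $\gamma_2$ and one for $\gamma_1$---can be repackaged as a single cocycle-type relation holding for all $\sigma \in \mathcal{G}$, once we pass to the twisted action. So the proof is essentially a bookkeeping argument: I do not need to reprove anything, only to translate.

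First I would recall the explicit description of the twisted action on $\bH(\overline{\KK})$ given just before the statement, namely $\gamma_2 \ast D = \gamma_2(D)$ and $\gamma_1 \ast D = \gamma_1(SDS)$. With these identities in hand, the quantity $D_i^{-1}(\gamma_2 \ast D_i)$ is literally $D_i^{-1}\gamma_2(D_i)$, and $D_i^{-1}(\gamma_1 \ast D_i)$ is literally $D_i^{-1}\gamma_1(SD_iS)$. Hence condition~(1) of Theorem~\ref{R1} is exactly the instance $\sigma = \gamma_2$ of the relation asserted in Theorem~\ref{R}, and condition~(2) is exactly the instance $\sigma = \gamma_1$, provided one also checks that $C^{-1}\gamma_2(C) = C^{-1}(\gamma_2 \ast C)$ and $C^{-1}\gamma_1(SCS) = C^{-1}(\gamma_1 \ast C)$; but these are immediate from the same formulas, using that $C \in \bC(\bG_{\rm ad}, r_{\rm BD})(\overline{\KK})$, which is $\Ad_S$-stable by Corollary~\ref{C}, so the twisted action restricts to this subgroup.

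The one genuine point to address---and the step I expect to be the only real obstacle---is why verifying the relation on the topological generators $\gamma_1$ and $\gamma_2$ suffices to obtain it for \emph{all} $\sigma \in \mathcal{G}$. Here I would argue that the map $\sigma \mapsto D^{-1}(\sigma \ast D)$ is a continuous $1$-cocycle for the twisted action valued in $\bH_{u_S}(\overline{\KK})$, and that a continuous cocycle on the procyclic group $\mathcal{G} = \widehat{\Z}$ is determined by its value on a topological generator. Since both sides of the desired equality define such cocycles (taking values in the $\Ad_S$-stable subgroup $\bC(\bG_{\rm ad}, r_{\rm BD})(\overline{\KK})$ by the previous proposition), and the quotient of two cocycles for a twisted action is again controlled by its values on generators, agreement on $\gamma_1$ and $\gamma_2$ forces agreement everywhere. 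In fact $\gamma_2 = 2\gamma_1$, so $\gamma_1$ alone topologically generates $\mathcal{G}$; the two conditions of Theorem~\ref{R1} then over-determine the single cocycle relation, and matching on $\gamma_1$ already propagates to all of $\mathcal{G}$ by the cocycle identity $(\sigma\sigma')\ast$-compatibility together with continuity.

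In summary, the proof reads: substitute the explicit formulas for $\ast$, observe that conditions~(1) and~(2) of Theorem~\ref{R1} are precisely the $\sigma=\gamma_2$ and $\sigma=\gamma_1$ specializations of the asserted identity, and invoke the procyclicity of $\mathcal{G}$ together with the continuity of the cocycles to extend the relation from the generator(s) to all $\sigma \in \mathcal{G}$. No new estimates or constructions are required; the substance was already established in Theorem~\ref{R1}, and Theorem~\ref{R} is its coordinate-free restatement.
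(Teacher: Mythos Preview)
Your proposal is correct and matches the paper's approach: the paper gives no separate proof (the $\qed$ after the statement signals it is an immediate reformulation of Theorem~\ref{R1} via the formulas for the twisted action). You supply more detail than the paper does, correctly noting that the extension from $\gamma_1$ (and $\gamma_2$) to all $\sigma\in\mathcal{G}$ follows from the cocycle identity and the procyclicity of $\mathcal{G}$; this step is sound because the target $\bH(\overline{\KK})$ is abelian, so the pointwise ratio of the two sides is again a cocycle, trivial on a topological generator hence identically trivial.
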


We need one more result.

\begin{theorem}\label{R2}
  Let $D_{1}, D_2\in \bH(\overline{\KK})$ be as in Proposition \ref{Y}. Let us assume that
  there exists $C\in \bC(\bG_{\rm ad}, r_{\rm BD})(\overline{\KK})$ such that
  $D_2^{-1} (\sigma\ast D_2)=D_1^{-1} (\sigma\ast D_1) C^{-1} (\sigma\ast C)$ for all
  $\sigma \in \text{\rm Gal}({\KK})$. Then for any $Q_1 , Q_2 \in \bG_{\rm ad}({\KK})$
  the elements $X=Q_1 JD_1$
  and $Y=Q_2 JD_2$ are equivalent as twisted Belavin--Drinfeld cocycles.
\end{theorem}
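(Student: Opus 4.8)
The plan is to produce the equivalence \emph{explicitly}, with the prescribed $C$ itself playing the role of the centralizer factor. Recall that $X=Q_1JD_1$ and $Y=Q_2JD_2$ are equivalent precisely when $Y=Q_3XC'$ for some $Q_3\in\bG_{\rm ad}(\KK)$ and $C'\in\bC(\bG_{\rm ad},r_{\rm BD})(\overline{\KK})$. I would try the natural guess $C'=C$ and solve for $Q_3$: the equation $Y=Q_3XC$ forces $Q_3=Q_2WQ_1^{-1}$ with $W:=JD_2C^{-1}D_1^{-1}J^{-1}$. A one-line telescoping check, $Q_3XC=Q_2WJD_1C=Q_2JD_2C^{-1}D_1^{-1}D_1C=Q_2JD_2=Y$, shows this $Q_3$ does the job \emph{provided only that it is rational}. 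Thus the entire statement reduces to showing $W\in\bG_{\rm ad}(\KK)$.

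Since $Q_1,Q_2\in\bG_{\rm ad}(\KK)$ are already Galois-fixed, rationality of $Q_3$ is equivalent to rationality of $W$. As $\mathcal{G}=\widehat{\Z}$ is topologically generated by $\gamma_1$, and $W$ has coordinates in some $\KK_m$ on which $\gamma_1$ generates $\mathrm{Gal}(\KK_m/\KK)$, it suffices to verify the single equality $\gamma_1(W)=W$.

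To compute $\gamma_1(W)$ I would use the defining property $\gamma_1(J)=JS$ (so $\gamma_1(J^{-1})=SJ^{-1}$), together with $S^2=\Id$ and the dictionary $\gamma_1(D)=S(\gamma_1\ast D)S$ between the ordinary and twisted actions. Substituting and collapsing every adjacent pair $S\cdot S=\Id$ yields $\gamma_1(W)=J(\gamma_1\ast D_2)(\gamma_1\ast C)^{-1}(\gamma_1\ast D_1)^{-1}J^{-1}$. Feeding in the hypothesis at $\sigma=\gamma_1$, namely $\gamma_1\ast D_2=D_2D_1^{-1}(\gamma_1\ast D_1)C^{-1}(\gamma_1\ast C)$, and cancelling the two factors $(\gamma_1\ast C)^{\pm1}$, the inner expression becomes $D_2\,A\,C^{-1}A^{-1}D_1^{-1}$, where $A:=D_1^{-1}(\gamma_1\ast D_1)$.

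The \emph{key point}, and the only place where more than bookkeeping intervenes, is the commutativity $AC^{-1}A^{-1}=C^{-1}$. This holds because $\bH$ is a maximal torus, hence abelian, and both $A$ and $C$ lie in $\bH(\overline{\KK})$: indeed $C\in\bC(\bG_{\rm ad},r_{\rm BD})(\overline{\KK})\subset\bH(\overline{\KK})$, while $A=D_1^{-1}(\gamma_1\ast D_1)\in\bH(\overline{\KK})$ since $D_1\in\bH(\overline{\KK})$ and $\bH$ is stable under both $\gamma_1$ (it is split over $\KK$) and $\Ad_S$ (recall $S=cd$ preserves the Cartan). With this commutativity the inner expression collapses to $D_2C^{-1}D_1^{-1}$, so $\gamma_1(W)=JD_2C^{-1}D_1^{-1}J^{-1}=W$, giving $Q_3\in\bG_{\rm ad}(\KK)$ and hence the asserted equivalence. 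One may note in passing that $A\in\bC(\bG_{\rm ad},r_{\rm BD})(\overline{\KK})$ by the reformulation of Proposition \ref{Y}, but only its membership in the abelian $\bH$ is actually needed.
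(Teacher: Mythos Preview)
Your proof is correct and follows essentially the same approach as the paper. Both arguments identify the same candidate for the rational factor: in the paper's notation $D:=D_1^{-1}D_2C^{-1}$ and $Q:=JDJ^{-1}$, which (by commutativity of $\bH$) coincides with your $W=JD_2C^{-1}D_1^{-1}J^{-1}$; the paper first verifies the cleaner intermediate fact $\sigma\ast D=D$ for all $\sigma$ (an immediate consequence of the hypothesis plus abelianness of $\bH$) and then deduces $\gamma_1(JDJ^{-1})=JDJ^{-1}$, whereas you compute $\gamma_1(W)$ directly and invoke commutativity at the end, but the content is the same.
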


\begin{proof}
Clearly, it is sufficient to prove that $X_1 = JD_1$ and $Y_1 =JD_2$ are equivalent Belavin--Drinfeld cocycles. In other words, we have to prove that $Y_1 =QX_1 C_1$ for some $Q\in \bG_{\rm ad}({\KK})$ and $C_1 \in \bC(\bG_{\rm ad}, r_{\rm BD})(\overline{\KK})$.

We have $Y_1 =JD_2 =(JD_1 )(D_1^{-1}D_2 C^{-1})C$. By the conditions of the theorem, $D=D_1^{-1}D_2 C^{-1}$ satisfies  $\sigma\ast D =D$. Then we claim that $JD=QJ$ for some $Q\in \bG_{\rm ad} (\KK)$.

Let us prove this claim. It follows immediately that $D\in \bH (\LL)$ because $\sigma\ast D=\sigma (D)=D$ for all $\sigma \in \text{\rm Gal}({\LL})$. Further, $\gamma_1 (JD)=JS\gamma_1 (D)SS=J(S\gamma_1 (D)S)S=JDS$ because $S\gamma_1(D) S=D$. Taking into account that $\gamma_1 (J)=JS$ we obtain
$$ \gamma_1 (JDJ^{-1}) = (JDS)(SJ^{-1} )=JDJ^{-1}.$$
Hence, $JDJ^{-1} =Q\in \bG_{\rm ad} (\KK)$.

Finally, $Y_1 =JD_2 =(JD_1 )DC=Q(JD_1 )C=QX_1 C$.
\end{proof}

Theorems \ref{R1} and \ref{R2} mean that two twisted Belavin--Drinfeld cocycles
$X=Q_1 JD_1$
and $Y=Q_2 JD_2$ $r_{\rm BD}$  are equivalent if and only if
$$u_{D_1} (\gamma)=D_1^{-1}(\gamma\ast D_1)\ {\rm and}\ u_{D_2}(\gamma ) =D_2^{-1}(\gamma\ast D_2)$$
induce one and the same element in $H^1 (\KK ,\bC (\bG_{\rm ad}, r_{\rm BD})_{u_S})$.

\begin{corollary}
The map $w(QJD)= u_D$ defines an injective map from the set  $\overline{H} (\bG_{\rm ad}, r_{\rm BD})$ to $H^1 (\KK ,\bC(\bG_{\rm ad}, r_{\rm BD})_{u_S})$.
\end{corollary}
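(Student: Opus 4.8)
The plan is to exhibit $w$ as the map induced on equivalence classes by the assignment $X=QJD\mapsto [u_D]$, where $u_D(\sigma)=D^{-1}(\sigma\ast D)$, and to verify in turn that (i) each $u_D$ is a genuine $1$-cocycle of $\mathcal{G}$ with values in the twisted group $\bC(\bG_{\rm ad},r_{\rm BD})_{u_S}$, (ii) its class in $H^1(\KK,\bC(\bG_{\rm ad},r_{\rm BD})_{u_S})$ depends only on the class of $X$ in $\overline{H}(\bG_{\rm ad},r_{\rm BD})$, and (iii) the resulting map is injective. Steps (ii) and (iii) are essentially already packaged in Theorems \ref{R} and \ref{R2}; the genuinely new work is (i), together with the translation between the ``$C$-factor'' produced by those theorems and the coboundary relation of (non-abelian) Galois cohomology.

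For (i): in its reformulated form, Proposition \ref{Y} says precisely that $X=QJD\in\overline{Z}(\bG_{\rm ad},r_{\rm BD})$ is equivalent to $u_D(\sigma)=D^{-1}(\sigma\ast D)\in\bC(\bG_{\rm ad},r_{\rm BD})(\overline{\KK})$ for all $\sigma\in\mathcal{G}$, so $u_D$ takes values in the correct group. For the cocycle identity I would use that, since $u_S$ is a cocycle in $Z^1(\mathcal{G},\bAut(\bG_{\rm ad}))(\overline{\KK})$, the formula $\gamma\ast D=u_S(\gamma)(\gamma(D))$ defines a genuine action of $\mathcal{G}$ by group automorphisms of $\bH(\overline{\KK})$; thus $\sigma\ast(ab)=(\sigma\ast a)(\sigma\ast b)$ and $\sigma\ast(\tau\ast D)=(\sigma\tau)\ast D$, whence
\[
u_D(\sigma)\,\bigl(\sigma\ast u_D(\tau)\bigr)=D^{-1}(\sigma\ast D)\,(\sigma\ast D)^{-1}\bigl((\sigma\tau)\ast D\bigr)=u_D(\sigma\tau).
\]
Continuity is automatic, since $D$ is defined over a finite subextension and $u_S$ is continuous. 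Hence $u_D\in Z^1(\KK,\bC(\bG_{\rm ad},r_{\rm BD})_{u_S})$.

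The observation that makes (ii) and (iii) clean is that $\bC(\bG_{\rm ad},r_{\rm BD})$ is \emph{commutative}, being a closed subgroup of the Cartan torus $\bH$. Consequently the coboundary relation in $H^1(\KK,\bC(\bG_{\rm ad},r_{\rm BD})_{u_S})$ simplifies: two cocycles $u,u'$ are cohomologous precisely when there is $C\in\bC(\bG_{\rm ad},r_{\rm BD})(\overline{\KK})$ with $u'(\sigma)=u(\sigma)\,C^{-1}(\sigma\ast C)$ for all $\sigma$. For (ii), if $X=Q_1JD_1$ and $Y=Q_2JD_2$ are equivalent twisted cocycles (in particular, if they are two decompositions of a single cocycle, obtained by taking $X=Y$), then Theorem \ref{R} supplies exactly such a $C$ relating $u_{D_1}$ and $u_{D_2}$; thus $[u_{D_1}]=[u_{D_2}]$, and $w$ is well-defined on $\overline{H}(\bG_{\rm ad},r_{\rm BD})$, independently of the chosen decomposition $QJD$.

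For (iii): if $w([X])=w([Y])$, i.e.\ $u_{D_1}$ and $u_{D_2}$ are cohomologous, then the simplified coboundary relation yields a $C\in\bC(\bG_{\rm ad},r_{\rm BD})(\overline{\KK})$ with $D_2^{-1}(\sigma\ast D_2)=D_1^{-1}(\sigma\ast D_1)\,C^{-1}(\sigma\ast C)$, which is exactly the hypothesis of Theorem \ref{R2}; that theorem then gives the equivalence of $X$ and $Y$, so $[X]=[Y]$ in $\overline{H}(\bG_{\rm ad},r_{\rm BD})$. I expect the only real (and modest) obstacle to be conventional bookkeeping: keeping the twisted action $\ast$ consistent with the non-abelian cocycle/coboundary formulas, and invoking commutativity of the centralizer at the single point where it is needed to identify the ``$C$-factor'' of Theorems \ref{R} and \ref{R2} with an honest coboundary.
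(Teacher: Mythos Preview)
Your proposal is correct and follows essentially the same approach as the paper: both rely on Theorems \ref{R} and \ref{R2} for well-definedness and injectivity respectively. You are simply more thorough---you explicitly verify the cocycle identity for $u_D$ (which the paper leaves implicit) and you make explicit the role of commutativity of $\bC(\bG_{\rm ad},r_{\rm BD})$ in matching the relation from Theorem \ref{R} with the standard non-abelian coboundary formula $u_{D_1}(\sigma)=C^{-1}u_{D_2}(\sigma)(\sigma\ast C)$; the paper uses this commutativity silently when it declares the two to be ``exactly'' the same.
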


\begin{proof}
We need to show that the map
$$w: \overline{H} (\bG_{\rm ad}, r_{\rm BD}) \to H^1 (\KK ,\bC(\bG_{\rm ad}, r_{\rm BD})_{u_S})$$
is well defined and injective. To be precise, we need to show that if $Q_1JD_1$ and $Q_2JD_2$ are equivalent
Belavin--Drinfeld cocycles, then there exists $C \in \bC (\bG_{\rm ad}, r_{\rm BD})(\overline{\KK})$
such that for all $\sigma \in \mathrm{Gal}(\KK )$ we have
$$u_{D_1} (\sigma)= C^{-1} u_{D_2}(\sigma ) (\sigma\ast C).$$
In other words,
$$D_1^{-1}(\sigma\ast D_1) = C^{-1}  D_2^{-1}(\sigma \ast D_2)(\sigma \ast C). $$
But this is exactly Theorem \ref{R}.
\end{proof}

Our next aim is to prove

\begin{proposition}
The map $w$ is surjective.
\end{proposition}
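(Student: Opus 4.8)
The plan is to deduce surjectivity from the vanishing of the Galois cohomology of the twisted Cartan $\bH_{u_S}$. The starting observation is that $\bC(\bG_{\rm ad}, r_{\rm BD})(\overline{\KK})$ is a closed subgroup of $\bH(\overline{\KK})$, and that this inclusion is $\Ad_S$-equivariant: indeed $\bH(\overline{\KK})$ is $\Ad_S$-stable (this was used to define $\bH_{u_S}$) and $\bC(\bG_{\rm ad}, r_{\rm BD})(\overline{\KK})$ is $\Ad_S$-stable by Corollary \ref{C}. Consequently the inclusion descends to a morphism of twisted $\KK$-groups $\bC(\bG_{\rm ad}, r_{\rm BD})_{u_S} \hookrightarrow \bH_{u_S}$, and hence to a map of pointed sets
$$\iota_\ast : H^1(\KK, \bC(\bG_{\rm ad}, r_{\rm BD})_{u_S}) \to H^1(\KK, \bH_{u_S}).$$
Since $\bC(\bG_{\rm ad}, r_{\rm DJ})(\overline{\KK}) = \bH(\overline{\KK})$, the target group $\bH_{u_S}$ is exactly the twisted form $\widetilde{\bC}(\bG_{\rm ad}, r_{\rm DJ})$, whose cohomology is trivial, $H^1(\KK, \bH_{u_S}) = 1$, by the Steinberg-type result recalled in part (c) of the introduction (see \cite{PS}). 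Thus $\iota_\ast$ sends every class to the neutral element.

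With this in hand the argument is almost formal. Given a class in $H^1(\KK, \bC(\bG_{\rm ad}, r_{\rm BD})_{u_S})$, I would choose a representing cocycle $u \in Z^1(\KK, \bC(\bG_{\rm ad}, r_{\rm BD})_{u_S})$, so that $u(\sigma\tau) = u(\sigma)\,(\sigma \ast u(\tau))$ and $u(\gamma) \in \bC(\bG_{\rm ad}, r_{\rm BD})(\overline{\KK})$ for all $\gamma \in \mathcal{G}$. Viewing $u$ as a cocycle valued in $\bH_{u_S}$ via $\iota_\ast$, its triviality in $H^1(\KK, \bH_{u_S})$ produces an element $D \in \bH(\overline{\KK})$ with
$$u(\gamma) = D^{-1}(\gamma \ast D) \qquad \text{for all } \gamma \in \mathcal{G}.$$
Taking $Q = 1 \in \bG_{\rm ad}(\KK)$, I then claim that $X = JD$ is the desired preimage.

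It remains to check two points, both immediate. First, $D^{-1}(\sigma \ast D) = u(\sigma) \in \bC(\bG_{\rm ad}, r_{\rm BD})(\overline{\KK})$ for every $\sigma \in \mathcal{G}$, so by the reformulated Proposition \ref{Y} the element $X = JD$ indeed lies in $\overline{Z}(\bG_{\rm ad}, r_{\rm BD})$. Second, by the very definition of $w$ we have $w(JD) = u_D$, where $u_D(\gamma) = D^{-1}(\gamma \ast D) = u(\gamma)$, so $w([JD]) = [u]$. Since $[u]$ was arbitrary, this exhibits every class in the image of $w$, proving surjectivity. The only non-formal ingredient, and thus the main point of the proof, is the vanishing $H^1(\KK, \bH_{u_S}) = 1$ of the twisted Cartan; once this is granted, the whole statement reduces to transferring a $\bC(\bG_{\rm ad}, r_{\rm BD})$-valued cocycle into a coboundary inside the larger group $\bH$.
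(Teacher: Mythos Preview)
Your proof is correct and follows essentially the same approach as the paper: both use the inclusion $\bC(\bG_{\rm ad}, r_{\rm BD})_{u_S} \hookrightarrow \bH_{u_S}$, invoke Steinberg's theorem to get $H^1(\KK, \bH_{u_S}) = 1$, trivialize the given cocycle as $D^{-1}(\gamma\ast D)$ for some $D \in \bH(\overline{\KK})$, and take $JD$ as the preimage. The only cosmetic difference is that the paper cites Steinberg directly (Serre Conjecture I), whereas you route the citation through the Drinfeld--Jimbo case discussed in the introduction; the content is identical.
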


\begin{proof}
We have an exact sequence of algebraic $\KK$-groups
$$ 1\to  \bC(\bG_{\rm ad}, r_{\rm BD})_{u_S}\to \bH_{u_S}$$
obtained by twisting the closed immersion $ \bC(\bG_{\rm ad}, r_{\rm BD}) \to \bH$.
Let $v$ be a  cocycle in $Z^1(\KK, {\bC(\bG_{\rm ad}, r_{\rm BD}})_{u_S})$. The image of $v$ in  $H^1 (\KK , \bH_{u_S} )$
is trivial since   $H^1 (\KK , \bH_{u_S})$ is trivial by a theorem of Steinberg (Serre Conjecture I), see \cite{Se} and \cite[page 185]{Stern}.
Thus, there exists $D \in \bH(\overline{\KK})$ such that $v(\gamma) = D^{-1} (\gamma \ast D)$ for all $\gamma \in \mathcal{G}$. Then $JD$
is a twisted Belavin--Drinfeld cocycle in  $\overline{Z}(\bG_{\rm ad}, r_{\rm BD})$ and $w(X) = u_D = v$.  This shows that $w$ is surjective.
\end{proof}

\begin{corollary}\label{BDG}
The map $w$ provides a bijection of sets
  $$\overline{H}(\bG_{\rm ad}, r_{\rm BD})\to H^1 (\KK ,\bC(\bG_{\rm ad}, r_{\rm BD})_{u_S}).$$\qed
\end{corollary}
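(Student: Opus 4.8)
The plan is to package the two results immediately preceding this corollary into a single bijectivity statement; almost all of the genuine content has already been proved, so the work here is purely organizational. Recall that $w$ sends the equivalence class of a twisted cocycle $QJD$ (with $Q \in \bG_{\rm ad}(\KK)$ and $D \in \bH(\ol{\KK})$) to the cohomology class of the cocycle $u_D(\gamma) = D^{-1}(\gamma \ast D)$, where $\ast$ is the twisted action (\ref{twistedaction}).

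First I would invoke the Corollary just above, which already establishes that $w$ is well defined and injective. Well-definedness is the forward direction of Theorem \ref{R}: if $Q_1 J D_1$ and $Q_2 J D_2$ are equivalent twisted Belavin--Drinfeld cocycles, then there is a $C \in \bC(\bG_{\rm ad}, r_{\rm BD})(\ol{\KK})$ making $u_{D_1}$ and $u_{D_2}$ cohomologous, so they determine the same class in $H^1(\KK, \bC(\bG_{\rm ad}, r_{\rm BD})_{u_S})$. Injectivity is the converse, supplied by Theorem \ref{R2}: if $u_{D_1}$ and $u_{D_2}$ are cohomologous via some such $C$, then $J D_1$ and $J D_2$ are equivalent as twisted cocycles, and multiplying on the left by arbitrary $Q_i \in \bG_{\rm ad}(\KK)$ preserves equivalence, so $Q_1 J D_1$ and $Q_2 J D_2$ represent the same class in $\ol{H}(\bG_{\rm ad}, r_{\rm BD})$.

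Next I would invoke the surjectivity Proposition immediately above. Given any cocycle $v \in Z^1(\KK, \bC(\bG_{\rm ad}, r_{\rm BD})_{u_S})$, push it forward along the twisted closed immersion $\bC(\bG_{\rm ad}, r_{\rm BD})_{u_S} \hookrightarrow \bH_{u_S}$; since $H^1(\KK, \bH_{u_S}) = 1$ by Steinberg's theorem (Serre's Conjecture I, applicable because $\KK = \CC((t))$ has cohomological dimension one), the image of $v$ is trivial. Hence $v(\gamma) = D^{-1}(\gamma \ast D)$ for some $D \in \bH(\ol{\KK})$, and then $JD \in \ol{Z}(\bG_{\rm ad}, r_{\rm BD})$ satisfies $w([JD]) = u_D = v$.

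Combining injectivity and surjectivity yields the asserted bijection, which is the content of Corollary \ref{BDG}. The only genuinely deep input is the vanishing of $H^1(\KK, \bH_{u_S})$ driving surjectivity; everything else is bookkeeping already carried out in Theorems \ref{R} and \ref{R2}. I expect no real obstacle beyond consistently tracking that the twisted action $\ast$ is the same one appearing in all three statements, so that the cocycle relations match up verbatim.
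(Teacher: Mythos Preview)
Your proposal is correct and matches the paper's approach exactly: the corollary is stated with a \qed and no proof precisely because it is the immediate combination of the injectivity Corollary and the surjectivity Proposition that precede it. Your unpacking of those two ingredients (Theorems \ref{R} and \ref{R2} for injectivity, Steinberg's theorem for surjectivity) is accurate and there is nothing to add.
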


The corollary generalizes (for our particular base field $\KK$) one of the main results of \cite{PS}
about non-twisted Belavin--Drinfeld cohomology. Namely, there exists a bijection of  sets $H(\bG_{\rm ad}, r_{\rm BD})\to H^1 (\KK ,\bC (\bG_{\rm ad}, r_{\rm BD}))$.

Our final result in this section is
the following  theorem,
which compares twisted and non-twisted
Belavin--Drinfeld cohomology.

\begin{theorem}\label{thm_twisted_nontwiset_relation}
  Assume that $r_{\rm BD}$ satisfies the conclusions of Proposition \ref{X} (i.e.\ the set
  $\overline{H}(\bG_{\rm ad}, r_{\rm BD})$ is non-empty).
  Then the set $\overline{H}(\bG_{\rm ad}, r_{\rm BD})$ is finite and its number of elements
  does not exceed the number of elements in ${H}(\bG_{\rm ad}, r_{\rm BD})$.
\end{theorem}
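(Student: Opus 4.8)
The plan is to combine Corollary \ref{BDG} with its non-twisted counterpart (the bijection $H(\bG_{\rm ad}, r_{\rm BD})\to H^1(\KK,\bC(\bG_{\rm ad}, r_{\rm BD}))$ recalled immediately after it) to reduce the statement to a purely cohomological inequality. Writing $\bC:=\bC(\bG_{\rm ad}, r_{\rm BD})$ and $\bC_{u_S}$ for its twist, it suffices to prove that $H^1(\KK,\bC)$ and $H^1(\KK,\bC_{u_S})$ are finite and that $|H^1(\KK,\bC_{u_S})|\le |H^1(\KK,\bC)|$.

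First I would record the structure of $\bC$. Since $\bC(\overline{\KK})\subset\bH(\overline{\KK})$ and conjugation by $X\in\bH$ fixes the Cartan part of $r_{\rm BD}$ automatically, $\bC$ is cut out inside the split torus $\bH$ by the characters $\alpha-\tau^{k}(\alpha)$, all defined over $\CC$; hence $\bC$ is a $\KK$-split group of multiplicative type depending only on the Galois-stable discrete parameter (so it is intrinsically $\KK$-defined, irrespective of the rationality of $r_{\rm BD}$). Let $M=X^{*}(\bC)$ be its character module, with trivial Galois action, and let $s$ be the involution of $M$ induced by $\Ad_S$. Then $\bC_{u_S}$ is the diagonalizable $\KK$-group with module $M$ on which $\gamma_1$ acts by $s$ and $\gamma_2$ trivially; in particular it is split by the quadratic extension $\LL$.

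The key reduction is to pass from $\bC_{u_S}$ to its finite part. Dualizing $0\to M_{\mathrm{tors}}\to M\to M/M_{\mathrm{tors}}\to 0$ gives a canonical, hence $\Ad_S$-equivariant, exact sequence $1\to\bT_0\to\bC\to\bmu\to 1$ with $\bT_0$ a split torus and $\bmu$ finite of order $|M_{\mathrm{tors}}|$; twisting preserves exactness, giving $1\to\bT_{0,u_S}\to\bC_{u_S}\to\bmu_{u_S}\to 1$ with $\bT_{0,u_S}$ a torus split by $\LL$. The crucial input is $H^1(\KK,\bT_{0,u_S})=0$: every $\text{Gal}(\LL/\KK)\cong\Z/2$-lattice is a sum of the three indecomposables $\Z$, $\Z^{-}$ and $\Z[\Z/2]$, whose tori are $\mathbb{G}_m$, the norm-one torus, and $R_{\LL/\KK}\mathbb{G}_m$, and over $\KK=\CC((t))$ these have vanishing $H^1$ by Hilbert 90, by the surjectivity $N_{\LL/\KK}(\LL^{\times})=\KK^{\times}$, and by Shapiro's lemma respectively. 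Feeding this into the long exact sequence yields an injection $H^1(\KK,\bC_{u_S})\hookrightarrow H^1(\KK,\bmu_{u_S})$.

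It then remains to estimate the cohomology of the finite group. Here $\bmu_{u_S}(\overline{\KK})=\Hom(M_{\mathrm{tors}},\mu_\infty)=:A$ is finite of order $|M_{\mathrm{tors}}|$, with $\mathcal{G}=\widehat{\Z}$ acting through the involution $\phi$ dual to $s$. Since $\mathrm{cd}(\KK)=1$, one has $H^1(\KK,\bmu_{u_S})=\mathrm{coker}(\phi-1:A\to A)$, whose order equals $|\ker(\phi-1)|=|A^{\phi}|\le|A|=|M_{\mathrm{tors}}|$. The same computation untwisted (now $\phi=\mathrm{id}$, and the torus part contributes nothing since $H^1(\KK,\bT_0)=0=H^2(\KK,\bT_0)$) gives $|H^1(\KK,\bC)|=|A|=|M_{\mathrm{tors}}|$. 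Chaining the inequalities gives $|H^1(\KK,\bC_{u_S})|\le|A^{\phi}|\le|M_{\mathrm{tors}}|=|H^1(\KK,\bC)|$, and finiteness is immediate from $|A|<\infty$. I expect the main obstacle to be the vanishing $H^1(\KK,\bT_{0,u_S})=0$ that powers the reduction to the finite part; everything else is a short exact sequence chase together with the $\mathrm{cd}=1$ computation over $\widehat{\Z}$.
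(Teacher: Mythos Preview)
Your proof is correct and follows essentially the same route as the paper: identify both sides with Galois cohomology via Corollary~\ref{BDG} and its untwisted analogue, pass to the quotient by the identity-component torus, use vanishing of $H^1$ for the (twisted) torus to inject into the cohomology of the finite quotient, and then bound the latter by the order of the finite group.

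The differences are in execution rather than strategy. For the vanishing $H^1(\KK,\bT_{0,u_S})=0$, the paper simply invokes Steinberg's theorem (Serre's Conjecture~I for connected groups over a field of cohomological dimension~$1$), whereas you give the more explicit argument via the decomposition of $\Z/2$-lattices into indecomposables; your ``norm surjectivity'' ingredient is in effect the statement $\mathrm{Br}(\KK)=0$, so the underlying input is the same. For the finite part, the paper bounds $|H^1(\KK,\mathbf M)|$ crudely by $|Z^1(\KK,\mathbf M)|=|A|$ (a cocycle is determined by the image of the progenerator $\gamma_1$), while your cokernel computation $H^1\cong A/(\phi-1)A$ together with $|\mathrm{coker}(\phi-1)|=|\ker(\phi-1)|=|A^{\phi}|$ gives the slightly sharper bound $|A^{\phi}|\le|A|$. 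Your use of $H^2(\KK,\bT_0)=0$ to identify $H^1(\KK,\bC)$ with $|A|$ exactly is a nice touch the paper does not spell out (it computes $H^1(\KK,\bC)$ directly from the product decomposition $\bT\times\prod\bmu_{m_i}$ instead).
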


\begin{proof}
Since $\bC(\bG_{\rm ad}, r_{\rm BD})$ is a closed subgroup of $\bH$, it is of the form
$$
\bC(\bG_{\rm ad}, r_{\rm BD}) = \bT \times \bmu_{m_1} \times \ldots \times \bmu_{m_n},
$$
where $\bT$ is a split torus over $\KK$ and $\bmu_{m_k}$ is the finite multiplicative $\KK$-group of $m_k$-roots of unity.
Thus,
$$
H^1 (\KK, \bC(\bG_{\rm ad}, r_{\rm BD})) = \mathbb{K}^{\times}/ (\mathbb{K}^{\times})^{m_1}\times \ldots \times \mathbb{K}^{\times}/ (\mathbb{K}^{\times})^{m_n}=\Z/(m_1)\times\ldots\times\Z/(m_n).
$$
We consider  $r_{\rm BD}$ satisfying  conclusions of Proposition \ref{X}.
It is clear that the subtorus $\bT$ is stable under the action of $\mathrm{Ad}_S$.

Therefore, we can consider the following exact sequence of the twisted $\KK$-groups:
$$
1\to \bT_{u_S} \to \bC(\bG_{\rm ad}, r_{\rm BD})_{u_S}\to\bC(\bG_{\rm ad}, r_{\rm BD})_{u_S}/\bT_{u_S}  \to 1 .
$$
The last group in the sequence above is a twisted form of the finite constant group corresponding to $\Z/(m_1)\times\ldots\times\Z/(m_n)$.
Let us denote this $\KK$-group by
$\mathbf M$.

Now, consider
$$
H^1 (\KK, \bT_{u_S}) \to H^1 (\KK, \bC(\bG_{\rm ad}, r_{\rm BD})_{u_S})\to H^1 (\KK, \mathbf M).
$$
Since $\bT_{u_S}$ is reductive, we obtain $H^1 (\KK, \bT_{u_S})=\{ 1 \}$ by Steinberg's theorem
and consequently we get an embedding
$H^1 (\KK ,\bC(\bG_{\rm ad}, r_{\rm BD})_{u_S})\to H^1 (\KK, \mathbf M)$.

Let us estimate the number of elements of $H^1 (\KK ,\mathbf M)$.
Any element of $Z^1 (\mathbf M)$ is uniquely defined by the image of $\gamma_1$ in $\mathbf M (\KK)$
because $\rm{Gal}(\KK )$ is pro-cyclic and $\gamma_1$ is its pro-generator. Therefore,
$Z^1 (\mathbf M)$ contains $m_1 \ldots m_n$ elements, and the number of elements of $H^1 (\KK, \mathbf M)$, and thus of
$\overline{H}(\bG_{\rm ad}, r_{\rm BD})\simeq H^1 (\KK ,\bC(\bG_{\rm ad}, r_{\rm BD})_{u_S})$, is at most $m_1 \ldots m_n$.

As we have seen, ${H}(\bG_{\rm ad}, r_{\rm BD})\simeq H^1 (\KK, \bC(\bG_{\rm ad}, r_{\rm BD}))$ has exactly $m_1 \ldots m_n$ elements. This completes the proof.
 \end{proof}

\begin{corollary}
  Assume that $r_{\rm BD}$ satisfies the conclusions of Proposition \ref{X} and
  $H^1 (\KK ,\bC(\bG_{\rm ad}, r_{\rm BD}))=\{ 1\}$. Then $\overline{H}(\bG_{\rm ad}, r_{\rm BD})$
  consists of one element, which is $J$.\qed
\end{corollary}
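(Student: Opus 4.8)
The plan is to deduce the twisted statement from the non-twisted hypothesis: first I would show that $H^1(\KK,\bC(\bG_{\rm ad},r_{\rm BD}))=\{1\}$ forces the centralizer to be a split torus, then apply Steinberg's theorem to the associated twisted group, and finally invoke Corollary~\ref{BDG} to conclude that $\overline{H}(\bG_{\rm ad},r_{\rm BD})$ is a singleton, which I would identify explicitly.

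First I would unwind the hypothesis. In the proof of Theorem~\ref{thm_twisted_nontwiset_relation} the centralizer is written as $\bC(\bG_{\rm ad},r_{\rm BD})=\bT\times\bmu_{m_1}\times\cdots\times\bmu_{m_n}$ with $\bT$ a split $\KK$-torus, and one computes $H^1(\KK,\bC(\bG_{\rm ad},r_{\rm BD}))=\Z/(m_1)\times\cdots\times\Z/(m_n)$ (Hilbert~90 kills $\bT$, while each finite factor contributes $\KK^\times/(\KK^\times)^{m_k}\cong\Z/(m_k)$). The assumption that this group is trivial therefore forces every $m_k=1$, so that $\bC(\bG_{\rm ad},r_{\rm BD})=\bT$ carries no finite factors at all.

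Consequently the twisted quotient $\mathbf M$ appearing in the proof of Theorem~\ref{thm_twisted_nontwiset_relation} is trivial, so $\bC(\bG_{\rm ad},r_{\rm BD})_{u_S}=\bT_{u_S}$ is a twisted form of a torus, in particular connected and reductive. Steinberg's theorem (Serre's Conjecture~I over the cohomological-dimension-one field $\KK$), applied to $\bT_{u_S}$ exactly as in that proof, gives $H^1(\KK,\bT_{u_S})=\{1\}$, whence $H^1(\KK,\bC(\bG_{\rm ad},r_{\rm BD})_{u_S})=\{1\}$. By Corollary~\ref{BDG} the map $w$ is a bijection, so $\overline{H}(\bG_{\rm ad},r_{\rm BD})$ consists of a single class. (Alternatively, the inequality $\#\overline{H}\le\#H$ of Theorem~\ref{thm_twisted_nontwiset_relation}, together with $\#H=\#H^1(\KK,\bC(\bG_{\rm ad},r_{\rm BD}))=1$, already yields $\#\overline{H}\le 1$, and the non-emptiness corollary, valid because $r_{\rm BD}$ satisfies the conclusions of Proposition~\ref{X}, gives equality.)

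Finally I would pin down the unique class. Since $r_{\rm BD}$ satisfies the conclusions of Proposition~\ref{X}, the earlier proposition shows that $j\,\mathrm{Ad}_J(r_{\rm BD})$ induces a Lie bialgebra structure, i.e.\ $J\in\overline{Z}(\bG_{\rm ad},r_{\rm BD})$; writing $J=QJD$ with $Q=D=1$, the map $w$ of Corollary~\ref{BDG} sends $J$ to the cocycle $\gamma\mapsto \gamma\ast 1=1$, the distinguished element of $H^1(\KK,\bC(\bG_{\rm ad},r_{\rm BD})_{u_S})$. As that set is a singleton, $\overline{H}(\bG_{\rm ad},r_{\rm BD})=\{[J]\}$, as claimed. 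I do not anticipate a serious obstacle here: the only delicate point is the bookkeeping that the vanishing of the non-twisted $H^1$ genuinely eliminates all $\bmu_{m_k}$ factors (so that no twisted finite quotient $\mathbf M$ survives), after which the reductivity of the remaining torus and Steinberg's theorem do all the work.
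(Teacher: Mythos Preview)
Your proof is correct and matches the paper's approach: the paper marks the corollary with \qed because it is immediate from Theorem~\ref{thm_twisted_nontwiset_relation} together with non-emptiness of $\overline{Z}(\bG_{\rm ad},r_{\rm BD})$, which is precisely your ``alternatively'' argument. Your main line of argument is also fine but is really just a special-case unpacking of that theorem's proof (showing directly that $H^1=\{1\}$ forces all $m_k=1$, so $\mathbf M$ is trivial), rather than a genuinely different route.
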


\section{Classification of Lie bialgebras}\label{sec_classif_alg}



Let 
$\bG$ be a split simple algebraic group over any field $\FF$ of characteristic zero, $\bH\subset\bG$ a Cartan subgroup, $Q\subset P$ the root and weight lattices.
Let $\chi(\bH)$ be the group of (algebraic) characters of the torus $\bH$. The map $\lambda\mapsto \mathrm{d}\lambda$,
where $\mathrm{d}$ is the differential at the identity, is an isomorphism of $\chi(\bH)$ onto a lattice $X$ with $Q\subset X\subset P$.

Let $\gamma_1,\ldots,\gamma_n$ be a $\mathbb Z$-basis of $X$, $t_1,\ldots,t_n\in\chi(\bH)$ the corresponding characters.
Then the map $h\mapsto (t_1(h),\ldots, t_n(h))$ defines an isomorphism $\bH\to(\mathbb{G}_m)^n$ of algebraic tori.\footnote{By definition, $t_i\in\mathrm{Hom}(\bH,\mathbb{G}_m)$. Here $h\in\bH(R)$ for any ring extension $R\supset\FF$.}

\begin{proposition}\label{prop-conn-centr}
  Let $X=Q$, i.e.\ the group $\bG$ is of adjoint type.
  Then $\mathbf{C}(\bG, r_{\rm BD})$ is connected for any Belavin--Drinfeld r-matrix $r_{\rm BD}$.
\end{proposition}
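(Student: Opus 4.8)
The plan is to show that $\mathbf{C}(\bG, r_{\rm BD})$ is connected by analyzing its explicit description as a subgroup of the Cartan $\bH$. By the remarks preceding the statement we know $\mathbf{C}(\bG, r_{\rm BD}) \subset \bH$, and since $\bG$ is of adjoint type we have $X = Q$, so the isomorphism $\bH \to (\mathbb{G}_m)^n$ identifies the character lattice with the root lattice $Q$. First I would compute the Lie-algebra (or functorial) condition $\Ad_X(r_{\rm BD}) = r_{\rm BD}$ for $X \in \bH$. Writing $r_{\rm BD} = r_0 + \sum_{\alpha>0} e_\alpha \otimes e_{-\alpha} + \sum_{\alpha \in (\mathrm{Span}\,\Gamma_1)^+}\sum_k e_\alpha \wedge e_{-\tau^k(\alpha)}$ as in (\ref{rBD}), and noting that $\Ad_h(e_\beta) = \beta(h)\,e_\beta$ for $h \in \bH$, the Cartan part $r_0 \in \fh \otimes \fh$ is automatically fixed, and each root-space term $e_\beta \otimes e_{-\gamma}$ transforms by the character $\beta\gamma^{-1}$ (written multiplicatively). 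Hence $h$ centralizes $r_{\rm BD}$ iff a finite family of characters of the form $\alpha - (-\alpha) = 2\alpha$ and $\alpha - \tau^k(\alpha)$ vanish on $h$.

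The key step is then to read off $\mathbf{C}(\bG, r_{\rm BD})$ as the kernel of a family of characters $\{\chi_i\} \subset \chi(\bH) = Q$. Concretely, the term $e_\alpha \otimes e_{-\alpha}$ (for every positive root $\alpha$) forces no condition since $\alpha + (-\alpha) = 0$ as the relevant character on $h$ is $\alpha(h)(-\alpha)(h)^{-1}$... more carefully, $\Ad_h(e_\alpha \otimes e_{-\alpha}) = \alpha(h)(-\alpha)(h)\, e_\alpha \otimes e_{-\alpha} = e_\alpha \otimes e_{-\alpha}$ automatically, so these impose nothing; the genuine constraints come from the skew terms $e_\alpha \wedge e_{-\tau^k(\alpha)}$, which are fixed iff $(\alpha - \tau^k(\alpha))(h) = 0$ (additively in $\fh^*$), i.e.\ $(\alpha - \tau^k(\alpha))(\log h) = 0$. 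Thus $\mathbf{C}(\bG, r_{\rm BD})$ is exactly the common kernel in $\bH$ of the finite set of characters $\alpha - \tau^k(\alpha) \in Q$, as $\alpha$ ranges over $(\mathrm{Span}\,\Gamma_1)^+$ and $k \in \mathbb{N}$.

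Having identified $\mathbf{C}(\bG, r_{\rm BD})$ as $\{h \in \bH : \chi_i(h) = 1 \text{ for all } i\}$ for a finite collection of characters $\chi_i \in Q = \chi(\bH)$, connectedness reduces to a lattice-theoretic statement: the common kernel of a set of characters of a split torus $\bH \cong (\mathbb{G}_m)^n$ is connected if and only if the sublattice $M \subset \chi(\bH)$ generated by the $\chi_i$ is \emph{saturated}, i.e.\ $\chi(\bH)/M$ is torsion-free. Equivalently, the kernel has $\pi_0$ equal to the torsion subgroup of $\chi(\bH)/M$. So the hard part, which I expect to be the main obstacle, is to prove that the sublattice of $Q$ generated by all the differences $\alpha - \tau^k(\alpha)$ is saturated in $Q$. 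Here I would use that $\tau$ is an isometry between sub-bases $\Gamma_1, \Gamma_2$ of the simple roots $\Gamma$, together with the fact that (since $\bG$ is adjoint) $\chi(\bH)$ is precisely the root lattice $Q$ with its standard basis of simple roots. The plan is to exhibit a complement: organize the simple roots into $\tau$-orbit chains $\alpha, \tau(\alpha), \tau^2(\alpha), \ldots$ and show that the differences $\alpha - \tau^k(\alpha)$ together with the remaining simple roots span $Q$ over $\mathbb{Z}$ with the quotient torsion-free — essentially because one can triangulate: each $\tau^k(\alpha)$ is congruent mod $M$ to $\alpha$, and the ``free'' simple roots (one representative per $\tau$-orbit, plus all roots outside $\mathrm{Span}\,\Gamma_1$) descend to a $\mathbb{Z}$-basis of $Q/M$. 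This exhibits $Q/M$ as free, giving saturation and hence connectedness of $\mathbf{C}(\bG, r_{\rm BD})$.
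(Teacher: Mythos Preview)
Your proposal is correct and follows essentially the same approach as the paper. The paper is slightly more direct: it cites \cite[Theorem 2]{KKPS2} to identify $\bC(\bG, r_{\rm BD})$ as the set of $h\in\bH$ with $e^{\alpha}(h)=e^{\tau(\alpha)}(h)$ for all simple $\alpha\in\Gamma_1$, then observes that since $X=Q$ the simple roots are literally the coordinate characters $t_i$, so each $\tau$-string yields equations $t_{i_1}=\cdots=t_{i_k}$ and $\bC(\bG_{\rm ad}, r_{\rm BD})\simeq(\mathbb{G}_m)^{n(r_{\rm BD})}$ directly. Your saturation argument (one simple root per $\tau$-string gives a $\mathbb{Z}$-basis of $Q/M$) is the abstract restatement of exactly this computation; you just work a bit harder by starting from all differences $\alpha-\tau^k(\alpha)$ with $\alpha\in(\mathrm{Span}\,\Gamma_1)^+$ rather than reducing first to the generating set $\{\alpha_i-\tau(\alpha_i):\alpha_i\in\Gamma_1\}$.
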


\begin{proof}
Let the discrete parameter of $r_{\rm BD}$ be $(\Gamma_1, \Gamma_2, \tau)$. It follows from \cite[Theorem 2]{KKPS2} that $\mathbf{C}(\bG, r_{\rm BD})$ consists of all $h\in \bH $ such that for any $\alpha\in \Gamma_1$ we have
$e^{\alpha} (h)=e^{\tau (\alpha) }(h)$. Here $e^{\alpha}$ is the character of $\bH $ related to the
simple root $\alpha$.

If $X=Q$, we can choose $\gamma_i=\alpha_i$, where $\alpha_i$ are simple roots.
Then the centralizer $\mathbf{C}(\bG_{\rm ad}, r_{\rm BD})\subset \bH\simeq(\mathbb{G}_m)^n$ is
defined by equations of the form $t_{i_1}=\ldots=t_{i_k}$ for any string
$$\{\alpha_{i_1},\alpha_{i_2} =\tau (\alpha_{i_1}), \ldots,\alpha_{i_k} =\tau^{k-1} (\alpha_{i_1})   \}$$
of the r-matrix $r_{\rm BD}$. Therefore, $\mathbf{C}(\bG_{\rm ad}, r_{\rm BD})\simeq(\mathbb{G}_m)^{n(r_{\rm BD})}$, where $n(r_{\rm BD})$ is the number of strings of $r_{\rm BD}$ (including
strings which consist of one element only, i.e.\ the corresponding $\alpha$ is not contained in  $\Gamma_1$).
\end{proof}

\begin{remark}
If the lattice $X$ is bigger than $Q$, then each $\alpha_i =\sum n_{ij} \gamma_j$ with $n_{ij} \in \mathbb{Z}$.
Let $\bG=\bG_X$ be the corresponding group and let
$h=(h_1,\ldots, h_n)\in \bC (\bG_X,r_{\rm BD})(R)$ for a ring extension $R\supset\FF$.
Let $\alpha_i \in \Gamma_1$ and $\tau (\alpha_i) =\alpha_k =\sum n_{km} \gamma_m$.
Then we get the following equation
on the elements $h_s$:
\begin{equation}\label{system_centr}
\prod_j h_j^{n_{ij}}   = \prod_m h_m^{n_{km}}.
\end{equation}
Consequently, we get a system of equations which might lead to
non-connected\-ness of $\mathbf{C}(\bG, r_{\rm BD})$
as it happened for
$\bG=\mathbf{SO}_{2n}$, see \cite{KKPS3}.
See also Appendix \ref{appendix_B} with computations for $E_6$ and $E_7$.
\end{remark}

By \cite[Remark 4.11 and Corollary 4.13]{PS} we have

\begin{corollary}\label{cor_nontwisted_adjoint}
Let the base field $\FF$ be of cohomological dimension $1$ (eg., $\FF=\KK$). If $\bG$ is of adjoint type, then $H(\bG, r_{\rm BD})=\{1\}$ for any Belavin--Drinfeld r-matrix $r_{\rm BD}$ with $r_0\in\mathfrak{h} \otimes_\FF\mathfrak{h}$.\qed
\end{corollary}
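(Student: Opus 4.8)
The plan is to turn the statement into a Galois-cohomology computation and then exploit the very explicit shape of the centralizer in the adjoint case. By the main identification of \cite{PS}---which, as the introductory Remark points out, holds over an arbitrary field of characteristic zero and not merely over $\KK$---the non-twisted Belavin--Drinfeld cohomology is a Galois cohomology set,
$$H(\bG, r_{\rm BD}) \cong H^1(\FF, \bC(\bG, r_{\rm BD})).$$
The hypothesis $r_0 \in \fh \otimes_\FF \fh$ is exactly what guarantees that $r_{\rm BD}$ is rational, so that $\bC(\bG, r_{\rm BD})$ is a bona fide algebraic $\FF$-group and this identification makes sense over $\FF$. It therefore suffices to prove that the right-hand side is trivial.

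First I would invoke Proposition \ref{prop-conn-centr}. Since $\bG$ is of adjoint type we have $X = Q$, and that proposition yields much more than connectedness: it identifies the centralizer explicitly as a \emph{split} $\FF$-torus,
$$\bC(\bG, r_{\rm BD}) \simeq (\mathbb{G}_m)^{n(r_{\rm BD})},$$
where $n(r_{\rm BD})$ is the number of strings of $r_{\rm BD}$. The mechanism is that the choice $\gamma_i = \alpha_i$ is available precisely when $X = Q$, so the defining conditions $e^{\alpha}(h) = e^{\tau(\alpha)}(h)$ become coordinate identifications $t_{i_1} = \cdots = t_{i_k}$ and cut out a split subtorus.

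Once splitness is in hand the cohomology is immediate: since $H^1$ commutes with finite products and $H^1(\FF, \mathbb{G}_m) = 1$ by Hilbert's Theorem 90, we obtain
$$H^1(\FF, \bC(\bG, r_{\rm BD})) = \big(H^1(\FF, \mathbb{G}_m)\big)^{n(r_{\rm BD})} = 1,$$
and hence $H(\bG, r_{\rm BD}) = 1$. I would remark that the hypothesis on the cohomological dimension of $\FF$ is in fact not consumed in the adjoint case: once the centralizer is known to be split, Hilbert~90 alone kills $H^1$. The cohomological dimension hypothesis becomes essential only for the non-split or disconnected centralizers that arise for non-adjoint $\bG$, where one instead appeals to Steinberg's theorem (Serre's Conjecture~I), exactly as in the proof of Theorem \ref{thm_twisted_nontwiset_relation}.

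The substantive content thus lies in Proposition \ref{prop-conn-centr} rather than in this corollary: the only way the argument could fail is through a failure of splitness of $\bC(\bG, r_{\rm BD})$, and the genuine obstacle in this circle of ideas is controlling the component group of the centralizer when $X \supsetneq Q$. For adjoint $\bG$ that obstacle simply disappears, via the string description of \cite[Theorem 2]{KKPS2} recorded in Proposition \ref{prop-conn-centr}; the sharp contrast is the non-adjoint case such as $\bG = \mathbf{SO}_{2n}$, where the analogous centralizer can be disconnected and $H^1$ genuinely nontrivial.
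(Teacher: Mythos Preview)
Your argument is correct and is essentially what the paper has in mind: it simply cites \cite[Remark 4.11 and Corollary 4.13]{PS}, which packages the identification $H(\bG, r_{\rm BD}) \cong H^1(\FF, \bC(\bG, r_{\rm BD}))$ together with the vanishing of $H^1$ for the centralizer, the latter being exactly the split-torus computation via Proposition \ref{prop-conn-centr} and Hilbert~90 that you spell out. Your observation that the cohomological dimension hypothesis is not actually consumed in the adjoint case (since the centralizer is split, not merely connected reductive) is a correct and worthwhile sharpening.
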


Therefore, by Theorem \ref{thm_twisted_nontwiset_relation} we have

\begin{corollary}\label{cor_twisted_adjoint}
  Let the base field be $\KK$. Assume that $\bG$ is of adjoint type. Then $\overline{H}(\bG, r_{\rm BD})=\{J\}$
  for any Belavin--Drinfeld r-matrix $r_{\rm BD}$ with $\overline{Z}(\bG, r_{\rm BD})$ non-empty.\qed
\end{corollary}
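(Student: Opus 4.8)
The plan is to read off the result by feeding the triviality of the non-twisted cohomology into Theorem \ref{thm_twisted_nontwiset_relation}. First I would record that the hypothesis that $\overline{Z}(\bG, r_{\rm BD})$ is non-empty is, by the earlier corollary characterizing non-emptiness of $\overline{Z}$, equivalent to the statement that $r_{\rm BD}$ satisfies the conclusions of Proposition \ref{X}. This is precisely the standing assumption under which Theorem \ref{thm_twisted_nontwiset_relation} is proved, so that theorem is available for our $r_{\rm BD}$.

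Next I would compute the non-twisted side. Since $\bG$ is of adjoint type, Proposition \ref{prop-conn-centr} identifies $\bC(\bG, r_{\rm BD})$ with a split torus $(\mathbb{G}_m)^{n(r_{\rm BD})}$ over $\KK$, where $n(r_{\rm BD})$ is the number of strings of $r_{\rm BD}$. As $\KK = \CC((t))$ has cohomological dimension $1$, Hilbert 90 gives $H^1(\KK, \bC(\bG, r_{\rm BD})) = \{1\}$, and through the non-twisted bijection $H(\bG, r_{\rm BD}) \simeq H^1(\KK, \bC(\bG, r_{\rm BD}))$ this says $H(\bG, r_{\rm BD}) = \{1\}$. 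This is exactly Corollary \ref{cor_nontwisted_adjoint} applied to our base field.

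With both inputs in place, Theorem \ref{thm_twisted_nontwiset_relation} bounds the cardinality of $\overline{H}(\bG, r_{\rm BD})$ above by that of $H(\bG, r_{\rm BD})$, so $\overline{H}(\bG, r_{\rm BD})$ has at most one element. On the other hand, the quotient $\overline{H}(\bG, r_{\rm BD})$ of the non-empty set $\overline{Z}(\bG, r_{\rm BD})$ is itself non-empty, hence has exactly one element. To name that element I would invoke the proposition showing that $j\Ad_J(r_{\rm BD})$ induces a Lie bialgebra structure on $\fg(\KK)$ whenever $r_{\rm BD}$ satisfies Proposition \ref{X}; this places $J$ in $\overline{Z}(\bG, r_{\rm BD})$, so its class must be the unique class, giving $\overline{H}(\bG, r_{\rm BD}) = \{J\}$.

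I do not expect a genuine obstacle, as the statement is a formal consequence of the apparatus already assembled. The one point deserving a word of caution is that in the twisted regime the continuous parameter $r_0$ need only be rational over $\LL$ (by Proposition \ref{X}(3)), not over $\KK$, so one cannot literally invoke Corollary \ref{cor_nontwisted_adjoint} for this very $r_{\rm BD}$ with base field $\KK$. This causes no difficulty, however, because $\bC(\bG, r_{\rm BD})$, and therefore its Galois cohomology, depends only on the discrete parameter $(\Gamma_1, \Gamma_2, \tau)$, so the computation $H^1(\KK, \bC(\bG, r_{\rm BD})) = \{1\}$ is insensitive to the field of rationality of $r_0$.
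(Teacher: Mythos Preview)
Your proposal is correct and follows essentially the same route as the paper: the paper simply says ``Therefore, by Theorem \ref{thm_twisted_nontwiset_relation}'' after Corollary \ref{cor_nontwisted_adjoint}, and you have spelled out exactly that deduction, invoking Proposition \ref{prop-conn-centr} to see the centralizer is a split torus and then applying the cardinality bound. Your closing remark about the rationality of $r_0$ over $\LL$ versus $\KK$ is a useful clarification the paper leaves implicit; as you note, it is harmless because $\bC(\bG, r_{\rm BD})$ depends only on the discrete parameter.
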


\begin{remark}\label{rem_twisted_classical}
Note that in the non-trivial classical cases Corollary \ref{cor_twisted_adjoint} also follows from the explicit calculation of twisted Belavin--Drinfeld cohomology obtained in \cite{KKPS2, KKPS3}. Namely:

1) Let $\fg$ be of type $A_{n-1}$, $n\geq 3$. Then it follows from results of \cite{KKPS2} that
$\bC(\bGL_n, r_{\rm BD})$ is a split
sub-torus of $\bGL_n $ for any $r_{\rm BD}$. Consequently, $\bC(\bPGL_n, r_{\rm BD})$ is a split sub-torus of
$\bPGL_n$ and $\overline{H}(\bPGL_n, r_{\rm BD})$ is either empty or contains one element $J$ if
$r_{\rm BD}$ satisfies the conclusions of Proposition \ref{X}.

2) Let $\fg$ be of type $D_n$ with $n$ odd and the vertices of the corresponding Coxeter--Dynkin diagram $\alpha_{n-1},\ \alpha_n$
  be such that $d(\alpha_{n-1})=\alpha_n$.
  It follows from results of  \cite{KKPS3} that
$r_{\rm BD}$
satisfies the conclusions of Proposition \ref{X} if and only if $\Gamma_1 =\{ \alpha_{n-1}\}$, $\Gamma_2=\{ \alpha_{n}\}$ or
$\Gamma_1 =\{ \alpha_{n-1}, \alpha_k \}$, $\Gamma_2=\{ \alpha_k ,\alpha_{n}\}$, $\tau(\alpha_{n-1})=\alpha_k$, $\tau(\alpha_{k})=\alpha_n$.
Then for the corresponding
$r_{\rm BD}$ its centralizer in $\mathbf{SO}_{2n}$ is isomorphic to $\bT \times \{\pm I\}$, where
$\bT$ is a split sub-torus. It is clear that for the corresponding adjoint group we have
$H^1 (\KK,\bC(\mathbf{SO}_{2n} /\{\pm I\}, r_{\rm BD}))=\{ 1\}$. Consequently, if $r_{\rm BD}$
satisfies the conclusions of Proposition \ref{X}, then $\overline{H} (\mathbf{SO}_{4p+2}/\{\pm I\} , r_{\rm BD})=\{ J\}$.
\end{remark}

We now return to our classification. Let $\fg(\KK)$ be as above, and $\bG$ the algebraic $\KK$-group of adjoint type corresponding to $\fg(\KK)$. By Corollary \ref{cor_nontwisted_adjoint}, for any Belavin--Drinfeld triple $(\Gamma_1, \Gamma_2, \tau)$ and a continuous Belavin--Drinfeld parameter $r_0$ we have a unique, up to $\bG$-equivalence, Lie bialgebra structure on $\fg(\KK)$ of non-twisted type. Namely, let $\mathfrak{R}$ be the set of all quadruples $(\Gamma_1, \Gamma_2, \tau, r_0)$, where $(\Gamma_1, \Gamma_2, \tau)$ is a Belavin--Drinfeld triple, and $r_0\in\mathfrak{h} \otimes_\KK\mathfrak{h}$ is a continuous Belavin--Drinfeld parameter.

\begin{theorem}
Up to $\bG$-equivalence, Lie bialgebra structures on $\fg(\KK)$ of non-twisted type 
are parameterized by $\mathfrak{R}$.\qed
\end{theorem}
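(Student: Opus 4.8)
The plan is to turn the two results already in hand into an explicit bijection: the Belavin--Drinfeld classification attaches to each non-triangular structure a unique matrix $r_{\rm BD}$, while Corollary \ref{cor_nontwisted_adjoint} shows that a fixed $r_{\rm BD}$ supports a single structure up to equivalence. Concretely, to a quadruple $q=(\Gamma_1,\Gamma_2,\tau,r_0)\in\mathfrak{R}$ I attach the matrix $r_{\rm BD}(q)$ of (\ref{rBD}). Since the $e_{\pm\alpha}$ are defined over $\CC\subset\KK$ and, by definition of $\mathfrak{R}$, one has $r_0\in\fh\otimes_\KK\fh$, the matrix $r_{\rm BD}(q)$ is rational; as it solves the CYBE and satisfies $r_{\rm BD}(q)+r_{\rm BD}(q)^{21}=\Omega$, the cobracket $\delta_{r_{\rm BD}(q)}$ is a Lie bialgebra structure on $\fg(\KK)$ of non-twisted type. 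I let $\Phi(q)$ be its $\bG$-equivalence class. Note that $q\mapsto r_{\rm BD}(q)$ is a bijection onto the Belavin--Drinfeld list: the discrete data returns $(\Gamma_1,\Gamma_2,\tau)$ and the Cartan component returns $r_0$.

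Injectivity of $\Phi$ is immediate from the uniqueness in the Belavin--Drinfeld classification. If $\Phi(q)=\Phi(q')$ then, at the level of the distinguished $r$-matrices, $r_{\rm BD}(q')=b\,\Ad_Q(r_{\rm BD}(q))$ for some $Q\in\bG(\KK)\subset\bG(\overline{\KK})$ and $b\in\KK^\times$; this is an equivalence (\ref{equivalent}) between two members of the Belavin--Drinfeld list, so uniqueness gives $r_{\rm BD}(q)=r_{\rm BD}(q')$ and hence $q=q'$.

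For surjectivity, take any non-twisted structure $\delta=\delta_r$. By the Belavin--Drinfeld classification, $\overline{r}=b\,\Ad_X(r_{\rm BD})$ for a unique $r_{\rm BD}$, with $b\in\KK^\times$ (the non-twisted case) and $X\in\bG(\overline{\KK})$. By Theorem \ref{CartanF}, $r_{\rm BD}$ is rational, so its Cartan component $r_0$ lies in $\fh\otimes_\KK\fh$ and $q=(\Gamma_1,\Gamma_2,\tau,r_0)$ belongs to $\mathfrak{R}$; the same theorem shows $X^{-1}\gamma(X)\in\bC(\bG,r_{\rm BD})(\overline{\KK})$ for every $\gamma$, i.e.\ $X$ is a non-twisted Belavin--Drinfeld cocycle. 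By Corollary \ref{cor_nontwisted_adjoint}, $H(\bG,r_{\rm BD})=\{1\}$, so this cocycle is equivalent to the trivial one: there are $Q\in\bG(\KK)$ and $C\in\bC(\bG,r_{\rm BD})(\overline{\KK})$ with $X=QC$. Then $\Ad_X(r_{\rm BD})=\Ad_Q(r_{\rm BD})$, since $C$ centralizes $r_{\rm BD}$, whence $\overline{r}=b\,\Ad_Q(r_{\rm BD})$; this is exactly an equivalence (\ref{equivalent}) of $\overline{r}$ with $r_{\rm BD}(q)$ realized by $Q\in\bG(\KK)$ and the scalar $b\in\KK^\times$. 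Therefore $[\delta]=\Phi(q)$, and $\Phi$ is onto.

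The only delicate point---and the reason the bijection is with $\mathfrak{R}$ rather than with quadruples decorated by an extra scalar---is the role of $b$: I must use the full equivalence (\ref{equivalent}), in which the Belavin--Drinfeld scalar is allowed, so that the factor $b\in\KK^\times$ appearing in $\overline{r}=b\,\Ad_X(r_{\rm BD})$ is absorbed and does not produce additional classes. This is also what links Corollary \ref{cor_nontwisted_adjoint} to the geometric claim: the identification of $H(\bG,r_{\rm BD})$ with the equivalence classes of non-twisted structures carrying $r_{\rm BD}$, established in \cite{KKPS2} and \cite{PS}, is precisely the translation that makes the cohomological vanishing $H(\bG,r_{\rm BD})=\{1\}$ say that every such structure is equivalent to $\delta_{r_{\rm BD}}$. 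Granting this dictionary, the theorem reduces to the bookkeeping bijection above.
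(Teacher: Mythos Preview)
Your proof is correct and is precisely the argument the paper intends: the theorem is stated with a bare \qed\ because it is meant as an immediate consequence of the Belavin--Drinfeld uniqueness, Theorem~\ref{CartanF} (rationality of $r_{\rm BD}$ and the cocycle condition on $X$), and Corollary~\ref{cor_nontwisted_adjoint} ($H(\bG,r_{\rm BD})=\{1\}$), which you have unpacked faithfully. Your explicit handling of the scalar $b$ via the equivalence~(\ref{equivalent}) and your remark that the dictionary between $H(\bG,r_{\rm BD})$ and gauge classes is what makes the cohomological triviality do the work are exactly the points the paper leaves implicit.
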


Let us say that r-matrices $r$ and $r'$ (and the corresponding Lie bialgebras) are \emph{$\bAut(\fg)$-equivalent} if $r' = b \, \varphi(r)$, where $\varphi\in\Aut(\fg)$, $b\in\KK^\times$. In order to classify Lie bialgebras up to $\bAut(\fg)$-equivalence, we need to describe an action of ${\rm Out} (\fg)$ on $\mathfrak{R}$.
Let $d\in {\rm Out} (\fg)$. Clearly, $d$ acts on the Cartan subalgebra of
$\mathfrak{h}\subset\fg(\KK)$ as $d(h_{\alpha})= d([e_{\alpha}, e_{-\alpha}])=h_{d({\alpha})}$, where $\alpha$ is a simple root. Then there is a natural action of ${\rm Out} (\fg)$
on the set $\mathfrak{R}$ given by
$$
d(\Gamma_1, \Gamma_2, \tau, r_0)=(d(\Gamma_1), d(\Gamma_2), d\tau d^{-1}, d(r_0)).
$$
Thus, we have the following

\begin{theorem}
Up to $\bAut(\fg)$-equivalence, Lie bialgebra structures on $\fg(\KK)$ of non-twisted type 
are parameterized by
$\rm{Out} (\fg)\backslash \mathfrak{R}$.\qed
\end{theorem}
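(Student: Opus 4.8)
The plan is to leverage the previous theorem, which already establishes that non-twisted Lie bialgebra structures up to $\bG$-equivalence are parameterized by $\mathfrak{R}$, and to upgrade this to $\bAut(\fg)$-equivalence by tracking the extra symmetry coming from outer automorphisms. Since $\Aut(\fg)$ is the semidirect product of $\bG_{\rm ad}(\KK)$ and $\Out(\fg)$, any $\varphi \in \Aut(\fg)$ factors as $\varphi = \Ad_X \circ d$ with $X \in \bG_{\rm ad}(\KK)$ and $d \in \Out(\fg)$. Thus $\bAut(\fg)$-equivalence is generated by $\bG$-equivalence together with the action of the (finite) group $\Out(\fg)$. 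The first step is therefore to record this factorization and conclude that the quotient we seek is the set of $\bG$-equivalence classes further quotiented by the residual $\Out(\fg)$-action, i.e.\ $\Out(\fg)\backslash\mathfrak{R}$, provided the $\Out(\fg)$-action on $\mathfrak{R}$ defined in the excerpt is the one genuinely induced by applying $d$ to an $r$-matrix.

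Next I would verify that the stated formula $d(\Gamma_1,\Gamma_2,\tau,r_0)=(d(\Gamma_1),d(\Gamma_2),d\tau d^{-1},d(r_0))$ really describes the effect of $d$ on the discrete and continuous parameters. For this I would apply $d$ directly to the Belavin--Drinfeld expression \eqref{rBD}. Because $d$ permutes the simple roots and, by the computation already given in the excerpt, sends $h_\alpha$ to $h_{d(\alpha)}$, it carries each $e_\alpha$ (up to the normalization of the Chevalley system) to $e_{d(\alpha)}$, hence $e_\alpha \wedge e_{-\tau^k(\alpha)}$ to $e_{d(\alpha)} \wedge e_{-d\tau^k(\alpha)}$. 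Reading off the new admissible triple shows the source set becomes $d(\Gamma_1)$, the target $d(\Gamma_2)$, and the isometry $d\tau d^{-1}$, while the Cartan part $r_0$ becomes $d(r_0)$. I would also check that $d$ preserves admissibility and the defining relations on $r_0$ (namely $r_0+r_0^{21}=\Omega_0$ and the vanishing conditions), which is immediate since $d$ is an isometry of the root system fixing $\Omega_0$.

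The substantive point is well-definedness and faithfulness of the induced action on classes: I must confirm that two quadruples lie in the same $\bG$-orbit precisely when the associated $r_{\rm BD}$'s are $\bG$-equivalent, so that the $\Out(\fg)$-action descends to $\bG\backslash$-classes without collapsing or artificially identifying orbits. This follows from the uniqueness in the Belavin--Drinfeld classification (item~2 of Section~\ref{sec_BD}): distinct quadruples give $r_{\rm BD}$'s that are \emph{not} $\bG$-equivalent, so the map $\mathfrak{R}\to\{\text{non-twisted classes}\}$ is a bijection, and an $\Out(\fg)$-action on the target pulls back to a well-defined $\Out(\fg)$-action on $\mathfrak{R}$.

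The main obstacle I anticipate is the bookkeeping around the Chevalley system normalization: applying $d$ to $e_\alpha$ may introduce sign or scalar factors $d(e_\alpha)=\pm e_{d(\alpha)}$, and I must ensure these do not alter the equivalence class, which they do not because $\bAut(\fg)$-equivalence absorbs the scalar $b\in\KK^\times$ and any diagonal adjustment lies in $\bH(\KK)\subset\bG_{\rm ad}(\KK)$. Once this is handled, the final statement follows formally: the set of $\bAut(\fg)$-equivalence classes is the set of $\bG$-equivalence classes modulo $\Out(\fg)$, which by the bijection with $\mathfrak{R}$ is exactly $\Out(\fg)\backslash\mathfrak{R}$.
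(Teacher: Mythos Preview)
Your proposal is correct and follows essentially the same approach as the paper: the paper treats the theorem as an immediate consequence (marked \qed) of the preceding paragraph, which records the semidirect product $\Aut(\fg)=\bG_{\rm ad}(\KK)\rtimes\Out(\fg)$ and the induced action $d(\Gamma_1,\Gamma_2,\tau,r_0)=(d(\Gamma_1),d(\Gamma_2),d\tau d^{-1},d(r_0))$ on $\mathfrak{R}$. You simply spell out the verification steps (how $d$ acts on the Belavin--Drinfeld expression, why admissibility is preserved, and why Chevalley-system sign ambiguities are absorbed by $\bH(\KK)$) that the paper leaves implicit.
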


Let us pass to the twisted type now. Let $\overline{\mathfrak{R}}$ be the set of all quadruples $(\Gamma_1, \Gamma_2, \tau, r_0)$, where a Belavin--Drinfeld triple $(\Gamma_1, \Gamma_2, \tau)$ 
and a continuous Belavin--Drinfeld parameter $r_0$ satisfy the conclusions of Proposition \ref{X}. By Corollary \ref{cor_twisted_adjoint}, we have

\begin{theorem}
Up to $\bG$-equivalence, Lie bialgebra structures on $\fg(\KK)$ of twisted type 
are parameterized
by $\overline{\mathfrak{R}}$.\qed
\end{theorem}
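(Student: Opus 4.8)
The plan is to prove that the map sending a twisted-type Lie bialgebra structure to its associated quadruple $(\Gamma_1,\Gamma_2,\tau,r_0)\in\overline{\mathfrak{R}}$ is a well-defined bijection onto $\overline{\mathfrak{R}}$, once we work up to $\bG$-equivalence. First I would recall that any Lie bialgebra structure of twisted type on $\fg(\KK)$ is given by some $r=j\,\Ad_X(r_{\rm BD})$ with $X\in\bG(\overline{\KK})$ and a uniquely determined Belavin--Drinfeld matrix $r_{\rm BD}$ by the Belavin--Drinfeld classification applied over $\overline{\KK}$. By Proposition \ref{X}, the existence of such an $X$ forces the discrete parameter and the continuous parameter $r_0$ of $r_{\rm BD}$ to satisfy the four conditions listed there; in other words, the quadruple $(\Gamma_1,\Gamma_2,\tau,r_0)$ attached to $r$ lies in $\overline{\mathfrak{R}}$. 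This gives a well-defined assignment from twisted-type structures to $\overline{\mathfrak{R}}$, and the Corollary to the existence proposition (that $\overline{Z}(\bG,r_{\rm BD})$ is non-empty exactly when $r_{\rm BD}$ satisfies Proposition \ref{X}) guarantees that every quadruple in $\overline{\mathfrak{R}}$ is actually realized, so the map is surjective.

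The heart of the argument is injectivity up to $\bG$-equivalence, and this is where Corollary \ref{cor_twisted_adjoint} does the essential work. Suppose two twisted-type structures share the same quadruple $(\Gamma_1,\Gamma_2,\tau,r_0)$; then both are of the form $j\,\Ad_{X_1}(r_{\rm BD})$ and $j\,\Ad_{X_2}(r_{\rm BD})$ for the \emph{same} $r_{\rm BD}$, with $X_1,X_2\in\overline{Z}(\bG,r_{\rm BD})$. By Corollary \ref{cor_twisted_adjoint}, since $\bG$ is of adjoint type the twisted cohomology set $\overline{H}(\bG,r_{\rm BD})$ consists of the single class $\{J\}$, so $X_1$ and $X_2$ are equivalent twisted cocycles. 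By the definition of equivalence of cocycles there exist $Q\in\bG(\KK)$ and $C\in\bC(\bG,r_{\rm BD})(\overline{\KK})$ with $X_1=QX_2C$. The key point is then to check that gauge-acting by $\Ad_Q$ carries one $r$-matrix to the other: because $C$ centralizes $r_{\rm BD}$ we have $\Ad_{X_1}(r_{\rm BD})=\Ad_{QX_2C}(r_{\rm BD})=\Ad_{Q}\Ad_{X_2}(r_{\rm BD})$, so $j\,\Ad_{X_1}(r_{\rm BD})=\Ad_Q\bigl(j\,\Ad_{X_2}(r_{\rm BD})\bigr)$ with $Q\in\bG(\KK)$, exhibiting the two structures as $\bG$-equivalent. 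Conversely, two $\bG$-equivalent twisted structures visibly produce the same Belavin--Drinfeld matrix by uniqueness in the Belavin--Drinfeld classification, hence the same quadruple; so the map descends to a genuine bijection on $\bG$-equivalence classes.

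I expect the main obstacle to be the bookkeeping around the scalar $b=j$ and the interaction between the continuous parameter $r_0$ and the equivalence relation. One must be careful that the equivalence \eqref{equivalent} allows rescaling by $b\in\overline{\KK}^\times$, yet the twisted-type normalization fixes $b=j$ up to elements of $\KK^\times$; since gauge equivalence by $\bG(\KK)$ preserves this normalization, no spurious collapsing or splitting of classes occurs, but this needs a sentence of justification. A second subtlety is that the continuous parameter $r_0$ is genuinely part of the data: distinct admissible $r_0$ (for a fixed discrete triple) give inequivalent structures precisely because the Belavin--Drinfeld matrix $r_{\rm BD}=r_0+r_{\rm BD}'$ is uniquely recovered from $r$, and $\Ad_Q$ for $Q\in\bG(\KK)$ cannot alter $r_0$ without altering the whole $r_{\rm BD}$. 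Assembling these observations, the theorem follows by combining Proposition \ref{X}, its existence corollary, and Corollary \ref{cor_twisted_adjoint}, with the Belavin--Drinfeld uniqueness providing the remaining rigidity.
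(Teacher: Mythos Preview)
Your proposal is correct and follows exactly the route the paper takes: the paper states this theorem as an immediate consequence of Corollary~\ref{cor_twisted_adjoint} (with the ``well-defined'' and ``surjective'' parts coming from Proposition~\ref{X} and its existence corollary), and you have simply spelled out those implications in detail. The only difference is expository---the paper leaves the unpacking to the reader, while you write it out.
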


Now we classify twisted Lie bialgebra structures on $\fg(\KK)$ 
up to $\bAut(\fg)$-equivalence.

\begin{theorem}\label{kaka}
 Up to $\bAut(\fg)$-equivalence, Lie bialgebra structures on $\fg(\KK)$ of twisted type 
 are parameterized by
 $\rm{Out} (\fg)\backslash\overline{\mathfrak{R}}$.
\end{theorem}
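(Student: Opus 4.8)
The plan is to combine the two classification results for twisted Lie bialgebras we have already established, namely the parameterization up to $\bG$-equivalence by $\overline{\mathfrak{R}}$ (the immediately preceding theorem), with the action of $\Out(\fg)$ on quadruples. The key structural fact is that $\bAut(\fg)$ is the semidirect product of $\bG_{\rm ad}(\KK)$ and $\Out(\fg)$, so that passing from $\bG$-equivalence to $\bAut(\fg)$-equivalence amounts exactly to quotienting the $\bG$-equivalence classes by the residual action of $\Out(\fg)$. Thus the strategy is to show that $\Out(\fg)$ acts on the set $\overline{\mathfrak{R}}$ by the formula $d(\Gamma_1,\Gamma_2,\tau,r_0)=(d(\Gamma_1),d(\Gamma_2),d\tau d^{-1},d(r_0))$, that this action is well defined on $\overline{\mathfrak{R}}$ (i.e.\ preserves the conclusions of Proposition \ref{X}), and that two quadruples lie in the same $\Out(\fg)$-orbit precisely when the corresponding twisted Lie bialgebras are $\bAut(\fg)$-equivalent.

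First I would verify that $\Out(\fg)$ genuinely acts on $\overline{\mathfrak{R}}$ rather than merely on $\mathfrak{R}$. The action on $\mathfrak{R}$ was already defined in the non-twisted discussion; here I must check it restricts to $\overline{\mathfrak{R}}$. This means checking that if $(\Gamma_1,\Gamma_2,\tau,r_0)$ satisfies the four conclusions of Proposition \ref{X}, then so does its image $d(\Gamma_1,\Gamma_2,\tau,r_0)$ for every $d\in\Out(\fg)$. The conditions $\Gamma_2=d'(\Gamma_1)$ and $\tau=d'\tau^{-1}(d')^{-1}$ (where $d'$ is the distinguished element attached to $S=cd'$) transform in a controlled way under $d$ because $d$ commutes with the Chevalley involution $c$ up to the diagram automorphism group, and the conditions $\gamma_2(r_0)=r_0$ and $\gamma_1(r_0)=\Ad_S(r_0)^{21}$ are Galois-equivariant and compatible with the $\Out(\fg)$-action since $\Out(\fg)$ consists of $\KK$-rational automorphisms that commute with the Galois action. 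This is a mechanical but necessary bookkeeping step.

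Next I would establish the bijection between $\bAut(\fg)$-equivalence classes and $\Out(\fg)$-orbits. Given the semidirect product decomposition, any $\varphi\in\Aut(\fg)$ factors as $\varphi=g\cdot d$ with $g\in\bG_{\rm ad}(\KK)$ and $d\in\Out(\fg)$. If $r'=b\,\varphi(r)$, then modulo the $\bG_{\rm ad}(\KK)$-part (which is exactly $\bG$-equivalence, already quotiented out by the preceding theorem) the remaining freedom is the $\Out(\fg)$-action on the discrete and continuous parameters. Conversely, if the quadruples differ by an element of $\Out(\fg)$, I can realize the corresponding diagram automorphism as an element of $\Aut(\fg)$ via the fixed section $\bOut(\fg)\to\bAut(\fg)$ stabilizing $(\bB,\bH)$, which sends one Belavin--Drinfeld r-matrix to the other up to $\bG$-equivalence and the scalar $b=j$. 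Here one uses that the section permutes the fundamental root spaces compatibly with the combinatorics encoded in $\overline{\mathfrak{R}}$.

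The main obstacle I expect is precisely the interaction between the diagram automorphism $d$ and the twisting element $S=cd$ appearing in Proposition \ref{X}: the twisted conditions are phrased relative to a fixed $S$, and applying an outer automorphism $d_0$ conjugates $S$ to a possibly different element $S'=cd_0 d d_0^{-1}$, so I must check that the well-definedness of the action on $\overline{\mathfrak{R}}$ is not spoiled by this conjugation. Concretely, the compatibility $\tau=d\tau^{-1}d^{-1}$ and $\Gamma_2=d(\Gamma_1)$ must be shown to be $\Out(\fg)$-invariant conditions, which reduces to the fact that $\Out(\fg)$ is abelian (or at least that the relevant elements commute), so that conjugating $\tau$ by an outer automorphism preserves the symmetry relation. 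Once this compatibility is confirmed, the rest follows formally from the preceding theorem and the semidirect product structure, yielding the desired parameterization by $\Out(\fg)\backslash\overline{\mathfrak{R}}$.
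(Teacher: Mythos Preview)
Your proposal is correct and follows essentially the same approach as the paper: the crux is showing that the $\Out(\fg)$-action preserves $\overline{\mathfrak{R}}$, which reduces to checking that any diagram automorphism commutes with $\gamma_1$ (trivial) and with $\Ad_S$. The paper handles the latter by a short case split on whether the Chevalley involution $c$ is inner (then $S=c$ acts as $-\Id$ on $\fh$, so $\Ad_S$ is trivial on $r_0$) or outer (then $\Out(\fg)=\{1,d\}$ with $S=cd$, and $c$ commutes with $d$), which is exactly your ``$\Out(\fg)$ is abelian'' observation specialized to the relevant types. Your outline is in fact more complete than the paper's terse proof, which only verifies condition (4) of Proposition~\ref{X} explicitly and takes the passage from $\bG$-equivalence to $\bAut(\fg)$-equivalence via the semidirect product as understood.
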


\begin{proof}
We have to prove that $d(r_0)$ satisfies the condition $$\gamma_1 (d(r_0))={\rm Ad}_S (d(r_0)^{21}),$$  while $\gamma_1 (r_0)={\rm Ad}_S (r_0)^{21}$.
It is sufficient to prove that $d$ commutes with $\gamma_1$, which is obvious, and with ${\rm Ad}_S$.

Case 1: the Chevalley involution $c$ is not inner. Let us recall that in this case  $S=cd$,  where $d$ is the only automorphism of the Coxeter--Dynkin diagram which has order $2$. Notice that $c$ commutes with $d$, see (\ref{split}). Then, clearly, $d$ commutes with ${\rm Ad}_S$.

Case 2: $c$ is inner. Then, by construction of $S$, we have $S=c$, see \cite{PS}, and ${\rm Ad}_S$ acts identically on discrete parameters because it acts as $-\mathrm{Id}$ on the Cartan subalgebra. This observation completes the proof.
\end{proof}

\begin{remark}
1) If the Chevalley involution $c$ is inner, then, by Proposition \ref{21discrete}, $\Gamma_1=\Gamma_2=\emptyset$ for any $(\Gamma_1, \Gamma_2, \tau, r_0)\in\overline{\mathfrak{R}}$. In other words, $$\overline{\mathfrak{R}}=\overline{\mathfrak{R}}_{\mathrm{DJ}}:=\{r_0\in\mathfrak{h}(\LL)\otimes_\LL\mathfrak{h}(\LL)\,:\,
r_{0}+r_{0}^{21}=\Omega_{0}, \gamma_1 (r_0) = {\rm Ad}_S (r_0)^{21}\}.$$
Therefore, in this case Lie bialgebra structures on $\fg(\KK)$ of twisted type 
are parameterized by $\overline{\mathfrak{R}}_{\mathrm{DJ}}$ up to $\bG$-equivalence and by $\rm{Out}(\fg)\backslash\overline{\mathfrak{R}}_{\mathrm{DJ}}$ up to $\bAut(\fg)$-equivalence.

\smallskip

2) Let the Chevalley involution $c$ be outer. In this case we have $|\rm{Out} (\fg)|=2$, and $d\in\rm{Out} (\fg)$ of order $2$ acts on $\overline{\mathfrak{R}}$ by
\begin{equation}\label{d-action}
d(\Gamma_1, \Gamma_2, \tau, r_0)=(\Gamma_2, \Gamma_1, \tau^{-1}, d(r_0)).
\end{equation}
Therefore, in this case Lie bialgebra structures on $\fg(\KK)$ of twisted type 
are parameterized up to $\bAut(\fg)$-equivalence by $\overline{\mathfrak{R}}$ modulo the relation (\ref{d-action}).

\smallskip

The Chevalley involution is outer if and only if $\fg$ is of type $A_{n+1}, D_{2n+1}, E_6$. For the $A_{n+1}$ and $D_{2n+1}$ cases, see Remark \ref{rem_twisted_classical}. For the $E_6$ case, see Appendix \ref{appendix_B}.
\end{remark}



\section[Classification of quantum groups]{Classification of quantum groups\footnote{In this and the following sections we consider algebraic groups over $\mathbb C$.}}\label{sec_classif_gr}

According to \cite{EK1, EK2}, classification of quantum groups such that their
classical limit is $\fg(\KK)$ is equivalent to classification of Lie bialgebra structures on $\fg(\mathbb O)=\fg\otimes_{\mathbb C}\mathbb O$, where $\mathbb O=\mathbb C[[t]]$.

First recall \cite{KKPS2} that any Lie bialgebra structure on $\fg(\mathbb O)$ can be naturally extended to $\fg(\KK)$. Conversely, for any Lie bialgebra structure $\delta$ on $\fg(\KK)$ there exists a non-negative integer $n$ such that $t^n f(t)\delta$ for any invertible element $f(t)\in \OO^\times$ can be restricted onto $\fg(\mathbb O)$ and defines a Lie bialgebra structure on it.

Let us start with the non-twisted case.

\begin{theorem}\label{class}
Let $r= a \, \mathrm{Ad}_{X}(r_{\rm BD})$ and $r'= a' \, \mathrm{Ad}_{X'}(r'_{\rm BD})$ be two r-matrices of non-twisted type defining Lie bialgebra structures on $\fg(\mathbb O)$. Write non-twisted Belavin--Drinfeld cocycles $X$ and $X'$ as $X=QD$, $X'=Q'D'$, where $Q, Q'\in \bG_{\rm ad} (\KK)$, $D\in \bC(\bG_{\rm ad}, r_{\rm BD})(\overline{\KK})$, $D'\in \bC(\bG_{\rm ad}, r'_{\rm BD})(\overline{\KK})$. Then $r$ and $r'$ define $\bG_{\rm ad}(\mathbb O)$-equivalent Lie bialgebra structures on $\fg(\mathbb O)$ if and only if the following conditions hold:

(1) $a=a'$,

(2) $r_{\rm BD}=r'_{\rm BD}$,

(3) $Q$ and $Q'$ are in the same double coset in $$\bG_{\rm ad}(\mathbb O)\backslash\bG_{\rm ad}(\mathbb K)/\bC(\bG_{\rm ad}, r_{\rm BD})(\KK).$$
\end{theorem}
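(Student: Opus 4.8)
The plan is to reduce the statement to the two-sided refinement of the non-twisted Belavin--Drinfeld classification already available in the excerpt, and then to translate $\bG_{\rm ad}(\mathbb O)$-equivalence over $\mathbb O$ into a double-coset condition over $\KK$. First I would recall that $r$ and $r'$ define $\bG_{\rm ad}(\mathbb O)$-equivalent Lie bialgebra structures on $\fg(\mathbb O)$ precisely when there exists $P\in\bG_{\rm ad}(\mathbb O)$ with $r' = \Ad_P(r)$ (no scalar is allowed once we fix the structures on $\fg(\mathbb O)$, which is why the scalars $a,a'$ must agree on the nose). Substituting $r = a\,\Ad_X(r_{\rm BD})$ and $r' = a'\,\Ad_{X'}(r'_{\rm BD})$, equivalence becomes $a'\,\Ad_{X'}(r'_{\rm BD}) = a\,\Ad_{PX}(r_{\rm BD})$, and comparing the identities $r + r^{21} = a\,\Omega$, $r' + (r')^{21} = a'\,\Omega$ forces $a = a'$, giving condition (1) immediately.

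With $a=a'$ fixed, I would cancel the scalar and read the equation as $\Ad_{X'}(r'_{\rm BD}) = \Ad_{PX}(r_{\rm BD})$, i.e.\ two Belavin--Drinfeld matrices are $\bG_{\rm ad}(\overline\KK)$-equivalent (indeed gauge equivalent) after the respective adjoint twists. By the uniqueness in the Belavin--Drinfeld classification (property (2) of the BD list, recalled after \eqref{rBD}), gauge-equivalent BD matrices coincide, so $r_{\rm BD} = r'_{\rm BD}$, which is condition (2). This step is essentially the same uniqueness argument the paper uses repeatedly (for instance in the proofs of Propositions \ref{21discrete} and \ref{X}), so I expect it to be routine once the scalars are matched.

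It remains to extract condition (3). Having established $r_{\rm BD}=r'_{\rm BD}$ and writing $X = QD$, $X' = Q'D'$ with $Q,Q'\in\bG_{\rm ad}(\KK)$ and $D,D'$ centralizing $r_{\rm BD}$, the equation $\Ad_{X'}(r_{\rm BD}) = \Ad_{PX}(r_{\rm BD})$ says $(PX)^{-1}X' \in \bC(\bG_{\rm ad}, r_{\rm BD})(\overline\KK)$. Since $\Ad_D$ and $\Ad_{D'}$ fix $r_{\rm BD}$, this is equivalent to $(PQ)^{-1}Q' \in \bC(\bG_{\rm ad}, r_{\rm BD})(\overline\KK)$, and because $P,Q,Q'$ are all $\KK$-rational the intersection with $\bG_{\rm ad}(\KK)$ gives $(PQ)^{-1}Q' = E \in \bC(\bG_{\rm ad}, r_{\rm BD})(\KK)$. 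Thus $Q' = PQE$ with $P\in\bG_{\rm ad}(\mathbb O)$ and $E\in\bC(\bG_{\rm ad},r_{\rm BD})(\KK)$, which is exactly the statement that $Q$ and $Q'$ lie in the same double coset in $\bG_{\rm ad}(\mathbb O)\backslash\bG_{\rm ad}(\KK)/\bC(\bG_{\rm ad}, r_{\rm BD})(\KK)$; running the computation backwards gives the converse. I expect the main obstacle to be the bookkeeping in this last step: one must verify that the ambient $\overline\KK$-equation descends to a $\KK$-rational identity (so that the centralizer element is genuinely in $\bC(\bG_{\rm ad},r_{\rm BD})(\KK)$ rather than merely $\bC(\bG_{\rm ad},r_{\rm BD})(\overline\KK)$), and that the splitting $X=QD$ with $D$ rational over $\overline\KK$ but $Q$ rational over $\KK$ interacts correctly with the $\mathbb O$-integrality of $P$; the connection between $\mathbb O$-equivalence of the restricted structures and $\KK$-rational gauge transformations (via the extension/restriction discussion preceding the theorem) is where the argument needs the most care.
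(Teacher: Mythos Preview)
Your argument is correct and follows essentially the same route as the paper: first use $r+r^{21}=a\Omega$ together with the $\bG_{\rm ad}$-invariance of $\Omega$ to force $a=a'$, then invoke Belavin--Drinfeld uniqueness to get $r_{\rm BD}=r'_{\rm BD}$, and finally translate the relation $X'=PXC$ with $C\in\bC(\bG_{\rm ad},r_{\rm BD})(\overline\KK)$ into the double-coset condition on $Q,Q'$ by absorbing $D,D'$ into $C$ and observing that the resulting element $(PQ)^{-1}Q'$ is $\KK$-rational. The paper does the last step by writing $Z=DC(D')^{-1}$ and reading off $Q'=PQZ$, which is the same computation you describe; your concern about descent from $\overline\KK$ to $\KK$ is handled exactly as you indicate, since $P,Q,Q'\in\bG_{\rm ad}(\KK)$.
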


\begin{proof}
Assume that $r$ and $r'$ define $\bG_{\rm ad}(\mathbb O)$-equivalent Lie bialgebra structures on $\fg(\mathbb O)$. Notice that since $r_{\rm BD}+r_{\rm BD}^{21}=a\Omega$, $r'_{\rm BD}+(r'_{\rm BD})^{21}=a'\Omega$, and $\Omega$ is invariant with respect to automorphisms of $\fg(\mathbb O)$, we have $a=a'$ and $r_{\rm BD}=r'_{\rm BD}$.

Further, let us study when $r= a \, \mathrm{Ad}_{X}(r_{\rm BD})$ and $r'= a \, \mathrm{Ad}_{X'}(r_{\rm BD})$ induce $\bG_{\rm ad}(\mathbb O)$-equivalent Lie bialgebra structures on $\fg(\mathbb O)$. This condition is equivalent to $X'=YXC$, where $Y\in\bG_{\rm ad}(\mathbb O)$, $C\in\bC(\bG_{\rm ad}, r_{\rm BD})(\overline{\KK})$. Therefore, we have $Q'=YQZ$, where $Z=DC(D')^{-1}\in\bC(\bG_{\rm ad}, r_{\rm BD})(\KK)$. Conversely, having $Q'=YQZ$ with $Y\in\bG_{\rm ad}(\mathbb O)$, $Z\in\bC(\bG_{\rm ad}, r_{\rm BD})(\KK)$, we define $C=D^{-1}ZD'\in\bC(\bG_{\rm ad}, r_{\rm BD})(\overline{\KK})$ and obtain $X'=YXC$.
\end{proof}

\begin{remark}
The theorem above means that the quantum groups are parameterized by two parameters:
\begin{itemize}
\item {\it a continuous parameter} $a=t^n f(t)$,
\item {\it a double coset} in $\bG_{\rm ad}(\mathbb O)\backslash\bG_{\rm ad}(\mathbb K)/\bC(\bG_{\rm ad}, r_{\rm BD})(\KK)$. This parameter is discrete for $\mathfrak{sl}(2)$ and is not discrete
    already for $\mathfrak{sl}(3)$ as we will see later.
\end{itemize}
Since $\mathrm{Aut }(\fg(\mathbb O))$ is a semi-direct product of $\bG_{\rm ad}(\mathbb O)$ and a finite group $\mathrm{Out} (\fg )$, up to isomorphism quantum groups are classified by the continuous parameter $a=t^n f(t)$ and the set
    $$
    \mathrm{Out}(\fg)\backslash(\bG_{\rm ad}(\mathbb O)\backslash\bG_{\rm ad}(\mathbb K)/\bC(\bG_{\rm ad}, r_{\rm BD})(\KK)).
    $$
The action of $\mathrm{Out} (\fg )$ can be easily described: clearly $\mathrm{Out} (\fg )$   acts canonically on the simply connected
$\bG_{\rm sc}(\KK )$ and the action preserves the center, so it acts on $\bG_{\rm ad}(\mathbb K)$.
\end{remark}

Consider the case $\fg=\mathfrak{sl}(n)$ and $r_{\rm BD}=r_{\rm DJ}$. Notice that the natural projection $\mathbf{GL}(n, \mathbb K)\to\mathbf{PGL}(n, \mathbb K)$ induces a bijection $$\mathbf{GL}(n, \mathbb O)\backslash\mathbf{GL}(n, \mathbb K)/\mathbf{Diag}(n, \KK)\stackrel{\sim}{\rightarrow}\mathbf{PGL}(n, \mathbb O)\backslash\mathbf{PGL}(n, \mathbb K)/\mathbf{H}(\KK).$$

Let us discuss the set $\mathbf{GL}(n, \mathbb O)\backslash\mathbf{GL}(n, \mathbb K)/\mathbf{Diag}(n, \KK)$ for small values of $n$.

\begin{proposition}\label{gauss}
The set of representatives of $\mathbf{GL}(2, \mathbb O)\backslash\mathbf{GL}(2, \mathbb K)/\mathbf{Diag}(2, \KK)$ is $\left\{ T_i=\left(\begin{array}{cc}
1 & t^{-i} \\
0 & 1 \\
\end{array}\right) : i=0, 1, 2, \ldots\right\}$.
\end{proposition}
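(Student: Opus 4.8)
The plan is to reduce the classification of double cosets to the theory of elementary divisors (Smith normal form) over the discrete valuation ring $\OO = \mathbb{C}[[t]]$, exploiting the fact that $\KK = \mathbb{C}((t))$ is its field of fractions. First I would recall that $\mathbf{GL}(2,\OO)$ is precisely the group of invertible $2\times 2$ matrices over $\OO$, and that $\mathbf{Diag}(2,\KK)$ consists of invertible diagonal matrices $\mathrm{diag}(a,b)$ with $a,b\in\KK^\times$. The problem is to parametrize orbits of $\mathbf{GL}(2,\KK)$ under the two-sided action of left multiplication by $\mathbf{GL}(2,\OO)$ and right multiplication by $\mathbf{Diag}(2,\KK)$.

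\smallskip

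The key computational engine is that $\OO$ is a principal ideal domain (in fact a DVR with uniformizer $t$), so every matrix $M\in\mathbf{GL}(2,\KK)$ can first be scaled into $\Mat_2(\OO)$ and then brought to Smith normal form: there exist $U,V\in\mathbf{GL}(2,\OO)$ with $UMV = \mathrm{diag}(t^{a}, t^{b})$ for integers $a\le b$. The next step is to observe that right multiplication by the \emph{full} diagonal group $\mathbf{Diag}(2,\KK)$ is a coarser equivalence than right multiplication by $\mathbf{GL}(2,\OO)$; in particular it allows us to rescale the two columns independently by arbitrary powers of $t$ (and units), so the diagonal entries $t^a, t^b$ can be normalized to $1$. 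Thus the relevant invariant is not the pair $(a,b)$ but rather the \emph{relative position} of the lattice $M\OO^2$ with respect to the standard lattice, measured only up to independent rescaling of the two coordinate axes. I would make this precise by passing to the lattice-theoretic picture: cosets in $\mathbf{GL}(2,\OO)\backslash\mathbf{GL}(2,\KK)$ correspond to $\OO$-lattices in $\KK^2$, and right multiplication by $\mathbf{Diag}(2,\KK)$ corresponds to rescaling each of the two standard basis lines independently.

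\smallskip

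With this reduction, the concrete step is to show that every class has a representative of the stated form $T_i = \left(\begin{array}{cc} 1 & t^{-i} \\ 0 & 1 \end{array}\right)$ with $i\ge 0$, and that distinct $i$ give distinct classes. For existence, I would take an arbitrary $M$, clear denominators and use row operations over $\OO$ (left action) together with column rescalings by powers of $t$ (right diagonal action) to reduce $M$ to upper-triangular unipotent form, then scale so the off-diagonal entry is a pure power $t^{-i}$; the sign convention and the requirement $i\ge 0$ come from choosing the representative in which the lattice genuinely sticks out, discarding the trivial case into $T_0 = \Id$. For distinctness, I would compute an invariant that separates the $T_i$: the natural candidate is the elementary-divisor-type invariant of the pair of lattices $(\OO^2,\ T_i\OO^2)$ that is insensitive to diagonal rescaling, which will read off the integer $i$ as the valuation gap. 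The main obstacle I anticipate is precisely the bookkeeping in this last distinctness argument: because the right group is the \emph{full} diagonal torus over $\KK$ rather than over $\OO$, one must verify carefully that the axis-rescaling freedom cannot collapse two different values of $i$, i.e.\ that the single remaining integer invariant is genuinely well-defined on double cosets. Checking that no combination of an $\OO$-unimodular row operation and a $\KK^\times\times\KK^\times$ column scaling carries $T_i$ to $T_{i'}$ for $i\ne i'$ is the crux, and I would handle it by explicitly tracking the minimal valuation appearing in the entries after normalizing the diagonal to be unimodular.
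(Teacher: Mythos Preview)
Your existence argument via row operations over $\OO$ together with diagonal column rescalings over $\KK$ is exactly the paper's ``Gauss algorithm'' reduction, so on that half you agree. One caveat: the Smith normal form paragraph is a false start. Smith normal form uses $\mathbf{GL}(2,\OO)$ on \emph{both} sides, whereas here the right action is by $\mathbf{Diag}(2,\KK)$; these groups are not comparable, so the sentence ``right multiplication by the full diagonal group $\mathbf{Diag}(2,\KK)$ is a coarser equivalence than right multiplication by $\mathbf{GL}(2,\OO)$'' is simply false (e.g.\ the elementary unipotent $\left(\begin{smallmatrix}1&1\\0&1\end{smallmatrix}\right)\in\mathbf{GL}(2,\OO)$ is not diagonal). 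You silently abandon this anyway, so no harm is done, but drop that paragraph.

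For distinctness you plan to extract a valuation-type invariant; this can be made to work, but the paper's argument is considerably shorter and more direct. It writes $T_i = P T_k H$ with $P\in\mathbf{GL}(2,\OO)$ and $H$ diagonal, observes that since $T_i,T_k$ are upper triangular unipotent the matrix $P$ is forced to be upper triangular with units $y_1,y_2\in\OO^\times$ on the diagonal and $H=\mathrm{diag}(y_1^{-1},y_2^{-1})$, and then reads off from the $(1,2)$-entry that the off-diagonal entry $p$ of $P$ satisfies $p = y_2 t^{-k} - y_1 t^{-i}$. Since $p\in\OO$ while $y_1,y_2$ are units in $\OO$, the two singular terms must cancel, forcing $i=k$. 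This three-line computation replaces your proposed invariant-tracking and is worth adopting.
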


\begin{proof}
Using considerations similar to the Gauss algorithm we can conclude that any double coset
has a representative of the form above. Let us prove that they are distinct in the set of
double cosets. Indeed, let $T_i=PT_k H$, with $P\in \mathbf{GL}(2, \mathbb O)$ and $H$ diagonal.
It follows that $P$ is upper triangular and hence has the form
$P=\left(\begin{array}{cc}
y_1 & p \\
0 & y_2 \\
\end{array}\right)$ with invertible elements $y_i \in\OO^\times$ and $p\in \OO$. Furthemore,
we see that $H=\mathrm{diag}\,(y_1^{-1}, y_2^{-1})$. Multiplying, we get $p=y_2 t^{-k}-y_1 t^{-i}$.
Recall that $p\in \OO$. This can never happen unless $i=k$. 
\end{proof}

From the above proof also follows

\begin{corollary}\label{cor-Gauss}
Let $P,T_i , H$ be as above. If $PT_i H=T_i$, then
$$P=\left(\begin{array}{cc}
y_1 & (y_2-y_1)t^{-i}\\
0 & y_2 \\
\end{array}\right),
$$
$H=\mathrm{diag}\,(y_1^{-1}, y_2^{-1})$, and $y_1\equiv y_2 \pmod{t^i}$.\qed
\end{corollary}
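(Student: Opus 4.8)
The plan is to read off the corollary directly from the computation already carried out in the proof of Proposition \ref{gauss}, specifying it to the case $i=k$. Recall that in that proof one starts from the identity $P T_i H = T_i$ (which is the case $k=i$ of the relation $T_i = P T_k H$), and the argument there shows that $P$ must be upper triangular, say
$$
P=\left(\begin{array}{cc}
y_1 & p \\
0 & y_2 \\
\end{array}\right),
$$
with $y_1, y_2\in\OO^\times$ and $p\in\OO$, and that $H=\mathrm{diag}\,(y_1^{-1}, y_2^{-1})$. So the first step is simply to specialize that setup.

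Next I would carry out the matrix multiplication $P T_i H$ explicitly and compare with $T_i$ to pin down $p$. Multiplying,
$$
P T_i = \left(\begin{array}{cc}
y_1 & y_1 t^{-i}+p \\
0 & y_2 \\
\end{array}\right),
$$
and then right-multiplying by $H=\mathrm{diag}\,(y_1^{-1}, y_2^{-1})$ gives
$$
P T_i H = \left(\begin{array}{cc}
1 & (y_1 t^{-i}+p)\,y_2^{-1} \\
0 & 1 \\
\end{array}\right).
$$
Setting the $(1,2)$ entry equal to that of $T_i$, namely $t^{-i}$, yields $(y_1 t^{-i}+p)\,y_2^{-1}=t^{-i}$, hence $p = y_2 t^{-i}-y_1 t^{-i}=(y_2-y_1)t^{-i}$. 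This is exactly the claimed form of $P$.

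Finally I would extract the congruence. Since $P$ has entries in $\OO$, we need $p=(y_2-y_1)t^{-i}\in\OO$, which forces $y_2-y_1$ to be divisible by $t^i$, i.e. $y_1\equiv y_2 \pmod{t^i}$. This is the last assertion of the corollary. I do not anticipate any real obstacle here: the entire content is a direct transcription of the bookkeeping in the proof of Proposition \ref{gauss}, and the only thing to be slightly careful about is the direction of the congruence and the placement of the factor $t^{-i}$ in the off-diagonal entry, which the explicit multiplication above settles unambiguously.
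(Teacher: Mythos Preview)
Your proposal is correct and follows exactly the approach the paper intends: the corollary is stated without proof (marked \qed) precisely because it is the specialization $k=i$ of the computation in the proof of Proposition~\ref{gauss}, and you have simply written out that specialization explicitly. The only extra content you add is the explicit matrix multiplication and the observation that $p\in\OO$ forces $t^i\mid(y_2-y_1)$, both of which are routine and correct.
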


\begin{proposition}
Representatives of $\mathbf{GL}(3, \mathbb O)\backslash\mathbf{GL}(3, \mathbb K)/\mathbf{Diag}(3, \KK)$
can be chosen of the form
$T_{ij}(q)=\left(\begin{array}{ccc}
1 & t^{-i} & q(t^{-1})\\
0 & 1 & t^{-j} \\
0 & 0 & 1 \\
\end{array}\right)$, where $i, j =0, 1, 2, \ldots$ and $q$ is a polynomial such that $q(0)=0$.
\end{proposition}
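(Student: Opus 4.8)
The plan is to mimic the column-reduction argument of Proposition~\ref{gauss} one dimension up, reducing an arbitrary double coset to a representative of the stated upper-unitriangular form. Given $g\in\mathbf{GL}(3,\mathbb K)$, I would first left-multiply by a suitable $Y\in\mathbf{GL}(3,\mathbb O)$ and right-multiply by a diagonal $H\in\mathbf{Diag}(3,\KK)$ to bring $g$ into upper-triangular shape with $1$'s on the diagonal. The key tool is the $2\times 2$ Gauss-type result already established: applying it to the lower-right $2\times2$ block (and to the pivots coming from the leftmost column) normalizes the superdiagonal entries to powers $t^{-i}$, $t^{-j}$ with $i,j\ge 0$. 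The genuinely new feature is the $(1,3)$ entry, which a priori is an arbitrary element of $\KK$; I must show it can be normalized to a polynomial $q(t^{-1})$ with $q(0)=0$.

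The mechanism for controlling the $(1,3)$ entry is that the remaining freedom, after the superdiagonal has been fixed, consists of those pairs $(P,H)$ with $P\in\mathbf{GL}(3,\mathbb O)$, $H$ diagonal, that stabilize the already-normalized superdiagonal. By the three-dimensional analogue of Corollary~\ref{cor-Gauss}, such $P$ is upper unitriangular-ish with entries constrained modulo powers of $t$; conjugating the representative by these stabilizing transformations shifts the $(1,3)$ entry by elements of $\mathbb O$ (the $\OO$-part can be absorbed) and by cross-terms built from the normalized superdiagonal entries. Hence the $(1,3)$ entry is well-defined only modulo $\mathbb O$, so its negative-power (principal) part is the invariant: this is exactly a polynomial in $t^{-1}$ with no constant term, i.e.\ $q$ with $q(0)=0$. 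So I would carry out the steps in the order: (i) upper-triangularize via $\mathbf{GL}(3,\mathbb O)$ on the left and $\mathbf{Diag}(3,\KK)$ on the right; (ii) normalize the two superdiagonal entries using the $2\times2$ result applied to appropriate blocks; (iii) use the residual stabilizer to kill the $\OO$-part of the $(1,3)$ entry, leaving a representative $T_{ij}(q)$.

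The main obstacle I expect is step (iii): the block-normalizations of the two superdiagonal entries are not independent, and the transformations that stabilize the superdiagonal can \emph{interact} with the $(1,3)$ slot through products of off-diagonal entries. Concretely, when I multiply by a stabilizing $P$ whose $(1,2)$ and $(2,3)$ entries are nonzero (these are forced to be divisible by $t^i$, resp.\ $t^j$, by the $2\times2$ analysis), the induced change in the $(1,3)$ entry is not simply additive by an $\OO$-element but also picks up terms like $p_{12}t^{-j}$ and $t^{-i}p_{23}$. The care needed is to verify that all such correction terms again lie in $\OO$ (because $p_{12}\in t^i\OO$ and $t^{-i}p_{12}\in\OO$, and similarly for the other block), so that the principal part of the $(1,3)$ entry is genuinely invariant and only its constant-and-higher part can be cleared. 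Establishing that these cross-terms are harmless — i.e.\ that the stabilizer acts on the $(1,3)$ entry exactly by translation by $\mathbb O$ — is the crux; once it is in place, that the remaining degrees of freedom are precisely $i,j\ge 0$ and $q\in t^{-1}\mathbb C[t^{-1}]$ follows by the same distinctness argument as in Proposition~\ref{gauss}.
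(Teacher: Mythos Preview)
Your steps (i)--(iii) are exactly the Gauss-type reduction the paper has in mind; the paper's own proof is the single sentence ``One can apply a Gauss type algorithm.'' So your core outline is correct and matches the paper's approach.

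However, you are doing more than the proposition asks, and in that overreach you make a false claim. The proposition is purely an \emph{existence} statement: every double coset contains some $T_{ij}(q)$. It does \emph{not} assert that distinct $(i,j,q)$ give distinct cosets, nor that $q$ is an invariant. For existence, step (iii) is trivial: once you have an upper unitriangular representative with superdiagonal $(t^{-i},t^{-j})$ and arbitrary $(1,3)$ entry $b\in\KK$, write $b=q(t^{-1})+r$ with $r\in\OO$ and left-multiply by the elementary matrix $I+(-r)E_{13}\in\mathbf{GL}(3,\OO)$. No stabilizer analysis is required.

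Your assertion that ``the stabilizer acts on the $(1,3)$ entry exactly by translation by $\OO$'' is wrong, and this is precisely why $q$ is \emph{not} an invariant. The paper's subsequent theorem computes the genuine equivalence: $T_{ij}(q_1)$ and $T_{ij}(q_2)$ lie in the same double coset iff $q_2=[y_1 q_1+(y_2-y_1)t^{-i-j}]$ for suitable units $y_1,y_2\in\OO^\times$ satisfying congruence conditions. So the action involves a multiplicative rescaling of $q$ and an additive shift by $t^{-i-j}$-terms, not merely $\OO$-translation. Your ``distinctness argument as in Proposition~\ref{gauss}'' would therefore fail for $q$ (though it does work for $i$ and $j$, as the paper notes). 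Drop the invariance discussion entirely and your proof of the proposition as stated is complete.
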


\begin{proof}
One can apply a Gauss type algorithm.
\end{proof}

It follows from Proposition \ref{gauss} and Corollary \ref{cor-Gauss} that if $T_{ij} (q_1)$ and
$T_{kl} (q_2)$ are contained in the same double coset, then $i=k, j=l$.
Furthermore, it follows that if $PT_{ij}(q_1)H=T_{ij}(q_2)$, then
$$
P=P_{ij}(y_1,y_2,p)=\left(\begin{array}{ccc}
y_1 & t^{-i}(y_2-y_1)& p\\
0 & y_2 & t^{-j}(1-y_2) \\
0 & 0 & 1 \\
\end{array}\right)
$$
and $H=\mathrm{diag}\,(y_1^{-1},y_2^{-1},1)$ with
$p\in \OO$, $y_1, y_2\in \OO^\times$ such that $y_2\equiv 1 \pmod{t^j}$,
$y_2\equiv y_1\pmod{t^i}$.

Let $f(t)=\sum_{-N}^M a_s t^s\in\KK$. Define $[f]=\sum_{-N}^{-1} a_s t^s$.

\begin{theorem}
$T_{ij} (q_1)$ and $T_{ij} (q_2)$ are in the same double coset if and only if
$q_2=[y_1 q_1+(y_2-y_1)t^{-i-j}]$ for some $y_1,y_2\in\OO^\times$ such
that $y_2\equiv 1\pmod{t^j}$ and $y_2\equiv y_1\pmod{t^i}$.
\end{theorem}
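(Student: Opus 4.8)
The plan is to reduce the entire statement to the behaviour of a single matrix entry under the polar-part operator $[\,\cdot\,]$, exploiting the structural description of admissible $P$ and $H$ that has just been recorded. Recall that $T_{ij}(q_1)$ and $T_{ij}(q_2)$ lie in the same double coset exactly when $T_{ij}(q_2)=P\,T_{ij}(q_1)\,H$ for some $P\in\mathbf{GL}(3,\OO)$ and $H\in\mathbf{Diag}(3,\KK)$, and that any such pair is necessarily of the form $P=P_{ij}(y_1,y_2,p)$ and $H=\mathrm{diag}(y_1^{-1},y_2^{-1},1)$ with $p\in\OO$, $y_1,y_2\in\OO^\times$, $y_2\equiv 1\pmod{t^j}$ and $y_2\equiv y_1\pmod{t^i}$. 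Since the $(1,1)$, $(1,2)$, $(2,2)$, $(2,3)$ and $(3,3)$ entries of $P\,T_{ij}(q_1)\,H$ are already forced to coincide with those of $T_{ij}(q_2)$, the whole content of the equation $T_{ij}(q_2)=P\,T_{ij}(q_1)\,H$ is concentrated in its $(1,3)$ entry.

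For the forward direction I would multiply out $P_{ij}(y_1,y_2,p)\,T_{ij}(q_1)\,\mathrm{diag}(y_1^{-1},y_2^{-1},1)$ and read off the $(1,3)$ entry, obtaining
$$q_2 = y_1 q_1 + (y_2-y_1)t^{-i-j} + p.$$
Now $q_2$ is, by definition of a representative, a polynomial in $t^{-1}$ with vanishing constant term, hence involves only strictly negative powers of $t$, whereas $p\in\OO$ contributes only non-negative powers. Applying $[\,\cdot\,]$ to both sides therefore annihilates $p$ and fixes $q_2$, yielding $q_2=[\,y_1 q_1+(y_2-y_1)t^{-i-j}\,]$, which is the asserted relation.

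For the converse, suppose $q_2=[\,f\,]$ with $f:=y_1 q_1+(y_2-y_1)t^{-i-j}$ and the two congruences holding. I would set $p:=q_2-f=[\,f\,]-f$, which lies in $\OO$ since $f-[\,f\,]$ is precisely the non-negative part of $f$. Then $P:=P_{ij}(y_1,y_2,p)$ genuinely belongs to $\mathbf{GL}(3,\OO)$: its determinant is $y_1y_2\in\OO^\times$, and the two potentially polar off-diagonal entries $t^{-i}(y_2-y_1)$ and $t^{-j}(1-y_2)$ lie in $\OO$ exactly because $y_2\equiv y_1\pmod{t^i}$ and $y_2\equiv 1\pmod{t^j}$. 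With $H:=\mathrm{diag}(y_1^{-1},y_2^{-1},1)\in\mathbf{Diag}(3,\KK)$, the computation of the previous paragraph gives $(1,3)$ entry $f+p=q_2$, so $P\,T_{ij}(q_1)\,H=T_{ij}(q_2)$ and the two matrices share a double coset. The only delicate point — and it is the crux, though a mild one — is the polar-part bookkeeping: one must observe that the freedom in $p\in\OO$ absorbs exactly the non-negative part of $f$, so that the double-coset class is governed solely by $[\,f\,]$, and that $[\,f\,]$ automatically has vanishing constant term and thus yields a legitimate representative $q_2$.
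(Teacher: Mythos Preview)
Your proof is correct and follows essentially the same approach as the paper. The paper's proof is much terser: it simply notes that after computing the product $P_{ij}(y_1,y_2,p)\cdot T_{ij}(q_1)\cdot\mathrm{diag}(y_1^{-1},y_2^{-1},1)$ one may absorb the non-negative powers of $t$ in the upper-right corner via an elementary row operation (equivalently, via the free choice of $p\in\OO$), which is exactly the polar-part bookkeeping you spell out in detail.
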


\begin{proof}
Calculating the product $P_{ij}(y_1,y_2,p)\cdot T_{ij}(q_1)\cdot\mathrm{diag}\,(y_1^{-1},y_2^{-1},1)$,
we can get positive degrees of $t$ in the upper right corner only.
Now, applying an elementary row operation we can ``kill'' the polynomial part in the upper right corner.
\end{proof}

So, we have constructed an action of the group
$$
N_{ij}=\{ (y_1,y_2)\in\OO^\times\times\OO^\times:\ y_2\equiv 1\!\!\pmod{t^j},\ \
y_2\equiv y_1\!\!\pmod{t^i}\}
$$
on the set of polynomials $P_0=t\CC [t]$.

\begin{corollary}
Double cosets $\mathbf{GL}(3, \mathbb O)\backslash\mathbf{GL}(3, \mathbb K)/\mathbf{Diag}(3, \KK)$
are in a bijection with the following data:

(1) A pair of non-negative integers $i,j$.

(2) An orbit of the action of the group $N_{ij}$ on the set $P_0$.\qed
\end{corollary}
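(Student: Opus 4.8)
The plan is to assemble the bijection from the two structural results that immediately precede this corollary, namely the theorem characterizing when two $T_{ij}(q_1)$ and $T_{ij}(q_2)$ lie in the same double coset, together with the remark that distinct $(i,j)$ always yield distinct cosets. First I would invoke the proposition giving $T_{ij}(q)$ as a complete set of representatives for $\mathbf{GL}(3, \mathbb O)\backslash\mathbf{GL}(3, \mathbb K)/\mathbf{Diag}(3, \KK)$: this shows every double coset is hit by some $T_{ij}(q)$ with $i,j\ge 0$ and $q\in P_0 = t\CC[t]$ (the condition $q(0)=0$ is exactly membership in $P_0$). So the assignment $(i,j,q)\mapsto [\text{coset of }T_{ij}(q)]$ is surjective onto the set of double cosets.

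Next I would show that this assignment descends to the claimed data. The observation recorded just before the corollary (a consequence of Proposition~\ref{gauss} and Corollary~\ref{cor-Gauss}) says that if $T_{ij}(q_1)$ and $T_{kl}(q_2)$ share a double coset then $i=k$ and $j=l$; hence the pair of non-negative integers $(i,j)$ is a well-defined invariant of the coset. Once $(i,j)$ is fixed, the preceding theorem states precisely that $T_{ij}(q_1)$ and $T_{ij}(q_2)$ are in the same coset if and only if $q_2 = [y_1 q_1 + (y_2-y_1)t^{-i-j}]$ for some $(y_1,y_2)\in N_{ij}$. By the discussion following that theorem, this relation is exactly the orbit equivalence for the action of $N_{ij}$ on $P_0$. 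Therefore, within a fixed $(i,j)$, double cosets correspond bijectively to $N_{ij}$-orbits on $P_0$.

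Combining the two steps gives the map $[\text{coset}]\mapsto (i,j,\ N_{ij}\text{-orbit of }q)$ is well defined and injective (well-definedness from the $(i,j)$-invariance and the orbit description, injectivity because two cosets with the same data must share the same $(i,j)$ and $q$-values in one orbit, hence coincide), while surjectivity is the representative proposition. This establishes the stated bijection.

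I do not anticipate a genuine obstacle here, since all the analytic content has already been extracted in the two results immediately above; the only care needed is a purely bookkeeping point, namely verifying that the equivalence relation $q_2 = [y_1 q_1 + (y_2-y_1)t^{-i-j}]$ is literally the orbit relation of a genuine group action (reflexivity via $(y_1,y_2)=(1,1)$, and closure under the group law of $N_{ij}$), which is what the sentence constructing the $N_{ij}$-action on $P_0$ asserts. Thus the corollary follows by simply packaging these facts.
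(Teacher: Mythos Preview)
Your proposal is correct and matches the paper's approach: the corollary is stated with a bare \qed, so the authors regard it as an immediate repackaging of the proposition on representatives $T_{ij}(q)$, the observation that $(i,j)$ is an invariant of the double coset, and the theorem characterizing when $T_{ij}(q_1)$ and $T_{ij}(q_2)$ lie in the same coset via the $N_{ij}$-action. Your write-up spells out precisely this assembly, and your caveat about checking that the relation is a genuine group action is the only point requiring any care.
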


\begin{remark}
A description of these orbits is an elementary problem, which we leave to the readers.
One can check that the orbit of the zero polynomial $N_{ij}(0)$ consists of all polynomials of degree $\leq j$.
\end{remark}

Now let us turn to the twisted case.

\begin{theorem}
Let $r= aj \, \mathrm{Ad}_{X}(r_{\rm BD})$ and $r'= a'j \, \mathrm{Ad}_{X'}(r'_{\rm BD})$ be two r-matrices of twisted type defining Lie bialgebra structures on $\fg(\mathbb O)$. Write twisted Belavin--Drinfeld cocycles $X$ and $X'$ as $X=QJD$, $X'=Q'JD'$, where $Q, Q'\in \bG_{\rm ad} (\KK)$, $D\in \bC(\bG_{\rm ad}, r_{\rm BD})(\overline{\KK})$, $D'\in \bC(\bG_{\rm ad}, r'_{\rm BD})(\overline{\KK})$. Then $r$ and $r'$ define $\bG_{\rm ad}(\mathbb O)$-equivalent Lie bialgebra structures on $\fg(\mathbb O)$ if and only if the following conditions hold:

(1) $a=a'$,

(2) $r_{\rm BD}=r'_{\rm BD}$,

(3) $Q$ and $Q'$ are in the same double coset in $$\bG_{\rm ad}(\mathbb O)\backslash\bG_{\rm ad}(\mathbb K)/(J\cdot\bC(\bG_{\rm ad}, r_{\rm BD})(\overline{\KK})\cdot J^{-1})\cap\bG_{\rm ad}(\KK).$$
\end{theorem}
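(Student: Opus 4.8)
The plan is to imitate the proof of Theorem~\ref{class}, the only new feature being the fixed element $J$ and the conjugation it induces. I would first dispose of (1) and (2). From $r=aj\,\mathrm{Ad}_X(r_{\rm BD})$ and the $\mathrm{Ad}_X$-invariance of the Casimir one gets $r+r^{21}=aj\,\mathrm{Ad}_X(r_{\rm BD}+r_{\rm BD}^{21})=aj\,\mathrm{Ad}_X(\Omega)=aj\Omega$, and likewise $r'+(r')^{21}=a'j\Omega$. Any $\bG_{\rm ad}(\mathbb{O})$-equivalence $r'=\mathrm{Ad}_Y(r)$ with $Y\in\bG_{\rm ad}(\mathbb{O})$ is $\mathbb{O}$-linear, commutes with the flip, and fixes $\Omega$; hence $a'j\Omega=\mathrm{Ad}_Y(aj\Omega)=aj\Omega$, so $a=a'$. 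Since such an equivalence is in particular an equivalence over $\overline{\KK}$, the uniqueness part of the Belavin--Drinfeld classification forces $r_{\rm BD}=r'_{\rm BD}$.

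Granting (1) and (2), I would reduce the equivalence to a condition on the cocycles, exactly as in the non-twisted case. Writing $r'=\mathrm{Ad}_Y(r)$ and using $\mathrm{Ad}_C(r_{\rm BD})=r_{\rm BD}$ for $C\in\bC(\bG_{\rm ad},r_{\rm BD})(\overline{\KK})$, one checks that $r'=\mathrm{Ad}_Y(r)$ holds for some $Y\in\bG_{\rm ad}(\mathbb{O})$ if and only if $\mathrm{Ad}_{X'^{-1}YX}(r_{\rm BD})=r_{\rm BD}$, i.e.
$$X'=YXC,\qquad Y\in\bG_{\rm ad}(\mathbb{O}),\ C\in\bC(\bG_{\rm ad},r_{\rm BD})(\overline{\KK}),$$
where I use that $\bC(\bG_{\rm ad},r'_{\rm BD})=\bC(\bG_{\rm ad},r_{\rm BD})$.

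The crux is to substitute $X=QJD$ and $X'=Q'JD'$ (with the same fixed $J$, which depends only on $S$) into $X'=YXC$. This gives $Q'JD'=YQJDC$, hence
$$Q'=YQ\cdot J\big(DCD'^{-1}\big)J^{-1}.$$
Put $Z=J(DCD'^{-1})J^{-1}$. Since $D,C,D'$ all lie in $\bC(\bG_{\rm ad},r_{\rm BD})(\overline{\KK})$, we have $Z\in J\,\bC(\bG_{\rm ad},r_{\rm BD})(\overline{\KK})\,J^{-1}$; and since $Z=Q^{-1}Y^{-1}Q'$ with $Q,Q',Y\in\bG_{\rm ad}(\KK)$, we have $Z\in\bG_{\rm ad}(\KK)$. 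Thus $Z$ lies in the intersection appearing in (3) and $Q'=YQZ$ places $Q,Q'$ in the same double coset. For the converse I would start from $Q'=YQZ$ with $Z=JEJ^{-1}$ in that intersection, set $C=D^{-1}ED'\in\bC(\bG_{\rm ad},r_{\rm BD})(\overline{\KK})$, and verify $YXC=YQJD\,D^{-1}ED'=YQJED'=Q'JD'=X'$, which recovers the equivalence.

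The step needing the most care is the rationality of $Z$: in the non-twisted case the analogous element lies automatically in $\bC(\bG_{\rm ad},r_{\rm BD})(\KK)$, whereas here $J(DCD'^{-1})J^{-1}$ is rational only because of the identity $Z=Q^{-1}Y^{-1}Q'$. Keeping track of this is precisely what replaces the clean divisor $\bC(\bG_{\rm ad},r_{\rm BD})(\KK)$ by the twisted one; conceptually $(J\,\bC(\bG_{\rm ad},r_{\rm BD})(\overline{\KK})\,J^{-1})\cap\bG_{\rm ad}(\KK)$ is nothing but $J\,\bC(\bG_{\rm ad},r_{\rm BD})_{u_S}(\KK)\,J^{-1}$, the $\KK$-points of the twisted centralizer of Section~\ref{sec_twisted_2}, which is the correct replacement for the problem at hand.
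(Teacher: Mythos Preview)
Your proposal is correct and follows essentially the same approach as the paper, whose proof consists of the single sentence ``Similar to the proof of Theorem~\ref{class}.'' You have carried out precisely that adaptation: the argument for (1) and (2) via $\Omega$-invariance, the reduction to $X'=YXC$, and the substitution $X=QJD$, $X'=Q'JD'$ leading to $Q'=YQZ$ with $Z=J(DCD'^{-1})J^{-1}$, together with the converse, all match the intended reasoning. Your closing remark identifying $(J\,\bC(\bG_{\rm ad},r_{\rm BD})(\overline{\KK})\,J^{-1})\cap\bG_{\rm ad}(\KK)$ with $J\,\bC(\bG_{\rm ad},r_{\rm BD})_{u_S}(\KK)\,J^{-1}$ is also in line with the paragraph the paper places immediately after the theorem.
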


\begin{proof}
Similar to the proof of Theorem \ref{class}.
\end{proof}

If $C\in (J\cdot\bC(\bG_{\rm ad}, r_{\rm BD})(\overline{\KK})\cdot J^{-1})\cap\bG_{\rm ad}(\KK)$,
then
$JCJ^{-1}=\gamma_1 (JCJ^{-1})=JS\gamma_1 (C)SJ^{-1}$.
Therefore, $\gamma_1 (C)=SCS $. Consequently,  $\gamma_2 (C)=C$ and
$C\in \bC(\bG_{\rm ad}, r_{\rm BD})({\LL})$.

Let us concentrate on the case $\fg=\mathfrak{sl} (2)$.
It is easy to show that in this case $C=\mathrm{diag}\,(d, \gamma_1 (d))$ with $d\in \LL$.
Another easy remark is that in this case we can substitute
$$\bG_{\rm ad}(\mathbb O)\backslash\bG_{\rm ad}(\mathbb K)/(J\cdot\bC(\bG_{\rm ad}, r_{\rm BD})(\overline{\KK})\cdot J^{-1})\cap\bG_{\rm ad}(\KK)$$ by
$$
\mathbf{GL}(2, \mathbb O)\backslash\mathbf{GL}(2, \mathbb K)/J\mathbf{D}_2 J^{-1}
\cong J\mathbf{D}_2 J^{-1}\backslash\mathbf{GL}(2, \mathbb K)/\mathbf{GL}(2, \mathbb O)
\cong
$$
$$\mathbf{D}_2\backslash J^{-1}\mathbf{GL}(2, \mathbb K)J /J^{-1}\mathbf{GL}(2, \mathbb O)J,$$
where $\mathbf{D}_2=\{\mathrm{diag}\,(d,\gamma_1 (d)): \ d\in\LL \}$ and 
$J=\left(\begin{array}{cc}
1 & 1\\
-j & j \\
\end{array}\right)$, see \cite{KKPS2}.
\vskip0.2cm
To study the latter set we need the theory of orders, see \cite{Rein}. The description is given in the appendix below.

\appendix
\section{
Double cosets and orders (by Juliusz Brzezinski and A. Stolin)}\label{appendix_A}

\subsection{Double cosets and orders in $\KK^n$}
In this subsection, we consider $\KK^n$ as a $\KK$-algebra with $\KK$ embedded diagonally
into $\KK^n$. Our purpose is to describe the double cosets which we discussed in the preceding section
in terms of $\OO$-orders in the algebra $\KK^n$.

\begin{definition} An $\OO$-module $M\subset \KK^n$ is called a \emph{lattice} on $\KK^n$ if its rank over $\OO$
is equal to $n$.
\end{definition}

Clearly, $\mathbf{GL}(n, \mathbb K)$ acts transitively on the set of lattices in $\KK^n$ because
any lattice has a form $M=G\cdot\OO^n$ for some $G  \in \mathbf{GL}(n, \mathbb K)$.
Hence, $$\mathbf{GL}(n, \mathbb K)/\mathbf{GL}(n, \mathbb O)\cong \{\mathrm{lattices}\ \mathrm{in}\ \KK^n\}.$$

\begin{definition} An \emph{order} in $\KK^n$ is a subring $\Lambda$ of  $\KK^n$ containing $\OO$, finitely generated
as an $\OO$-module and such that $\Lambda\KK = \KK^n$.
\end{definition}

\begin{definition}
Let $M\subset \KK^n$ be a lattice. Then $I(M)=\{x\in \KK^n:\ xM\subseteq M \}$
is called the \emph{set of multipliers} of $M$.
\end{definition}

The following lemma is well known.

\begin{lemma}
1) Any order $\Lambda$ is contained in $\OO^n$.

2) For any order $\Lambda$, we have $I(\Lambda)=\Lambda$.

3) For any lattice $M$, $I(M)$ is an order.\qed
\end{lemma}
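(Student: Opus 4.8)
The plan is to treat the three statements in increasing order of difficulty, exploiting two standard features of the base ring: $\OO = \CC[[t]]$ is a complete discrete valuation ring — in particular a Noetherian PID that is integrally closed in its fraction field $\KK$ — and any two full lattices in $\KK^n$ are commensurable.

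For part (1), I would first show that every element of an order is integral over $\OO$. If $x \in \Lambda$, then multiplication by $x$ is an $\OO$-linear endomorphism of the finitely generated module $\Lambda$ preserving $\Lambda$, so the determinant trick yields a monic polynomial over $\OO$ which, evaluated at the element $1 \in \Lambda$, annihilates $x$. It then remains to identify the integral closure of $\OO$ in $\KK^n$. Writing an integral element as $x = (x_1, \ldots, x_n)$ and projecting its monic integral equation onto each factor shows that each $x_i$ is integral over $\OO$ inside $\KK$; since $\OO$ is integrally closed in $\KK$, each $x_i \in \OO$, and hence $\Lambda \subseteq \OO^n$.

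Part (2) is essentially formal. Because $\OO$, and in particular $1 = (1, \ldots, 1)$, lies in $\Lambda$, any $x \in I(\Lambda)$ satisfies $x = x\cdot 1 \in x\Lambda \subseteq \Lambda$, giving $I(\Lambda) \subseteq \Lambda$; the reverse inclusion is immediate since $\Lambda$ is closed under multiplication. Thus $I(\Lambda) = \Lambda$.

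Part (3) is the substantive one, and I would verify the four defining properties of an order for $I(M)$. That $I(M)$ is a subring containing $\OO$ is straightforward: it is closed under sums and products, contains $1$, and contains $\OO$ because $M$ is an $\OO$-module. The finite generation is the first real step: fixing an $\OO$-basis $m_1, \ldots, m_n$ of the free module $M$ — which is simultaneously a $\KK$-basis of $\KK^n$ since $M$ is full — the faithful multiplication action embeds $\KK^n$ into $\Mat_n(\KK)$, and $I(M)$ is precisely the preimage of $\Mat_n(\OO)$; as an $\OO$-submodule of the finitely generated module $\Mat_n(\OO)$ over the Noetherian ring $\OO$, it is finitely generated. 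The point I expect to be the main obstacle is showing $I(M)\KK = \KK^n$, since containing $\OO$ only accounts for the diagonal. Here I would invoke commensurability to choose $N$ with $t^N \OO^n \subseteq M \subseteq t^{-N}\OO^n$, and then check for each idempotent $e_i \in \KK^n$ that $t^{2N} e_i M \subseteq t^{2N} e_i (t^{-N}\OO^n) = t^N \OO e_i \subseteq t^N \OO^n \subseteq M$, so that $t^{2N} e_i \in I(M)$. Since these nonzero multiples of the standard basis already span $\KK^n$ over $\KK$, the spanning property follows, and $I(M)$ is an order.
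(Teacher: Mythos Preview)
Your proof is correct and complete. The paper itself does not prove this lemma: it simply states the result as well known and marks it with \qed, so there is no approach to compare against; your argument via integrality for (1), the unit element for (2), and commensurability of lattices for (3) is the standard one.
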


\begin{proposition}
There is a canonical surjection
\begin{gather*}
\mathbf{GL}(n, \mathbb O)\backslash\mathbf{GL}(n, \mathbb K)/\mathbf{Diag}(n, \KK)\cong
\mathbf{Diag}(n, \KK)\backslash\mathbf{GL}(n, \mathbb K)/\mathbf{GL}(n, \mathbb O)\to\\
\{\mathrm{orders}\ \mathrm{in}\ \KK^n \}.
\end{gather*}
\end{proposition}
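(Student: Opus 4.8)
The plan is to realize both double-coset spaces as spaces of lattices up to a torus action, and then to send a lattice to its ring of multipliers. First I would invoke the identification $\mathbf{GL}(n,\KK)/\mathbf{GL}(n,\OO)\cong\{\mathrm{lattices}\ \mathrm{in}\ \KK^n\}$ recorded just above the statement, under which $g\mathbf{GL}(n,\OO)\mapsto M=g\cdot\OO^n$. The leftmost bijection $\cong$ in the assertion is then purely formal: since $\mathbf{GL}(n,\OO)$ and $\mathbf{Diag}(n,\KK)$ are both stable under inversion, the map $g\mapsto g^{-1}$ descends to a bijection $\mathbf{GL}(n,\OO)\backslash\mathbf{GL}(n,\KK)/\mathbf{Diag}(n,\KK)\cong\mathbf{Diag}(n,\KK)\backslash\mathbf{GL}(n,\KK)/\mathbf{GL}(n,\OO)$. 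Thus it suffices to produce a canonical surjection from the second space onto the set of orders.

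Next I would translate the residual left action of $\mathbf{Diag}(n,\KK)$ into the lattice picture. For $D=\mathrm{diag}(d_1,\dots,d_n)$ and $M=g\cdot\OO^n$ one has $D\cdot M=uM$, where $u=(d_1,\dots,d_n)$ is the corresponding unit of the algebra $\KK^n$; hence $\mathbf{Diag}(n,\KK)\backslash\mathbf{GL}(n,\KK)/\mathbf{GL}(n,\OO)$ is exactly the set of lattices modulo multiplication by $(\KK^n)^\times$. I would then define the candidate map by $M\mapsto I(M)$. By part 3 of the Lemma, $I(M)$ is an order, so the target is correct; what remains is to check that this map is constant on $(\KK^n)^\times$-orbits. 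This is the heart of the argument and the step I expect to be the main (though modest) obstacle: for a unit $u$ one must verify $I(uM)=\{x:\,x\,uM\subseteq uM\}=\{x:\,u(xM)\subseteq uM\}=\{x:\,xM\subseteq M\}=I(M)$, where the middle equality uses the commutativity of $\KK^n$ and the last cancels the unit $u$. This invariance is precisely where the (commutative) algebra structure of $\KK^n$, as opposed to merely its $\KK$-vector-space structure, is used; everything else is bookkeeping.

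Finally I would establish surjectivity. Given an order $\Lambda$, part 1 of the Lemma gives $\Lambda\subseteq\OO^n$, so $\Lambda$ is a finitely generated torsion-free $\OO$-module; since $\OO$ is a discrete valuation ring and the condition $\Lambda\KK=\KK^n$ forces full rank, $\Lambda$ is free of rank $n$, i.e.\ a lattice. By part 2 of the Lemma, $I(\Lambda)=\Lambda$, so the class of the lattice $\Lambda$ maps onto $\Lambda$, proving that every order is attained. Note I would \emph{not} claim injectivity: distinct orbits of lattices can share a ring of multipliers, which is why the statement asserts only a surjection. Assembling the inversion bijection with the well-defined, surjective multiplier map yields the canonical surjection claimed.
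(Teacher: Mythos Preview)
Your proposal is correct and follows essentially the same route as the paper: identify the double-coset space with lattices modulo $(\KK^n)^\times$, define the map via $M\mapsto I(M)$, use commutativity of $\KK^n$ for well-definedness, and invoke $I(\Lambda)=\Lambda$ for surjectivity. You supply a little more detail than the paper (the inversion bijection for the $\cong$, and the verification that an order is itself a lattice), but the argument is the same.
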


\begin{proof}
Consider two lattices in $\KK^n$, $M_1=G\cdot \OO^n$ and $M_2=H\cdot G\cdot \OO^n$, where $H\in \mathbf{Diag}(n, \KK)$.
Clearly, multiplication by $H=\mathrm{diag}\,(a_1,\ldots,a_n)$ coincides with multiplication by
$h=(a_1,\ldots,a_n)\in\KK^n$. Let $g\in I(M_1)$. Since the ring $\KK^n$ is commutative, it
follows that $g\in I(M_2)$ and so, by symmetry we have $I(M_1)=I(M_2)$. Thus, the correspondence $G\mapsto I(M_1)$ defines the required
map
$$\omega_n: \mathbf{Diag}(n, \KK)\backslash\mathbf{GL}(n, \mathbb K)/\mathbf{GL}(n, \mathbb O)\to
\{\mathrm{orders}\ \mathrm{in}\ \KK^n \}.$$
It is a surjection because for any order $\Lambda$ we have $I(\Lambda)=\Lambda$.
\end{proof}

Generally speaking, the map defined above is not injective. Let us define its kernel
in the sense of sets. More exactly, for any order $\Lambda$ we will find the subset
$\omega_n^{-1} (\Lambda)$.

\begin{definition}
Given an order $\Lambda$, we say that a lattice $M$ \emph{belongs} to $\Lambda$ if $\Lambda=I(M)$.
\end{definition}

It is clear that $M$ and $h\cdot M$, $h\in\KK^n$, belong to the same order $\Lambda=I(M)$.

\begin{definition}
We say that two lattices $M_1$ and $M_2$ are \emph{in the same lattice class} of $\Lambda$ if $M_1=hM_2$
for some $h\in\KK^n$.
\end{definition}

Let us consider a canonical map $\omega: \{\mathrm{lattices}\ \mathrm{in}\ \KK^n\}\to\{\mathrm{orders}\ \mathrm{in}\ \KK^n\}$ defined as  $M\mapsto I(M)$.
The following proposition is obvious.

\begin{proposition}
$\omega (M_1)=\omega (M_2)$ if $M_1$ and $M_2$ belong to the same lattice class.\qed
\end{proposition}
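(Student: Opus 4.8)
The plan is to unwind the definitions of $\omega$ and of the lattice-class relation, after which the claim reduces to a one-line computation exploiting the commutativity of $\KK^n$. Recall that $\omega(M) = I(M) = \{x \in \KK^n : xM \subseteq M\}$, and that $M_1$ and $M_2$ lie in the same lattice class precisely when $M_1 = hM_2$ for some $h \in \KK^n$. The first thing I would observe is that since $M_1 = hM_2$ is again a lattice, hence of rank $n$ over $\OO$, the element $h$ cannot have a zero component; thus $h$ is a unit in $\KK^n$, i.e.\ $h \in (\KK^\times)^n$, and $h^{-1}$ exists in $\KK^n$.

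The key point is the commutativity of the ring $\KK^n$: for any $x \in \KK^n$ we have $h^{-1} x h = x$, so multiplication by $x$ commutes with multiplication by $h$. I would then transport the defining containment across the equality $M_1 = hM_2$: for $x \in \KK^n$,
$$x M_1 \subseteq M_1 \iff x h M_2 \subseteq h M_2 \iff h^{-1} x h\, M_2 \subseteq M_2 \iff x M_2 \subseteq M_2,$$
where the second equivalence uses the invertibility of $h$ and the last uses commutativity. This shows $x \in I(M_1)$ if and only if $x \in I(M_2)$, hence $I(M_1) = I(M_2)$, which is exactly $\omega(M_1) = \omega(M_2)$.

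There is essentially no genuine obstacle here, which is why the authors mark the statement as obvious; the entire content is that $\KK^n$ is commutative, so conjugation by the unit $h$ acts trivially. The only point deserving a moment's care is verifying that $h$ is an actual unit in $\KK^n$ rather than a mere zero divisor, but this is forced by the requirement that both $M_1$ and $M_2$ have full rank $n$ over $\OO$.
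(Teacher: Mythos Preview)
Your proof is correct and matches the paper's approach: the paper marks this proposition as obvious (with a bare \qed), having already spelled out exactly this commutativity argument in the proof of the preceding surjection proposition, where it shows $I(M_1)=I(M_2)$ for $M_2=hM_1$ ``since the ring $\KK^n$ is commutative.'' Your additional remark that $h$ must be a unit because both lattices have full rank is a nice clarification.
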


\begin{remark}
We remind readers that $\mathbf{GL}(n, \mathbb K)/\mathbf{GL}(n, \mathbb O)\cong \{\mathrm{lattices}\ \mathrm{in}\ \KK^n\}$.
Therefore, we can define a map
$$
d: \{\mathrm{lattices}\ \mathrm{in}\ \KK^n\}\to
\mathbf{Diag}(n, \KK)\backslash\mathbf{GL}(n, \mathbb K)/\mathbf{GL}(n, \mathbb O)
$$
and it is easy to see
that $\omega (M)=\omega_n (d(M))$. Moreover,
$$
\mathbf{Diag}(n, \KK)\backslash\mathbf{GL}(n, \mathbb K)/\mathbf{GL}(n, \mathbb O)\cong \bigcup_{\Lambda\subset \OO^n} \  \{\mathrm{lattice\ classes}\ \mathrm{belonging}\ \mathrm{to}\ \Lambda \}.
$$
\end{remark}

Let us fix an order $\Lambda$ and consider the set of lattices $L(\Lambda)$ belonging to $\Lambda$.
If $M\in L(\Lambda)$, then $hM\in L(\Lambda)$. Therefore, $L(\Lambda)$ is a disjoint union of lattice classes.
Let us denote the number of such classes by $\mathrm{lc}(\Lambda)$.
The number $\mathrm{lc}(\Lambda)$ is finite
because $\Lambda$ is finitely generated over $\OO$, which is a
discrete valuation ring.
The following proposition is a direct consequence of the remark above.

\begin{proposition}
$
\omega_n^{-1}(\Lambda)
=\{\mathrm{lattice\ classes}\ \mathrm{belonging}\ \mathrm{to}\ \Lambda \}
$
and hence,
$\omega_n^{-1}(\Lambda)$ consists of $\mathrm{lc}(\Lambda)$ elements.\qed
\end{proposition}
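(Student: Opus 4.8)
The plan is to read the statement off directly from the identification of the double-coset space with lattice classes recorded in the preceding remark, being careful only to check that this identification intertwines $\omega_n$ with the multiplier map $\omega$.

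First I would recall that $\mathbf{GL}(n, \mathbb K)/\mathbf{GL}(n, \mathbb O)$ is identified with the set of lattices in $\KK^n$ via $G\mapsto G\cdot\OO^n$, and that left multiplication by $H=\mathrm{diag}\,(a_1,\ldots,a_n)\in\mathbf{Diag}(n, \KK)$ sends the lattice $M=G\cdot\OO^n$ to $HM=hM$ with $h=(a_1,\ldots,a_n)\in\KK^n$. Consequently the $\mathbf{Diag}(n, \KK)$-orbits on lattices are exactly the lattice classes (the orbits under $M\mapsto hM$, $h\in\KK^n$), so that the double-coset space $\mathbf{Diag}(n, \KK)\backslash\mathbf{GL}(n, \mathbb K)/\mathbf{GL}(n, \mathbb O)$ is in canonical bijection with the set of lattice classes in $\KK^n$.

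Next I would trace $\omega_n$ through this bijection. By construction $\omega_n$ sends the class of $G$ to $I(G\cdot\OO^n)$; under the identification above this is precisely the assignment $[M]\mapsto I(M)$ on lattice classes. This is well defined exactly because, as already observed, $I(hM)=I(M)$ for every $h\in\KK^n$ (the algebra $\KK^n$ being commutative), i.e.\ $\omega$ is constant on lattice classes; this invariance is the content of the obvious proposition preceding the statement. Thus, after the identification, $\omega_n$ is nothing but the map $[M]\mapsto\omega(M)=I(M)$.

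The conclusion is then immediate: the fibre $\omega_n^{-1}(\Lambda)$ consists of exactly those lattice classes $[M]$ with $I(M)=\Lambda$, which by definition is the set of lattice classes belonging to $\Lambda$, i.e.\ the lattice classes partitioning $L(\Lambda)$. By the definition of $\mathrm{lc}(\Lambda)$ as the number of such classes --- a finite number, since $\Lambda$ is finitely generated over the discrete valuation ring $\OO$ --- this fibre has $\mathrm{lc}(\Lambda)$ elements. There is no real obstacle here beyond the bookkeeping of matching $\omega_n$ with $\omega$; the only point requiring care is the invariance $I(hM)=I(M)$, which secures both the well-definedness of the identification and the identification of the fibre with $\omega_n^{-1}(\Lambda)$.
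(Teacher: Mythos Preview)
Your argument is correct and is essentially the same as the paper's: the paper states this proposition without proof (note the \qed), treating it as an immediate consequence of the preceding remark, which records exactly the identification $\omega(M)=\omega_n(d(M))$ and the bijection between the double-coset space and the union of lattice classes over all orders. Your write-up simply unpacks that remark carefully, including the (tacit) observation that $M_1=hM_2$ for lattices forces $h\in(\KK^\times)^n$, so diagonal orbits and lattice classes coincide.
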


The result below was proved by J.~Brzezinski in \cite{BRZ}.

\begin{theorem}\label{thm_Brz}
$\mathrm{lc}(\Lambda)=1$ if and only if $\Lambda$ is a Gorenstein ring.\qed
\end{theorem}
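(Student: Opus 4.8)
The plan is to translate the combinatorial quantity $\mathrm{lc}(\Lambda)$ into the language of fractional ideals and then invoke the homological characterization of the Gorenstein property. First I would record the standing structural facts: since $\OO$ is a complete discrete valuation ring and $\Lambda$ is a finite torsion-free $\OO$-algebra whose total ring of fractions is the product of fields $\KK^n$, the order $\Lambda$ is a one-dimensional, reduced, Cohen--Macaulay, generically Gorenstein ring. Being finite over the complete local ring $\OO$, it decomposes as a finite product $\Lambda=\prod_j\Lambda_j$ of local orders, with $\Lambda_j$ sitting in the corresponding subproduct of $\KK^n$ cut out by an idempotent $e_j\in\Lambda$. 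A full fractional ideal $M$ with $I(M)=\Lambda$ splits accordingly as $M=\bigoplus_j e_jM$, and $I(M)=\prod_j I(e_jM)$, so $\mathrm{lc}(\Lambda)=\prod_j\mathrm{lc}(\Lambda_j)$; since $\Lambda$ is Gorenstein iff each $\Lambda_j$ is, the statement reduces to the case where $\Lambda$ is local.

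Next I would set up the dictionary between lattice classes and isomorphism classes of modules. The lattices belonging to $\Lambda$ are exactly the full fractional $\Lambda$-ideals $M\subset\KK^n$ whose multiplier ring $I(M)=(M:M)=\End_\Lambda(M)$ equals $\Lambda$; I will call these the \emph{proper} fractional ideals. Any $\Lambda$-linear map between two full fractional ideals extends uniquely to a $\KK^n$-linear endomorphism of $\KK^n$, that is, to multiplication by an element of $\KK^n$, and it is an isomorphism precisely when that element is a unit, i.e.\ lies in $(\KK^\times)^n=\mathbf{Diag}(n,\KK)$. Hence $M_1$ and $M_2$ lie in the same lattice class ($M_1=hM_2$) if and only if they are isomorphic as $\Lambda$-modules, and $\mathrm{lc}(\Lambda)$ equals the number of isomorphism classes of proper fractional $\Lambda$-ideals, with $\Lambda$ itself always representing one such class.

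With this reformulation the result becomes the classical theorem of Bass: for a local order $\Lambda$ as above, $\Lambda$ is Gorenstein if and only if every proper fractional ideal $M$ is invertible. Over the local ring $\Lambda$ an invertible fractional ideal is projective of rank one, hence free, hence isomorphic to $\Lambda$ and principal. Thus if $\Lambda$ is Gorenstein every proper ideal is $\cong\Lambda$, giving a single isomorphism class and $\mathrm{lc}(\Lambda)=1$; conversely, if $\Lambda$ is not Gorenstein, Bass's theorem produces a proper ideal $M_0$ that is not invertible, hence not free, hence not isomorphic to $\Lambda$, so $M_0$ and $\Lambda$ represent two distinct classes and $\mathrm{lc}(\Lambda)\geq 2$. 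This settles both implications.

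The only nonformal input is Bass's characterization itself, and that is where the real work sits. I would establish it through the duality furnished by the canonical (dualizing) fractional ideal $\mathfrak{c}$, normalized so that $\Lambda\subseteq\mathfrak{c}\subseteq\widetilde{\Lambda}$ and $(\mathfrak{c}:\mathfrak{c})=\Lambda$: the operation $M\mapsto(\mathfrak{c}:M)$ is then an involution on the set of proper fractional ideals, and $\Lambda$ is Gorenstein exactly when $\mathfrak{c}$ is principal, i.e.\ $\mathfrak{c}\cong\Lambda$. Proving the existence of $\mathfrak{c}$, the reflexivity $(\mathfrak{c}:(\mathfrak{c}:M))=M$ for proper $M$, and the equivalence ``$M$ invertible for all proper $M$ $\Leftrightarrow$ $\mathfrak{c}$ principal'' is the technical heart of the matter; these are precisely the facts worked out in \cite{BRZ}, which I would cite rather than reprove. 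The main obstacle is therefore not the counting argument, which is routine once the dictionary is in place, but the duality theory for the dualizing module over a possibly non-Gorenstein order.
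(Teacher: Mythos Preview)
The paper does not prove this theorem at all: it is stated with a trailing \qed and attributed wholesale to Brzezinski's paper \cite{BRZ} (``The result below was proved by J.~Brzezinski in \cite{BRZ}''). Your proposal therefore goes well beyond what the paper does, supplying the reduction to local orders, the dictionary between lattice classes and isomorphism classes of proper fractional $\Lambda$-ideals, and the final appeal to Bass's characterization of Gorenstein one-dimensional orders via invertibility of proper ideals. This outline is correct and is the standard route to the result; in the end you, too, defer the hardest step (existence and reflexivity properties of the dualizing ideal $\mathfrak{c}$, and the equivalence of ``$\mathfrak{c}$ principal'' with ``every proper ideal invertible'') to \cite{BRZ}, so both your write-up and the paper ultimately rest on the same reference---you simply unpack the surrounding architecture that the paper leaves implicit.
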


\subsection{Quantum groups over $\mathfrak{sl}(2)$}
We begin with a corollary to Brzezinski's theorem.

\begin{corollary}
The map $\omega_2$ is a bijection.
\end{corollary}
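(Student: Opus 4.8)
The plan is to deduce the bijectivity of $\omega_2$ from Brzezinski's Theorem \ref{thm_Brz} together with the surjectivity of $\omega_n$ already proved above. Recall that the fibre $\omega_2^{-1}(\Lambda)$ consists of exactly $\mathrm{lc}(\Lambda)$ elements (the number of lattice classes belonging to $\Lambda$), so $\omega_2$ is injective precisely when $\mathrm{lc}(\Lambda)=1$ for every order $\Lambda$ in $\KK^2$. By Theorem \ref{thm_Brz} this holds if and only if every such order is Gorenstein. Hence the entire corollary reduces to the single assertion that every $\OO$-order in $\KK^2=\KK\times\KK$ is a Gorenstein ring.

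To establish this I would first classify the orders explicitly. Let $\Lambda$ be an order; recall that any order is contained in $\OO^n$, so $\OO\subseteq\Lambda\subseteq\OO^2$ with $\OO$ embedded diagonally. Put $\mathfrak a=\{a\in\OO:(a,0)\in\Lambda\}$ and $\mathfrak b=\{b\in\OO:(0,b)\in\Lambda\}$; these are ideals of $\OO$, and they are nonzero because $\Lambda\KK=\KK^2$ forces $\Lambda$ to contain an off-diagonal element $(a,b)$, whence $(a-b,0)=(a,b)-b(1,1)\in\Lambda$ and $(0,b-a)=(a,b)-a(1,1)\in\Lambda$. Write $\mathfrak a=t^{k_1}\OO$ and $\mathfrak b=t^{k_2}\OO$. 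Every $(a,b)\in\Lambda$ then satisfies $a-b\in t^{k_1}\OO\cap t^{k_2}\OO=t^{\max(k_1,k_2)}\OO$, while the generators $(t^{k_1},0),(0,t^{k_2})\in\Lambda$ have respective differences $t^{k_1}$ and $-t^{k_2}$, forcing $k_1\ge\max(k_1,k_2)$ and $k_2\ge\max(k_1,k_2)$. Thus $k_1=k_2=:k$, and comparing the generators shows
$$\Lambda=\Lambda_k:=\{(a,b)\in\OO^2:\ a\equiv b\!\!\pmod{t^k}\}=\OO\cdot(1,1)+\OO\cdot(t^k,0).$$

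It then remains to check that each $\Lambda_k$ is Gorenstein, and here the clean observation is that $\Lambda_k$ is a complete intersection. Setting $e=(t^k,0)$ one computes $e^2=t^k e$, and $\{1,e\}$ is an $\OO$-basis, so
$$\Lambda_k\cong\OO[x]/(x^2-t^k x),$$
a hypersurface over the regular local ring $\OO$ cut out by the single nonzerodivisor $x^2-t^k x$; such a ring is a complete intersection, hence Gorenstein. (Alternatively one may verify directly that the trace dual $\Lambda_k^{\ast}=\{z\in\KK^2:\mathrm{Tr}(z\Lambda_k)\subseteq\OO\}$ equals $(t^{-k},-t^{-k})\cdot\Lambda_k$, an invertible $\Lambda_k$-module, so the canonical module is free of rank one.) In either case Theorem \ref{thm_Brz} gives $\mathrm{lc}(\Lambda_k)=1$, so $\omega_2$ is injective and, being surjective, bijective. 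The only step demanding genuine care is the classification of the orders; once they are pinned down as the $\Lambda_k$, the Gorenstein property is immediate from the complete-intersection presentation.
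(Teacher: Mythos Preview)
Your proof is correct and follows essentially the same approach as the paper: both reduce the bijectivity of $\omega_2$ to the Gorenstein property of every order in $\KK^2$, established by observing that any such order is a quadratic $\OO$-algebra $\OO[y]/(y^2+ay+b)$, hence a complete intersection. The only difference is presentational: you carry out the explicit classification of orders as $\Lambda_k=\OO\cdot(1,1)+\OO\cdot(t^k,0)$ inside this proof, whereas the paper states the Gorenstein fact directly here and defers the explicit classification to the subsequent proposition.
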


\begin{proof}
Let $\Lambda$ be an order in $\OO^2$. Then it is of the form
$\Lambda=\OO [y]$, where $y$ satisfies a quadratic equation
$y^2+ay+b=0$ with $a,b\in\OO$. It is known that such a ring
is Gorenstein. Therefore, $\mathrm{lc}(\Lambda)=1$ and $\omega_2$ is a bijection.
\end{proof}

\begin{proposition}
Any order $\Lambda\subset\OO^2$ is a free $\OO$-module $\Lambda_n$ with a basis
$\{(1,1), (t^n,0)\}$, $n=0,1,\ldots$. The orders $\Lambda_{n_1}$ and $\Lambda_{n_2}$ are not isomorphic if $n_1\neq n_2$
and hence, quantum groups of non-twisted type over $\mathfrak{sl}(2)$ are
indexed by non-negative integers.
\end{proposition}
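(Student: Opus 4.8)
The plan is to exploit that $\OO=\CC[[t]]$ is a discrete valuation ring, so that every finitely generated torsion-free $\OO$-module is free and every nonzero ideal is a power of $(t)$. First I would establish the normal form. Since $\Lambda\subseteq\OO^2$ and $\OO^2$ is free of rank $2$, $\Lambda$ is free of rank at most $2$, and the defining condition $\Lambda\KK=\KK^2$ forces the rank to be exactly $2$. The ring $\Lambda$ contains the identity $(1,1)$, so for every $(a,b)\in\Lambda$ we have $(0,b-a)=(a,b)-a\,(1,1)\in\Lambda$. Hence $J:=\{c\in\OO:(0,c)\in\Lambda\}$ is a nonzero ideal of $\OO$ --- nonzero because $J=0$ would confine $\Lambda$ to the diagonal and contradict $\Lambda\KK=\KK^2$ --- and therefore $J=(t^n)$ for a unique $n\ge 0$. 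A short check then shows $\Lambda=\OO\,(1,1)\oplus\OO\,(0,t^n)$, and since $(0,t^n)=t^n(1,1)-(t^n,0)$ this module equals $\Lambda_n:=\OO\,(1,1)\oplus\OO\,(t^n,0)$; the vectors $(1,1),(t^n,0)$ are $\OO$-independent because their determinant $-t^n$ is nonzero. Conversely each $\Lambda_n$ really is an order: it contains $\OO$ diagonally, is finitely generated, spans $\KK^2$, and is multiplicatively closed because $(t^n,0)^2=t^n(t^n,0)$.

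For the non-isomorphism statement I would produce a ring-theoretic invariant separating the $\Lambda_n$. The normalization (integral closure in the total quotient ring $\KK^2$) of any order in $\KK^2$ is the maximal order $\OO^2$, and normalization is intrinsic to the ring; thus the finite-length $\OO$-module $\OO^2/\Lambda$ is an isomorphism invariant of $\Lambda$, and so is its length. Changing the basis of $\OO^2$ to $\{(1,1),(1,0)\}$ turns $\Lambda_n$ into $\OO\,(1,1)\oplus t^n\OO\,(1,0)$, whence $\OO^2/\Lambda_n\cong\OO/(t^n)$ has $\CC$-dimension $n$. Since these rings are local with residue field $\CC$, this length equals $n$ and is preserved by any isomorphism, so $\Lambda_{n_1}\cong\Lambda_{n_2}$ forces $n_1=n_2$.

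Finally I would assemble the classification. The preceding corollary identifies $\omega_2$ as a bijection between the double cosets $\mathbf{Diag}(2,\KK)\backslash\mathbf{GL}(2,\KK)/\mathbf{GL}(2,\OO)$ and the orders in $\KK^2$, and by the lemma every order lies in $\OO^2$; combining this with the normal form just obtained, the orders are exactly $\{\Lambda_n\}_{n\ge0}$, pairwise non-isomorphic, hence indexed by $\Z_{\ge0}$. Through the double-coset description of non-twisted quantum groups over $\mathfrak{sl}(2)$ established in Section \ref{sec_classif_gr}, this yields the stated parametrization. The only genuinely delicate point is the non-isomorphism step: everything else is bookkeeping over a discrete valuation ring, but there one must pin down a truly intrinsic invariant (the length of the normalization modulo the order) rather than one that merely reflects the chosen embedding.
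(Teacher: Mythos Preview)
Your argument is correct and, for the normal form, follows essentially the same route as the paper: both use that $\OO$ is a DVR, take $(1,1)$ as part of a basis, subtract to land on a coordinate axis, and strip the unit to obtain $(t^n,0)$ (you pass through $(0,t^n)$, the paper through $(a-b,0)$ directly; these are mirror images). The paper then dispatches the non-isomorphism step with ``The rest is clear'', whereas you actually supply an invariant, the length of $\tilde\Lambda/\Lambda$ with $\tilde\Lambda=\OO^2$ the normalization. In context the relevant isomorphisms are of $\OO$-algebras, so the $\OO$-length of $\OO^2/\Lambda_n$ is already intrinsic and your closing worry about the choice of embedding does not arise; your normalization argument nonetheless works and in fact proves more, distinguishing the $\Lambda_n$ as abstract rings. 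One tiny slip: $\Lambda_0=\OO^2$ is not local, but that case is harmless since the quotient $\OO^2/\Lambda_0$ vanishes.
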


\begin{proof}
Let $\Lambda$ have a basis $\{(1,1), (a,b)\}$ with $a,b\in \OO$. Then $\{(1,1), (a-b,0)\}$
is also a basis. Let $a-b=xt^n$, where $x\in\OO$ is invertible and $n$ is a non-negative integer number.
Therefore, $\{(1,1), (t^n,0)\}$ is a basis. The rest is clear.
\end{proof}

Let us also discuss the twisted case, in other words the double cosets
$$\mathbf{D}_2\backslash J^{-1}\mathbf{GL}(2, \mathbb K)J /J^{-1}\mathbf{GL}(2, \mathbb O)J,$$
where $\mathbf{D}_2=\{\mathrm{diag}\,(d,\gamma_1 (d)): \ d\in\LL\}$.

The lemma below is straightforward.

\begin{lemma}
$J^{-1}\mathbf{GL}(2, \mathbb K)J =\mathbf{U}(1,1)$.\qed
\end{lemma}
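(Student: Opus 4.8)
The plan is to realize both sides of the asserted identity as subgroups of $\mathbf{GL}(2,\LL)$ cut out by one and the same $\gamma_1$-semilinear condition, and then to solve that condition by a direct $2\times 2$ computation. The only external input is the relation $\gamma_1(J)=JS$ from \cite[Proposition 5.11]{PS}. For $\fg=\mathfrak{sl}(2)$ the Chevalley involution is inner, so $S=c$, and with the explicit $J=\left(\begin{smallmatrix}1&1\\-j&j\end{smallmatrix}\right)$ one has $S=J^{-1}\gamma_1(J)=\left(\begin{smallmatrix}0&1\\1&0\end{smallmatrix}\right)$; this is checked directly from $\gamma_1(j)=-j$, and in particular $S=S^{-1}$, $S^2=1$. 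The key observation to record first is that for any $g\in\mathbf{GL}(2,\KK)$, writing $h:=J^{-1}gJ$ and using $\gamma_1(g)=g$, one gets $\gamma_1(h)=\gamma_1(J)^{-1}\,g\,\gamma_1(J)=(JS)^{-1}g(JS)=S(J^{-1}gJ)S=ShS$.

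With this in hand I would prove the two inclusions. The computation above already shows $J^{-1}\mathbf{GL}(2,\KK)J\subseteq\{h\in\mathbf{GL}(2,\LL):\gamma_1(h)=ShS\}$. For the reverse inclusion I would start from an arbitrary $h\in\mathbf{GL}(2,\LL)$ satisfying $\gamma_1(h)=ShS$, set $g:=JhJ^{-1}$, and verify $\gamma_1(g)=\gamma_1(J)\,\gamma_1(h)\,\gamma_1(J)^{-1}=(JS)\,\gamma_1(h)\,(JS)^{-1}=J\bigl(S\gamma_1(h)S\bigr)J^{-1}=JhJ^{-1}=g$, the last step because $S\gamma_1(h)S=S(ShS)S=h$. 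Hence $g\in\mathbf{GL}(2,\KK)$ and $h=J^{-1}gJ$, giving equality of the two sets. It is worth noting that this exhibits $J^{-1}\mathbf{GL}(2,\KK)J$ as exactly the $\KK$-points of the twisted group $(\mathbf{GL}_2)_{u_S}$ in the sense of (\ref{twistedaction}), consistent with the general machinery of Section \ref{sec_twisted_2}.

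The final step is to solve the fixed-point condition explicitly and recognize $\mathbf{U}(1,1)$. Writing $h=\left(\begin{smallmatrix}a&b\\c&d\end{smallmatrix}\right)$ and noting that conjugation $S(\cdot)S$ interchanges the two diagonal entries and the two off-diagonal entries, the equation $\gamma_1(h)=ShS$ reads $d=\gamma_1(a)$ and $c=\gamma_1(b)$. Thus $h=\left(\begin{smallmatrix}a&b\\\gamma_1(b)&\gamma_1(a)\end{smallmatrix}\right)$ with $a,b\in\LL$ and $\det h=N_{\LL/\KK}(a)-N_{\LL/\KK}(b)\in\KK^\times$, which is precisely the matrix description of $\mathbf{U}(1,1)$ used in \cite{KKPS2} (equivalently, the unitary similitudes of the Hermitian form $\mathrm{diag}(1,-1)$ relative to $\LL/\KK$: one checks $h^{*}\,\mathrm{diag}(1,-1)\,h=(\det h)\,\mathrm{diag}(1,-1)$).

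The main (though mild) obstacle is purely bookkeeping: because $J$ is not $\KK$-rational, conjugation by $J$ does not commute with the Galois action, and the entire correction is carried by the single relation $\gamma_1(J)=JS$, which turns ordinary $\gamma_1$-invariance of $g$ into the $S$-twisted invariance $\gamma_1(h)=ShS$ of $h$. A secondary point I would make explicit is the convention, namely that $\mathbf{U}(1,1)$ here \emph{is} the group of matrices $\left(\begin{smallmatrix}a&b\\\gamma_1(b)&\gamma_1(a)\end{smallmatrix}\right)$, so that the displayed parametrization is exactly the object being matched; once this is fixed, the lemma follows from the two inclusions above and the elementary solution of the $2\times2$ system.
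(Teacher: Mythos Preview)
Your proof is correct. The paper itself gives no proof beyond the word ``straightforward,'' so there is nothing to compare at the level of detail; your Galois-theoretic argument via the identity $\gamma_1(J)=JS$ is one clean way to do it, and the alternative the authors may have had in mind is simply the direct entry-by-entry computation of $J^{-1}gJ$ for $g\in\mathbf{GL}(2,\KK)$, which yields the same form $\left(\begin{smallmatrix}x&y\\\gamma_1(y)&\gamma_1(x)\end{smallmatrix}\right)$ after a short calculation. Your route has the advantage of making transparent the connection with the twisted $\KK$-group $(\mathbf{GL}_2)_{u_S}$ from Section~\ref{sec_twisted_2}, and it generalizes without change to any $n$ once one knows the appropriate $J$ and $S$; the direct computation is more elementary but specific to the $2\times 2$ case.
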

Here, in an analogy with the real numbers, we denote by $\mathbf{U}(1,1)$ the group which consists of matrices of the form
$$
P=\left(\begin{array}{cc}
x & y\\
\gamma_1 (y) & \gamma_1 (x)\\
\end{array}\right)
$$
with $x,y\in \LL$.

The group $\mathbf{U}(1,1)$ acts naturally on $\LL$ via the formula $Pd=xd+y\gamma_1 (d)$.
In fact, this action comes from the natural action of $\mathbf{U}(1,1)$ on $\LL^2$
and the embedding $\LL \to \LL^2$, $d\mapsto (d,\gamma_1 (d))$.

Now we can repeat the non-twisted considerations above.

\begin{definition}
$M\subset \LL$ is a \emph{lattice} in $\LL$ if it is an $\OO$-submodule of $\LL$ of  rank $2$.
\end{definition}

It is not difficult to show that
$$
J^{-1}\mathbf{GL}(2, \mathbb K)J /J^{-1}\mathbf{GL}(2, \mathbb O)J\cong \{\mathrm{lattices}\ \mathrm{in}\ \LL \}.
$$

\begin{definition}
  $\Lambda\subset \LL$ is an \emph{order} in $\LL$ if it is a lattice and a sub-ring of $\LL$ which contains
  the unit of $\LL$.
\end{definition}

\begin{remark}
One can show that in fact $\Lambda\subset \OO_\LL =\OO [j]$.
\end{remark}

Using the result by Brzezinski \cite{BRZ}, we deduce the final classification of the twisted quantum groups for $\mathfrak{sl}(2)$.

\begin{theorem}
There is a canonical bijection
$$\rho: \mathbf{D}_2\backslash J^{-1}\mathbf{GL}(2, \mathbb K)J /J^{-1}\mathbf{GL}(2, \mathbb O)J\to \{\mathrm{orders}\ \mathrm{in}\ \LL  \} = \{\OO [t^{n+\frac{1}{2} }] :\  n\in {\mathbb Z}_+\}.$$
\end{theorem}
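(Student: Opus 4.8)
The plan is to transcribe, almost verbatim, the non-twisted $\mathfrak{sl}(2)$ argument (the corollary that $\omega_2$ is a bijection), replacing the split algebra $\KK^2$ by the quadratic field $\LL$ and the diagonal torus by $\mathbf{D}_2$. First I would invoke the identification $J^{-1}\mathbf{GL}(2,\KK)J/J^{-1}\mathbf{GL}(2,\OO)J \cong \{\mathrm{lattices}\ \mathrm{in}\ \LL\}$ established just above, under which an element $P\in\mathbf{U}(1,1)$ carries the standard lattice $\OO_\LL$ to $P\cdot\OO_\LL$. Since $\mathbf{D}_2=\{\mathrm{diag}(d,\gamma_1(d)):d\in\LL^\times\}$ sits inside $\mathbf{U}(1,1)$ (take $x=d$, $y=0$) and acts on $\LL$ by multiplication by $d$, the residual left $\mathbf{D}_2$-action on cosets is exactly rescaling of lattices $M\mapsto dM$. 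Hence the double coset space in the statement is canonically the set of \emph{lattice classes} in $\LL$, where $M_1\sim M_2$ iff $M_1=dM_2$ for some $d\in\LL^\times$.

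I would then define $\rho$ by sending a lattice class $[M]$ to its multiplier ring $I(M)=\{x\in\LL:xM\subseteq M\}$, which is an order in $\LL$ (the $\LL$-analogue of the earlier lemma asserting $I(M)$ is an order). Because $I(dM)=I(M)$, this descends to classes; it is surjective since every order $\Lambda$ is itself a lattice with $I(\Lambda)=\Lambda$, so $\rho([\Lambda])=\Lambda$. This is precisely the surjection $\omega_n$ of the split case, specialized to $\LL$.

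The heart of the matter, and the step I expect to require the most care, is injectivity: by construction this amounts to showing that each order $\Lambda\subset\LL$ carries a single lattice class, i.e.\ $\mathrm{lc}(\Lambda)=1$. By Brzezinski's Theorem \ref{thm_Brz} this is equivalent to $\Lambda$ being Gorenstein, so it suffices to verify the Gorenstein property for every order in $\LL$. Here I would show each such order is monogenic: writing $\Lambda=\OO+\OO w$ and subtracting the $\OO$-part of $w$ reduces to $w=bj$ with $b=ut^n$, $u\in\OO^\times$, whence $\Lambda=\OO+\OO t^{n+1/2}=\OO[y]$ with $y=t^{n+1/2}$ satisfying the monic quadratic $y^2-t^{2n+1}=0$ over $\OO$. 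As in the non-twisted $\mathfrak{sl}(2)$ corollary, such a quadratic ring $\OO[y]$ is Gorenstein, giving $\mathrm{lc}(\Lambda)=1$ and thus injectivity of $\rho$.

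Finally I would read off the classification of orders. The computation above, together with the constraint $\Lambda\subset\OO_\LL=\OO[j]$ noted in the preceding remark (which forces $n\geq 0$, since $t^{n+1/2}=j^{2n+1}$ lies in $\mathbb{C}[[j]]$ only for $2n+1\geq 0$), shows that every order is $\OO[t^{n+1/2}]$ with $n\in\mathbb{Z}_+$, and distinct $n$ yield distinct orders. Combining this with the bijection $\rho$ gives the stated parameterization. The only genuinely nontrivial input is Brzezinski's Gorenstein criterion; the remaining steps are a direct transcription of the split case to the quadratic field $\LL$.
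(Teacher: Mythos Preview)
Your proposal is correct and follows essentially the same approach as the paper: the paper's proof simply says ``as in the non-twisted case, we have to show that any order $\Lambda$ in $\LL$ is a Gorenstein ring, which is clear because it can be easily shown that in this case $\Lambda=\OO[t^{n+\frac{1}{2}}]$, $n\in\mathbb{Z}_+$.'' You have fleshed out precisely this argument---identifying the double coset space with lattice classes in $\LL$, defining $\rho$ via multiplier rings, reducing injectivity to the Gorenstein property via Brzezinski's theorem, and verifying Gorenstein by the monogenic presentation $\Lambda=\OO[t^{n+1/2}]$---so there is nothing to add.
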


\begin{proof}
  As in the non-twisted case, we have to show that any order $\Lambda$ in $\LL$ is a Gorenstein ring, which is clear because it can be easily shown that in this case
  $\Lambda=\OO [t^{n+\frac{1}{2}}]$, $n\in {\mathbb Z}_+$.
\end{proof}

\begin{corollary}
Quantum groups such that their classical limit is $\mathfrak{sl}(2)$ are in a one-to-one correspondence with the set of orders in separable quadratic rings, i.e.\ $\OO^2$ and $\OO_\LL$. The corresponding orders were described above.\qed
\end{corollary}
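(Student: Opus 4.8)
The plan is to deduce the corollary by assembling the non-twisted and twisted classifications already obtained for $\mathfrak{sl}(2)$, once the split/non-split dichotomy of the Drinfeld double is matched with the two separable quadratic algebras over $\KK$. First I would note that $\mathfrak{sl}(2)$ has a single simple root, so the only admissible triple is the empty one and $r_{\rm DJ}$ is the sole Belavin--Drinfeld matrix. Hence the only remaining invariant is the scalar $b$ in (\ref{crational}): either $b\in\KK$, giving the non-twisted type whose double has the form $\KK\times\KK$, or $b=j$, giving the twisted type whose double is the quadratic field $\LL$; the inseparable dual-numbers (Frobenius) case is excluded from the start. Since $\KK=\mathbb{C}((t))$ has a unique quadratic field extension, these two types are exhaustive.

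For the non-twisted type, Theorem \ref{class} identifies the quantum groups (with the fixed matrix $r_{\rm DJ}$) with the double cosets $\mathbf{GL}(2,\OO)\backslash\mathbf{GL}(2,\KK)/\mathbf{Diag}(2,\KK)$, and the map $\omega_2$ carries these bijectively onto the $\OO$-orders of $\KK^2$, which the preceding Proposition lists as the $\Lambda_n=\OO(1,1)+\OO(t^n,0)\subset\OO^2$. For the twisted type, the analogous theorem identifies the relevant quantum groups with the double cosets $\mathbf{D}_2\backslash J^{-1}\mathbf{GL}(2,\KK)J/J^{-1}\mathbf{GL}(2,\OO)J$, and the bijection $\rho$ carries these onto the orders in $\LL$, all of the form $\OO[t^{n+\frac{1}{2}}]\subset\OO_\LL$.

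I would then combine the two families. The maximal orders of the two separable quadratic $\KK$-algebras $\KK^2$ and $\LL$ are precisely $\OO^2$ and $\OO_\LL=\OO[j]$, and every order is contained in its maximal order. By the first paragraph these two algebras correspond exactly to the non-twisted and twisted types, so the disjoint union of the two lists above is exactly the set of $\OO$-orders inside the separable quadratic rings $\OO^2$ and $\OO_\LL$, giving the asserted bijection.

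The genuine content — already packaged inside the two invoked results — is the injectivity of $\omega_2$ and $\rho$, namely that distinct double cosets yield distinct orders. This is where Brzezinski's Theorem \ref{thm_Brz} enters: an order in a separable quadratic ring is Gorenstein, so its lattice-class number is $\mathrm{lc}(\Lambda)=1$ and each order is the image of a single double coset. Granting this, the corollary is essentially bookkeeping, and the one point I would still verify is the exhaustiveness together with the role of the continuous scalar parameter $a=t^nf(t)$: one must confirm that in this rank-one setting the order is a complete invariant, so that the correspondence with orders is genuinely one-to-one rather than one-to-one only after fixing $a$.
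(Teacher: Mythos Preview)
Your argument is essentially the paper's own: the corollary carries a bare \qed\ because it is meant as the disjoint union of the non-twisted bijection $\omega_2$ (Proposition preceding the corollary) and the twisted bijection $\rho$ (Theorem immediately preceding it), both resting on Brzezinski's Gorenstein criterion. Your reduction to $r_{\rm DJ}$ via the one-dimensional Cartan, and your handling of the split/non-split dichotomy of the double, are exactly right.

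Your closing concern is legitimate and is not resolved in the paper either. By Theorem~\ref{class} and its twisted analogue, the $\bG_{\rm ad}(\OO)$-equivalence classes are parameterized by the pair $(a,\text{double coset})$, and the Remark following Theorem~\ref{class} says so explicitly. The paper's Proposition (``quantum groups of non-twisted type over $\mathfrak{sl}(2)$ are indexed by non-negative integers'') and the Corollary itself silently suppress the continuous scalar $a=t^n f(t)$; they should be read as classifying the discrete part of the moduli, with $a$ either fixed or understood as an independent continuous parameter. There is no mechanism in the rank-one case that collapses different values of $a$, so the bijection with orders is one-to-one only after fixing $a$. You have not missed anything; the paper is simply informal on this point.
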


\subsection{Quantum groups over $\mathfrak{sl} (3)$ and orders in cubic rings}
The aim of this subsection is to classify  quantum groups such that their classical limit is
$\mathfrak{sl}(3)$ with the Lie bialgebra structure defined by
$r_{\rm DJ}$ and $jr_{\rm DJ}$
in terms of cubic rings.

Our considerations are based on results about orders in cubic rings obtained in \cite{DF} and
\cite{DKF}, see also \cite{Br} and \cite{GL}.
We begin with the non-twisted case.
If $n=3$, the bijection
$$
\mathbf{Diag}(3, \KK)\backslash\mathbf{GL}(3, \mathbb K)/\mathbf{GL}(3, \mathbb O)\cong \bigcup_{\Lambda\subset \OO^3} \  \{\mathrm{lattice\ classes}\ \mathrm{belonging}\ \mathrm{to}\ \Lambda \}.
$$
has been already constructed.

Let us turn to the twisted case. We have
$$
J=J_{3}=\left(\begin{array}{ccc}
1 & 0 & 1\\
0 & 1 & 0 \\
-j & 0 & j \\
\end{array}\right)
$$
(see \cite{KKPS2}). Because of this particular form of $J_3$, our treatment of the case $n=3$ is very similar to the case $n=2$.

Let us present an element of $\LL\oplus\KK$ in the form
$(x,a,\gamma_1 (x))$, where $x\in\LL$, $a\in\KK$. Then, it is clear
that there is a bijection of sets
$$
J_3^{-1}\mathbf{GL}(3, \mathbb K)J_3 /J_3^{-1}\mathbf{GL}(3, \mathbb O)J_3
\cong\{\rm{lattices}\ \rm{in}\ \LL\oplus\KK\}.
$$
Let us define $\mathbf{D}_3=\{\mathrm{diag}\,(d,a,\gamma_1 (d)): d\in\LL,a\in\KK\}$.

Let $N$ be a lattice in $\LL\oplus\KK$. Let us define the \emph{ring of multipliers}
of $N$ as $I(N)=\{x\in\mathbf{D}_3: \ xN\subset N\}$. Clearly,
$I(N)\subset\OO_\LL\oplus\OO$ is an order.
The following result takes place.

\begin{theorem}
1) Quantum groups of the twisted type which quantize the Lie bialgebra structure on
$\mathfrak{sl}(3)$ defined by $jr_\mathrm{DJ}$ are parameterized by
$$\mathbf{D}_3\backslash J_3^{-1}\mathbf{GL}(3, \mathbb K)J_3 /J_3^{-1}\mathbf{GL}(3, \mathbb O)J_3$$

2) There is a natural surjection
$$
\rho_3: \mathbf{D}_3\backslash J_3^{-1}\mathbf{GL}(3, \mathbb K)J_3 /J_3^{-1}\mathbf{GL}(3, \mathbb O)J_3
\to \{\mathrm{orders}\ \mathrm{in}\ \OO_{\LL}\oplus\OO\}.
$$\qed
\end{theorem}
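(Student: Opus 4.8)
The plan is to derive both statements from the general twisted classification theorem of Section~\ref{sec_classif_gr}, specialized to $\fg=\mathfrak{sl}(3)$ and $r_{\rm BD}=r_{\rm DJ}$, and then to transport the resulting group-theoretic double cosets into the language of lattices and orders in the rank-$3$ separable $\KK$-algebra $\LL\oplus\KK$, exactly mirroring the treatment of $\KK^n$ and of $\mathfrak{sl}(2)$ earlier in this appendix.

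\emph{Part 1).} For $\fg=\mathfrak{sl}(3)$ the adjoint group is $\bPGL_3$ and, since $\bC(\bG_{\rm ad},r_{\rm DJ})=\bH$, the twisted classification theorem parameterizes the $\bG_{\rm ad}(\OO)$-equivalence classes quantizing $jr_{\rm DJ}$ (so that $a=a'=1$ and $r_{\rm BD}=r'_{\rm BD}=r_{\rm DJ}$) by the double coset
$$
\bPGL_3(\OO)\backslash\bPGL_3(\KK)/\bigl(J\cdot\bH(\overline\KK)\cdot J^{-1}\bigr)\cap\bPGL_3(\KK).
$$
First I would lift this to $\mathbf{GL}(3)$: the projection $\mathbf{GL}(3)\to\bPGL_3$ induces a bijection of the relevant double cosets (the central $\mathbb{G}_m$ lies in the diagonal torus and acts trivially), replacing $\bH$ by $\mathbf{Diag}(3)$ and $J$ by $J_3$. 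Conjugating the whole double coset by $J_3^{-1}$ is harmless, and it remains to compute $\mathbf{Diag}(3)(\overline\KK)\cap J_3^{-1}\mathbf{GL}(3,\KK)J_3$ using the given form of $J_3$. A direct calculation shows that $J_3\,\mathrm{diag}(d_1,d_2,d_3)\,J_3^{-1}$ is $\KK$-rational precisely when $d_2\in\KK$ and $d_3=\gamma_1(d_1)$ with $d_1\in\LL$; hence this intersection is exactly $\mathbf{D}_3=\{\mathrm{diag}(d,a,\gamma_1(d)):d\in\LL,\ a\in\KK\}$, and the double coset assumes the asserted form.

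\emph{Part 2).} Here I would reproduce the multiplier-ring formalism developed for $\KK^n$, now for the algebra $\LL\oplus\KK$. Using the bijection $J_3^{-1}\mathbf{GL}(3,\KK)J_3/J_3^{-1}\mathbf{GL}(3,\OO)J_3\cong\{\mathrm{lattices}\ \mathrm{in}\ \LL\oplus\KK\}$, the remaining left quotient by $\mathbf{D}_3$ becomes the quotient by multiplication by the units of $\LL\oplus\KK$, i.e.\ the passage to lattice classes. I would then define $\rho_3(N)=I(N)$, the ring of multipliers. As in the commutative setting of $\KK^n$, commutativity of $\LL\oplus\KK$ gives $I(hN)=I(N)$ for every unit $h$, so $\rho_3$ descends to lattice classes and hence to the double-coset set. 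Surjectivity is immediate from $I(\Lambda)=\Lambda$: every order $\Lambda\subset\OO_\LL\oplus\OO$ is a lattice belonging to itself, so $\rho_3$ hits it.

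The routine part is the multiplier formalism, which is verbatim the $\KK^n$ and $\mathfrak{sl}(2)$ arguments. \textbf{The main obstacle} is the honest verification underlying Part~1): that $J_3^{-1}\mathbf{GL}(3,\KK)J_3$ really acts on the $\KK$-space $\LL\oplus\KK$, embedded in $\overline\KK^3$ via $(x,b)\mapsto(x,b,\gamma_1(x))$, with the standard lattice stabilized by $J_3^{-1}\mathbf{GL}(3,\OO)J_3$, and that the twisted-torus intersection is exactly $\mathbf{D}_3$. The special block form of $J_3$ (after reordering coordinates it is the $\mathfrak{sl}(2)$ matrix $J$ on a $2\times2$ block together with a trivial $\KK$-factor) is what reduces this to the already-understood $\mathfrak{sl}(2)$ computation plus an inert $\KK$-summand. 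Finally I would stress that $\rho_3$ is genuinely only a surjection: its fiber over $\Lambda$ is the set of lattice classes belonging to $\Lambda$, of cardinality $\mathrm{lc}(\Lambda)$, which by Theorem~\ref{thm_Brz} exceeds $1$ exactly when $\Lambda$ fails to be Gorenstein. Unlike the quadratic case, cubic orders need not be Gorenstein, and describing these fibers is precisely where the Delone--Faddeev theory of cubic rings enters.
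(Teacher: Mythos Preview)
Your proposal is correct and follows essentially the same path the paper sketches: the paper states this theorem with a bare \qed, having set up in the preceding paragraphs precisely the ingredients you use (the block form of $J_3$ reducing to the $\mathfrak{sl}(2)$ computation plus a trivial $\KK$-factor, the presentation of elements of $\LL\oplus\KK$ as $(x,a,\gamma_1(x))$, the bijection with lattices, the definition of $\mathbf{D}_3$, and the multiplier ring $I(N)$). Your explicit identification of the twisted-torus intersection with $\mathbf{D}_3$ and the lift from $\bPGL_3$ to $\mathbf{GL}(3)$ are exactly the steps the paper carries out in detail for $n=2$ and then declares ``very similar'' for $n=3$.
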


Given an order $\Lambda\subset \OO_\LL\oplus\OO$,
we say that a lattice $N$ \emph{belongs} to $\Lambda$ if $I(N)=\Lambda$.
Further, we say that two lattices $N_1$ and $N_2$ are \emph{in the same
lattice class} if $N_2=xN_1$ for some $x\in \LL\oplus\KK$.
Clearly, $I(N_1)=I(N_2)$ and the set of lattices belonging to $\Lambda$
is a disjoint union of lattice classes.

\begin{corollary}
\begin{gather*}
\mathbf{D}_3\backslash J_3^{-1}\mathbf{GL}(3, \mathbb K)J_3 /J_3^{-1}\mathbf{GL}(3, \mathbb O)J_3\cong\\
\bigcup_{\Lambda\subset \OO_\LL\oplus\OO} \  \{\mathrm{lattice\ classes}\ \mathrm{belonging}\ \mathrm{to}\ \Lambda \}.
\end{gather*}\qed
\end{corollary}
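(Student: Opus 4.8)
The plan is to transcribe, essentially verbatim, the bookkeeping already carried out in the non-twisted $n=3$ case (and before that in the twisted $\mathfrak{sl}(2)$ case), with the commutative $\KK$-algebra $\KK^3$ replaced by $\LL\oplus\KK$ and the diagonal torus replaced by $\mathbf{D}_3$. The only external input I would invoke is the identification established just above the preceding theorem,
\[
J_3^{-1}\mathbf{GL}(3,\KK)J_3\,/\,J_3^{-1}\mathbf{GL}(3,\OO)J_3\;\cong\;\{\mathrm{lattices}\ \mathrm{in}\ \LL\oplus\KK\},
\]
together with the fact (already recorded) that the ring of multipliers $I(N)$ of a lattice $N$ is an order contained in the maximal order $\OO_\LL\oplus\OO$. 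Under the displayed identification the remaining left quotient by $\mathbf{D}_3$ becomes a left action of $\mathbf{D}_3$ on lattices, so everything reduces to describing that action and then sorting the resulting orbits by their associated order.

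First I would pin down the $\mathbf{D}_3$-action. Writing a general element of $\LL\oplus\KK$ in the prescribed form $(x,a,\gamma_1(x))$ and an element of $\mathbf{D}_3$ (acting by left multiplication, hence invertible) as $\mathrm{diag}(d,b,\gamma_1(d))$ with $d\in\LL^\times$, $b\in\KK^\times$, the product is $(dx,\,ab,\,\gamma_1(dx))$. This is again of the required form, and it is exactly scalar multiplication of the lattice element by the invertible element $(d,b)\in(\LL\oplus\KK)^\times=\LL^\times\oplus\KK^\times$. Consequently the left $\mathbf{D}_3$-orbits on lattices are precisely the multiplication orbits $\{xN:\ x\in(\LL\oplus\KK)^\times\}$, i.e.\ the lattice classes in the sense defined above. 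This identifies the entire double-coset set with the set of all lattice classes in $\LL\oplus\KK$.

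It then remains to organize these lattice classes by their order, which is purely formal. For any lattice $N$ the ring of multipliers $I(N)$ is a class invariant: since $\LL\oplus\KK$ is commutative and $x$ is invertible, $y(xN)\subseteq xN\iff yN\subseteq N$, whence $I(xN)=I(N)$. Thus each lattice class belongs to a well-defined order $\Lambda=I(N)$, and every such order satisfies $\Lambda\subset\OO_\LL\oplus\OO$. Partitioning the lattice classes according to the order to which they belong yields the disjoint decomposition
\[
\{\mathrm{lattice\ classes}\}=\bigcup_{\Lambda\subset\OO_\LL\oplus\OO}\{\mathrm{lattice\ classes}\ \mathrm{belonging}\ \mathrm{to}\ \Lambda\},
\]
and composing with the identification of the previous paragraph gives the asserted bijection. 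The only point needing genuine (though mild) care is the verification that the left $\mathbf{D}_3$-action corresponds to scaling by \emph{invertible} elements of $\LL\oplus\KK$ and that the embedding $(x,a)\mapsto(x,a,\gamma_1(x))$ is $\mathbf{D}_3$-equivariant; once this is checked against the analogous $n=2$ computation, the rest is the formal partitioning of orbits by a class invariant, exactly as in the non-twisted case.
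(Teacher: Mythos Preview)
Your argument is correct and is precisely the intended one: the paper marks this corollary with \qed\ because it follows immediately from the preceding identification of $J_3^{-1}\mathbf{GL}(3,\KK)J_3/J_3^{-1}\mathbf{GL}(3,\OO)J_3$ with lattices in $\LL\oplus\KK$, together with the observation (already used in the non-twisted case and in the twisted $\mathfrak{sl}(2)$ case) that the left $\mathbf{D}_3$-action is scalar multiplication by $(\LL\oplus\KK)^\times$, whose orbits are exactly the lattice classes. Your write-up simply makes explicit the bookkeeping that the paper leaves to the reader.
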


Further, we need to study orders in cubic rings $\KK^3$ and $\LL\oplus\KK$. In the next two
subsections, we give two approaches to this description.

\subsection{Classification of cubic orders contained in separable cubic algebras I}

We begin with a general construction of cubic rings following \cite{DF},
see also \cite{Br}, \cite{GL}. Let $R$ be a discrete valuation ring (e.g., $R=\mathbb O$) and $K$ its quotient field.
Assume that any quadratic field extension of $K$ is generated by an element of $R$ whose square equals a generator of the
maximal ideal of $R$.
Let $A$ be a cubic separable $K$-algebra.
For every $R$-order $\Lambda$ in $A$, write $\Lambda= R+R\omega+R\theta$. Translating
$\omega$ and $\theta$ by appropriate elements of $R$, we can achieve that $\omega\theta=n\in R$.
Such a basis we will call {\it normal}. So, we got the following multiplication table:
\begin{equation*}
\omega\theta =n, \hspace{0,5cm} \omega^2= m +b\omega-a\theta, \hspace{0.5cm} \theta^2 =l+d\omega -c\theta,
\end{equation*}
where $a,b,c,d,l,m,n\in R$.
One can show that the associative law implies
that $(n,m,l)=(-ad,-ac,-bd)$, i.e., we get
\begin{equation}\label{Jast}
\omega\theta = -ad, \hspace{0,5cm} \omega^2= -ac +b\omega-a\theta, \hspace{0.5cm} \theta^2 =-bd + d\omega -c\theta.
\end{equation}

Now let us consider the {\it index form} $f(x,y)=ax^3+bx^2y+cxy^2+dy^3$ of $\Lambda$. Notice that the index form $f$ determines $\Lambda=\Lambda(f)=\Lambda_{abcd}$ uniquely
up to an isomorphism.

Let $P_\omega(X)=X^3 -bX^2 + acX- a^2d$ and $P_\theta(X)= X^3+cX^2+bdX+ad^2$.

\begin{lemma}\label{lm}
$P_\theta(\theta)=0$ and $P_\omega(\omega)=0$.
\end{lemma}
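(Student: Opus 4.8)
The plan is to verify the two polynomial identities by direct substitution, using the multiplication table in (\ref{Jast}) to reduce all powers of $\omega$ and $\theta$ to the normal basis $\{1, \omega, \theta\}$. Since $P_\theta(\theta) = \theta^3 + c\theta^2 + bd\theta + ad^2$, I first compute $\theta^3 = \theta \cdot \theta^2$. The hard part—really just bookkeeping—is that $\theta^2 = -bd + d\omega - c\theta$ contains an $\omega$ term, so multiplying by $\theta$ forces me to use the cross relation $\omega\theta = -ad$ to eliminate the resulting $\omega\theta$ product. Concretely, I would compute
\[
\theta^3 = \theta(-bd + d\omega - c\theta) = -bd\,\theta + d\,\omega\theta - c\,\theta^2 = -bd\,\theta + d(-ad) - c(-bd + d\omega - c\theta).
\]
Collecting terms, this expresses $\theta^3$ in the normal basis; then $P_\theta(\theta) = \theta^3 + c\theta^2 + bd\theta + ad^2$ should collapse to zero once the $\theta^2$ term is likewise expanded. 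The coefficient of each basis element ($1$, $\omega$, $\theta$) must vanish separately, which I expect to check line by line.

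For $P_\omega(\omega) = \omega^3 - b\omega^2 + ac\omega - a^2 d$, the computation is entirely symmetric. I would compute $\omega^3 = \omega \cdot \omega^2 = \omega(-ac + b\omega - a\theta)$, again invoking $\omega\theta = -ad$ to handle the $\omega\theta$ product that appears, and then re-expanding the remaining $\omega^2$ via (\ref{Jast}). Substituting into $P_\omega(\omega)$ and grouping by $1$, $\omega$, $\theta$ should yield zero in each coefficient.

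The only genuine subtlety—and the step I would flag as the main obstacle—is confirming that the relation $(n,m,l) = (-ad, -ac, -bd)$ recorded in the text, which encodes associativity of $\Lambda$, is exactly what forces the coefficients of $1$ and $\omega$ (the ones not automatically matched by the defining monic shape of $P_\theta$ and $P_\omega$) to cancel. In other words, the identities $P_\theta(\theta)=0$ and $P_\omega(\omega)=0$ are not formal consequences of the quadratic relations alone; they hold precisely because the structure constants satisfy the associativity constraints used to derive (\ref{Jast}). I would therefore present the expansion and then remark that the vanishing of the scalar and $\omega$-coefficients is exactly the content of those constraints. Since everything reduces to finite arithmetic in the commutative ring $R$, no further machinery is needed, and I would keep the write-up to the two explicit reductions above rather than belabor the routine term-collection.
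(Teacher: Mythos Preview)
Your approach is correct and essentially the same as the paper's: multiply the quadratic relation for $\theta^2$ (resp.\ $\omega^2$) by $\theta$ (resp.\ $\omega$) and use $\omega\theta=-ad$. Two minor remarks: you need not expand the $-c\theta^2$ term further, since it cancels directly against the $+c\theta^2$ in $P_\theta(\theta)$ before any basis reduction; and the ``subtlety'' you flag is not actually present, because the table (\ref{Jast}) already has the associativity constraints $(n,m,l)=(-ad,-ac,-bd)$ built in, so the vanishing is a purely mechanical consequence of those three relations with no residual condition to verify.
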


\begin{proof}
To derive the first equation, we multiply both sides of the third relation in the multiplication table above by $\theta$ and take into
account that $\omega\theta=n=-ad$. We get the second equation similarly.
\end{proof}

\begin{remark}\label{rem_P}
If $ad\neq 0$, then $P_{\theta}(-ad/X)=(ad^2 /X^3)P_{\omega}(X)$. If $a=1$, then
$P_{\omega} (X)=f(X,-1)$.
\end{remark}

\begin{theorem}
If $A=K\Lambda(f)$, then $A$ is a field if and only if  $P_\omega(X)$
 is irreducible over $K$.
\end{theorem}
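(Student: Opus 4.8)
The plan is to leverage the single relation $P_\omega(\omega)=0$ provided by Lemma \ref{lm}, together with the dimension count coming from the normal basis. The first step I would take is to record that $\{1,\omega,\theta\}$ is a $K$-basis of $A$: since $\Lambda(f)=R+R\omega+R\theta$ is a free $R$-module of rank $3$ and $A=K\Lambda(f)$, these three elements span $A$ over $K$ and must be $K$-linearly independent, so $\dim_K A=3$. The essential byproduct of this step is that $\omega\notin K$ (as $1$ and $\omega$ are independent). From $P_\omega(\omega)=0$ I then get that the minimal polynomial $\mu$ of $\omega$ over $K$ is a monic divisor of the monic cubic $P_\omega$.

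For the direction ($\Leftarrow$), assume $P_\omega$ is irreducible over $K$. Then the evaluation map $K[X]\to A$, $X\mapsto\omega$, has kernel exactly the maximal ideal $(P_\omega)$ (it cannot be all of $K[X]$ since $1\mapsto 1$), so $K[\omega]\cong K[X]/(P_\omega)$ is a field of degree $3$ over $K$. Being a $3$-dimensional $K$-subspace of the $3$-dimensional algebra $A$, it must coincide with $A$, whence $A=K[\omega]$ is a field.

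For the direction ($\Rightarrow$), assume $A$ is a field, so $[A:K]=3$ and $K(\omega)=K[\omega]$ is an intermediate field. By multiplicativity of degrees $[K(\omega):K]$ divides $3$, and since $\omega\notin K$ we have $[K(\omega):K]>1$, forcing $[K(\omega):K]=3$; thus $\deg\mu=3$. As $\mu$ is a monic degree-$3$ divisor of the monic degree-$3$ polynomial $P_\omega$, we conclude $\mu=P_\omega$, so $P_\omega$ is irreducible.

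I do not expect a serious obstacle here: the whole argument is elementary field theory once $P_\omega(\omega)=0$ is in hand. The only point requiring care is the bookkeeping of the first step, namely that the normal basis $\{1,\omega,\theta\}$ is genuinely a $K$-basis and in particular that $\omega\notin K$; everything downstream is routine. (Separability of $A$ is not logically needed for the equivalence itself, but it is what guarantees that ``$A$ is a field'' is one genuine alternative among the étale cubic $K$-algebras $K^3$, $K\times(\text{quadratic field})$, and a cubic field.)
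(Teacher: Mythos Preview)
Your proof is correct and follows essentially the same approach as the paper's: both directions hinge on $P_\omega(\omega)=0$, the fact that $\omega\notin K$, and the dimension count $\dim_K A=3$, leading to $K(\omega)=A$ in the $(\Leftarrow)$ direction and $\deg\mu=3$ (hence $\mu=P_\omega$) in the $(\Rightarrow)$ direction. Your write-up is simply more explicit about the bookkeeping than the paper's two-line argument.
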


\begin{proof}
Let $A$ be a field. Since $\omega \in A \setminus K$ is a zero of the polynomial $P_\omega(X)$ of degree $3$, this polynomial is minimal for $\omega$ over $K$. Thus, it is irreducible
over $K$.  Conversely, if $P_\omega(X)$ is irreducible over $K,$ then $K(\omega)$ is a field extension of degree $3$ over $K$, so $K(\omega)=A$.
\end{proof}

\begin{remark}
Clearly, if $P_\omega (X)$ is irreducible, then $P_\theta(X)$ is also irreducible
because irreducibility of $P_\omega(X)$ implies that $ad\neq 0$, and then we can use Remark \ref{rem_P}.
\end{remark}

As we know, if $A$ is a separable algebra of degree 3 over $K$, then $A$ is either
a (separable) field extension of $K$, or $A$ is isomorphic to a product of a quadratic
(separable) field extension $L$ of $K$ by $K$, or $A$ is isomorphic to $K^3$. If $A=K\Lambda(f)$, then we already know that $A$ is a field if and only if $P_\omega(X)$ (and $P_\theta(X)$) are irreducible.
Moreover, the algebra $A=K\Lambda(f)$ is separable if and only if the discriminant 

$$\Delta(f) =   18abcd + b^2 c^2-4ac^3-4db^3-27a^2 d^2 \neq 0.$$

Now we want to distinguish between the two remaining cases using the {\it index form} $f(X,Y)$.

We need the following auxiliary result:

\begin{lemma} The elements $1,\omega,\omega^2$ form a basis of $A$ over $K$ if and only if $a \neq 0$, while $1,\theta,\theta^2$ form a basis of $A$ if and only if $d \neq 0$.
\end{lemma}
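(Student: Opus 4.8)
The plan is to reduce the statement to a single $3\times 3$ determinant computation. Since $A = K\Lambda(f)$ is a separable cubic $K$-algebra it is three-dimensional over $K$, and because $\Lambda = R + R\omega + R\theta$ is an order with $\Lambda K = A$, the triple $\{1,\omega,\theta\}$ is a $K$-basis of $A$. Hence $\{1,\omega,\omega^2\}$ (resp.\ $\{1,\theta,\theta^2\}$) is a basis if and only if it is linearly independent over $K$, and this I would test by writing these elements in the already-known basis $\{1,\omega,\theta\}$ and checking that the resulting transition matrix is invertible.

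First I would invoke the multiplication table (\ref{Jast}). The relation $\omega^2 = -ac + b\omega - a\theta$ expresses the coordinates of $1,\omega,\omega^2$ in the basis $\{1,\omega,\theta\}$ as the columns of
$$\begin{pmatrix} 1 & 0 & -ac \\ 0 & 1 & b \\ 0 & 0 & -a \end{pmatrix},$$
whose determinant is $-a$. Thus $\{1,\omega,\omega^2\}$ is a $K$-basis of $A$ exactly when $a\neq 0$. Symmetrically, the relation $\theta^2 = -bd + d\omega - c\theta$ gives a transition matrix of determinant $-d$, so $\{1,\theta,\theta^2\}$ is a basis precisely when $d\neq 0$.

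For the two ``only if'' directions I would simply record the degenerate cases directly, as both a shortcut and a consistency check: if $a=0$ then (\ref{Jast}) collapses to $\omega^2 = b\omega \in K\omega$, so $\{1,\omega,\omega^2\}$ is dependent; likewise $d=0$ forces $\theta^2 = -c\theta \in K\theta$, making $\{1,\theta,\theta^2\}$ dependent. This establishes the contrapositive without even appealing to the determinant.

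There is essentially no hard step here: once the multiplication table (\ref{Jast}) is in hand the whole argument is elementary linear algebra. The only point deserving a word of care is the claim that $\{1,\omega,\theta\}$ is a $K$-basis of $A$, not merely an $R$-basis of $\Lambda$; this is where the separability and three-dimensionality of $A$ together with $\Lambda K = A$ enter, and it is what licenses the equivalence between ``is a basis'' and ``transition determinant nonzero'' used throughout.
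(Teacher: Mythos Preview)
Your proof is correct and is essentially a fleshed-out version of the paper's own argument, which simply says the claim ``follows immediately from the relations (\ref{Jast}) taking into account that $1,\omega,\theta$ is a basis of $A$.'' You have merely made explicit the transition matrix and its determinant that the paper leaves to the reader.
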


\begin{proof}
Follows immediately from the relations (\ref{Jast})
taking into account that $1,\omega, \theta$ is a basis of $A$.
\end{proof}

\begin{proposition}
If $a \neq 0$ and the polynomial
$P_\omega(X)$ is reducible over $K$, then

$(a)$ $A$ is isomorphic to $L\oplus K$ if $P_\omega(X)$ has only one zero in $K$,

$(b)$  $A$ is isomorphic to $K^3$ if $P_\omega(X)$ has three $($different$)$ zeros in $K$.

The same is true when $d \neq 0$ and $P_\omega(X)$  is replaced by $P_\theta(X)$.
\end{proposition}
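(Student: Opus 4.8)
The plan is to identify $A$ with a quotient $K[X]/(P_\omega(X))$ and then to read off its isomorphism type directly from the factorization of $P_\omega$ over $K$.

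First I would exploit $a\neq 0$. By the basis lemma just proved, $\{1,\omega,\omega^2\}$ is then a $K$-basis of $A$, so these elements are linearly independent over $K$. Since $P_\omega(\omega)=0$ by Lemma~\ref{lm} and $P_\omega$ is monic of degree $3$, it must be the minimal polynomial of $\omega$. Hence the evaluation homomorphism $X\mapsto\omega$ induces an isomorphism of $K$-algebras
$$A=K[\omega]\cong K[X]/\big(P_\omega(X)\big).$$

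Next I would bring in separability together with the hypothesis that $P_\omega$ is reducible. Because $A$ is a separable cubic algebra, we have $\Delta(f)\neq 0$, which is precisely the statement that $P_\omega$ has no repeated root in $\ol{K}$; equivalently $K[X]/(P_\omega(X))$ is \'etale over $K$. Thus $P_\omega$ is a separable polynomial that factors over $K$, and for a separable cubic this leaves exactly two possibilities. Either $P_\omega=(X-r)q(X)$ with $r\in K$ and $q$ an irreducible quadratic, so $P_\omega$ has a single zero in $K$; then the Chinese Remainder Theorem gives $A\cong K\times K[X]/(q(X))=L\oplus K$, where $L=K[X]/(q)$ is a (separable) quadratic field extension of $K$, which is case~$(a)$. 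Or $P_\omega$ splits into three pairwise distinct linear factors, i.e.\ has three zeros in $K$, and then $A\cong K^3$, which is case~$(b)$. Separability is exactly what excludes the intermediate configuration of two distinct zeros in $K$: the remaining linear factor would force a third $K$-zero coinciding with one of them, contradicting the absence of repeated roots. Equivalently, one may count $K$-algebra homomorphisms $A\to K$, which equals the number of distinct $K$-zeros of $P_\omega$ and is $1$ for $L\oplus K$ and $3$ for $K^3$; reducibility rules out the value $0$ attached to the cubic-field case already treated. In all cases the number of zeros of $P_\omega$ in $K$ pins down the isomorphism type.

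For the assertion with $d\neq 0$ and $P_\theta$ in place of $P_\omega$, the argument is entirely symmetric: $d\neq 0$ makes $\{1,\theta,\theta^2\}$ a $K$-basis by the same lemma, $P_\theta(\theta)=0$ by Lemma~\ref{lm}, so $A\cong K[X]/(P_\theta(X))$, and the same case analysis applies. (When in addition $ad\neq 0$ one may instead quote Remark~\ref{rem_P}, which relates the $K$-zeros of $P_\theta$ to those of $P_\omega$ via $X\mapsto -ad/X$ and so transfers the count directly.) The only point requiring genuine care is the separability bookkeeping in the middle step --- verifying that $\Delta(f)\neq 0$ really is the non-vanishing of the discriminant of $P_\omega$, so that ``reducible and separable'' splits cleanly into the one-zero and three-zero cases; once that is secured, the Chinese Remainder decomposition and the resulting case analysis are routine.
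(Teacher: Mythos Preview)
Your proof is correct and follows essentially the same route as the paper's: use $a\neq 0$ together with the basis lemma to conclude $A\cong K[X]/(P_\omega(X))$, then read off the structure from the factorization of $P_\omega$. The paper's own argument is terser, simply declaring (a) and (b) ``evident'' once the isomorphism $A\cong K[X]/(P_\omega(X))$ is in hand; you have filled in the Chinese Remainder Theorem step and the separability bookkeeping that the paper leaves implicit.
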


\begin{proof}
If $a \neq 0$, then by the lemma above, the elements $1,\omega,\omega^2$
generate $A$, which implies that $A \cong K[X]/(P_\omega(X))$ and both (a) and (b)
are evident. The same arguments work when $d \neq 0$ and $P_\omega(X)$  is replaced by $P_\theta(X)$.
\end{proof}

It remains the case when $a=d=0$. The multiplication rules (\ref{Jast}) reduce then to
$$\omega\theta =0, \hspace{0,5cm} \omega^2= b\omega, \hspace{0.5cm} \theta^2 =-c\theta.$$

Notice that 
$\Delta(f)=b^2c^2 \neq 0$, since $A$ is separable.

\begin{proposition}
If $a=d=0$ and $A=K\Lambda(f)$ is a separable algebra, then $A \cong K^3$.
\end{proposition}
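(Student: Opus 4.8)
The plan is to exhibit three nonzero orthogonal idempotents in $A$ summing to $1$, which immediately yields the decomposition $A\cong K^3$. The key observation is that the degenerate multiplication table produced by $a=d=0$, namely $\omega\theta=0$, $\omega^2=b\omega$ and $\theta^2=-c\theta$, already displays $\omega$ and $\theta$ as non-normalized idempotents, and separability guarantees that the normalizing scalars are invertible.

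First I would record that, since $A=K\Lambda(f)$ is separable, we have $\Delta(f)=b^2c^2\neq 0$, so both $b\neq 0$ and $c\neq 0$ in $K$. This is exactly what is needed in order to divide by $b$ and $c$ in the construction that follows.

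Next I would set $e_1=\omega/b$ and $e_2=-\theta/c$. From $\omega^2=b\omega$ one gets $e_1^2=\omega^2/b^2=\omega/b=e_1$, and from $\theta^2=-c\theta$ one gets $e_2^2=\theta^2/c^2=-\theta/c=e_2$, so both are idempotent; from $\omega\theta=0$ one gets $e_1e_2=-\omega\theta/(bc)=0$, so they are orthogonal. I would then put $e_3=1-e_1-e_2$ and verify directly that $e_1e_3=e_1-e_1^2-e_1e_2=0$, similarly $e_2e_3=0$, and $e_3^2=e_3-e_3e_1-e_3e_2=e_3$, so that $e_1,e_2,e_3$ are pairwise orthogonal idempotents with $e_1+e_2+e_3=1$.

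Finally I would note that each $e_i$ is nonzero: $e_1$ and $e_2$ are nonzero scalar multiples of the basis vectors $\omega$ and $\theta$, while $e_3$ has coefficient $1$ on the basis vector $1$ of $A=K\oplus K\omega\oplus K\theta$. Orthogonality and $e_1+e_2+e_3=1$ then give the direct sum of ideals $A=Ae_1\oplus Ae_2\oplus Ae_3$, each summand containing the nonzero element $e_i$; since $\dim_K A=3$, every summand must be one-dimensional, so $Ae_i=Ke_i\cong K$ and therefore $A\cong K^3$. There is no genuine obstacle in this argument: the whole point is to spot the idempotents hidden in the reduced multiplication table, and everything else is a formal verification whose only external input is the nonvanishing $b,c\neq 0$ supplied by separability.
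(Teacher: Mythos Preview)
Your proof is correct and takes a genuinely different, more direct route than the paper. The paper argues by exclusion: it presents $A$ as $K[X,Y]/(X^2-bX,\,Y^2+cY,\,XY)$ and then rules out the case $A\cong L\oplus K$ by showing that no element of $A$ squares to a generator $t$ of the maximal ideal of $R$ (using the standing assumption that every quadratic field extension of $K$ is generated by such an element). You instead exhibit the three primitive idempotents $e_1=\omega/b$, $e_2=-\theta/c$, $e_3=1-e_1-e_2$ directly, which immediately forces $A\cong K^3$. Your argument is both shorter and more general: it uses only $b,c\ne 0$ from separability and nowhere invokes the special hypothesis on quadratic extensions of $K$, so it works over an arbitrary field. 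The paper's approach, by contrast, is tailored to the ambient discrete-valuation setting and relies on that extra structural assumption.
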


\begin{proof}
It is easy to see that  $A \cong K[X,Y]/(X^2-bX,Y^2+cY,XY)$.  Since $A$ is separable, we have to exclude a possibility that $A$ contains
a quadratic field extension of $K$. In our case, such a quadratic field extension is generated by an element of $A$ whose square equals a generator $t$ of the maximal ideal of $R$. A general element of  $K[X,Y]/(X^2-bX,Y^2+cY,XY)$
has the form $\alpha+\beta x+\gamma y$, where $\alpha, \beta, \gamma \in K$ and $x^2=bx, y^2=-cy,xy =0$. Thus
$(\alpha+\beta x+\gamma y)^2 =t$ implies that $\alpha ^2=t$, where $\alpha \in K$, which is impossible.
\end{proof}

Notice that in the case $a=d=0$ the polynomials $P_\omega(X), P_\theta(X)$ have all their zeros in $K$.
Thus, we have

\begin{corollary}
$(a)$ The separable algebra $A=K \Lambda( f)$ is isomorphic to $K^3$ if and only if
both polynomials $P_{\omega}(X)$ and $P_{\theta}(X)$ have all their zeros in $K$.

$(b)$  The separable algebra $A=K \Lambda (f)$ is isomorphic to $L\oplus K$ if and only if at least one of
the polynomials $P_{\omega}(X)$ or $P_{\theta}(X)$ has a root in $L$.
\end{corollary}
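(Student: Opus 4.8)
The plan is to use that separability of $A$ (equivalently $\Delta(f)\neq 0$) forces $A$ to be exactly one of the three types $K^3$, $L\oplus K$, or a cubic field, and then to detect which type occurs from the factorization of the two resolvents $P_\omega$ and $P_\theta$. The three results I would combine are: the earlier theorem that $A$ is a field precisely when $P_\omega$ (equivalently $P_\theta$) is irreducible over $K$; the earlier proposition that, when $a\neq 0$, one has $A\cong K^3$ iff $P_\omega$ has three distinct zeros in $K$ and $A\cong L\oplus K$ iff $P_\omega$ has exactly one zero in $K$, together with its symmetric version using $P_\theta$ when $d\neq 0$; and the proposition treating $a=d=0$, in which case $A\cong K^3$ automatically.

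Before the case analysis I would record two facts. First, the degenerate factorizations: if $a=0$ then $P_\omega(X)=X^2(X-b)$, and if $d=0$ then $P_\theta(X)=X^2(X+c)$, so in either situation that resolvent splits completely over $K$ and carries no arithmetic content; in particular when $a=d=0$ both split while $A\cong K^3$. Second, the reciprocity: when $ad\neq 0$, Remark \ref{rem_P} shows $X\mapsto -ad/X$ is a bijection between the (nonzero) zeros of $P_\omega$ and those of $P_\theta$, and since $a,d\in K^\times$ this bijection preserves both the property of lying in $K$ and the property of lying in $L\setminus K$. Hence, in the only case where both resolvents are genuinely informative, $P_\omega$ splits over $K$ iff $P_\theta$ does, and $P_\omega$ has a zero generating $L$ iff $P_\theta$ does.

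For part (a) I would run a short case analysis on $(a,d)$. If $ad\neq 0$, the proposition gives $A\cong K^3$ iff $P_\omega$ splits over $K$, and the reciprocity then upgrades this to ``both $P_\omega$ and $P_\theta$ split''. If exactly one of $a,d$ vanishes, the degenerate factorization makes the corresponding resolvent split for free, so ``both split'' reduces to the splitting of the non-degenerate resolvent, which the proposition equates with $A\cong K^3$ (separability ensuring the three zeros are distinct there). The case $a=d=0$ is immediate, both sides being true. For part (b) I would characterize $A\cong L\oplus K$ as the residual case after excising the field case (resolvent irreducible) and the $K^3$ case (both resolvents split, by part (a)): the relevant resolvent is then reducible but not split, so its irreducible-over-$K$ quadratic factor splits over the unique quadratic extension $L$ and produces a zero in $L\setminus K$. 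Conversely, a zero of $P_\omega$ or $P_\theta$ in $L\setminus K$ has a degree-$2$ minimal polynomial over $K$ dividing that resolvent; this irreducible quadratic factor rules out the field case (reducibility, via the theorem) and the $K^3$ case (non-splitting, via part (a)), leaving $A\cong L\oplus K$.

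The step I would be most careful about is the bookkeeping in the degenerate cases $a=0$ or $d=0$: there one resolvent splits over $K$ for trivial reasons, so the genuine quadratic obstruction that detects $L\oplus K$ can only be carried by the other resolvent, which is exactly why (b) is phrased as ``at least one of $P_\omega,P_\theta$''. I would also make explicit that ``a zero in $L$'' must be read as a zero generating $L$, i.e.\ lying in $L\setminus K$: when $A\cong K^3$ every zero already lies in $K\subseteq L$, so the literal reading would make (b) fail, while no zero lies in $L\setminus K$, and it is the latter condition that the argument uses.
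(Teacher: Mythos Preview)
Your approach is essentially the paper's: the corollary is stated immediately after the $a=d=0$ proposition with no further argument, the implicit proof being exactly the case analysis on $(a,d)$ that you spell out, combined with the earlier proposition (for $a\neq 0$ or $d\neq 0$) and the reciprocity in Remark~\ref{rem_P}. Your explicit degenerate factorizations $P_\omega(X)=X^2(X-b)$ when $a=0$ and $P_\theta(X)=X^2(X+c)$ when $d=0$ make transparent what the paper only hints at in the sentence preceding the corollary.

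Your final paragraph is worth keeping: the observation that part~(b) must be read as ``a root in $L\setminus K$'' is a genuine clarification the paper does not make, and without it the literal statement of (b) fails (since when $A\cong K^3$ every root already lies in $K\subset L$).
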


Now return to the case $R=\OO$. We are almost ready to complete our description of double cosets (and therefore, our classification of the corresponding quantum groups) in terms of quadruples $(a,b,c,d)$.

First, we define an action of $\mathbf{GL}(2,\OO)$ on the set
of index forms and hence, on the set of quadruples $(a,b,c,d)$. Let $g\in\mathbf{GL}(2,\OO)$.
The action is defined as follows:
$$
f(u,v)\mapsto g\cdot f(u,v)=\frac{1}{\mathrm{det}(g)}f((u,v)g).
$$
Here, we consider $(u,v)$ as a row.

The result below was proved in \cite{GL}, see also \cite{Br} and \cite{DF}.

\begin{proposition}
Let $S$ be either a local ring or a principal ideal domain. Then there is a bijection
between the set of orbits  of the action of $\mathbf{GL}(2,S)$ on the set of index forms
(and hence, on the set of quadruples $(a,b,c,d)$) and the set of isomorphism classes
of cubic rings over $S$.\qed
\end{proposition}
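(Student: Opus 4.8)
The plan is to construct the correspondence explicitly and verify it in both directions, using the normal-basis description already recorded in (\ref{Jast}). First I would go from cubic rings to forms. Let $\Lambda$ be a cubic ring over $S$. Since $S$ is local or a PID and $\Lambda$ is finitely generated, commutative, unital and of rank $3$, it is in fact \emph{free} of rank $3$, so I may choose a basis $1,\omega',\theta'$. Writing $\omega'\theta' = n_0 + p\omega' + q\theta'$ with $n_0,p,q\in S$ and replacing $\omega=\omega'-q$, $\theta=\theta'-p$ gives $\omega\theta = n_0+pq\in S$; hence a \emph{normal} basis always exists. With a normal basis fixed, the associative law forces the multiplication table (\ref{Jast}) (this is the computation cited just before that equation, yielding $(n,m,l)=(-ad,-ac,-bd)$), so $\Lambda$ determines the quadruple $(a,b,c,d)$ and thus the index form $f(x,y)=ax^3+bx^2y+cxy^2+dy^3$.

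Next I would analyse the ambiguity of this construction, which is exactly where $\mathbf{GL}(2,S)$ enters. The key observation is that $f$ is intrinsic to the rank-two quotient $\overline{\Lambda}:=\Lambda/S$: concretely it is the cubic map $\overline{\Lambda}\to\wedge^2\overline{\Lambda}$, $\bar x\mapsto \overline{x^2}\wedge \bar x$, where $x$ is any lift of $\bar x$ and $\overline{x^2}$ its reduction modulo $S$. One checks this is homogeneous of degree $3$ and independent of the lift (a shift $x\mapsto x+s$ changes $\overline{x^2}$ by $2s\bar x$, which dies under the wedge with $\bar x$); evaluating on the basis reproduces $f(u,v)\,(\bar\omega\wedge\bar\theta)$, matching $(a,b,c,d)$. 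Consequently, translating $\omega,\theta$ by elements of $S$ — in particular renormalizing — does not alter $f$, and the only remaining freedom is the choice of basis of $\overline{\Lambda}$, on which $\mathbf{GL}(2,S)$ acts. I would then compute that a basis change $g$ sends $f$ to $\tfrac{1}{\det(g)}f((u,v)g)$, the factor $1/\det(g)$ coming from the induced action on the trivialization $\bar\omega\wedge\bar\theta$ of $\wedge^2\overline{\Lambda}$. This shows the $\mathbf{GL}(2,S)$-orbit of $f$ is a well-defined invariant of $\Lambda$, that isomorphic rings give equivalent forms, and (since a ring isomorphism carries normal bases to normal bases) that equal orbits force isomorphic rings, i.e.\ injectivity on isomorphism classes.

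For surjectivity I would reverse the construction: given $(a,b,c,d)$, set $\Lambda(f)=S\oplus S\omega\oplus S\theta$ with the multiplication (\ref{Jast}), and verify it is a commutative unital associative $S$-algebra. Commutativity and the unit are immediate, and associativity reduces to the associator identities $(\omega\omega)\theta=\omega(\omega\theta)$, $(\omega\theta)\theta=\omega(\theta\theta)$, $(\omega\omega)\omega=\omega(\omega\omega)$ together with their symmetric partners, each of which collapses to a polynomial identity in $a,b,c,d$ over $S$. This is precisely the converse of the forced relation $(n,m,l)=(-ad,-ac,-bd)$: with those structure constants the associator vanishes identically, so $\Lambda(f)$ is a genuine cubic ring mapping to the given form. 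Combining the three steps yields the claimed bijection between $\mathbf{GL}(2,S)$-orbits of index forms and isomorphism classes of cubic rings over $S$.

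I expect the main obstacle to be the bookkeeping in the second paragraph, namely writing out the transformation of $(a,b,c,d)$ under a general $g\in\mathbf{GL}(2,S)$ and matching it, \emph{including} the $1/\det(g)$ twist, to the prescribed action $f(u,v)\mapsto \tfrac{1}{\det(g)}f((u,v)g)$; the intrinsic formulation via $\overline{\Lambda}\to\wedge^2\overline{\Lambda}$ is what keeps this computation honest and coordinate-free. Finally, the hypothesis that $S$ is local or a PID is used in exactly one essential place: it guarantees that every cubic ring is free of rank $3$ (so that a normal basis exists) and that $\wedge^2\overline{\Lambda}$ is a trivial line, allowing one to speak of honest binary cubic forms rather than forms valued in a rank-one twist.
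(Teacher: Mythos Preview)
Your argument is correct and is essentially the standard Delone--Faddeev / Gross--Lucianovic proof. Note, however, that the paper does not supply its own proof of this proposition: it is stated with a terminal \qed\ and attributed to \cite{GL} (with \cite{Br} and \cite{DF} as further references). So there is no in-paper argument to compare against; your write-up is precisely the kind of proof those references contain, with the intrinsic description of the index form as the cubic map $\overline{\Lambda}\to\wedge^2\overline{\Lambda}$, $\bar x\mapsto \overline{x^2}\wedge\bar x$, being exactly the device used in \cite{GL} to make the $\mathbf{GL}(2,S)$-covariance transparent.
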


Let us make the following observation:

\begin{lemma}
  Let $r:\Lambda\to \Lambda'$ be an $\OO$-algebra isomorphism. Then we can extend $r$ to a $\KK$-isomorphism $r': \KK\Lambda\to \KK\Lambda'$ of the corresponding enveloping algebras (and therefore, they are isomorphic).
\end{lemma}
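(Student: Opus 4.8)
The plan is to realize each enveloping algebra as a base change of the order along $\OO \hookrightarrow \KK$ and then extend $r$ by the functoriality of tensor product. First I would record the standard identification $\KK\Lambda \cong \Lambda \ot_\OO \KK$. Indeed, by the definition of an order, $\Lambda$ is a full $\OO$-lattice in the cubic separable $\KK$-algebra $A := \KK\Lambda$ with $\Lambda\KK = A$; since $\Lambda$ is a submodule of the $\KK$-vector space $A$ it is torsion-free over $\OO$, so the canonical map $\Lambda \to \Lambda \ot_\OO \KK$ is injective, and the induced $\KK$-linear map $\Lambda \ot_\OO \KK \to A$ is surjective (as $\Lambda$ spans $A$) between spaces of the same finite $\KK$-dimension $n$, hence an isomorphism of $\KK$-algebras. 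The same applies to $\Lambda'$, giving $\KK\Lambda' \cong \Lambda' \ot_\OO \KK$.

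Next I would define the extension by base change, setting
$$
r' := r \ot \id_\KK : \Lambda \ot_\OO \KK \lra \Lambda' \ot_\OO \KK .
$$
Since $r$ is $\OO$-linear, $r'$ is $\KK$-linear, and under the embeddings $\Lambda \hookrightarrow \Lambda\ot_\OO\KK$ and $\Lambda' \hookrightarrow \Lambda'\ot_\OO\KK$ it satisfies $r'(\lambda \ot 1) = r(\lambda)\ot 1$, so $r'$ genuinely extends $r$. To see that $r'$ is a homomorphism of $\KK$-algebras, note that the multiplication on $\Lambda \ot_\OO \KK$ is the $\OO$-bilinear (hence $\KK$-bilinear after localization) extension of the multiplication on $\Lambda$; multiplicativity of $r$ on generators $\lambda \ot 1$ therefore propagates to all of $\Lambda \ot_\OO \KK$ by $\KK$-bilinearity, and $r'(1\ot 1) = 1\ot 1$.

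Finally I would check that $r'$ is an isomorphism by applying the identical construction to the inverse $\OO$-algebra isomorphism $r^{-1}: \Lambda' \to \Lambda$, obtaining $(r^{-1})' = r^{-1}\ot \id_\KK$. By functoriality of $- \ot_\OO \KK$ one has $(r^{-1})' \circ r' = (r^{-1}\circ r)\ot \id_\KK = \id$ and likewise $r' \circ (r^{-1})' = \id$, so $r'$ is a $\KK$-algebra isomorphism $\KK\Lambda \to \KK\Lambda'$; in particular the enveloping algebras are isomorphic. I do not expect a genuine obstacle here: the statement is pure base change along the flat extension $\OO \to \KK$ (a localization), and the only points requiring care are the well-definedness of the identification $\KK\Lambda \cong \Lambda\ot_\OO\KK$ and the verification that multiplicativity survives the extension, both of which are formal once the lattice/order properties of $\Lambda$ are invoked.
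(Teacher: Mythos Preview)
Your proof is correct and takes essentially the same approach as the paper: the paper's proof consists of the single line ``Clearly, $r$ can be extended to $r': \KK\Lambda\to \KK\Lambda'$ as $r'(a\otimes k)=r(a)\otimes k$,'' which is precisely your map $r\ot\id_\KK$. You have simply supplied the routine verifications (identification $\KK\Lambda\cong\Lambda\ot_\OO\KK$, multiplicativity, and invertibility via $(r^{-1})'$) that the paper leaves implicit.
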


\begin{proof}
  Clearly, $r$ can be extended to $r': \KK\Lambda\to \KK\Lambda'$ as $r' (a\otimes k)=r(a)\otimes k$.
\end{proof}

Let us denote the set of quadruples $(a,b,c,d)$ such that the corresponding cubic order is contained in $\KK^3$ (resp.\ $\LL\oplus\KK$) by $\cal{P}$ (resp.\ $\cal{Q}$).

\begin{corollary}
The sets $\cal{P}$, $\cal{Q}$ are invariant under the action of  $\mathbf{GL}(2,\OO)$.\qed
\end{corollary}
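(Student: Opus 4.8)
The plan is to reduce the statement to the two facts just established: the bijection between $\mathbf{GL}(2,\OO)$-orbits of index forms and isomorphism classes of cubic $\OO$-orders, and the preceding Lemma that an $\OO$-algebra isomorphism of orders extends to a $\KK$-algebra isomorphism of their enveloping algebras. The key observation is that membership in $\cal{P}$ or $\cal{Q}$ is, by definition, a property of the enveloping algebra $\KK\Lambda(f)$, namely whether it is isomorphic to $\KK^3$ or to $\LL\oplus\KK$; so it suffices to show that this property depends only on the isomorphism class of the order $\Lambda(f)$, which the $\mathbf{GL}(2,\OO)$-action preserves.

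First I would fix a quadruple $(a,b,c,d)$ with index form $f$ and an element $g\in\mathbf{GL}(2,\OO)$, and let $(a',b',c',d')$ be the quadruple attached to the transformed index form $g\cdot f$. By the Proposition of \cite{GL} applied with $S=\OO$ (a local ring), two quadruples lie in the same orbit precisely when the associated cubic rings are isomorphic; in particular the orders $\Lambda(f)=\Lambda_{abcd}$ and $\Lambda(g\cdot f)=\Lambda_{a'b'c'd'}$ are isomorphic as $\OO$-algebras. Next I would invoke the preceding Lemma: an $\OO$-algebra isomorphism $\Lambda(f)\to\Lambda(g\cdot f)$ extends to a $\KK$-algebra isomorphism $\KK\Lambda(f)\to\KK\Lambda(g\cdot f)$ of the corresponding enveloping algebras. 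Hence $\KK\Lambda(f)\cong\KK^3$ if and only if $\KK\Lambda(g\cdot f)\cong\KK^3$, and likewise with $\LL\oplus\KK$ in place of $\KK^3$. By the very definitions of $\cal{P}$ and $\cal{Q}$ this says $(a,b,c,d)\in\cal{P}$ iff $(a',b',c',d')\in\cal{P}$, and similarly for $\cal{Q}$, which is the asserted invariance.

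There is no serious obstacle here; the substance is carried entirely by the two cited results, and the argument is a short chain of implications. The one point deserving a moment's care is verifying that the condition defining $\cal{P}$ and $\cal{Q}$ is genuinely an isomorphism invariant of the enveloping $\KK$-algebra, so that ``the order is contained in $\KK^3$ (resp.\ $\LL\oplus\KK$)'' becomes unambiguous after passing to isomorphism classes. This is immediate from the structure theorem recalled earlier: a separable cubic $\KK$-algebra is isomorphic to exactly one of $\KK^3$, $\LL\oplus\KK$, or a separable cubic field. These three types are therefore mutually non-isomorphic, so the enveloping-algebra type of $\Lambda(f)$ is well defined on isomorphism classes, and the invariance of $\cal{P}$ and $\cal{Q}$ follows.
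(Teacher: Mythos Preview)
Your argument is correct and is exactly the reasoning the paper intends: the corollary is stated immediately after the Proposition on $\mathbf{GL}(2,\OO)$-orbits and the Lemma on extending $\OO$-algebra isomorphisms to enveloping algebras, and is marked \qed\ without further proof. Your only addition is spelling out that the three separable cubic $\KK$-algebra types are mutually non-isomorphic, which is a harmless clarification.
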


Let $\mathrm{Aut}_\KK (\KK\Lambda)$ be the group of $\KK$-automorphisms of the enveloping algebra $\KK\Lambda$.

\begin{corollary}
  There are two bijections of sets
  $$
  \mathrm{Aut}_\KK (\LL\oplus\KK)\backslash\{\mathrm{orders}\ \mathrm{in}\ \OO_\LL\oplus\OO\}\cong
  \mathbf{GL}( 2,\OO)\backslash{\cal Q},
  $$
  $$
  \mathrm{Aut}_\KK (\KK^3)\backslash\{\mathrm{orders}\ \mathrm{in}\ \OO^3\}\cong
  \mathbf{GL}(2,\OO)\backslash{\cal P}.
  $$
\end{corollary}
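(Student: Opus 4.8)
The plan is to reduce both bijections to the Delone--Faddeev/Gross--Lucianovic parametrization (the Proposition above), which already identifies the $\mathbf{GL}(2,\OO)$-orbits of index forms $(a,b,c,d)$ with the isomorphism classes of cubic rings over $\OO$, via $f \mapsto \Lambda(f)$. Since the two statements are proved identically, one simply replacing $\KK^3$ by $\LL\oplus\KK$ throughout, I would carry out the case of $\KK^3$ and ${\cal P}$ in detail and remark that the argument for $\LL\oplus\KK$ and ${\cal Q}$ is verbatim the same. Recall that the separable cubic $\KK$-algebras fall into three mutually exclusive types (a cubic field, $\LL\oplus\KK$, and $\KK^3$), so ${\cal P}$ and ${\cal Q}$ single out exactly the two non-field types.

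First I would restrict the Gross--Lucianovic bijection to ${\cal P}$. By definition $f \in {\cal P}$ precisely when the enveloping algebra $\KK\Lambda(f)$ is isomorphic to $\KK^3$; by the preceding Lemma any $\OO$-algebra isomorphism $\Lambda \to \Lambda'$ extends to a $\KK$-isomorphism $\KK\Lambda \to \KK\Lambda'$, so the isomorphism type of the enveloping algebra is an invariant of the isomorphism class of $\Lambda$. Combined with the invariance of ${\cal P}$ under $\mathbf{GL}(2,\OO)$ (the Corollary above), this shows that the Gross--Lucianovic bijection restricts to a bijection between $\mathbf{GL}(2,\OO)\backslash{\cal P}$ and the set of isomorphism classes of cubic $\OO$-orders $\Lambda$ whose enveloping algebra $\KK\Lambda$ is isomorphic to $\KK^3$.

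It then remains to identify this last set of isomorphism classes with $\mathrm{Aut}_\KK(\KK^3)\backslash\{\mathrm{orders}\ \mathrm{in}\ \OO^3\}$. I would proceed as follows. Given an abstract cubic order $\Lambda$ with $\KK\Lambda \cong \KK^3$, choose a $\KK$-algebra isomorphism $\iota: \KK\Lambda \xrightarrow{\sim} \KK^3$ and send $\Lambda$ to $\iota(\Lambda)$; since $\Lambda$ is free of rank $3$ over $\OO$ and $\iota$ is a $\KK$-isomorphism, $\iota(\Lambda)$ is a full $\OO$-lattice and a subring of $\KK^3$, hence an order in $\KK^3$, and it lies in $\OO^3$ by the Lemma stating that every order is contained in $\OO^n$. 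A different choice $\iota'$ satisfies $\iota' = \phi\circ\iota$ with $\phi = \iota'\iota^{-1} \in \mathrm{Aut}_\KK(\KK^3)$, so $\iota'(\Lambda) = \phi(\iota(\Lambda))$ lies in the same $\mathrm{Aut}_\KK(\KK^3)$-orbit, and the class of $\iota(\Lambda)$ is well defined. Conversely, restricting an element $\phi \in \mathrm{Aut}_\KK(\KK^3)$ shows that two orders $M, \phi(M) \subset \KK^3$ are $\OO$-algebra isomorphic, while the extension Lemma shows that any $\OO$-algebra isomorphism between two orders in $\KK^3$ extends to an element of $\mathrm{Aut}_\KK(\KK^3)$ carrying one onto the other (here one uses $\KK M = \KK^3$ for every order $M$ in $\KK^3$). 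Hence the map descends to a bijection between isomorphism classes and $\mathrm{Aut}_\KK(\KK^3)$-orbits, which, composed with the restricted Gross--Lucianovic bijection, yields the asserted identification. The genuinely delicate point, and the step I would be most careful with, is exactly this last translation between \emph{abstract} isomorphism classes of orders (the output of the index-form parametrization) and $\mathrm{Aut}_\KK(\KK^3)$-orbits of orders \emph{literally sitting inside} the fixed split algebra $\KK^3$; both directions rest on the extension Lemma together with the fact that the enveloping algebra of any order in $\KK^3$ is $\KK^3$ itself.
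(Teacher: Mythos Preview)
Your proposal is correct and follows essentially the same approach as the paper, which gives only a one-sentence proof (``It is sufficient to notice that any $\KK$-automorphism of the enveloping algebra preserves the corresponding maximal order, $\OO^3$ or $\OO_\LL\oplus \OO$''). You have simply made explicit the steps the paper leaves to the reader: the restriction of the Gross--Lucianovic bijection to ${\cal P}$ (resp.\ ${\cal Q}$) using the invariance corollary, and the translation between abstract isomorphism classes of orders and $\mathrm{Aut}_\KK$-orbits of orders embedded in the fixed algebra via the extension lemma.
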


\begin{proof}
  It is sufficient to notice that any $\KK$-automorphism of the enveloping algebra preserves the corresponding
  maximal order, $\OO^3$ or $ \OO_\LL\oplus \OO$.
\end{proof}

\begin{remark}
It is easy to show that
$\mathrm{Aut}_\KK (\LL\oplus\KK)\cong \mathrm{Aut}_\KK (\LL)\cong\mathbb{Z}/2\mathbb{Z}$ and
$\mathrm{Aut}_\KK (\KK^3)\cong S_3$, the symmetric group.
\end{remark}

Now, we can describe the set of quantum groups related to the orders contained in $\KK^3$ as follows.
\begin{itemize}
  \item Choose a representative $(a,b,c,d)$ in $\mathbf{GL}(2,\OO)\backslash{\cal P}$.
  \item Construct $\Lambda_{abcd}\subset \KK^3$.
  \item Quantum groups corresponding to the orbit of the quadruple $(a,b,c,d)$ are in
  a one-to-one correspondence with lattices  in $\KK^3$ such that their ring of multipliers
  is $\gamma (\Lambda_{abcd})$, where $\gamma$ is an automorphism of $\KK^3$.
  \end{itemize}
The set of quantum groups related to the orders contained in $\KK\oplus\LL$ has an almost identical description.
\begin{example}
Assume that $ad\neq 0$ and $(a,b,c,d)\in{\cal P}$. The equation $P_{\theta}(x)=0$ has three roots
$x_1,x_2,x_3\in\OO$ and we can set $\theta=(x_1,x_2,x_3)\in\KK^3$. Then
$\omega=(-ad/x_1,-ad/x_2,-ad/x_3)$ and $\mathrm{Aut}_\KK (\KK^3)=S_3$ acts on $\Lambda_{abcd}$ as a permutation group. It is not necessary that all six orders $\gamma (\Lambda_{abcd}),\ \gamma\in S_3$
are distinct. It might happen that some of them coincide.
\end{example}

In order to complete our description of quantum groups in terms of quadruples,
we have to describe the set of lattice classes belonging to an order $\Lambda$
in terms of $a,b,c,d$.

The result below is a consequence of general results of \cite{DKF} applied to the
ring $\OO$.

\begin{theorem}
If $a,b,c,d\in t\OO$, then $\mathrm{lc}(\Lambda_{abcd})=2$.
Otherwise, $\mathrm{lc}(\Lambda_{abcd})=1$.\qed
\end{theorem}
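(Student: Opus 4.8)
The plan is to reduce the entire statement to Brzezinski's criterion (Theorem~\ref{thm_Brz}), according to which $\mathrm{lc}(\Lambda_{abcd})=1$ exactly when $\Lambda_{abcd}$ is a Gorenstein ring. Thus the main task becomes a Gorenstein dichotomy for the cubic order $\Lambda_{abcd}$ in terms of $(a,b,c,d)$: I want to show that $\Lambda_{abcd}$ fails to be Gorenstein precisely when $a,b,c,d\in t\OO$. This immediately disposes of the ``otherwise'' case, giving $\mathrm{lc}(\Lambda_{abcd})=1$. The one remaining input — that in the non-Gorenstein case the number of lattice classes is exactly $2$ rather than merely $>1$ — will be supplied by the local count of \cite{DKF}.

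To settle the dichotomy I would pass to the special fibre. Since $\Lambda_{abcd}$ is finite free of rank $3$ over the discrete valuation ring $\OO$ and its generic fibre $\KK\Lambda_{abcd}=A$ is separable, hence \'etale and therefore Gorenstein, the standard relative criterion (a finite flat algebra over a Gorenstein base is Gorenstein iff all its fibres are) shows that $\Lambda_{abcd}$ is Gorenstein if and only if its closed fibre $\bar\Lambda:=\Lambda_{abcd}/t\Lambda_{abcd}$ is a Gorenstein Artinian $\mathbb C$-algebra. The table (\ref{Jast}) reduces modulo $t$ to the analogous one with structure constants $\bar a,\bar b,\bar c,\bar d\in\mathbb C$, and everything is then read off from $\bar\Lambda$.

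Now two cases. If at least one coefficient is a unit, I would check that $\bar\Lambda$ is Gorenstein. When $a$ (resp. $d$) is a unit one solves $\theta$ (resp. $\omega$) from (\ref{Jast}) as an $\OO$-polynomial in $\omega$ (resp. $\theta$), so that $1,\bar\omega,\bar\omega^2$ (resp. $1,\bar\theta,\bar\theta^2$) is a $\mathbb C$-basis and $\bar\Lambda\cong\mathbb C[X]/(\bar P_\omega)$ (resp. $\mathbb C[X]/(\bar P_\theta)$) is a hypersurface, hence Gorenstein. When $a,d\in t\OO$ but $b$ or $c$ is a unit, (\ref{Jast}) reduces to $\bar\omega\bar\theta=0$, $\bar\omega^2=\bar b\bar\omega$, $\bar\theta^2=-\bar c\bar\theta$, and splitting off the idempotents $\bar\omega/\bar b$ and $-\bar\theta/\bar c$ exhibits $\bar\Lambda$ as a product of copies of $\mathbb C$ and at most one copy of $\mathbb C[\epsilon]/(\epsilon^2)$, again Gorenstein. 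If, on the other hand, all of $a,b,c,d$ lie in $t\OO$, then (\ref{Jast}) gives $\bar\omega\bar\theta=\bar\omega^2=\bar\theta^2=0$, so $\bar\Lambda=\mathbb C\oplus\mathfrak m$ with $\mathfrak m=\mathbb C\bar\omega\oplus\mathbb C\bar\theta$ and $\mathfrak m^2=0$; its socle is all of $\mathfrak m$, of dimension $2$, so $\bar\Lambda$ is not Gorenstein. Together with Theorem~\ref{thm_Brz} this yields $\mathrm{lc}(\Lambda_{abcd})=1$ whenever some coefficient is a unit.

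It remains to pin down $\mathrm{lc}(\Lambda_{abcd})=2$ (and not larger) when $a,b,c,d\in t\OO$. Here I would write $f=t^k g$ with $g$ of unit content and observe that $\omega\mapsto t^k\omega_g$, $\theta\mapsto t^k\theta_g$ identifies $\Lambda_{abcd}$ with the suborder $\OO+t^k(\OO\omega_g\oplus\OO\theta_g)$ of the Gorenstein order $\Lambda_g$; this exhibits $\Lambda_{abcd}$ as one of the explicit local non-Gorenstein cubic orders treated in \cite{DKF}. For such an order over a complete discrete valuation ring the Delone--Faddeev--Kuz'min analysis gives exactly two lattice classes, represented by $\Lambda_{abcd}$ itself and by one further, non-invertible class (for instance the trace dual $\{x\in A:\mathrm{Tr}(x\Lambda_{abcd})\subseteq\OO\}$, whose multiplier ring one verifies is again $\Lambda_{abcd}$). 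I expect this last step to be the genuine obstacle: Theorem~\ref{thm_Brz} only upgrades the Gorenstein failure to $\mathrm{lc}>1$, and it is precisely the sharp value $2$ that forces one to use the special cubic structure and the fine count of \cite{DKF} rather than any general order-theoretic bound.
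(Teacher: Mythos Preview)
Your proposal is correct and in fact more self-contained than the paper's own treatment. The paper offers no argument beyond citing \cite{DKF} for the entire statement; the subsequent Remark then reads off the Gorenstein dichotomy as a \emph{consequence} of the theorem combined with Theorem~\ref{thm_Brz}. You have inverted this logic: you establish the Gorenstein dichotomy directly via a closed-fibre analysis over $\mathbb C$ (the relative Gorenstein criterion over a DVR, the hypersurface presentation when $a$ or $d$ is a unit, the idempotent splitting when only $b$ or $c$ is a unit, and the square-zero maximal ideal with two-dimensional socle when all four lie in $t\OO$ are all sound), deduce $\mathrm{lc}=1$ in the unit-content case from Theorem~\ref{thm_Brz}, and invoke \cite{DKF} only for the sharp value $\mathrm{lc}=2$. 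Your parenthetical about the trace dual is also correct: one always has $I(\Lambda^\vee)=\Lambda$ (since $x\Lambda^\vee\subseteq\Lambda^\vee$ is equivalent, via $(\Lambda^\vee)^\vee=\Lambda$, to $x\Lambda\subseteq\Lambda$), and $\Lambda^\vee$ lies in the class of $\Lambda$ exactly when $\Lambda$ is Gorenstein, so this genuinely exhibits a second class --- though, as you rightly note, it does not by itself cap $\mathrm{lc}$ at $2$. What your route buys is an independent, elementary proof of the $\mathrm{lc}=1$ half and an explicit second representative in the other half; both you and the paper ultimately defer to \cite{DKF} for the upper bound $\mathrm{lc}\le 2$.
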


\begin{remark}
Notice that, according to Theorem \ref{thm_Brz}, if $a,b,c,d\in t\OO$, then $\Lambda_{abcd}$ is not Gorenstein.
Otherwise, $\Lambda_{abcd}$ is Gorenstein.
\end{remark}

\subsection{Classification of cubic orders contained in separable  cubic algebras II}\label{cubic_orders}
Here, we give a different approach to the classification problem of cubic orders.
Again, let  $R$ be a discrete valuation ring (e.g., $R=\mathbb O$) and $K$ its quotient field. Denote by $t$ a generator of the maximal ideal of $R$. If $\Lambda' \subset \Lambda$ are two $R$-orders in a $K$-algebra $A$, then the product
of the invariant factors (see \cite[(4.14)]{Rein}) of this pair (of $R$-lattices) is a power of the ideal $(t)$.
We write $[\Lambda:\Lambda']=t^k$ if this product of the invariant factors is $(t^k)$ and we call $t^k$ or simply
$k$ for the index of $\Lambda'$ in $\Lambda$.

\medskip

{\bf Description of all $R$-orders in the $K$-algebra $K^3$.}

\smallskip

We consider the field $K$ as diagonally embedded into $K^3$. The maximal order in this algebra
is $\Lambda=R^3$. Choose as a basis of $R^3$ the following elements: $e_1=1 = (1,1,1)$, $e_2=(0,1,0)$
and $e_3=(0,0,1)$. Of course, we have $e_2^2=e_2, e_3^2=e_3$ and $e_2e_3=0$.
Let $\Lambda' \subset \Lambda$ be any $R$-suborder of $\Lambda$. Let $1,f_2,f_3$ be an $R$-basis
of $\Lambda'$. It is clear that $1$ always can be chosen as a part of such a basis
since $\Lambda'/R$ is torsion-free and $\Lambda'$ is $R$-projective (even free). This means
that we can choose $f_2 = \alpha e_2+\beta e_3, f_3=\gamma e_2+\delta e_3$, where $\alpha,\beta,\gamma,\delta \in R$.

Assume now that $\Lambda'$ is a Gorenstein order, that is, $\alpha,\beta,\gamma,\delta$ are relatively prime. Otherwise,
we have $\Lambda' = R + t\Lambda''$, where $\Lambda''$ is a suborder of $\Lambda$. When $\Lambda'$
is Gorenstein, at least one of $\alpha,\beta,\gamma,\delta$ is invertible in $R$, say $\alpha$, and we can assume that $\alpha=1$. Thus, we may choose $\gamma=0$, so that $f_3=\delta e_3$. Further, we may assume that $\delta=t^k$ for a nonnegative integer $k$. Since $\Lambda'$ is an order, we have $f_2^2, f_3^2, f_2f_3 \in \Lambda'$. Only the first condition puts some restrictions on $\beta$:
$$f_2^2= e_2+ \beta^2e_3$$
implies that there exist $k,l \in R$ such that $e_2+ \beta^2e_3 =k(e_2+\beta e_3)+lt^ke_3$.
Hence, we get $k=1$ and $\beta^2=\beta+lt^k$. The second equation shows that $\beta \equiv 0,1 \pmod {t^k}$.
Thus, we get two possibilities: $f_2=e_2, f_3= t^ke_3$ or $f_2=e_2+e_3, f_3=t^ke_3$. It is easy to check that the orders $\Lambda_k =R+Re_2+Rt^ke_3$ and $\Lambda'_k=R + R(e_2+e_3)+Rt^ke_3$ are Gorenstein and
different if only $k > 0$ (if $k=0$, we get the maximal order $\Lambda$).
Thus, we have proved the following

\begin{theorem}
For every index $[\Lambda:\Lambda']=t^k$, where $k >0$, we have exactly two Gorenstein suborders of $\Lambda=R^3$, namely $\Lambda_k$ and $\Lambda'_k$. All other proper suborders of $\Lambda$ are not Gorenstein and are
$\Lambda_{k,l}=R+t^l\Lambda_k$ and $\Lambda'_{k,l}=R+t^l\Lambda'_k$, where $k >0$, $l >0$.
The number of all suborders of $\Lambda$ of given index $n=k+2l$ equals $\left[ \frac{n}{2} \right]+1$, $n \geq 0$.\qed
\end{theorem}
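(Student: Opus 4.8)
The plan is to reduce every suborder to a canonical generating set and then read off the two invariants $k,l$ that the statement uses. Since $e_1=1,\,e_2,\,e_3$ form an $R$-basis of $\Lambda=R^3$ and every order contains $1$, I would first fix, for an arbitrary suborder $\Lambda'$, an $R$-basis of the shape $1,\ f_2=\alpha e_2+\beta e_3,\ f_3=\gamma e_2+\delta e_3$, projecting out the $e_1$-component. Thus $\Lambda'$ is encoded by the matrix $M=\left(\begin{smallmatrix}\alpha&\gamma\\ \beta&\delta\end{smallmatrix}\right)$; the index $[\Lambda:\Lambda']$ equals $(\det M)$ up to a unit, and changing the basis of the sublattice $Rf_2+Rf_3$ amounts to right multiplication of $M$ by $\mathrm{GL}_2(R)$. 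This is exactly the freedom I will exploit to normalize.

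In the Gorenstein case the entries $\alpha,\beta,\gamma,\delta$ are coprime, i.e.\ $M$ is primitive, so some entry is a unit; by $\mathrm{GL}_2(R)$ column operations (Hermite normal form) I can arrange $\gamma=0$, $\alpha=1$, and $\delta=t^k$, leaving $f_2=e_2+\beta e_3$, $f_3=t^ke_3$. The genuine content is the ring axiom: the products $f_3^2=t^kf_3$ and $f_2f_3=\beta f_3$ lie in $\Lambda'$ automatically, while $f_2^2=e_2+\beta^2e_3$ forces $\beta^2\equiv\beta\pmod{t^k}$, hence (in a DVR) $\beta\equiv 0$ or $\beta\equiv 1$. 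After reducing $\beta$ modulo $t^k$ via $f_3$ these give precisely $\Lambda_k=R+Re_2+Rt^ke_3$ and $\Lambda'_k=R+R(e_2+e_3)+Rt^ke_3$, both Gorenstein and distinct for $k>0$. In the non-Gorenstein case I would let $t^l$ be the largest power of $t$ dividing all entries of $M$ and write $M=t^lM'$ with $M'$ primitive; then $\Lambda'=R+t^l\Lambda''$, where $\Lambda''$ is the Gorenstein order attached to $M'$, so $\Lambda''\in\{\Lambda_k,\Lambda'_k\}$ and one obtains $\Lambda_{k,l}$ and $\Lambda'_{k,l}$. A one-line determinant computation, $\det M=t^{2l}\det M'=t^{k+2l}$, shows the index is $t^{k+2l}$.

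For the final count I would observe that every order of index $t^n$ is tagged by a pair $(k,l)$ with $k,l\ge 0$ and $k+2l=n$: the value $l=0$ gives the Gorenstein orders and $k=0$ gives the orders $R+t^l\Lambda$, the remaining pairs giving the $\Lambda_{k,l},\Lambda'_{k,l}$. Since $\Lambda_{k,l}$ and $\Lambda'_{k,l}$ are interchanged by $\mathrm{Aut}_{\KK}(\KK^3)$, they are identified for the purpose of the classification, so counting reduces to counting admissible pairs: $l$ runs over $0,1,\dots,\lfloor n/2\rfloor$ with $k=n-2l$ determined, giving exactly $\lfloor n/2\rfloor+1=[n/2]+1$ of them, as asserted.

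The hard part will be the ring-closure analysis in the Gorenstein step, namely confirming that $f_2^2\in\Lambda'$ is the only genuine constraint and that it pins $\beta$ to the two residues $0,1\bmod t^k$, together with verifying completeness — that no additional orders hide under a different choice of which entry of $M$ is a unit. I would also need to check that the reduction $M=t^lM'$ is canonical, i.e.\ that $l$ and the class of $M'$ are intrinsic to $\Lambda'$, so that the labelling by $(k,l)$ is well defined before the combinatorial tally is carried out.
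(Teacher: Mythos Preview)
Your plan is the paper's argument essentially line for line: pick a basis $1,f_2,f_3$, normalize the Gorenstein case via column operations to $f_2=e_2+\beta e_3$, $f_3=t^ke_3$, use $f_2^2\in\Lambda'$ to force $\beta\equiv 0,1\pmod{t^k}$, and peel off $t^l$ in the non-Gorenstein case. The completeness worry you flag at the end is the one real issue, and it is genuine: column operations alone cannot always put the unit entry at position $\alpha$. If both top-row entries $\alpha,\gamma$ lie in $tR$ while $\beta$ is a unit, the Hermite form is $\beta=1,\delta=0$ instead, and a parallel analysis (exactly the paper's Case~II for $K\oplus L$, redone with $e_3^2=e_3$) yields a \emph{third} Gorenstein suborder $R+Re_3+Rt^ke_2$ for every $k\ge 1$, distinct from $\Lambda_k$ and $\Lambda'_k$ but isomorphic to $\Lambda_k$ under the automorphism swapping $e_2\leftrightarrow e_3$. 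Neither your sketch nor the paper's discussion for $K^3$ treats this case.

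Your counting step, where you identify $\Lambda_{k,l}$ with $\Lambda'_{k,l}$ via $\mathrm{Aut}_\KK(\KK^3)$, is not in the paper and is not licensed by the theorem as literally stated (``all suborders''). In fact, taken literally the count already fails at $n=1$: there are three Gorenstein suborders of index $t$, not one. The number $\lfloor n/2\rfloor+1$ is the number of pairs $(k,l)$ with $k+2l=n$, i.e.\ the number of \emph{isomorphism classes} of suborders of index $t^n$; your ad~hoc identification is precisely what collapses the count to this. So your instinct there is correct, but you are patching an imprecision in the statement rather than proving it as written.
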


\medskip

{\bf Description of all $R$-orders in the $K$-algebra $K \oplus L$, where $L$ is a quadratic field over $K$.}

\smallskip

Let $L=K(j)$, where $j^2=t$. We consider the field $K$ as embedded diagonally into $K \oplus L$. The maximal order in this algebra is $\Lambda = R \oplus S$, where $S$ is the maximal $R$-order in $L$. Choose as a basis of $R \oplus S$ the following elements: $e_1=1 = (1,1)$, $e_2=(0,1)$ and $e_3=(0,j)$. Of course, we have $e_2^2=e_2, e_3^2=te_2$ and $e_2e_3=e_3$.
Let $\Lambda' \subset \Lambda$ be any $R$-suborder of $\Lambda = R \oplus S$. Let $1,f_2,f_3$ be an $R$-basis
of $\Lambda'$. It is clear that $1$ always can be chosen as a part of basis of $\Lambda'$ for the same reasons as in the case of $\Lambda = R^3$. This means that we can choose $f_2 = \alpha e_2+\beta e_3, f_3=\gamma e_2+\delta e_3$, where $\alpha,\beta,\gamma,\delta\in R$.

Assume now that $\Lambda'$ is a Gorenstein order, that is, $\alpha,\beta,\gamma,\delta$ are relatively prime. Otherwise,
we have $\Lambda' = R + t\Lambda''$, where $\Lambda''$ is a suborder of $\Lambda$. When $\Lambda'$
is Gorenstein, at least one of $\alpha,\beta,\gamma,\delta$ is invertible in $R$.

{\bf Case I}. If one of $\alpha,\gamma$ is invertible in $R$, then without loss of generality we can
assume that $\alpha=1$. Thus, we may choose
$\gamma=0$, so that $f_3=\delta e_3$. Further, we may assume that $\delta=t^k$ for a nonnegative integer $k$.
Since $\Lambda'$ is a suborder, we have $f_2^2, f_3^2, f_2f_3 \in \Lambda'$. As before, only the first condition puts some restrictions on $\beta$:
$$f_2^2= e_2+2\beta e_3+ \beta^2te_2=(1+\beta^2)te_2+2\beta e_3$$
implies that there exist $k,l \in R$ such that $e_2+2\beta e_3+ \beta^2te_2=k(e_2+\beta e_3)+lt^ke_3$.
Hence, we get $k=1+\beta^2t$ and $2\beta= k\beta+lt^k$, which gives $2\beta=(1+\beta^2t)\beta+lt^k$. Thus, we have
$\beta \equiv 0 \pmod {t^k}$. As a consequence, we get that $\Lambda_k=R+Re_2+Rt^ke_3$ is the only Gorenstein suborder of $\Lambda$ of index $[\Lambda:\Lambda_k]=t^k$.
All other proper suborders of $\Lambda$ are not Gorenstein and are
$\Lambda_{k,l}=R+t^l\Lambda_k$, where $k >0$, $l >0$.
Observe also that in this case the number of all suborders of $\Lambda$ of given index $n=k+2l$ equals $\left[ \frac{n}{2} \right]+1$, $n \geq 0$.

{\bf Case II}. If $t$ divides both $\alpha, \gamma$ and one of $\beta,\delta$ is invertible in $R$, then without loss of generality we can assume that $\beta=1$. Thus, we may choose
$\delta=0$, so that $f_2=\alpha e_2+e_3$ and $f_3=\gamma e_2$. As before, since $\Lambda'$ is a suborder, we have $f_2^2, f_3^2, f_2f_3 \in \Lambda'$. We easily check that also this time only the first condition puts some
restrictions on the coefficients (this time $\alpha,\gamma$):
$$f_2^2= \alpha^2e_2+2\alpha e_3+ te_2=(\alpha^2+t)e_2+2\alpha e_3$$
implies that there exist $k,l \in R$ such that $(\alpha^2+t)e_2+2\alpha e_3 =k(\alpha e_2+e_3)+l\gamma e_2$.
Hence, we get $k=2\alpha$ and $\alpha^2+t=k\alpha+l\gamma$, which implies that $l\gamma=t-\alpha^2$. Since $t \mid \gamma$ and $t^2 \mid \alpha^2$, we get $l \in R$ only if $t^2 \nmid \gamma$. Hence, we can choose $f_2=e_3$ and $f_3= te_2$,
so $\Lambda'=R+Rte_2+Re_3$ is the only Gorenstein suborder of $\Lambda$ in this case.
The order $\Lambda'_k = R + t^k\Lambda'$ for integer $k >0$ is not Gorenstein and has index
$[\Lambda:\Lambda'_k]=t^{2k+1}$.

To summarize, we get the following

\begin{theorem}
The maximal order $\Lambda = R \oplus S = R+Re_2+Re_3$, where $e_2^2=e_2, e_3^2=te_2$ and $e_2e_3=e_3$
in $K \oplus L$ contains exactly one Gorenstein suborder $\Lambda_k =R+Re_2+Rt^ke_3$ of every index $k > 1$, while
for $k=1$, there are two Gorenstein suborders of index $1$, $\Lambda_1 =R+Re_2+Rte_3$ and $\Lambda'_1=R+Rte_2+Re_3$.
All non-Gorensteins suborders of $\Lambda$ are $\Lambda_{k,l}=R+t^l\Lambda_k$, where $k >0,l >0$ (of index $k+2l$) and
$\Lambda'_k = R + t^k\Lambda'_1$, where $k > 0$ (of index $2k+1$). The total number of suborders of $\Lambda$
of given index $n$ is equal $\left[ \frac{n}{2} \right]+1$ for even $n$ and $\left[ \frac{n}{2} \right]+2$ for odd $n$.\qed
\end{theorem}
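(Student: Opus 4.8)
The plan is to enumerate all $R$-suborders $\Lambda'\subset\Lambda=R\oplus S$ directly from an adapted basis, and then to organize the list by index. First I would record the reduction that makes everything computable. Since $\Lambda'/R$ is torsion-free, the identity may be taken as the first basis vector, so $\Lambda'=R\cdot 1\oplus N$ with $N=\Lambda'\cap S$ a rank-two sublattice of $S$, and one checks at once that $[\Lambda:\Lambda']$ equals the colength of $N$ in $S$. The crucial observation is that every product of two elements of $N$ again lies in $S$, hence has no component along the identity, and therefore must lie in $N$ itself; consequently the ring axioms for $\Lambda'$ reduce to the single requirement that $N$ be closed under multiplication. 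This converts the problem into the purely lattice-theoretic one of listing the multiplicatively closed $N\subset S$ of each colength, with $f_2=\alpha e_2+\beta e_3$, $f_3=\gamma e_2+\delta e_3$ an $R$-basis of $N$.

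Next I would split into the primitive and imprimitive cases. If $\alpha,\beta,\gamma,\delta$ fail to be relatively prime the order is imprimitive, $\Lambda'=R+t\Lambda''$ for a strictly larger order $\Lambda''$, and I would defer it to a recursion. In the primitive (Gorenstein) case at least one coefficient is a unit, and I would run the two normalizations. If one of $\alpha,\gamma$ is a unit, normalize to $\alpha=1,\ \gamma=0,\ f_3=t^k e_3$; imposing $f_2^2\in\Lambda'$ and using $e_2^2=e_2,\ e_3^2=te_2,\ e_2e_3=e_3$ yields $\beta(1-\beta^2 t)=l t^k$ with $1-\beta^2 t$ a unit, forcing $t^k\mid\beta$ and hence $\Lambda'=\Lambda_k=R+Re_2+Rt^ke_3$. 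If instead no such unit occurs but one of $\beta,\delta$ is a unit (so $t\mid\alpha$ and $t\mid\gamma$), the closure relation becomes $l\gamma'=t-\alpha^2$; since $t^2\mid\alpha^2$ while $t\mid\gamma'$, solvability in $R$ forces $\gamma'$ to be exactly divisible by $t$ and $t\mid\alpha$, pinning down the single order $\Lambda'_1=R+Rte_2+Re_3$ of index $1$. Thus the Gorenstein orders are precisely $\Lambda_k$ for $k\ge 0$ together with $\Lambda'_1$, the latter occurring only in index $1$.

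For the imprimitive orders I would push the reduction $\Lambda'=R+t\Lambda''$ to its end: every non-Gorenstein order has the shape $R+t^l(\text{Gorenstein order})$, giving the families $\Lambda_{k,l}=R+t^l\Lambda_k$ of index $k+2l$ and $\Lambda'_k=R+t^k\Lambda'_1$ of index $2k+1$, and --- a point easy to overlook --- the descendants $R+t^l\Lambda$ of the maximal order itself (the $k=0$ branch), which have even index $2l$. I would then verify pairwise distinctness and exhaustiveness by comparing the invariant factors $t^l,\ t^{l+k}$ of the explicit bases; this also shows that the second normalization contributes nothing beyond index $1$.

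Finally I would read off the count by fixing an index $n$ and summing the contributions: the single Gorenstein order $\Lambda_n$, the $\left[\tfrac{n-1}{2}\right]$ orders $\Lambda_{k,l}$ with $k+2l=n$ and $k,l\ge 1$, and then the parity-dependent remainder --- the even-index descendant $R+t^{n/2}\Lambda$ when $n$ is even, and exactly one order of $\Lambda'$-type when $n$ is odd (the second index-one order $\Lambda'_1$ if $n=1$, and the non-Gorenstein $\Lambda'_{(n-1)/2}$ if $n\ge 3$). Summing gives $\left[\frac{n}{2}\right]+1$ for even $n$ and $\left[\frac{n}{2}\right]+2$ for odd $n$. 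I expect the genuine obstacle to be precisely this bookkeeping across the Gorenstein/non-Gorenstein boundary: the family $R+t^l\Lambda$ is the easiest to miss, and it is exactly what supplies the extra order in even index and makes the two parity formulas come out cleanly.
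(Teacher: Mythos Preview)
Your approach is essentially the paper's: the same basis reduction $\Lambda'=R\cdot 1\oplus N$ with $N\subset Re_2+Re_3$, the same two normalizations for the Gorenstein case (your two subcases are exactly the paper's Case~I and Case~II, with the same closure computations), and the same recursion $\Lambda'=R+t\Lambda''$ for the non-Gorenstein orders.

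One point where you are actually more careful than the paper: you explicitly include the family $R+t^l\Lambda$ (descendants of the maximal order itself, the $k=0$ branch) among the non-Gorenstein suborders, and you correctly flag that it is precisely what is needed for the even-index count $\lfloor n/2\rfloor+1$ to come out right --- e.g.\ at index~$2$ the only suborders are $\Lambda_2$ and $R+t\Lambda$. The paper's stated list restricts $\Lambda_{k,l}$ to $k>0$, which as written would leave its own even-index count short by one; your bookkeeping closes this gap.
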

\begin{remark}
At this point we would like remind the reader that in the case $R=\OO$ we have one quantum group corresponding to a Gorenstein order and two quantum groups which correspond to a non-Gorenstein order.
\end{remark}
\medskip
Our results are quite unexpected: there are ``too many'' quantum groups which are not isomorphic as Hopf algebras over $\OO$.
However, we make a conjecture that after tensoring by $\KK$ there will be only two Hopf algebras over $\KK$ related to
non-twisted and twisted Belavin--Drinfeld cohomology.

\section{Belavin--Drinfeld cohomology for exceptional simple Lie algebras (by E. Karolinsky and Aleksandra Pirogova)}\label{appendix_B}

In this appendix we discuss Belavin--Drinfeld cohomology for exceptional simple Lie algebras. We keep notation introduced in Section \ref{sec_classif_alg}. Let $\bG$ be a split simple simply connected (i.e., $X=P$) algebraic group of exceptional type. If $\bG$ is of type $G_2$, $F_4$, or $E_8$, then $P=Q$, i.e., $\bG$ is of adjoint type, and therefore, by Proposition \ref{prop-conn-centr}, the centralizer $\bC(\bG, r_{\rm BD})$ is connected for any Belavin--Drinfeld r-matrix $r_{\rm BD}$. The remaining cases are $E_6$ and $E_7$. In the $E_6$ case,
$\Gamma=\{\alpha_1,\ldots,\alpha_5,\alpha_6\}$ is enumerated in a way that $\{\alpha_1,\ldots,\alpha_5\}$ is the simple root system of type $A_5$ (with the standard enumeration).

\begin{theorem}\label{thm_E6}
1) In the $E_6$ case, the centralizer $\mathbf{C}(\bG, r_{\rm BD})$ is not connected if and only if one of the following (mutually non-exclusive) conditions hold: either $\alpha_1$ and $\alpha_2$ are in the same string and $\alpha_4$ and $\alpha_5$ are also in the same string, or $\alpha_1$ and $\alpha_5$ are in the same string and $\alpha_2$ and $\alpha_4$ are also in the same string. In these cases $\bC(\bG, r_{\rm BD}) = \bT \times \bmu_3$,
where $\bT$ is a split torus and $\bmu_3$ is the group of cubic roots of unity.

2) In the $E_7$ case, the centralizer $\mathbf{C}(\bG, r_{\rm BD})$ is connected for any Belavin--Drinfeld r-matrix $r_{\rm BD}$.
\end{theorem}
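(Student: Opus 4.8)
The plan is to convert the connectedness of $\bC(\bG, r_{\rm BD})$ into an elementary computation in the weight lattice modulo $p=|Z(\bG)|$. Recall from the proof of Proposition \ref{prop-conn-centr} and equation (\ref{system_centr}) that $\bC(\bG, r_{\rm BD})$ is the diagonalizable subgroup of $\bH\simeq(\mathbb{G}_m)^n$ cut out by the characters $e^{\alpha}(e^{\tau(\alpha)})^{-1}$, $\alpha\in\Gamma_1$; equivalently its character group is $P/L$, where $P=\chi(\bH)$ and $L=\langle \alpha-\tau(\alpha):\alpha\in\Gamma_1\rangle\subseteq Q$. A diagonalizable group is connected exactly when its character group is torsion-free, so I would reduce the problem to deciding whether $P/L$ has torsion. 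By Proposition \ref{prop-conn-centr} the adjoint case says $Q/L$ is free of rank $n(r_{\rm BD})$ (the number of strings), so $L$ is saturated in $Q$ and the sequence
\begin{equation}\label{plan-ses}
0\to Q/L\to P/L\to P/Q\to 0
\end{equation}
exhibits the torsion of $P/L$ as a subgroup of $P/Q\cong\Z/p$ ($p=3$ for $E_6$, $p=2$ for $E_7$). A standard extension argument, using that $Q/L$ is free, then shows that $P/L$ acquires torsion (necessarily $\Z/p$) if and only if the class of $p\zeta$ vanishes in $(Q/L)\otimes\mathbb{F}_p$, where $\zeta\in P$ lifts a generator of $P/Q$. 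Reducing the relations modulo $pQ$ collapses the simple-root basis of $Q/pQ\cong\mathbb{F}_p^n$ to one vector per string, identifying $(Q/L)\otimes\mathbb{F}_p\cong\mathbb{F}_p^{\,n(r_{\rm BD})}$ with string-indexed coordinates; under this identification the image of $p\zeta$ is the tuple of string-sums of its simple-root coordinates. Thus \textbf{the centralizer is non-connected iff the sum over each string of the coordinates of $p\zeta$ is $\equiv 0 \pmod p$}, and in that case the torsion is $\Z/p$, whence $\bC(\bG, r_{\rm BD})\simeq\bT\times\bmu_p$.

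For $E_7$ this criterion is dispatched by a single parity computation, with no enumeration of admissible triples. Taking $\zeta=\omega_7$, one checks by applying the Cartan matrix that $2\omega_7=2\alpha_1+3\alpha_2+4\alpha_3+6\alpha_4+5\alpha_5+4\alpha_6+3\alpha_7$, whose coordinate sum is $27$. The strings partition all simple roots, so the string-sums add up to $27\equiv 1\pmod 2$ and cannot all be even. Hence the image of $2\omega_7$ in $(Q/L)\otimes\mathbb{F}_2$ is nonzero for every admissible triple, $P/L$ is torsion-free, and $\bC(\bG, r_{\rm BD})$ is connected. This proves the $E_7$ assertion.

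For $E_6$ the total sum vanishes and the finer string structure must be examined. Here $\zeta=\omega_1$ and $3\omega_1=4\alpha_1+3\alpha_2+5\alpha_3+6\alpha_4+4\alpha_5+2\alpha_6$ in Bourbaki numbering, with sum $24\equiv 0\pmod 3$; rewritten in the paper's numbering (the $A_5$-chain $\alpha_1\alpha_2\alpha_3\alpha_4\alpha_5$ with $\alpha_6$ attached at $\alpha_3$) this becomes the charge vector $(1,2,0,1,2,0)$, so the only nonzero charges are $q(\alpha_1)=q(\alpha_4)=1$ and $q(\alpha_2)=q(\alpha_5)=2$ in $\mathbb{F}_3$, while $\alpha_3,\alpha_6$ carry charge $0$. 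The centralizer is non-connected iff every string has total charge $\equiv 0\pmod 3$, and since the zero-charge roots may sit in any string freely, this reduces to partitioning $\{\alpha_1,\alpha_2,\alpha_4,\alpha_5\}$ into zero-sum blocks. No charged singleton and no like-charge pair $\{\alpha_1,\alpha_4\}$ or $\{\alpha_2,\alpha_5\}$ sums to zero, so the only admissible partitions are the two cross-pairings $\{\alpha_1,\alpha_2\},\{\alpha_4,\alpha_5\}$ and $\{\alpha_1,\alpha_5\},\{\alpha_2,\alpha_4\}$ (the case of all four in one string being subsumed under both). These are exactly the two stated string conditions, and in each the torsion is $\Z/3$, giving $\bC(\bG, r_{\rm BD})=\bT\times\bmu_3$.

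The main obstacle is the $E_6$ bookkeeping rather than any deep difficulty: one must carry the Bourbaki-to-paper relabeling accurately, and—to complete the argument in full—verify that each of the two cross-pairings is genuinely realized by an admissible triple (an isometry $\tau$ obeying the nilpotency condition $\tau^k(\alpha)\notin\Gamma_1$) and that no other admissible configuration yields an all-zero string-charge, which is where the combinatorics of the $A_5$-subdiagram and the involution $d$ enter. The $E_7$ half, by contrast, follows uniformly from the odd coordinate sum of $2\omega_7$ and needs no case analysis at all.
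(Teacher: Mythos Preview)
Your approach is correct and genuinely different from the paper's. The paper proves this theorem by exhaustive computer enumeration: a C++ program lists all admissible triples for $E_6$ and $E_7$, and then Wolfram Mathematica solves the system \eqref{system_centr} case by case to determine connectedness. Your argument is instead a clean lattice computation: identifying $\bC(\bG,r_{\rm BD})$ with the diagonalizable group of character group $P/L$, observing via the short exact sequence $0\to Q/L\to P/L\to P/Q\to 0$ (with $Q/L$ free by Proposition~\ref{prop-conn-centr}) that any torsion in $P/L$ injects into $P/Q\cong\Z/p$ and is detected by the vanishing of $p\zeta$ in $(Q/L)\otimes\mathbb F_p$, and then reading off that image as the vector of string-sums mod $p$. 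This turns $E_7$ into the one-line parity observation that the coordinates of $2\omega_7$ sum to $27$, and reduces $E_6$ to the short combinatorial check on the four charged roots, which you carry out correctly.

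Two remarks. First, your final paragraph is over-cautious: the theorem is an ``if and only if'' statement about string structure and asserts nothing about realizability, so once the mod-$3$ enumeration of zero-sum partitions of $\{\alpha_1,\alpha_2,\alpha_4,\alpha_5\}$ is complete (singletons and the like-charge pairs $\{\alpha_1,\alpha_4\}$, $\{\alpha_2,\alpha_5\}$ fail, forcing one of the two cross-pairings or the single four-element block), the $E_6$ case is finished; you do not need to exhibit admissible triples realizing each pattern. Second, the paper's brute-force method, while less illuminating, does produce as a by-product the explicit list of $70$ triples with non-connected centralizer and the $40$ triples satisfying Proposition~\ref{X}, data which your method does not directly generate.
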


\begin{proof}
The proof is via brute force aided by a computer. Namely, first, using a program written in C++, we list all possible admissible triples and compute the corresponding strings. Then, using Wolfram Mathematica, in each case we solve the corresponding system of equations (\ref{system_centr}) and compute the centralizer.
\end{proof}

Applying \cite[Remark 4.11 and Corollary 4.13]{PS}, we get

\begin{corollary}
Let the base field $\FF$ be of cohomological dimension $1$. Let $r_{\rm BD}$ be a Belavin--Drinfeld r-matrix with $r_0\in\mathfrak{h} \otimes_\FF\mathfrak{h}$.

1) In the $E_6$ case, $H(\bG, r_{\rm BD})=\FF^\times/(\FF^\times)^3$ in the cases when $\bC(\bG, r_{\rm BD}) = \bT \times \bmu_3$. Otherwise, $H(\bG, r_{\rm BD})=\{1\}$.

2) In the $G_2$, $F_4$, $E_7$, and $E_8$ cases, $H(\bG, r_{\rm BD})=\{1\}$.\qed
\end{corollary}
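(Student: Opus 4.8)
The plan is to reduce the statement to a computation of Galois cohomology of the centralizer, and then to read off the answer from the structural description supplied by Theorem \ref{thm_E6}. First I would invoke \cite[Remark 4.11 and Corollary 4.13]{PS}: over a field $\FF$ with $\mathrm{cd}(\FF)\le 1$, and for a Belavin--Drinfeld matrix with $r_0\in\fh\otimes_\FF\fh$, there is a canonical bijection
$$
H(\bG, r_{\rm BD})\;\simeq\;H^1(\FF,\bC(\bG, r_{\rm BD})).
$$
The hypothesis $\mathrm{cd}(\FF)\le 1$ enters precisely here, through Steinberg's theorem (Serre's Conjecture~I), which forces $H^1(\FF,\bG_{\rm ad})=\{1\}$ and thereby makes the comparison map a bijection rather than merely an injection. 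This reduces the whole corollary to evaluating $H^1(\FF,\bC(\bG, r_{\rm BD}))$, whose isomorphism type is pinned down case by case by Theorem \ref{thm_E6}.

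For the connected cases I would argue uniformly. In types $G_2$, $F_4$, $E_8$ one has $P=Q$, so $\bG$ is already adjoint and Proposition \ref{prop-conn-centr} gives $\bC(\bG, r_{\rm BD})\simeq(\mathbb{G}_m)^{n(r_{\rm BD})}$, a split torus. In type $E_7$ this same connectedness is the content of Theorem \ref{thm_E6}(2), and in the ``otherwise'' subcases of $E_6$ it is the content of Theorem \ref{thm_E6}(1). In every connected case $\bC(\bG, r_{\rm BD})$ is a closed connected subgroup of the split Cartan $\bH$, cut out by the character equations (\ref{system_centr}); such a subgroup is a split subtorus. Since $H^1(\FF,(\mathbb{G}_m)^k)=\{1\}$ by Hilbert~90 over any field, we conclude $H(\bG, r_{\rm BD})=\{1\}$. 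This settles part (2) in its entirety, together with the ``otherwise'' clause of part (1).

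It remains to treat the two non-connected $E_6$ configurations, where Theorem \ref{thm_E6}(1) gives $\bC(\bG, r_{\rm BD})=\bT\times\bmu_3$ with $\bT$ a split torus. Here I would use that Galois cohomology carries finite products of commutative algebraic groups to products, so that
$$
H^1(\FF,\bT\times\bmu_3)\;\simeq\;H^1(\FF,\bT)\times H^1(\FF,\bmu_3)\;\simeq\;H^1(\FF,\bmu_3),
$$
the first factor vanishing by Hilbert~90 as above. Finally, the Kummer sequence $1\to\bmu_3\to\mathbb{G}_m\xrightarrow{z\mapsto z^3}\mathbb{G}_m\to 1$ (valid since $\mathrm{char}\,\FF=0$) and the vanishing $H^1(\FF,\mathbb{G}_m)=0$ yield the standard identification $H^1(\FF,\bmu_3)\simeq\FF^\times/(\FF^\times)^3$. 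Combining, $H(\bG, r_{\rm BD})\simeq\FF^\times/(\FF^\times)^3$, as asserted.

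The principal obstacle has in fact already been cleared upstream: it is the determination of the precise isomorphism type of $\bC(\bG, r_{\rm BD})$ in the exceptional cases, and in particular the fact that a nontrivial component group—always the factor $\bmu_3$—arises only in the two distinguished $E_6$ string configurations. That is exactly the computer-assisted classification carried out in Theorem \ref{thm_E6}. Granting it, the cohomological step is entirely formal, resting only on Hilbert~90 and Kummer theory. The one conceptual point worth emphasising is that the hypothesis $\mathrm{cd}(\FF)\le 1$ is genuinely required for the translation into Galois cohomology via \cite{PS}, and \emph{not} for the evaluation of $H^1(\FF,\bT)$, which vanishes over an arbitrary base field.
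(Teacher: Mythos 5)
Your proposal is correct and follows essentially the same route as the paper, whose entire proof is the citation of \cite[Remark 4.11 and Corollary 4.13]{PS} (the bijection $H(\bG,r_{\rm BD})\simeq H^1(\FF,\bC(\bG,r_{\rm BD}))$ under $\mathrm{cd}(\FF)\le 1$) combined with the centralizer computations of Proposition \ref{prop-conn-centr} and Theorem \ref{thm_E6}; your Hilbert~90 and Kummer-sequence evaluations simply make explicit what that citation encapsulates. The only cosmetic slip is writing $H^1(\FF,\bG_{\rm ad})$ where, in Appendix \ref{appendix_B}, $\bG$ is simply connected ($X=P$) for $E_6$ and $E_7$ -- harmless, since Steinberg's theorem kills $H^1$ for any connected linear algebraic group over a perfect field of cohomological dimension $\le 1$.
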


For the $E_6$ case, totally there are $406=203\times2$ admissible triples (with non-empty $\Gamma_1$ and $\Gamma_2$). Among these, $70=35\times2$ triples satisfy the condition of Theorem \ref{thm_E6}. They are listed below (up to interchanging $\Gamma_1$ and $\Gamma_2$). First, we list the corresponding strings, and then the admissible triples having the given string structure.
\begin{itemize}
\item $\{\alpha_1, \alpha_2\}$, $\{\alpha_4, \alpha_5\}$
    \begin{itemize}
    \item[$\ast$] $\Gamma_1=\{\alpha_1, \alpha_4\}$, $\Gamma_2=\{\alpha_2, \alpha_5\}$, $\tau(\alpha_1)=\alpha_2$, $\tau(\alpha_4)=\alpha_5$;
    \item[$\ast$] $\Gamma_1=\{\alpha_1, \alpha_5\}$, $\Gamma_2=\{\alpha_2, \alpha_4\}$, $\tau(\alpha_1)=\alpha_2$, $\tau(\alpha_5)=\alpha_4$.
    \end{itemize}
\item $\{\alpha_1, \alpha_5\}$, $\{\alpha_2, \alpha_4\}$
    \begin{itemize}
    \item[$\ast$] $\Gamma_1=\{\alpha_1, \alpha_2\}$, $\Gamma_2=\{\alpha_5, \alpha_4\}$, $\tau(\alpha_1)=\alpha_5$, $\tau(\alpha_2)=\alpha_4$;
    \item[$\ast$] $\Gamma_1=\{\alpha_1, \alpha_4\}$, $\Gamma_2=\{\alpha_5, \alpha_2\}$, $\tau(\alpha_1)=\alpha_5$, $\tau(\alpha_4)=\alpha_2$.
    \end{itemize}
\item $\{\alpha_1, \alpha_2\}$, $\{\alpha_3, \alpha_4, \alpha_5\}$
    \begin{itemize}
    \item[$\ast$] $\Gamma_1=\{\alpha_1, \alpha_3, \alpha_4\}$, $\Gamma_2=\{\alpha_2, \alpha_4, \alpha_5\}$,\\ $\tau(\alpha_1)=\alpha_2$, $\tau(\alpha_3)=\alpha_4$, $\tau(\alpha_4)=\alpha_5$.
    \end{itemize}
\item $\{\alpha_1, \alpha_2, \alpha_3\}$, $\{\alpha_4, \alpha_5\}$
    \begin{itemize}
    \item[$\ast$] $\Gamma_1=\{\alpha_1, \alpha_2, \alpha_4\}$, $\Gamma_2=\{\alpha_2, \alpha_3, \alpha_5\}$,\\ $\tau(\alpha_1)=\alpha_2$, $\tau(\alpha_2)=\alpha_3$, $\tau(\alpha_4)=\alpha_5$.
    \end{itemize}
\item $\{\alpha_1, \alpha_5\}$, $\{\alpha_2, \alpha_3, \alpha_4\}$
    \begin{itemize}
    \item[$\ast$] $\Gamma_1=\{\alpha_1, \alpha_3, \alpha_4\}$, $\Gamma_2=\{\alpha_5, \alpha_2, \alpha_3\}$,\\ $\tau(\alpha_1)=\alpha_5$, $\tau(\alpha_3)=\alpha_2$, $\tau(\alpha_4)=\alpha_3$.
    \end{itemize}
\item $\{\alpha_1, \alpha_3, \alpha_5\}$, $\{\alpha_2, \alpha_4\}$
    \begin{itemize}
    \item[$\ast$] $\Gamma_1=\{\alpha_1, \alpha_2, \alpha_3\}$, $\Gamma_2=\{\alpha_3, \alpha_4, \alpha_5\}$,\\ $\tau(\alpha_1)=\alpha_3$, $\tau(\alpha_2)=\alpha_4$, $\tau(\alpha_3)=\alpha_5$;
    \item[$\ast$] $\Gamma_1=\{\alpha_1, \alpha_2, \alpha_5\}$, $\Gamma_2=\{\alpha_3, \alpha_4, \alpha_1\}$,\\ $\tau(\alpha_1)=\alpha_3$, $\tau(\alpha_2)=\alpha_4$, $\tau(\alpha_5)=\alpha_1$;
    \item[$\ast$] $\Gamma_1=\{\alpha_1, \alpha_4, \alpha_5\}$, $\Gamma_2=\{\alpha_5, \alpha_2, \alpha_3\}$,\\ $\tau(\alpha_1)=\alpha_5$, $\tau(\alpha_4)=\alpha_2$, $\tau(\alpha_5)=\alpha_3$.
    \end{itemize}
\item $\{\alpha_1, \alpha_2\}$, $\{\alpha_4, \alpha_5, \alpha_6\}$
    \begin{itemize}
    \item[$\ast$] $\Gamma_1=\{\alpha_1, \alpha_4, \alpha_6\}$, $\Gamma_2=\{\alpha_2, \alpha_6, \alpha_5\}$,\\ $\tau(\alpha_1)=\alpha_2$, $\tau(\alpha_4)=\alpha_6$, $\tau(\alpha_6)=\alpha_5$;
    \item[$\ast$] $\Gamma_1=\{\alpha_1, \alpha_5, \alpha_6\}$, $\Gamma_2=\{\alpha_2, \alpha_6, \alpha_4\}$,\\ $\tau(\alpha_1)=\alpha_2$, $\tau(\alpha_5)=\alpha_6$, $\tau(\alpha_6)=\alpha_4$.
    \end{itemize}
\item $\{\alpha_1, \alpha_2, \alpha_6\}$, $\{\alpha_4, \alpha_5\}$
    \begin{itemize}
    \item[$\ast$] $\Gamma_1=\{\alpha_1, \alpha_4, \alpha_6\}$, $\Gamma_2=\{\alpha_6, \alpha_5, \alpha_2\}$,\\ $\tau(\alpha_1)=\alpha_6$, $\tau(\alpha_4)=\alpha_5$, $\tau(\alpha_6)=\alpha_2$;
    \item[$\ast$] $\Gamma_1=\{\alpha_1, \alpha_5, \alpha_6\}$, $\Gamma_2=\{\alpha_6, \alpha_4, \alpha_2\}$,\\ $\tau(\alpha_1)=\alpha_6$, $\tau(\alpha_5)=\alpha_4$, $\tau(\alpha_6)=\alpha_2$.
    \end{itemize}
\item $\{\alpha_1, \alpha_5\}$, $\{\alpha_2, \alpha_4, \alpha_6\}$
    \begin{itemize}
    \item[$\ast$] $\Gamma_1=\{\alpha_1, \alpha_2, \alpha_4\}$, $\Gamma_2=\{\alpha_5, \alpha_4, \alpha_6\}$,\\ $\tau(\alpha_1)=\alpha_5$, $\tau(\alpha_2)=\alpha_4$, $\tau(\alpha_4)=\alpha_6$;
    \item[$\ast$] $\Gamma_1=\{\alpha_1, \alpha_2, \alpha_6\}$, $\Gamma_2=\{\alpha_5, \alpha_4, \alpha_2\}$,\\ $\tau(\alpha_1)=\alpha_5$, $\tau(\alpha_2)=\alpha_4$, $\tau(\alpha_6)=\alpha_2$;
    \item[$\ast$] $\Gamma_1=\{\alpha_1, \alpha_4, \alpha_6\}$, $\Gamma_2=\{\alpha_5, \alpha_6, \alpha_2\}$,\\ $\tau(\alpha_1)=\alpha_5$, $\tau(\alpha_4)=\alpha_6$, $\tau(\alpha_6)=\alpha_2$.
    \end{itemize}
\item $\{\alpha_1, \alpha_5, \alpha_6\}$, $\{\alpha_2, \alpha_4\}$
    \begin{itemize}
    \item[$\ast$] $\Gamma_1=\{\alpha_1, \alpha_2, \alpha_5\}$, $\Gamma_2=\{\alpha_5, \alpha_4, \alpha_6\}$,\\ $\tau(\alpha_1)=\alpha_5$, $\tau(\alpha_2)=\alpha_4$, $\tau(\alpha_5)=\alpha_6$;
    \item[$\ast$] $\Gamma_1=\{\alpha_1, \alpha_2, \alpha_6\}$, $\Gamma_2=\{\alpha_5, \alpha_4, \alpha_1\}$,\\ $\tau(\alpha_1)=\alpha_5$, $\tau(\alpha_2)=\alpha_4$, $\tau(\alpha_6)=\alpha_1$;
    \item[$\ast$] $\Gamma_1=\{\alpha_1, \alpha_4, \alpha_6\}$, $\Gamma_2=\{\alpha_6, \alpha_2, \alpha_5\}$,\\ $\tau(\alpha_1)=\alpha_6$, $\tau(\alpha_4)=\alpha_2$, $\tau(\alpha_6)=\alpha_5$.
    \end{itemize}
\item $\{\alpha_1, \alpha_2, \alpha_4, \alpha_5\}$
    \begin{itemize}
    \item[$\ast$] $\Gamma_1=\{\alpha_1, \alpha_2, \alpha_4\}$, $\Gamma_2=\{\alpha_4, \alpha_5, \alpha_2\}$,\\ $\tau(\alpha_1)=\alpha_4$, $\tau(\alpha_2)=\alpha_5$, $\tau(\alpha_4)=\alpha_2$;
    \item[$\ast$] $\Gamma_1=\{\alpha_1, \alpha_2, \alpha_4\}$, $\Gamma_2=\{\alpha_5, \alpha_4, \alpha_1\}$,\\ $\tau(\alpha_1)=\alpha_5$, $\tau(\alpha_2)=\alpha_4$, $\tau(\alpha_4)=\alpha_1$;
    \item[$\ast$] $\Gamma_1=\{\alpha_1, \alpha_2, \alpha_5\}$, $\Gamma_2=\{\alpha_4, \alpha_5, \alpha_1\}$,\\ $\tau(\alpha_1)=\alpha_4$, $\tau(\alpha_2)=\alpha_5$, $\tau(\alpha_5)=\alpha_1$;
    \item[$\ast$] $\Gamma_1=\{\alpha_1, \alpha_2, \alpha_5\}$, $\Gamma_2=\{\alpha_5, \alpha_4, \alpha_2\}$,\\ $\tau(\alpha_1)=\alpha_5$, $\tau(\alpha_2)=\alpha_4$, $\tau(\alpha_5)=\alpha_2$.
    \end{itemize}
\item $\{\alpha_1, \alpha_2\}$, $\{\alpha_3, \alpha_6\}$, $\{\alpha_4, \alpha_5\}$
    \begin{itemize}
    \item[$\ast$] $\Gamma_1=\{\alpha_1, \alpha_3, \alpha_5\}$, $\Gamma_2=\{\alpha_2, \alpha_6, \alpha_4\}$,\\ $\tau(\alpha_1)=\alpha_2$, $\tau(\alpha_3)=\alpha_6$, $\tau(\alpha_5)=\alpha_4$.
    \end{itemize}
\item $\{\alpha_1, \alpha_5, \alpha_6\}$, $\{\alpha_2, \alpha_3, \alpha_4\}$
    \begin{itemize}
    \item[$\ast$] $\Gamma_1=\{\alpha_1, \alpha_2, \alpha_3, \alpha_5\}$, $\Gamma_2=\{\alpha_6, \alpha_3, \alpha_4, \alpha_1\}$,\\
        $\tau(\alpha_1)=\alpha_6$, $\tau(\alpha_2)=\alpha_3$, $\tau(\alpha_3)=\alpha_4$, $\tau(\alpha_5)=\alpha_1$;
    \item[$\ast$] $\Gamma_1=\{\alpha_1, \alpha_2, \alpha_3, \alpha_6\}$, $\Gamma_2=\{\alpha_6, \alpha_3, \alpha_4, \alpha_5\}$,\\
        $\tau(\alpha_1)=\alpha_6$, $\tau(\alpha_2)=\alpha_3$, $\tau(\alpha_3)=\alpha_4$, $\tau(\alpha_6)=\alpha_5$;
    \item[$\ast$] $\Gamma_1=\{\alpha_1, \alpha_3, \alpha_4, \alpha_5\}$, $\Gamma_2=\{\alpha_5, \alpha_2, \alpha_3, \alpha_6\}$,\\
        $\tau(\alpha_1)=\alpha_5$, $\tau(\alpha_3)=\alpha_2$, $\tau(\alpha_4)=\alpha_3$, $\tau(\alpha_5)=\alpha_6$.
    \end{itemize}
\item $\{\alpha_1, \alpha_2\}$, $\{\alpha_3, \alpha_4, \alpha_5, \alpha_6\}$
    \begin{itemize}
    \item[$\ast$] $\Gamma_1=\{\alpha_1, \alpha_3, \alpha_5, \alpha_6\}$, $\Gamma_2=\{\alpha_2, \alpha_5, \alpha_6, \alpha_4\}$,\\
        $\tau(\alpha_1)=\alpha_2$, $\tau(\alpha_3)=\alpha_5$, $\tau(\alpha_5)=\alpha_6$, $\tau(\alpha_6)=\alpha_4$.
    \end{itemize}
\item $\{\alpha_1, \alpha_2, \alpha_3, \alpha_6\}$, $\{\alpha_4, \alpha_5\}$
    \begin{itemize}
    \item[$\ast$] $\Gamma_1=\{\alpha_1, \alpha_2, \alpha_4, \alpha_6\}$, $\Gamma_2=\{\alpha_3, \alpha_6, \alpha_5, \alpha_1\}$,\\
        $\tau(\alpha_1)=\alpha_3$, $\tau(\alpha_2)=\alpha_6$, $\tau(\alpha_4)=\alpha_5$, $\tau(\alpha_6)=\alpha_1$.
    \end{itemize}
\item $\{\alpha_1, \alpha_2, \alpha_3, \alpha_4, \alpha_5\}$
    \begin{itemize}
    \item[$\ast$] $\Gamma_1=\{\alpha_1, \alpha_2, \alpha_3, \alpha_4\}$, $\Gamma_2=\{\alpha_2, \alpha_3, \alpha_4, \alpha_5\}$,\\
        $\tau(\alpha_1)=\alpha_2$, $\tau(\alpha_2)=\alpha_3$, $\tau(\alpha_3)=\alpha_4$, $\tau(\alpha_4)=\alpha_5$.
    \end{itemize}
\item $\{\alpha_1, \alpha_2, \alpha_4, \alpha_5, \alpha_6\}$
    \begin{itemize}
    \item[$\ast$] $\Gamma_1=\{\alpha_1, \alpha_2, \alpha_4, \alpha_6\}$, $\Gamma_2=\{\alpha_4, \alpha_5, \alpha_6, \alpha_2\}$,\\
        $\tau(\alpha_1)=\alpha_4$, $\tau(\alpha_2)=\alpha_5$, $\tau(\alpha_4)=\alpha_6$, $\tau(\alpha_6)=\alpha_2$;
    \item[$\ast$] $\Gamma_1=\{\alpha_1, \alpha_2, \alpha_4, \alpha_6\}$, $\Gamma_2=\{\alpha_5, \alpha_4, \alpha_6, \alpha_1\}$,\\
        $\tau(\alpha_1)=\alpha_5$, $\tau(\alpha_2)=\alpha_4$, $\tau(\alpha_4)=\alpha_6$, $\tau(\alpha_6)=\alpha_1$;
    \item[$\ast$] $\Gamma_1=\{\alpha_1, \alpha_2, \alpha_5, \alpha_6\}$, $\Gamma_2=\{\alpha_4, \alpha_5, \alpha_6, \alpha_1\}$,\\
        $\tau(\alpha_1)=\alpha_4$, $\tau(\alpha_2)=\alpha_5$, $\tau(\alpha_5)=\alpha_6$, $\tau(\alpha_6)=\alpha_1$;
    \item[$\ast$] $\Gamma_1=\{\alpha_1, \alpha_2, \alpha_5, \alpha_6\}$, $\Gamma_2=\{\alpha_5, \alpha_4, \alpha_6, \alpha_2\}$,\\
        $\tau(\alpha_1)=\alpha_5$, $\tau(\alpha_2)=\alpha_4$, $\tau(\alpha_5)=\alpha_6$, $\tau(\alpha_6)=\alpha_2$.
    \end{itemize}
\end{itemize}

We also list the admissible triples that satisfy the conclusions of Proposition \ref{X}. There are $40=20\times2$ such triples (with non-empty $\Gamma_1$ and $\Gamma_2$). Their list (up to interchanging $\Gamma_1$ and $\Gamma_2$) is given below.
\begin{itemize}
\item $\Gamma_1=\{\alpha_1\}$, $\Gamma_2=\{\alpha_5\}$, $\tau(\alpha_1)=\alpha_5$;
\item $\Gamma_1=\{\alpha_2\}$, $\Gamma_2=\{\alpha_4\}$, $\tau(\alpha_2)=\alpha_4$;
\item $\Gamma_1=\{\alpha_1, \alpha_2\}$, $\Gamma_2=\{\alpha_4, \alpha_5\}$, $\tau(\alpha_1)=\alpha_4$, $\tau(\alpha_2)=\alpha_5$;
\item $\Gamma_1=\{\alpha_1, \alpha_2\}$, $\Gamma_2=\{\alpha_5, \alpha_4\}$, $\tau(\alpha_1)=\alpha_5$, $\tau(\alpha_2)=\alpha_4$;
\item $\Gamma_1=\{\alpha_1, \alpha_3\}$, $\Gamma_2=\{\alpha_3, \alpha_5\}$, $\tau(\alpha_1)=\alpha_3$, $\tau(\alpha_3)=\alpha_5$;
\item $\Gamma_1=\{\alpha_1, \alpha_4\}$, $\Gamma_2=\{\alpha_2, \alpha_5\}$, $\tau(\alpha_1)=\alpha_2$, $\tau(\alpha_4)=\alpha_5$;
\item $\Gamma_1=\{\alpha_1, \alpha_4\}$, $\Gamma_2=\{\alpha_5, \alpha_2\}$, $\tau(\alpha_1)=\alpha_5$, $\tau(\alpha_4)=\alpha_2$;
\item $\Gamma_1=\{\alpha_1, \alpha_6\}$, $\Gamma_2=\{\alpha_6, \alpha_5\}$, $\tau(\alpha_1)=\alpha_6$, $\tau(\alpha_6)=\alpha_5$;
\item $\Gamma_1=\{\alpha_2, \alpha_3\}$, $\Gamma_2=\{\alpha_3, \alpha_4\}$, $\tau(\alpha_2)=\alpha_3$, $\tau(\alpha_3)=\alpha_4$;
\item $\Gamma_1=\{\alpha_2, \alpha_6\}$, $\Gamma_2=\{\alpha_6, \alpha_4\}$, $\tau(\alpha_2)=\alpha_6$, $\tau(\alpha_6)=\alpha_4$;
\item $\Gamma_1=\{\alpha_1, \alpha_2, \alpha_3\}$, $\Gamma_2=\{\alpha_3, \alpha_4, \alpha_5\}$,\\ $\tau(\alpha_1)=\alpha_3$, $\tau(\alpha_2)=\alpha_4$, $\tau(\alpha_3)=\alpha_5$;
\item $\Gamma_1=\{\alpha_1, \alpha_2, \alpha_4\}$, $\Gamma_2=\{\alpha_4, \alpha_5, \alpha_2\}$,\\ $\tau(\alpha_1)=\alpha_4$, $\tau(\alpha_2)=\alpha_5$, $\tau(\alpha_4)=\alpha_2$;
\item $\Gamma_1=\{\alpha_1, \alpha_2, \alpha_5\}$, $\Gamma_2=\{\alpha_4, \alpha_5, \alpha_1\}$,\\ $\tau(\alpha_1)=\alpha_4$, $\tau(\alpha_2)=\alpha_5$, $\tau(\alpha_5)=\alpha_1$;
\item $\Gamma_1=\{\alpha_1, \alpha_3, \alpha_4\}$, $\Gamma_2=\{\alpha_5, \alpha_2, \alpha_3\}$,\\ $\tau(\alpha_1)=\alpha_5$, $\tau(\alpha_3)=\alpha_2$, $\tau(\alpha_4)=\alpha_3$;
\item $\Gamma_1=\{\alpha_1, \alpha_4, \alpha_6\}$, $\Gamma_2=\{\alpha_5, \alpha_6, \alpha_2\}$,\\ $\tau(\alpha_1)=\alpha_5$, $\tau(\alpha_4)=\alpha_6$, $\tau(\alpha_6)=\alpha_2$;
\item $\Gamma_1=\{\alpha_1, \alpha_4, \alpha_6\}$, $\Gamma_2=\{\alpha_6, \alpha_2, \alpha_5\}$,\\ $\tau(\alpha_1)=\alpha_6$, $\tau(\alpha_4)=\alpha_2$, $\tau(\alpha_6)=\alpha_5$;
\item $\Gamma_1=\{\alpha_1, \alpha_2, \alpha_3, \alpha_4\}$, $\Gamma_2=\{\alpha_2, \alpha_3, \alpha_4, \alpha_5\}$,\\ $\tau(\alpha_1)=\alpha_2$, $\tau(\alpha_2)=\alpha_3$, $\tau(\alpha_3)=\alpha_4$, $\tau(\alpha_4)=\alpha_5$;
\item $\Gamma_1=\{\alpha_1, \alpha_2, \alpha_3, \alpha_6\}$, $\Gamma_2=\{\alpha_6, \alpha_3, \alpha_4, \alpha_5\}$,\\ $\tau(\alpha_1)=\alpha_6$, $\tau(\alpha_2)=\alpha_3$, $\tau(\alpha_3)=\alpha_4$, $\tau(\alpha_6)=\alpha_5$;
\item $\Gamma_1=\{\alpha_1, \alpha_2, \alpha_4, \alpha_6\}$, $\Gamma_2=\{\alpha_4, \alpha_5, \alpha_6, \alpha_2\}$,\\ $\tau(\alpha_1)=\alpha_4$, $\tau(\alpha_2)=\alpha_5$, $\tau(\alpha_4)=\alpha_6$, $\tau(\alpha_6)=\alpha_2$;
\item $\Gamma_1=\{\alpha_1, \alpha_2, \alpha_5, \alpha_6\}$, $\Gamma_2=\{\alpha_4, \alpha_5, \alpha_6, \alpha_1\}$,\\ $\tau(\alpha_1)=\alpha_4$, $\tau(\alpha_2)=\alpha_5$, $\tau(\alpha_5)=\alpha_6$, $\tau(\alpha_6)=\alpha_1$.
\end{itemize}


\medskip

\textbf{Acknowledgments.} The authors are grateful to Seidon Alsaody, Borys Kadets,
Yury Nikolaevsky, Grigori Rozenblioum, and Efim Zelmanov for helpful conversations and many valuable suggestions, and to Tatyana Chunikhina and Aleksey Chunikhin (Kharkiv National University) for help with computer programming.

\end{document}